\newtheorem{theorem}{Theorem}[chapter]
\newtheorem{lemma}[theorem]{Lemma}
\newtheorem{fact}[theorem]{Fact}
\theoremstyle{definition}
\newtheorem{definition}[theorem]{Definition}
\theoremstyle{corollary}
\newtheorem{corollary}[theorem]{Corollary}
\theoremstyle{conjecture}
\newtheorem{conjecture}[theorem]{Conjecture}
\theoremstyle{proposition}
\newtheorem{proposition}[theorem]{Proposition}
\newcommand{\supp}{\mathop{\mathrm{supp}}}
\newtheorem{example}[theorem]{Example}
\theoremstyle{remark}
\newtheorem{remark}[theorem]{Remark}
\newcommand{\vp}{\mathop{\mathrm{vp}}}
\newcommand{\vol}{\mathop{\mathrm{vol}}}
\newcommand{\Imm}{\mathop{\mathrm{Im}}}
\newcommand{\Ree}{\mathop{\mathrm{Re}}}
\newcommand{\diag}{\mathop{\mathrm{diag}}}
\newcommand{\SobSDt}[2]{^{\nabla}\|#1\|_{\tau,\,\varepsilon}^{#2}}
\newcommand{\SobSDz}[2]{^{\nabla}\|#1\|_{\zeta,\,\varepsilon}^{#2}}
\newcommand{\SobSdt}[2]{^{\partial}\|#1\|_{\tau,\,\varepsilon\emph{}}^{#2}}
\newcommand{\SobSdz}[2]{^{\partial}\|#1\|_{\zeta,\,\varepsilon\emph{}}^{#2}}
\newcommand{\SobODt}[2]{^{\nabla}\|#1\|_{\Omega_\tau,\,\varepsilon}^{#2}}
\newcommand{\SobOdt}[2]{^{\partial}\|#1\|_{\Omega_\tau,\,\varepsilon\emph{}}^{#2}}
\def\R{{\mathbb R}}
\def\abs#1{\left|#1\right|}
\def\csub{\subset\subset}
\def\eps{\varepsilon}
\def\Gen{\mathcal G}
\def\Ord#1{O#1}
\numberwithin{section}{chapter} \numberwithin{equation}{chapter}
\begin{document}
\thispagestyle{empty}
\pagenumbering{roman}
\begin{flushright}
\includegraphics{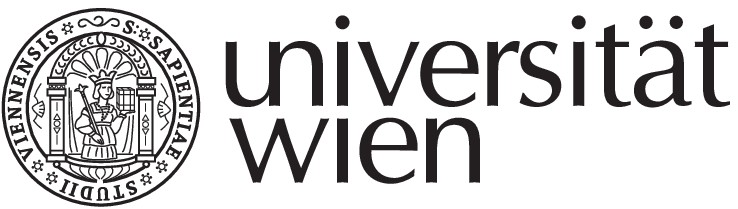}\\[2cm]
\end{flushright}
\begin{center}
\begin{huge}
Dissertation \\[1cm]
\end{huge}
\begin{Huge}
{\bf The wave equation on singular space-times}\\[2cm]
\end{Huge}
\begin{Small}
angestrebter akademischer Grad\\[0.6cm]
\end{Small}
\begin{large}
Doktor der Naturwissenschaften (Dr.\ rer. nat.)\\[4cm]
\end{large}

\end{center}
\begin{Large}
Verfasser: Dipl.-Ing. Eberhard Mayerhofer\\
Matrikel-Nummer: 9540617\\
Dissertationsgebiet: Mathematik\\
Betreuer: Prof.\ Dr. Michael Kunzinger\\[3cm]
Wien, am 1. August 2006
\end{Large}
\newpage \ \\[2cm]
\newpage
\begin{small} \ \\[3cm]
Abstract.\\
The first part of my thesis lays the foundations to generalized
Lorentz geometry. The basic algebraic structure of
finite-dimensional modules over the ring of generalized numbers is
investigated. This includes a new characterization of invertibility
in the ring of generalized numbers as well as a characterization of
free elements inside the $n$-dimensional module $\widetilde{\mathbb
R}^n$. The index of symmetric bilinear forms is introduced; this new
concept enables a (generalized) pointwise characterization of
generalized pseudo Riemannian metrics on smooth manifolds as
introduced by M.\ Kunzinger and R. Steinbauer. It is shown that free
submodules have direct summands, however $\widetilde{\mathbb R}^n$
turns out not to be semisimple. Applications of these new concepts are
a generalized notion of causality, the generalized inverse Cauchy
Schwarz inequality for time-like or null vectors, constructions of
pseudo Riemannian metrics as well as generalized energy tensors. The
motivation for this part of my thesis evolved from the main topic,
the wave equation on singular space-times.

The second and main part of my thesis is devoted to establishing a local
existence and uniqueness theorem for the wave equation on singular
space-times. The singular Lorentz metric subject to our discussion
is modeled within the special algebra on manifolds in the sense of
J.\ F.\ Colombeau. Inspired by an approach to generalized hyperbolicity
of conical-space times due to J.\ Vickers and J.\ Wilson, we succeed
in establishing certain energy estimates, which by a further
elaborated equivalence of energy integrals and Sobolev norms allow
us to prove existence and uniqueness of local generalized solutions
of the wave equation with respect to a wide class of generalized
metrics.

The third part of my thesis treats three different point value
resp.\ uniqueness questions in algebras of generalized functions.
The first one, posed by Michael Kunzinger, reads as follows: Is the
theorem by Albeverio et al., that elements of the so-called p-adic
Colombeau Egorov algebra are determined uniquely on standard points,
a p-adic scenario? We answer this problem by means of a
counterexample which shows that the statement in fact does not hold.
We further show that elements of an Egorov algebra of generalized
functions on a locally compact ultrametric space allow a point-value
characterization if and only if the metric induces the discrete
topology. {\it Secondly}, we prove that the ring of generalized
(real or complex) numbers endowed with the sharp norm does not admit
nested sequences of closed balls to have an empty intersection. As
an application we outline a possible version of the Hahn-Banach
Theorem as well as the ultrametric Banach fixed point theorem. {\it
Finally}, we establish that scaling invariant generalized functions
on the real line are constant and we prove several new
characterizations of locally constant generalized functions.
\end{small}
\newpage
\newpage
\thispagestyle{empty}
\newpage
\newpage
%

\chapter*{Preface}
The present thesis represents my research work 2004-2006 in the
field of generalized functions carried out under the supervision of
Professor Michael Kunzinger at the Faculty of Mathematics,
University of Vienna. All sections in this book have been the basis
for scientific papers. For references concerning publication of this
material I refer to the arxiv, where all of my submitted papers can
be found, along with updated information concerning their
publication status.

Vienna, February 2008 \aufm{Eberhard Mayerhofer}

\mainmatter \tableofcontents
\chapter{Introduction}\label{chapterintro}
Differential algebras of generalized functions in the sense of J. F.
Colombeau provide a rigorous setting for treating numerous problems
for which a general concept of multiplication of distributions is
needed. Popular examples of such include partial
differential equations with singular (in the sense of non-smooth,
say distributional) data or coefficients: A sensible theory must admit singular solutions of the latter,
therefore it is necessary to introduce a product of singular objects
(in our example a singular coefficient times a singular solution);
however, it is advisable to do this in a consistent way, meaning
that on reasonable function subspaces of the distributions, the
usual point-wise product coincides with such a  product of
singular objects, and is associative and commutative.

Many counterexamples support that on $\mathcal D'$ such a product
with values in $\mathcal D'$ cannot exist (cf. \cite{Bible}, chapter 1). Let us consider
the following: assume we were given an associative product $\circ$ on $\mathcal D'$ and let
$\vp(1/x)$ denote the principal value of $1/x$, then we
would have
\[
\delta=\delta\circ(x \circ \vp(1/x))=(\delta\circ x)\circ
\vp(1/x)=0,
\]
which is impossible, since $\delta\neq 0$. Apart from certain "irregular (intrinsic or extrinsic) operations"
(cf.\ \cite{MObook}), there are basically two ways out of this dilemma:
\begin{enumerate}
\item We could restrict ourselves to strict subspaces of $\mathcal D'$ which have a natural algebraic structure, for instance
Sobolev spaces $\mathcal H^s(\mathbb R^n)$ for $s>n/2$,
$L_{loc}^\infty,\, C^k$ etc., and
\item\label{dilemmaexit2} we could try to embed $\mathcal D'$ into a larger space $\mathcal G$ which can be endowed with the structure of a differential algebra.
\end{enumerate}

Since we want to multiply distributions unrestrictedly, we shall settle for (\ref{dilemmaexit2}).

First we formulate the desired properties of a differential algebra
$(\mathcal G, \circ, +)$ containing the distributions. Let
$\Omega\subset \mathbb R^s$ open. We wish to construct an
associative, commutative algebra $({\mathcal G},+,\circ)$ such that:
\begin{enumerate}
\item\label{eig1alg} There exists a linear embedding $\iota:{\mathcal D}' \hookrightarrow {\mathcal G}$ such that
$\iota(1)$ is the unit in ${\mathcal G}$.
\item There exist derivation operators $D_i: {\mathcal G} \to
{\mathcal G}$ ($1\leq i \leq s$), which are linear and satisfy the
Leibniz-rule.
\item $D_i \mid_{{\mathcal D}'} = \frac{\partial}{\partial x_i}$
($1\le i \le s$), that is the derivation operators restricted to
$\mathcal D'$ are the usual partial derivations.
\item \label{propfour} $\circ \mid_{{\mathcal C}^\infty(\Omega)\times
{\mathcal C}^\infty(\Omega)}$ is the point-wise product of
functions.
\end{enumerate}
Item (\ref{propfour}) corresponds to the above requirement that the new
product should coincide with the usual point-wise product on a
"reasonable" subspace of $\mathcal D'$. Schwartz's famous
impossibility result (\cite{Schw1}) states that
such an algebra, does not exist, if the requirement (\ref{propfour}) is replaced by the respective requirement
for continuous functions. Nevertheless J. F. Colombeau
successfully constructed differential algebras $(\mathcal
G,+,\circ)$ satisfying (\ref{eig1alg})--(\ref{propfour}) (\cite{Colombeau,C}). Meanwhile there are
a number of such algebras of generalized functions. For a general construction scheme,
cf. \cite{Bible}. In the following subsection we
explain how the so-called {\it special version} on open sets of
$\mathbb R^n$ is constructed. We then may introduce the special
algebra on manifolds and we shall discuss its relevance for
applications in general relativity. The chapter will end with an
introduction to point-value concepts in algebras of generalized
functions.



\subsection*{Colombeau's special algebra}\label{specialeuclidean}
Let $\Omega$ be an open subset of $\mathbb R^d$. The so-called {\it special Algebra}\footnote{In the literature the special algebra is often denoted by $\mathcal G^s$ (with the aim to distinguish it from other Colomebau algebras), however, since we only work in the special algebra we shall omit the index $s$ throughout.} due to J.\ F.\ Colombeau is given by the quotient
\[
\mathcal G(\Omega):=\mathcal E_M(\Omega) /\mathcal N(\Omega),
\]
where the (ring of) moderate functions $\mathcal E_M(\Omega) $ resp.\ the ring of negligible elements (being an ideal
in $\mathcal E_M(\Omega) $) are given by
\begin{align}\nonumber
\!\!\mathcal E_M(\Omega) \!\!&:=&\!\! \{(u_\varepsilon)_\varepsilon\in C^{\infty}(\Omega)^{(0,1]} | \forall K\subset\subset \Omega \,\forall \alpha
\, \exists\, N 
\ \sup_{x\in K} |\partial^\alpha u_\varepsilon(x)| = O(\varepsilon^{-N})\}\\\nonumber
\!\!\mathcal N(\Omega) \!\!&:=&\!\! \{(u_\varepsilon)_\varepsilon\in C^{\infty}(\Omega)^{(0,1]} | \forall K\subset\subset \Omega \,\forall \alpha
\, \forall\, m 
\ \sup_{x\in K} |\partial^\alpha u_\varepsilon(x)| = O(\varepsilon^{m})\}.
\end{align}
The algebraic operations ($+,\circ$) as well as (partial) differentiation,
composition of functions etc. are meant to be performed
component-wise on the level of representatives; the transfer to the
quotient $\mathcal G(\Omega)$ is then well defined (cf.\ the
comprehensive presentation in the first chapter of \cite{Bible}). Once a
Schwartz mollifier $\rho$ on $\mathbb R^d$ with all moments vanishing has
been chosen, the space of compactly supported distributions may be
embedded into $\mathcal G(\Omega)$ via convolution; an
embedding of all of $\mathcal D'(\Omega)$ into our algebra is
achieved via a partition of unity using sheaf
theoretic arguments, therefore being not canonical. 
\section[Generalized functions and applications to relativity]{Algebras of generalized functions on manifolds and applications in general
relativity} The aim of this section is to review the basics of the special algebra on
manifolds $X$ as well as the definitions of generalized sections of vector bundles
with base space $X$ and we recall the definition of generalized pseudo-Riemannian
metrics. At the end of the section we motivate the use of
differential algebras for applications in relativity, in particular
for the wave equation on singular space-times which is treated in
the present book.
\subsection{The special algebra on manifolds}
Similarly as in section \ref{specialeuclidean} one may define algebras of generalized functions on manifolds. We start first by 
introducing the special algebra on manifolds in a coordinate
independent way as in \cite{K}. However, for two reasons we shall
later translate the definitions into respective definitions in terms of coordinate expressions:
For for the sake of clarity and simplicity, but also for the following
purpose: In chapter \ref{chapterwaveeq} we shall perform estimates
in a coordinate patch in order to derive a (local) existence result
for the Cauchy problem of the wave equation in a generalized
setting. 

The material presented here stems from the original sources
\cite{K, KS}. For a comprehensive presentation we refer to the--meanwhile standard reference on generalized function algebras
-- \cite{Bible}. Moreover, for further works in geometry based
on Colombeau's ideas we refer to (\cite{GlobTh,KKo1,KU2,3MikesV,KS,KSV,GenConKSV}).

For what follows in this section, $X$ shall denote a paracompact,
smooth Hausdorff manifold of dimension $n$ and by $\mathcal P(X)$ we
denote the space of linear differential operators on $X$. The
special algebra of generalized functions on $X$ is constructed as
the quotient $\mathcal G(X):=\mathcal E_M(X)/\mathcal N(X)$, where
the ring of moderate (resp.\ negligible) functions is given by
\begin{align}\nonumber
\mathcal E_M(X):=\{(u_{\varepsilon})_{\varepsilon}\in (C^{\infty}(X))^I\mid\forall\;K\subset\subset X\;\forall\;P\in\mathcal P(X)\;\exists\;N\in\mathbb N:\\\sup_{x\in K}\vert Pu_{\varepsilon}\vert=O(\varepsilon^{-N})\,(\varepsilon\rightarrow 0)
\end{align}
resp.\
\begin{align}\nonumber
\mathcal N(X):=\{(u_{\varepsilon})_{\varepsilon}\in (C^{\infty}(X))^I\mid\forall\;K\subset\subset X\;\forall\;P\in\mathcal P(X)\;\forall\;m\in\mathbb N:\\\sup_{x\in K}\vert Pu_{\varepsilon}\vert=O(\varepsilon^m)\,(\varepsilon\rightarrow 0).
\end{align}
The $C^{\infty}$ sections of a vector bundle $(E,X,\pi)$ with base
space $X$ we denote by $(E,X,\pi)$. Moreover, let $\mathcal
P(X,E)$ be the space of linear partial differential operators acting
on $\Gamma(X,E)$. The $\mathcal G(X)$ module of generalized
sections $\Gamma_{\mathcal G}(X,E)$ of a vector bundle
$(E,X,\pi)$ on $X$ is defined similarly as (the algebra of
generalized functions on $X$) above, in that we use asymptotic
estimates with respect to the norm induced by some arbitrary
Riemannian metric on the respective fibers, that is, we define the
quotient
\[
\Gamma_{\mathcal G}(X,E):=\Gamma_{\mathcal E_M}(X,E)/\Gamma_{\mathcal N}(X,E),
\]
where the ring (resp.\ ideal) of moderate (resp.\ negligible) nets of sections is given by
\begin{align}\nonumber
\Gamma_{\mathcal E_M}(X,E):=\{(u_{\varepsilon})_{\varepsilon}\in
(\Gamma(X,E))^I\mid\forall\;K\subset\subset X\;\forall\;P\in\mathcal
P(X, E)\;\exists\;N\in\mathbb N:\\\sup_{x\in
K}\|Pu_{\varepsilon}\|=O(\varepsilon^N)\,(\varepsilon\rightarrow 0)
\end{align}
resp.\
\begin{align}\nonumber
\Gamma_{\mathcal N}(X,E):=\{(u_{\varepsilon})_{\varepsilon}\in
(\Gamma(X,E))^I\mid\forall\;K\subset\subset X\;\forall\;P\in\mathcal
P(X, E)\;\forall\;m\in\mathbb N:\\\sup_{x\in
K}\|Pu_{\varepsilon}\|=O(\varepsilon^m)\,(\varepsilon\rightarrow 0).
\end{align}
In this book we shall deal with generalized sections of the tensor
bundle $\mathcal T^{r}_{s}(X)$ over $X$, this we denote by
\[
\mathcal G^{r}_{s}(X):=\Gamma_{\mathcal G}(X,\mathcal T^{r}_{s}(X)).
\]
Elements of the latter we call {\it generalized tensors of type
$(r,s)$}. 
We end this section by translating the global description of generalized vector bundles
in terms of coordinate expressions. Following the notation of \cite{KS}, we denote by
$(V,\Psi)$ a vector bundle chart over a chart $(V,\psi)$ of the base $X$. With $\mathbb R^{n'}$, the typical fibre,
we can write:
\[
\Psi:\pi^{-1}(V)\rightarrow\psi(V)\times \mathbb R^{n'},
\]
\[
z\mapsto(\psi(p),\psi^1(z),\dots,\psi^ {n'}(z)).
\]
Let now $s\in\Gamma_{\mathcal G}(X,E)$. Then the local expressions of $s$, $s^i=\Psi^i\circ s\circ \psi^{-1}$ lie
in $\mathcal G(\psi(V))$. 

An equivalent "local definition" of generalized vector bundles can be achieved by 
defining moderate nets $(s_\varepsilon)_\varepsilon$ of smooth sections $s_\varepsilon$
to be such for which the local expressions $s_\varepsilon^i=\Psi^i\circ s_\varepsilon\circ \psi^{-1}$
are moderate, that is $(s_\varepsilon^i)_\varepsilon\in\mathcal E_M(\psi(V))$. The notion negligible is defined completely similar.
The proof of this fact can be achieved by using Peetre's theorem (cf.\ \cite{Bible}, p. 289).
\subsection{Generalized pseudo-Riemannian geometry}\label{introducerepseudoriemannereetconnexione}
We begin with recalling the following characterization of non-degenerateness of symmetric (generalized) tensor fields of type (0,2) on $X$ (\cite{KS1}, Theorem 3.\ 1). For
a characterization of invertibility of generalized functions we refer to
Proposition 2.\ 1 of \cite{KS1} and for a further characterization we refer to the appendix of chapter \ref{chaptercausality} (namely Theorem \ref{downhilliseasier}).                                          
\begin{theorem}\label{chartens02}
Let $g\in \mathcal G^0_2(X)$. The following are equivalent:
\begin{enumerate}
\item \label{chartens021} For each chart $(V_{\alpha},\psi_{\alpha})$ and each $\widetilde x\in (\psi_{\alpha}(V_{\alpha}))^{\sim}_c$ the map
$g_{\alpha}(\widetilde x): \widetilde{\mathbb R}^n\times \widetilde{\mathbb R}^n\rightarrow \widetilde{\mathbb R}$ is symmetric and non-degenerate.
\item $g: \mathcal G^0_1(X)\times \mathcal G^0_1(X)\rightarrow \mathcal G(X)$ is symmetric and $\det (g)$ is invertible
in $\mathcal G(X)$.
\item \label{chartens023} $\det g$ is invertible in $\mathcal G(X)$ and for each relatively compact open set $V\subset X$ there exists a representative $(g_{\varepsilon})_{\varepsilon}$ of $g$ and $\varepsilon_0>0$ such that $g_{\varepsilon}\mid_V$ is a smooth pseudo-Riemannian metric for all $\varepsilon<\varepsilon_0$.
\end{enumerate}
\end{theorem}
Furthermore, the index of $g\in \mathcal G^0_2(X)$ is introduced in the following well defined way
(cf. Definition 3.\ 2 and Proposition 3.\ 3 in \cite{KS1}):
\begin{definition} \label{defpseud}
Let $g\in \mathcal G^0_2(X)$ satisfy one (hence all) of the equivalent conditions in Theorem \ref{chartens02}. If there exists some $j\in\mathbb N$ with the property that for each relatively compact open set $V\subset X$ there exists a representative $(g_{\varepsilon})_{\varepsilon}$ of $g$ as in Theorem \ref{chartens02} (\ref{chartens023}) such for each $\varepsilon<\varepsilon_0$ the index of $g_{\varepsilon}$ is equals $j$ we say $g$ has index $j$. Such symmetric 2-forms we call generalized pseudo-Riemannian metrics on $X$.
\end{definition}
We shall work in generalized space-times. These are pairs $(\mathcal
M, g)$, where $\mathcal M$ is an orientable paracompact four dimensional smooth
manifold and $g$ is a symmetric generalized (0,2) tensor with
invertible $\det g$ (cf.\ Theorem \ref{chartens02}) and index
$\nu=1$. In chapter \ref{chaptercausality} we develop algebraic
foundations of generalized Lorentz geometry; here the emphasis lies
on considering Lorentz metrics from a generalized point of view and
to develop causality notions in the generalized context. In the
subsequent chapter \ref{chapterwaveeq} we use the so found new concepts to
define and work with space-time symmetries, namely (smooth) time-like
Killing vector fields $\xi$ with respect to a generalized metric $g$ (cf.\
Definition \ref{defstaticgen} and the subsequent elaboration).

We end this section with reviewing the notion of generalized
connections and curvature (\cite{KS}, section 5).

A generalized connection $\hat D$ is a mapping $\mathcal G^1_0(\mathcal M)\times \mathcal G^1_0(\mathcal M)\rightarrow \mathcal G^1_0(\mathcal M)$
satisfying (for the notion of generalized Lie derivative, cf. \cite{KS})
\begin{enumerate}
\item $\hat D_\xi\eta$ is $\widetilde{\mathbb R}$--linear in $\eta$,
\item $\hat D_\xi\eta$ is $\mathcal G(\mathcal M)$--linear in $\xi$ and
\item $\hat D_\xi(u\eta)=u\hat D_\xi\eta+\xi(u)\eta$ for all $u$ in $\mathcal G(\mathcal M)$.\\
In analogy with the standard pseudo-Riemannian geometry, the connection is unique provided the following additional conditions are satisfied (cf.\ \cite{KS}, Theorem 5.2). For arbitrary $\xi,\eta,\zeta\in\mathcal G^1_0(\mathcal M)$ we have:
\item $[\xi,\eta]=\hat D_\xi \eta-\hat D_\eta \xi$ and
\item $\xi g(\eta,\zeta)=g(D_\xi\eta,\zeta)+g(\eta, D_\xi\zeta)$.
\end{enumerate}

In terms of coordinate expressions, the connection can be written down by means of "generalized" Christoffel symbols: Assume we are given a chart
$(V_\alpha,\psi_\alpha)$ on $\mathcal M$ with coordinates $x^i$ $(i=1,\dots,4)$. The Christoffel symbols are generalized functions
$\Gamma_{ij}^k\in\mathcal G(V_\alpha)$ defined by
\[
\hat D_{\partial_i}\partial_j=\Gamma_{ij}^k\partial_k,\qquad 1\leq i,j\leq n.
\]
\subsection{Generalized function concepts in general relativity} Even\\
though sufficient motivation to study the Cauchy problem of the
wave equation on a space-time whose metric is of lower
differentiability may emerge from a purely mathematical
interest, our original motivation actually stems from physics. The aim
of this section is to answer the following two questions: "Why do we
intend to solve the wave equation on a singular space-time" and,
"Why do we employ generalized function algebras for this matter?".

The field of general relativity is a non-linear theory, in the sense
that the curvature depends non-linearly on the metric and its
derivatives. This results in several problems when one comes to consider
the concept of singularities in space-times:
\begin{enumerate}
\item \label{clarke1} Firstly, from a
mathematical point of view, an immediate problem when a singular
space--time is modeled by means of a distributional metric, is: In
the coordinate formula for the Christoffel symbols (hence in the
formula for the curvature), products of the metric coefficients and
their derivatives occur, and a (distributional) meaning has to be
given to the latter. As outlined above, this is not always possible
in the framework of distributions, because they form a linear theory
( cf.\ the discussion at the beginning of the chapter).
\item \label{clarke2} The second natural obstacle is the difficulty of
distinguishing "strong" singularities from "weak" singularities.
Singularities were originally defined as endpoints of incomplete
geodesics, which could not be extended such that the
differentiability of the resulting space-time remained $C^{2-}$
(cf.\ Hawking and Ellis, \cite{HE}). The class of singularities
defined in this manner unfortunately includes both genuine
gravitational singularities such as Schwarzschild and "weaker"
singularities as in conical space-times, impulsive gravitational
waves and shell crossing singularities. A recent idea put forward by
C.\ J.\ S.\ Clarke in (\cite{Clarke}) supports a new concept of
"weak" singularities: A singularity in a space-time should only be
considered essential if it disrupts the evolution of linear test
fields. According to this idea, Clarke calls a space-times
generalized hyperbolic, if the Cauchy problem for the scalar wave
equation is well posed, and then shows that space-times with locally
integrable curvature are in this class.
\end{enumerate}

Vickers and Wilson are the first authors who apply Clarke's concepts
by showing that conical space-times are generalized hyperbolic
(cf.\ \cite{VW}; in the context of generalized function algebras this is called
 $\mathcal G$--generalized hyperbolic). To further overcome obstacle (\ref{clarke1}) in a
mathematically rigorous way, they reformulate the Cauchy problem in
the full Colombeau algebra. Finally, they show that the resulting generalized
solution is associated with a distributional solution (this can be
done by considering weak limits with respect to the smoothing parameter $\varepsilon$, cf. the definitions given in section
\ref{scalinginvariance}).

We shall follow Vickers' and Wilson's approach and try to generalize
their result to a wide range of generalized space-times in chapter \ref{chapterwaveeq}. However, it
should be noted that contrary to \cite{VW}, we work in the {\it
special algebra} exclusively. Moreover, the technique we are using
(based on certain energy integrals and Sobolev norms) lies somewhere
between Hawking and Ellis' method (\cite{HE}) and Vickers' and
Wilson's.

For more information on the use of generalized function algebras in
relativity we refer to the recent review \cite{SV} on this topic by
R.\ Steinbauer and J.\ Vickers as well as J. Vickers's article (\cite{VickersESI} pp.\
275--290) and the introduction to \cite{VW}. For relativistic applications 
in the framework of Colombeau's theory, see \cite{CVW, Bible,fivepeople}.

\section[Uniqueness issues]{Uniqueness issues in algebras of generalized functions}
The last chapter of the present work consists of three different
problems which we have summarized under the title "point values and uniqueness questions in algebras of generalized functions".
Even though the problems are quite different, they all have to do with
the basic question: "given two generalized functions $f,g$, how can we decide
if $f=g$?" It is clear that we can reduce this to the problem of determining
whether a generalized function $h$ vanishes identically. Before we come to a possible answer
offered by M.\ Kunzinger and M. Oberguggenberger in \cite{MO1} in form of a "uniqueness test" via evaluation of generalized functions on so-called
compactly supported points, we motivate the problem from the distributional point of view.

By definition, a distribution $w\in \mathcal D'$ is zero if the test with
arbitrary test functions $\phi$ yields $\langle w,\phi\rangle=0$.
The question, reformulated in the context of the special algebra,
reads, "is the embedded object $\iota(w)\in\mathcal G$ identically
zero?". 

However, since the key idea of embedding distributions into $\mathcal G$ is regularization
of the latter, we shall leave aside the embedding and answer this question for regularized nets of
distributions in terms of the following characterization:
\begin{theorem}\label{MOdistcase}
Let $u\in\mathcal D'(\mathbb R^n)$ and let $\rho\in\mathcal
D(\mathbb R^n)$ be a standard mollifier, that is, with $\int
\rho(x)\;dx^n=1$ and let
$\rho_\varepsilon(x):=\frac{1}{\varepsilon^n}\rho(\frac{x}{\varepsilon})$.
The following are equivalent:
\begin{enumerate}
\item \label{chardist1} $u=0$ in $\mathcal D'(\mathbb R^n)$.
\item \label{chardist2} For each compactly supported net $(x_{\varepsilon})_{\varepsilon}\in(\mathbb R^n)^{(0,1]}$ we have
\[
(u\ast\rho_{\varepsilon})(x_{\varepsilon})\rightarrow
0\qquad\mbox{if}\qquad \varepsilon\rightarrow 0.
\]
\end{enumerate}
\end{theorem}
\begin{proof}
The implication (\ref{chardist1})$\Rightarrow$(\ref{chardist2}) is
obvious, since for each $\varepsilon>0$ and each $x\in\mathbb R^n$, $\rho_{\varepsilon}\ast
u(x)=\langle u(y),\rho_{\varepsilon}(\frac{x-y}{\varepsilon})\rangle=0$. To show the converse
direction, assume $u\neq 0$ but that (\ref{chardist2})
holds. Then there exists $\phi\in\mathcal D(\mathbb R^n)$ such that
$\langle u,\phi\rangle\neq 0$. It follows that there exists a
positive constant $C_1$ and an index $\varepsilon_0\in (0,1]$ such
that for each $\varepsilon<\varepsilon_0$ we have
\begin{equation}\label{estsm}
\left|\int (u\ast \rho_{\varepsilon})\, \phi\; dx^n\right|\geq C_1.
\end{equation}
Therefore there exist a sequence $\varepsilon_k\rightarrow 0$ in
$(0,1]$, a compactly supported sequence $x_{\varepsilon_k}\in\mathbb
R^n$ and a positive number $C$ such that for each $k\geq 1$ we have
\begin{equation}\label{subclaim}
\vert u\ast \rho_{\varepsilon_k}(x_{\varepsilon_k})\vert\geq C.
\end{equation}
Indeed, if we assume the contrary, then for each set
$K\subset\subset\mathbb R^n$ we would have $\sup_{x\in K}\vert
u\ast\rho_{\varepsilon}\vert\rightarrow 0$ whenever $\varepsilon\rightarrow
0$. Fix $K$ such that $\supp\phi\subseteq K$. Then we have
\[
\left| \int (u\ast\rho_{\varepsilon})\,\phi\;dx^n\right|\leq
\vol(K)\|\phi\|_{\infty}
\|u\ast\rho_{\varepsilon}\|_{K,\infty}\rightarrow 0
\]
whenever $\varepsilon\rightarrow 0$, a contradiction to
(\ref{estsm}).

Finally define $(x_{\varepsilon})_{\varepsilon}$ as follows:
$x_{\varepsilon}:=x_{\varepsilon_k}$ whenever
$\varepsilon\in(\varepsilon_{k+1},\varepsilon_k]$ ($k\geq 1)$ and
$x_{\varepsilon}:=x_{\varepsilon_1}$ when $\varepsilon\in (x_1,1]$.
By (\ref{subclaim}) we have a contradiction to our assumption.
Therefore $u=0$ and we are done.
\end{proof}
It is further evident that, in the above characterization, (\ref{chardist2}) cannot be replaced
by the condition\\
{\it For each } $x \in\mathbb R^n$ {\it we have}
\[
(u\ast\rho_{\varepsilon})(x)\rightarrow
0\quad\mbox{whenever}\quad \varepsilon\rightarrow 0.
\]
To see this, take a standard mollifier $\rho$ with support $\supp \rho=[0,1]$. Then for each $x$ there exists
an index $\varepsilon_0$ such that $\rho_\varepsilon(x)=0$ for each $\varepsilon<\varepsilon_0$. But for $\varepsilon\rightarrow 0$ we have
\[
\rho_\varepsilon\rightarrow\delta \qquad \mbox{in}\qquad \mathcal
D'.
\]
We go on now by showing how these ideas are elaborated in the
context of the special algebra:
\subsubsection{The generalized point values concept}\label{pointissue}
Generalized functions can be evaluated at standard points. To be more precise, let us introduce
the ring of generalized numbers $\widetilde{\mathbb R}$, defined by the quotient
\[
\widetilde{\mathbb R}:=\mathcal E_M/\mathcal N,
\]
where the ring of moderate numbers
\[
\mathcal E_M := \{(x_\varepsilon)_\varepsilon\in \mathbb R^{(0,1]} :
\exists\, N:\; | x_\varepsilon| = O(\varepsilon^{-N})\}.
\]
 Similarly the ideal of negligible numbers $\mathcal N$ in $\mathcal E_M$ is given by
\[
\mathcal N := \{(x_\varepsilon)_\varepsilon\in \mathbb R^{(0,1]} : \forall\, m:\; | x_\varepsilon| = O(\varepsilon^{m})\}.
\]
Let $\widetilde{\mathbb R}_c$ denote the set of compactly
supported elements of $\widetilde{\mathbb R}$, that is: $x_c$ lies
in $\widetilde{\mathbb R}_c$ if and only if there exists a compact
set $K\subseteq \mathbb R$  such that for one (hence any)
representative $(x_\varepsilon)_\varepsilon$ there exists an index
$\varepsilon_0$ such that for all $\varepsilon<\varepsilon_0$ we
have $x_\varepsilon\in K$. It can easily be shown that evaluation of
generalized functions $f$ on compactly supported generalized points
makes perfect sense in the following way: let $(f_\varepsilon)_\varepsilon$ be a representative of
 $f\in\mathcal G(\mathbb R)$, then
\[
\widetilde f(x_c):=(f_\varepsilon(x_\varepsilon))_\varepsilon+\mathcal
N\in\widetilde{\mathbb R}
\]
yields a well defined generalized number. We denote by $\widetilde
f:\widetilde{\mathbb R}_c\rightarrow\widetilde{\mathbb R}$ the above map induced by the generalized function $f$.

By a standard point $x$ we shall mean an element of
$\widetilde{\mathbb R}$ which admits a constant representative, i.\
e.\ $x=(\alpha)_\varepsilon+\mathcal N$ for a certain real number
$\alpha$. M.\ Kunzinger and M.\ Oberguggenberger show in
(\cite{MO1}) that it does not suffice to know the values of
generalized functions at standard points in order to determine them
uniquely. Furthermore, the following analog of Theorem
\ref{MOdistcase} holds:
\begin{theorem}\label{distingprop}
Let $f\in\mathcal G(\mathbb R)$. The following are equivalent:
\begin{enumerate}
\item $f=0$ in $\mathcal G(\mathbb R)$,
\item $\forall\; x_c\;\in\widetilde{\mathbb R}_c:\; \widetilde f(x_c)=0$.
\end{enumerate}
\end{theorem}
Note that a similar statement holds in Egorov algebras (cf.\ the final remark in \cite{MO1}).
In the first section of chapter \ref{chapterpointvalues} we show that also in $p$-adic Egorov algebras such a
characterization holds and that evaluation at standard points does not suffice to determine elements of such algebras uniquely.
In section \ref{sectionsharp} we elaborate a topological question in the ring of generalized numbers $\widetilde{\mathbb R}$
endowed with the so-called sharp topology. Finally, in the end of chapter \ref{chapterpointvalues}, we apply some new differential calculus on $\widetilde{\mathbb R}$ due to Aragona (\cite{A2}) for showing that the only scaling invariant functions on the real line are the constants.

\chapter{Algebraic foundations of Colombeau Lorentz geometry}\label{chaptercausality}

In the course of chapter \ref{chapterwaveeq} we shall establish a
local existence and uniqueness theorem for the Cauchy problem of the
wave equation in a generalized context. The considerations we had to
undertake to achieve this result showed that a generalized concept
of causality might be useful to describe scenarios in a non-smooth
space-time without always having to deal merely with the standard
concepts component-wise on the level of representatives. However,
also from a purely theoretical point of view, the need of a such a
concept becomes clear: the non-standard aspect in Colombeau theory,
which gives rise to a description of objects not point-wise but on
so-called generalized points (cf.\ \cite{MO1} and chapter
\ref{chapterpointvalues}). This has been taken up in the recent and
initial work by M.\ Kunzinger and R.\ Steinbauer on generalized
pseudo-Riemannian geometry (\cite{KS1}), on which we base our
considerations (cf.\  the assumptions on the metric in section
\ref{settingassumptions}), but it has not yet been investigated to a
wide extent. For instance, invertibility of generalized functions
has been characterized (cf.\ \cite{KS1}, Proposition 2.\ 1) and
allowed a notable characterization of symmetric generalized
non-degenerate $(0,2)$ forms (cf. Theorem \ref{chartens02}). But so far
there has not been given a characterization of generalized
pseudo-Riemannian metrics $h$ in terms of bilinear forms $\widetilde h$ stemming
from evaluation of $h$ at compactly supported points (on the
respective manifold).


The main aim of this chapter, therefore, is to describe and discuss
some elementary questions of generalized pseudo-Riemannian geometry
under the aspect of generalized points. Our program is as follows:
Introducing the index of a symmetric bilinear form on the
$n$-dimensional module $\widetilde{\mathbb R}^n$ over the
generalized numbers $\widetilde{\mathbb R}$ enables us to define the
appropriate notion of a bilinear form of Lorentz signature. We can
therefore propose a notion of causality in this context. The general
statement of the inverse Cauchy-Schwartz inequality is then given. We
further show that a dominant energy condition in the sense of
Hawking and Ellis for generalized energy tensors (such as also
indirectly assumed in \cite{VW}) is satisfied. We also answer the
algebraic question: "Does any submodule in $\widetilde{\mathbb R}^n$
have a direct summand?": For free submodules, the answer is positive
and is basically due to a new characterization of free elements in
$\widetilde{\mathbb R}^n$. In general, however, direct summands do
not exist: $\widetilde{\mathbb R}^n$ is not semisimple. In the end of the chapter we present a
new characterization of invertibility in algebras of generalized functions.
Finally, we want to point out that the positivity
issues on the ring of generalized numbers treated here have links to papers by
M.\ Oberguggenberger et al.\ (\cite{MOHor,PSMO}).

\section{Preliminaries}
Let $I:=(0,1]\subseteq \mathbb R$, and let $\mathbb K$ denote $\mathbb R$ resp.\ $\mathbb C$. The ring of generalized numbers over $\mathbb K$ is constructed in the following way: Given the ring of moderate nets of numbers $\mathcal E(\mathbb K):=\{(x_{\varepsilon})_{\varepsilon}\in\mathbb K^I \mid \exists\; m:\vert x_{\varepsilon}\vert=O(\varepsilon^m)\,(\varepsilon\rightarrow 0)\}$ and, similarly, the ideal of negligible nets in $\mathcal E(\mathbb K)$ which are of the form $\mathcal N(\mathbb K):=\{(x_{\varepsilon})_{\varepsilon}\in\mathbb K^I\mid \forall\; m:\vert x_{\varepsilon}\vert=O(\varepsilon^m)\,(\varepsilon\rightarrow 0)\}$, we may define the generalized numbers as the factor ring $\widetilde{\mathbb K}:=\mathcal E_M(\mathbb K)/\mathcal N(\mathbb K)$. An element $\alpha\in\widetilde{\mathbb K}$
is called strictly positive if it lies in $\widetilde{\mathbb R}$ (this means that for any representative $(\alpha_{\varepsilon})_{\varepsilon}=(\Ree(\alpha_{\varepsilon}))_{\varepsilon}+ i(\Imm(\alpha_{\varepsilon}))_{\varepsilon}$ we have $(\Imm(\alpha_{\varepsilon}))_{\varepsilon}\in \mathcal N(\mathbb R)$)  and  if $\alpha$ has a representative $(\alpha_{\varepsilon})_{\varepsilon}$ such that there exists $m\geq 0$ such that $\Ree(\alpha_{\varepsilon})\geq \varepsilon^m$ for each $\varepsilon\in I=(0,1]$, we shall write $\alpha>0$.
Clearly any strictly positive number is invertible. $\beta\in\widetilde{\mathbb R}$ is called strictly negative, if $-\beta>0$. Note that
a generalized number $u$ is strictly positive precisely when it is invertible (due to \cite{KS1} Proposition 2.\ 2 this means that $u$ is strictly non-zero) and positive (i.\ e., $u$ has a representative $(u_{\varepsilon})_{\varepsilon}$ which is greater or equals zero for each $\varepsilon\in I$).
In the appendix to this chapter a new and somewhat surprising characterization of invertibility and strict positivity in the frame of the
special algebra construction is presented.

Let $A\subset I$, then the characteristic function $\chi_A\in\widetilde{\mathbb R}$ is given by the class of $(\chi_{\varepsilon})_{\varepsilon}$, where
\[
\chi_{\varepsilon}:=\begin{cases} 1,\qquad\mbox{if}\qquad \varepsilon\in A\\ 0, \qquad\mbox{otherwise}\end{cases}.
\]
Whenever $\widetilde{\mathbb R}^n$ is involved, we consider it as an $\widetilde{\mathbb R}$--module of dimension $n\geq 1$.
Clearly the latter can be identified with $\mathcal E_M(\mathbb R^n)/\mathcal N({\mathbb R^n})$, but we will not often use this fact
subsequently.
Finally, we denote by $\widetilde{\mathbb R}^{n^2}:=\mathcal M_n(\widetilde{\mathbb R})$ the ring of
 $n\times n$ matrices over $\widetilde{\mathbb R}$. A matrix $A$ is called orthogonal, if $UU^t=\mathbb I$ in $\widetilde{\mathbb R}^{n^2}$ and $\det U=1$ in $\widetilde{\mathbb R}$.
Clearly, there are two different ways to introduce $\widetilde{\mathbb R}^{n^2}$:
\begin{remark}\rm
Denote by $\mathcal E_M(\mathcal M_n(\mathbb R))$ the ring of moderate nets of
$n\times n$ matrices over $\mathbb R$, a subring of $\mathcal M_n(\mathbb R)^I$. Similarly let $\mathcal N(\mathcal M_n(\mathbb R))$
denote the ideal of negligible nets of real $n\times n$ matrices. There is a ring isomorphism
$\varphi: \widetilde{\mathbb R}^{n^2}\rightarrow \mathcal E_M(\mathcal M_n(\mathbb R))/\mathcal N(\mathcal M_n(\mathbb R))$.
\end{remark}
For the convenience of the reader we repeat Lemma 2.\ 6 from \cite{KS1}:
\begin{lemma}\label{nondeg}
Let $A\in\widetilde{\mathbb R}^{n^2}$. The following are equivalent:
\begin{enumerate}\label {charnondeg}
\item \label{charnondeg1} $A$ is non-degenerate, that is, $\xi \in \widetilde{\mathbb R}^n,\;\xi^t A\eta=0$ for each $\eta\in\widetilde{\mathbb R}^n$ implies $\xi=0$.
\item \label{charnondeg2} $A: \widetilde{\mathbb R}^n\rightarrow \widetilde{\mathbb R}^n$ is injective.
\item \label{charnondeg3} $A: \widetilde{\mathbb R}^n\rightarrow \widetilde{\mathbb R}^n$ is bijective.
\item\label{charnondeg4} $\det A$ is invertible in $\widetilde{\mathbb R}$.
\end{enumerate}
\end{lemma}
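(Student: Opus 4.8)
The plan is to prove the cycle of implications \ref{charnondeg4} $\Rightarrow$ \ref{charnondeg3} $\Rightarrow$ \ref{charnondeg2} $\Rightarrow$ \ref{charnondeg4} and then to derive \ref{charnondeg1} from the other three. The implication \ref{charnondeg4} $\Rightarrow$ \ref{charnondeg3} is the soft one: the entries of the adjugate $\mathrm{adj}(A)$ are polynomials in the entries of $A$, so $(\mathrm{adj}(A)_\varepsilon)_\varepsilon$ is moderate and its class is well defined in $\widetilde{\mathbb R}^{n^2}$, and Laplace expansion gives $A\,\mathrm{adj}(A)=\mathrm{adj}(A)\,A=(\det A)\,\mathbb I$; if $\det A$ is invertible then $(\det A)^{-1}\mathrm{adj}(A)$ is a two-sided inverse of $A$, hence $A$ is bijective. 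The step \ref{charnondeg3} $\Rightarrow$ \ref{charnondeg2} is trivial.

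The core of the argument is \ref{charnondeg2} $\Rightarrow$ \ref{charnondeg4}, which I would prove by contraposition. Since, by \cite{KS1}, invertibility in $\widetilde{\mathbb R}$ is the same as strict nonzeroness, if $\det A$ is not invertible then, fixing a representative $(A_\varepsilon)_\varepsilon$, there is a strictly decreasing sequence $\varepsilon_k\downarrow 0$ with $|\det A_{\varepsilon_k}|<\varepsilon_k^{\,k}$. For such $\varepsilon_k$ the smallest singular value of $A_{\varepsilon_k}$ obeys $\sigma_{\min}(A_{\varepsilon_k})\le|\det A_{\varepsilon_k}|^{1/n}<\varepsilon_k^{\,k/n}$, so there is a unit vector $v_k\in\mathbb R^n$ with $|A_{\varepsilon_k}v_k|=\sigma_{\min}(A_{\varepsilon_k})<\varepsilon_k^{\,k/n}$. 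Define $\xi_\varepsilon:=v_k$ if $\varepsilon=\varepsilon_k$ for some $k$ and $\xi_\varepsilon:=0$ otherwise. Then $(\xi_\varepsilon)_\varepsilon$ is bounded, hence moderate; it is not negligible, since $|\xi_{\varepsilon_k}|=1$ along $\varepsilon_k\to 0$; and $(A_\varepsilon\xi_\varepsilon)_\varepsilon$ vanishes off the sequence while on the sequence it is bounded by $\varepsilon_k^{\,k/n}$, which eventually beats every fixed power of $\varepsilon_k$, so $(A_\varepsilon\xi_\varepsilon)_\varepsilon\in\mathcal N(\mathbb R^n)$. Thus $\xi\ne 0$ and $A\xi=0$, so $A$ is not injective.

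It remains to incorporate \ref{charnondeg1}. Testing the condition in \ref{charnondeg1} against the standard generators $e_j$ of $\widetilde{\mathbb R}^n$ shows that ``$\xi^tA\eta=0$ for all $\eta$'' is equivalent to $\xi^tA=0$, i.e.\ to $A^t\xi=0$; hence \ref{charnondeg1} for $A$ is exactly the injectivity of $A^t$. Applying the equivalence \ref{charnondeg2} $\Leftrightarrow$ \ref{charnondeg4} to $A^t$ and using $\det A^t=\det A$, we see \ref{charnondeg1} is equivalent to invertibility of $\det A$, i.e.\ to \ref{charnondeg4}. This closes all four equivalences. The one genuinely nontrivial ingredient is the construction in \ref{charnondeg2} $\Rightarrow$ \ref{charnondeg4}: the delicate point is that injectivity of $A$ must be broken by a \emph{single} moderate net $\xi$ that works for every scale simultaneously, which is precisely why one passes to the super-polynomially degenerate subsequence $\varepsilon_k$ and switches $\xi_\varepsilon$ off in between, the estimate $\sigma_{\min}(A_{\varepsilon_k})\le|\det A_{\varepsilon_k}|^{1/n}$ supplying the near-kernel directions.
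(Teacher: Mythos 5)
Your argument is correct.  The paper itself does not prove this lemma --- it merely restates Lemma~2.6 from \cite{KS1} and adds the remark that the equivalence of (1)--(3) with (4) follows from the fact that every nonzero non-invertible element of $\widetilde{\mathbb R}$ is a zero-divisor, thereby pointing at a purely ring-theoretic route (essentially McCoy's theorem: $A$ fails to be injective over a commutative ring iff some nonzero scalar annihilates $\det A$, combined with the adjugate identity). Your route is genuinely different and more in the spirit of the Colombeau framework: for the only nontrivial implication (2)$\Rightarrow$(4) you argue at the level of representatives, using that non-invertibility of $\det A$ gives a subsequence $\varepsilon_k$ with super-polynomially small $|\det A_{\varepsilon_k}|$, then passing via the estimate $\sigma_{\min}(A_{\varepsilon_k})\le |\det A_{\varepsilon_k}|^{1/n}$ to explicit near-kernel unit vectors $v_k$, and switching them off between the $\varepsilon_k$'s to get a moderate but non-negligible net killed by $A$. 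This is a constructive ``scale-wise'' proof that makes the zero-divisor phenomenon concrete rather than invoking it abstractly; what it buys is self-containedness and an explicit witness for non-injectivity, at the cost of importing the singular-value bound. Your handling of (1) --- observing that $\xi^t A\eta=0$ for all $\eta$ is exactly $A^t\xi=0$, so (1) is injectivity of $A^t$, and $\det A^t=\det A$ closes the loop --- is clean and correct, and the adjugate step (4)$\Rightarrow$(3) is the standard soft direction. No gaps.
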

Note that the equivalence of (\ref{charnondeg1})--(\ref{charnondeg3}) and (\ref{charnondeg4}) results from the fact that in $\widetilde{\mathbb R}$ any nonzero non-invertible element is a zero-divisor. Since we deal with symmetric matrices throughout, we start by giving a basic
characterization of symmetry of generalized matrices:
\begin{lemma}\label{symmetry}
Let $A\in\widetilde{\mathbb R}^{n^2}$. The following are equivalent:

\begin{enumerate}
\item \label{asymmetry} $A$ is symmetric, that is $A=A^t$ in $\widetilde{\mathbb R}^{n^2}$.
\item \label{bsymmetry} There exists a symmetric representative $(A_{\varepsilon})_{\varepsilon}:=((a_{ij}^{\varepsilon})_{ij})_{\varepsilon}$ of $A$.
\end{enumerate}
\end{lemma}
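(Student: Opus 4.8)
The plan is to prove the nontrivial implication \eqref{asymmetry}$\Rightarrow$\eqref{bsymmetry}, since \eqref{bsymmetry}$\Rightarrow$\eqref{asymmetry} is immediate: a symmetric representative induces a symmetric class, as symmetry is preserved under passage to the quotient. So suppose $A=A^t$ in $\widetilde{\mathbb R}^{n^2}$ and fix an arbitrary representative $(A_\varepsilon)_\varepsilon = ((a^\varepsilon_{ij})_{ij})_\varepsilon$. The equality $A = A^t$ in the quotient means precisely that $(A_\varepsilon - A_\varepsilon^t)_\varepsilon \in \mathcal N(\mathcal M_n(\mathbb R))$, i.e.\ for every $i,j$ the net $(a^\varepsilon_{ij} - a^\varepsilon_{ji})_\varepsilon$ is negligible. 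The idea is then simply to symmetrize: set $\tilde a^\varepsilon_{ij} := \tfrac12(a^\varepsilon_{ij} + a^\varepsilon_{ji})$ and $(\tilde A_\varepsilon)_\varepsilon := ((\tilde a^\varepsilon_{ij})_{ij})_\varepsilon$.

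First I would check that $(\tilde A_\varepsilon)_\varepsilon$ is still a moderate net: each $\tilde a^\varepsilon_{ij}$ is a fixed linear combination of entries of the moderate net $(A_\varepsilon)_\varepsilon$, hence moderate, so $(\tilde A_\varepsilon)_\varepsilon \in \mathcal E_M(\mathcal M_n(\mathbb R))$. Second, $(\tilde A_\varepsilon)_\varepsilon$ is manifestly symmetric for each $\varepsilon$, since $\tilde a^\varepsilon_{ij} = \tilde a^\varepsilon_{ji}$ by construction. Third, and this is where the hypothesis $A=A^t$ enters, I would verify that $(\tilde A_\varepsilon)_\varepsilon$ represents the same class as $(A_\varepsilon)_\varepsilon$: indeed $\tilde a^\varepsilon_{ij} - a^\varepsilon_{ij} = \tfrac12(a^\varepsilon_{ji} - a^\varepsilon_{ij})$, whose net over $\varepsilon$ is negligible by the hypothesis, so $(\tilde A_\varepsilon - A_\varepsilon)_\varepsilon \in \mathcal N(\mathcal M_n(\mathbb R))$ and therefore $[(\tilde A_\varepsilon)_\varepsilon] = [(A_\varepsilon)_\varepsilon] = A$ in $\widetilde{\mathbb R}^{n^2}$. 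This exhibits the desired symmetric representative.

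Honestly, there is no real obstacle here; the only point requiring a moment's care is the implicit identification between $\widetilde{\mathbb R}^{n^2}=\mathcal M_n(\widetilde{\mathbb R})$ and $\mathcal E_M(\mathcal M_n(\mathbb R))/\mathcal N(\mathcal M_n(\mathbb R))$ furnished by the ring isomorphism $\varphi$ of the preceding remark, which is what lets us speak interchangeably of ``a representative of $A$'' as a net of matrices and translate the matrix identity $A=A^t$ into the entrywise negligibility statement. Once that identification is in place the argument is the elementary observation that the averaging projection onto symmetric matrices is $\mathbb R$-linear (hence preserves moderateness and negligibility) and fixes the symmetric part, which is negligibly close to $A$ precisely under the hypothesis. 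I would write this up in a few lines, making the role of $\varphi$ explicit so the reader sees why ``symmetric in the quotient'' can be upgraded to ``has a representative that is symmetric for every $\varepsilon$.''
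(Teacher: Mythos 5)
Your proof is correct and takes exactly the same route as the paper: symmetrize an arbitrary representative via $\tilde a^\varepsilon_{ij} = \tfrac12(a^\varepsilon_{ij}+a^\varepsilon_{ji})$ and note that the correction term $\tfrac12(a^\varepsilon_{ji}-a^\varepsilon_{ij})$ is negligible by the hypothesis $A=A^t$. The paper's version is terser (it does not spell out moderateness of the symmetrization or the role of $\varphi$), but the argument is identical.
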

\begin{proof}
Since (\ref{bsymmetry}) $\Rightarrow$ (\ref{asymmetry}) is clear, we only need to show (\ref{asymmetry}) $\Rightarrow$ (\ref{bsymmetry}). Let $((\bar a_{ij}^{\varepsilon})_{ij})_{\varepsilon}$ a representative of $A$. Symmetrizing yields the desired representative
\[
(a_{ij}^{\varepsilon})_{\varepsilon}:=\frac{ (\bar a_{ij}^{\varepsilon})_{\varepsilon}+(\bar a_{ji}^{\varepsilon})_{\varepsilon} } {2}
\]
of $A$. This follows from the fact that for each pair $(i,j)\in\{1,\dots,n\}^2$ of indices one has $(\bar a_{ij}^{\varepsilon})_{\varepsilon}-(\bar a_{ji}^{\varepsilon})_{\varepsilon} \in \mathcal N(\mathbb R)$ due to the symmetry of $A$.
\end{proof}
Denote by $\|\,\|_F$ the Frobenius norm on $\mathcal M_n(\mathbb C)$.
In order to prepare a notion of eigenvalues for symmetric matrices, we repeat a numeric result given in \cite{SJ} (Theorem 5.\ 2):
\begin{theorem}\label{perturbation}
Let $A\in\mathcal M_n(\mathbb C)$ be a Hermitian matrix with eigenvalues $\lambda_1\geq\dots\geq\lambda_n$. Denote by
$\widetilde A$ a non-Hermitian perturbation of $A$, i.\ e., $E=\widetilde A- A$ is not Hermitian. We further call the eigenvalues of $\widetilde A$ (which might be complex) $\mu_k+i\nu_k\;(1\leq k\leq n)$ where $\mu_1\geq\dots\geq\mu_n$. In this notation, we have
\[
\sqrt{ \sum_{k=1}^n\vert(\mu_k+i\nu_k)-\lambda_k \vert^2 }\leq\sqrt 2\| E\|_F.
\]
\end{theorem}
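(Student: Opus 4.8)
The plan is to reduce the statement, via a Schur triangularisation of $\widetilde A$, to the Hoffman-Wielandt inequality for \emph{Hermitian} matrices, which I will use as a known ingredient: if $P,Q\in\mathcal M_n(\mathbb C)$ are Hermitian with eigenvalues $p_1\ge\dots\ge p_n$, resp.\ $q_1\ge\dots\ge q_n$, then $\sum_{k=1}^n(p_k-q_k)^2\le\|P-Q\|_F^2$. Beyond this, the argument amounts to careful bookkeeping of how a matrix splits, on one hand, into diagonal, strictly lower and strictly upper parts and, on the other hand, into its Hermitian and skew-Hermitian parts. Throughout, $M^{*}$ denotes the conjugate transpose and $M_D$, $M_L$, $M_U$ the diagonal, strictly lower and strictly upper part of a matrix $M$.

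First I would fix a Schur decomposition $\widetilde A=UTU^{*}$ with $U$ unitary and $T=D+N$, where $N=T_U$ is strictly upper triangular and $D=\mathrm{diag}(\mu_1+i\nu_1,\dots,\mu_n+i\nu_n)$ lists the eigenvalues of $\widetilde A$ in the prescribed order $\mu_1\ge\dots\ge\mu_n$ (for any chosen ordering of the eigenvalues such a $U$ exists). Put $\widehat A:=U^{*}AU$, which is Hermitian with the same eigenvalues $\lambda_1\ge\dots\ge\lambda_n$ as $A$, and $\widetilde E:=U^{*}EU$, so that $\|\widetilde E\|_F=\|E\|_F$ and $T=\widehat A+\widetilde E$. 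Split $\widetilde E=\widetilde E_1+\widetilde E_2$ with $\widetilde E_1=\frac12(\widetilde E+\widetilde E^{*})$ Hermitian and $\widetilde E_2=\frac12(\widetilde E-\widetilde E^{*})$ skew-Hermitian.

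The heart of the matter is to read three consequences off the single identity $D+N=\widehat A+\widetilde E$. (i) Comparing strictly lower parts, to which neither $D$ nor $N$ contributes, gives $\widehat A_L=-\widetilde E_L$; Hermiticity of $\widehat A$ then yields $\widehat A_U=-(\widetilde E_L)^{*}$, and comparing strictly upper parts gives $N=\widehat A_U+\widetilde E_U=-2\bigl((\widetilde E_2)_L\bigr)^{*}$, because in $\widetilde E_U-(\widetilde E_L)^{*}$ the Hermitian contributions cancel while the skew-Hermitian ones reinforce each other. (ii) The skew-Hermitian part $\frac12(T-T^{*})$ of $T$ equals $\widetilde E_2$; comparing diagonals gives $i\nu_k=(\widetilde E_2)_{kk}$, whence $\sum_{k=1}^n\nu_k^2=\|(\widetilde E_2)_D\|_F^2$. (iii) The Hermitian part $\frac12(T+T^{*})$ of $T$ equals $\widehat A+\widetilde E_1$, i.e.\ $\mathrm{diag}(\mu_1,\dots,\mu_n)=\widehat A+\widetilde E_1-\frac12(N+N^{*})$; since $\mathrm{diag}(\mu_1,\dots,\mu_n)$ and $\widehat A$ are Hermitian with eigenvalues $\mu_k$ resp.\ $\lambda_k$ already in decreasing order, the Hoffman-Wielandt inequality above gives $\sum_{k=1}^n(\mu_k-\lambda_k)^2\le\|\widetilde E_1-\frac12(N+N^{*})\|_F^2$. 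Substituting $N$ from (i) and decomposing into diagonal, lower and upper parts, the right-hand side collapses to $\|(\widetilde E_1)_D\|_F^2+2\|\widetilde E_L\|_F^2$.

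It then only remains to add (ii) and (iii). Using $|(\mu_k+i\nu_k)-\lambda_k|^2=(\mu_k-\lambda_k)^2+\nu_k^2$ one gets $\sum_{k=1}^n|(\mu_k+i\nu_k)-\lambda_k|^2\le\|(\widetilde E_1)_D\|_F^2+\|(\widetilde E_2)_D\|_F^2+2\|\widetilde E_L\|_F^2$, and since $\widetilde E_1$ has real and $\widetilde E_2$ purely imaginary diagonal entries, $\|(\widetilde E_1)_D\|_F^2+\|(\widetilde E_2)_D\|_F^2=\|\widetilde E_D\|_F^2$. Hence the bound is $\|\widetilde E_D\|_F^2+2\|\widetilde E_L\|_F^2\le 2\bigl(\|\widetilde E_D\|_F^2+\|\widetilde E_L\|_F^2+\|\widetilde E_U\|_F^2\bigr)=2\|\widetilde E\|_F^2=2\|E\|_F^2$, which is the assertion. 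The step I expect to cost the most care is (i): it is exactly the cancellation of the Hermitian and the doubling of the skew-Hermitian off-diagonal parts that forces the constant to be the sharp $\sqrt2$ and not something larger, so the triangular decompositions of $\widetilde E_1$ and $\widetilde E_2$ must be tracked with precision; the remainder is routine, plus the single appeal to Hoffman-Wielandt.
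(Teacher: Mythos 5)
The paper does not actually prove this statement: it is imported verbatim from the reference \cite{SJ} (Theorem 5.2) and used as a black box in the proof of Lemma \ref{schur}, so there is no in-paper argument to compare against. Judged on its own merits, your proof is correct. I checked the three key computations: (i) from the strictly lower part of $D+N=\widehat A+\widetilde E$ you get $\widehat A_L=-\widetilde E_L$, and combining this with Hermiticity of $\widehat A$ and the splitting of $\widetilde E_L,\widetilde E_U$ into Hermitian and skew-Hermitian parts does give $N=-2((\widetilde E_2)_L)^{*}$, with the Hermitian off-diagonal contributions cancelling and the skew-Hermitian ones doubling exactly as you say; (ii) the skew-Hermitian part of $T$ is $\widetilde E_2$ and its diagonal is $i\nu_k$, giving $\sum\nu_k^2=\|(\widetilde E_2)_D\|_F^2$; (iii) the Hermitian part identity $\mathrm{diag}(\mu_k)=\widehat A+\widetilde E_1-\tfrac12(N+N^{*})$ is a valid application of Hoffman--Wielandt for Hermitian matrices (with the sorted eigenvalues achieving the optimal pairing by the rearrangement inequality), and unpacking the right-hand side into diagonal/lower/upper blocks, using $\tfrac12(N+N^{*})=-(\widetilde E_2)_L-((\widetilde E_2)_L)^{*}$ and $(\widetilde E_1)_U+((\widetilde E_2)_L)^{*}=(\widetilde E_L)^{*}$, indeed collapses the bound to $\|(\widetilde E_1)_D\|_F^2+2\|\widetilde E_L\|_F^2$. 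Summing (ii) and (iii), combining the diagonal contributions to $\|\widetilde E_D\|_F^2$, and using $\|\widetilde E_D\|_F^2+2\|\widetilde E_L\|_F^2\le 2\|\widetilde E\|_F^2=2\|E\|_F^2$ yields the claimed $\sqrt2$-bound. Your instinct that step (i) is the delicate one, and responsible for the sharp constant, is right; the remainder is routine Frobenius-norm bookkeeping plus one appeal to Hoffman--Wielandt. So: correct, complete, and a reasonable reconstruction of how the cited result is proved, though I cannot attest that \cite{SJ} proceeds by literally the same decomposition.
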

\begin{definition}\label{eigenvalues}
Let $A\in\widetilde{\mathbb R}^{n^2}$ be a symmetric matrix and let $(A_{\varepsilon})_{\varepsilon}$ be an arbitrary representative of $A$. Let for any $\varepsilon\in I$, $\theta_{k,\varepsilon}:=\mu_{k,\varepsilon}+i\nu_{k,\varepsilon}\;(1\leq k\leq n)$ be the eigenvalues of $A_{\varepsilon}$ ordered by the size of the real parts, i.\ e., $\mu_{1,\varepsilon}\geq\dots\geq\mu_{n,\varepsilon}$.
The generalized eigenvalues $\theta_k\in\widetilde{\mathbb C}\;(1\leq k\leq n)$ of $A$ are defined as the classes $(\theta_{k,\varepsilon})_{\varepsilon}+\mathcal N(\mathbb C)$.
\end{definition}
\begin{lemma}\label{schur}
Let $A\in\widetilde{\mathbb R}^{n^2}$ be a symmetric matrix. Then the eigenvalues $\lambda_k\;(1\leq k\leq n)$ of $A$ as introduced in Definition
\ref{eigenvalues} are well defined elements of $\widetilde{\mathbb R}$. Furthermore, there exists an orthogonal $U\in \widetilde{\mathbb R}^{n^2}$ such that
\begin{equation}\label{eqdecschur}
U A U^t=\diag (\lambda_1,\dots,\lambda_n).
\end{equation}
We call $\lambda_i\;(1\leq i\leq n)$ the eigenvalues of $A$. $A$ is non-degenerate if and only if all generalized eigenvalues are invertible.
\end{lemma}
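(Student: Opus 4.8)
The plan is to dispose of the three assertions separately, the only non\-elementary input being Theorem~\ref{perturbation}. First I would fix a representative $(A_\varepsilon)_\varepsilon$ of $A$ and pass to its symmetrization $B_\varepsilon:=\tfrac12(A_\varepsilon+A_\varepsilon^t)$; exactly as in the proof of Lemma~\ref{symmetry} the net $E_\varepsilon:=A_\varepsilon-B_\varepsilon=\tfrac12(A_\varepsilon-A_\varepsilon^t)$ is negligible, and each $B_\varepsilon$ is a real symmetric matrix, hence Hermitian, whose real eigenvalues I order as $\lambda_{1,\varepsilon}\geq\dots\geq\lambda_{n,\varepsilon}$.

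For well\-definedness I would first record moderateness of $(\theta_{k,\varepsilon})_\varepsilon$ from the spectral radius bound $|\theta_{k,\varepsilon}|\leq\|A_\varepsilon\|_F$. Applying Theorem~\ref{perturbation} with Hermitian matrix $B_\varepsilon$ and perturbed matrix $A_\varepsilon$ (perturbation $E_\varepsilon$) yields $\sqrt{\sum_{k=1}^n|\theta_{k,\varepsilon}-\lambda_{k,\varepsilon}|^2}\leq\sqrt2\,\|E_\varepsilon\|_F$, which is negligible; in particular the imaginary parts $\nu_{k,\varepsilon}$ are negligible, so $\lambda_k:=(\theta_{k,\varepsilon})_\varepsilon+\mathcal N(\mathbb C)$ indeed lies in $\widetilde{\mathbb R}$ and is represented by the real net $(\lambda_{k,\varepsilon})_\varepsilon$. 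Independence of the representative then follows by running the same argument with a second representative $(A'_\varepsilon)_\varepsilon$ against the \emph{same} Hermitian reference $B_\varepsilon$: since $A'_\varepsilon-B_\varepsilon=(A'_\varepsilon-A_\varepsilon)+E_\varepsilon$ is negligible, Theorem~\ref{perturbation} gives $\sqrt{\sum_k|\theta'_{k,\varepsilon}-\lambda_{k,\varepsilon}|^2}\leq\sqrt2\,\|A'_\varepsilon-B_\varepsilon\|_F\in\mathcal N(\mathbb R)$, whence $(\theta_{k,\varepsilon}-\theta'_{k,\varepsilon})_\varepsilon\in\mathcal N(\mathbb C)$.

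For the decomposition I would, for each $\varepsilon$, choose an orthogonal $U_\varepsilon$ with $U_\varepsilon B_\varepsilon U_\varepsilon^t=\diag(\lambda_{1,\varepsilon},\dots,\lambda_{n,\varepsilon})$; should $\det U_\varepsilon=-1$, I replace $U_\varepsilon$ by $DU_\varepsilon$ with $D=\diag(1,\dots,1,-1)$, which commutes with the diagonal matrix on the right and hence preserves the identity while making the determinant equal to $1$. Since $\|U_\varepsilon\|_F=\sqrt n$ for all $\varepsilon$, the net $(U_\varepsilon)_\varepsilon$ is moderate and defines an orthogonal $U\in\widetilde{\mathbb R}^{n^2}$ (that is, $UU^t=\mathbb I$ and $\det U=1$). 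Writing $U_\varepsilon A_\varepsilon U_\varepsilon^t=\diag(\lambda_{1,\varepsilon},\dots,\lambda_{n,\varepsilon})+U_\varepsilon E_\varepsilon U_\varepsilon^t$ and using boundedness of $(U_\varepsilon)_\varepsilon$ to see that $U_\varepsilon E_\varepsilon U_\varepsilon^t$ is negligible, I obtain $UAU^t=\diag(\lambda_1,\dots,\lambda_n)$ after passing to classes. Finally, as $\det U=1$ we get $\det A=\det(UAU^t)=\prod_{k=1}^n\lambda_k$ in $\widetilde{\mathbb R}$, and since a product in the commutative ring $\widetilde{\mathbb R}$ is invertible precisely when each factor is, Lemma~\ref{nondeg} gives that $A$ is non\-degenerate if and only if every $\lambda_k$ is invertible.

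The step I expect to be delicate is the construction of $U$: the pointwise orthogonal diagonalizations rest on entirely arbitrary, in general highly discontinuous choices of eigenbases in $\varepsilon$, so the substantive point is that this nevertheless produces a genuine element of $\widetilde{\mathbb R}^{n^2}$ — which works only because orthogonal matrices are uniformly bounded — and that the non\-symmetric negligible defect $E_\varepsilon$ is absorbed after passing to the quotient. Everything else, including the $\det U=1$ normalization and the well\-definedness estimates, is routine once Theorem~\ref{perturbation} is at hand.
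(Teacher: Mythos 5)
Your proof is correct and follows essentially the same route as the paper: diagonalize a symmetric representative $\varepsilon$-wise, control changes of representative via Theorem~\ref{perturbation}, and read off the non-degeneracy criterion from $\det A=\prod_k\lambda_k$ together with Lemma~\ref{nondeg}. You are somewhat more scrupulous than the paper in recording the uniform bound $\|U_\varepsilon\|_F=\sqrt n$ (hence moderateness of $U$) and in normalising $\det U_\varepsilon=1$ by the sign flip $D=\diag(1,\dots,1,-1)$, but these refinements do not change the structure of the argument.
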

Before we prove the lemma, we note that throughout the chapter we shall omit the term "generalized" (eigenvalues) and we shall call the generalized numbers constructed in the above way simply "eigenvalues" (of a generalized symmetric matrix).
\begin{proof}
Due to Lemma \ref{symmetry} we may choose a symmetric representative $(A_{\varepsilon})_{\varepsilon}=((a_{ij}^{\varepsilon})_{ij})_{\varepsilon}\in\mathcal E_M(\mathcal M_n(\mathbb R))$ of $A$ . For any $\varepsilon$, denote by $\lambda_{1,\varepsilon}\geq\dots\geq\lambda_{n,\varepsilon}$ the resp.\ (real) eigenvalues
of $(a_{ij}^{\varepsilon})_{ij}$ ordered by size. For any $i\in\{1,\dots,n\}$, define $\lambda_i:=(\lambda_{i,\varepsilon})_\varepsilon+\mathcal N(\mathbb R)\in\widetilde{\mathbb R}$. For the well-definedness of the eigenvalues of $A$, we only need to show that for any other (not necessarily symmetric) representative of $A$, the resp.\ net of eigenvalues lies in the same class of $\mathcal E_M(\mathbb C)$; note that the use of complex numbers is indispensable here. Let $
(\widetilde A_{\varepsilon})_{\varepsilon}=((\widetilde a_{ij}^{\varepsilon})_{ij})_{\varepsilon}$ be another representative of $A$. Denote by $\mu_{k,\varepsilon}+i\nu_{k+\varepsilon}$ the eigenvalues of $\widetilde A_{\varepsilon}$ for any $\varepsilon\in I$ such that
the real parts are ordered by size, i.\ e., $\mu_{1,\varepsilon}\geq\dots\geq \mu_{n,\varepsilon}$.
Denote by $(E_{\varepsilon})_{\varepsilon}:=(\widetilde A_{\varepsilon})_{\varepsilon}-(A_{\varepsilon})_{\varepsilon}$.
Due to Theorem \ref{perturbation} we have for each $\varepsilon\in I$:
\begin{equation}\label{nullboundsE}
\sqrt{\sum_{k=1}^n\vert(\mu_{k,\varepsilon}+i\nu_{k,\varepsilon})-\lambda_{k,\varepsilon}\vert^2 }\leq\sqrt 2\|E_{\varepsilon}\|_F.
\end{equation}
Since $(E_{\varepsilon})_{\varepsilon}\in\mathcal N(\mathcal M_n(\mathbb R))$, (\ref{nullboundsE}) implies for any $k\in\{1,\dots,n\}$ and any $m$,
\[
\vert(\mu_{k,\varepsilon}+i\nu_{k,\varepsilon})-\lambda_{k,\varepsilon}\vert=O(\varepsilon^m)\;(\varepsilon\rightarrow 0)
\]
which means that the resp.\ eigenvalues of $(A_\varepsilon)_\varepsilon$ and of $(\widetilde A_\varepsilon)_\varepsilon$ in the above order belong to the same class in $\mathcal E_M(\mathbb C)$. In particular they yield the same elements of $\widetilde{\mathbb R}$.
The preceding argument and Lemma \ref{symmetry} show that without loss of generality we may construct the eigenvalues of $A$ by means of a symmetric representative $(A_{\varepsilon})_{\varepsilon}=((a_{ij}^{\varepsilon})_{ij})_{\varepsilon}\in\mathcal E_M(\mathcal M_n(\mathbb R))$. For such a choice we have for any $\varepsilon$
an orthogonal matrix $U_{\varepsilon}$ such that
\[
U_{\varepsilon}A_{\varepsilon}U_{\varepsilon}^t=\diag(\lambda_{1,\varepsilon},\dots,\lambda_{n,\varepsilon}),\;\lambda_{1,\varepsilon}\geq\dots\geq\lambda_{n,\varepsilon}.
\]
Declaring $U$ as the class of $(U_{\varepsilon})_{\varepsilon}\in\mathcal E_M(\mathcal M_n(\mathbb R))$ yields the proof of the second claim,
since orthogonality for any $U_{\varepsilon}$  implies orthogonality of $U$ in $\mathcal M_n(\widetilde{\mathbb R})$. Finally, decomposition (\ref{eqdecschur}) gives, by applying the multiplication theorem for determinants and the orthogonality of $U$, $\det A=\prod_{i=1}^n \lambda_i$. This shows in conjunction with Lemma \ref{nondeg} that invertibility of all eigenvalues is a sufficient and necessary condition for the non-degenerateness of $A$ and we are done.
\end{proof}
\begin{remark}\rm
A remark on the notion eigenvalue of a generalized symmetric matrix $A\in\widetilde{\mathbb R}^{n^2}$ is in order: Since for any eigenvalue $\lambda$ of $A$
we have $\det (A-\lambda \mathbb I)=\det (U(A-\lambda \mathbb I)U^t)=\det ((UAU^t)-\lambda \mathbb I)=0$, Lemma \ref{charnondeg}
implies that $A-\lambda \mathbb I: \widetilde{\mathbb R}^n\rightarrow\widetilde{\mathbb R}^n$ is not injective. However, again by the same lemma, $\det (A-\lambda \mathbb I)=0$ is not necessary for $A-\lambda \mathbb I$ to be not injective, and a $\theta\in\widetilde{\mathbb R}$ for which $A-\theta I$ is not injective
need not be an eigenvalue of $A$. More explicitly, we give two examples of possible scenarios here:
\begin{enumerate}
\item  Let $\forall\;i\in\{1,\dots,n\}: \lambda_i\neq 0$ and for some $i$ let $\lambda_i$ be a zero divisor. Then besides
$A-\lambda_i\;(i=1,\dots,n)$, also $ A: \widetilde{\mathbb R}^n\rightarrow\widetilde{\mathbb R}^n$ fails to be injective.
\item  "Mixing" representatives of $\lambda_i,\lambda_j\; (i\neq j)$ might give rise to generalized numbers $\theta\in\widetilde{\mathbb R}, \theta\neq \lambda_j\,\forall j\in\{1,\dots,n\}$ for
which $A-\theta \mathbb I$ is not injective as well. Consider for the sake of simplicity the matrix $D:=\diag (1,-1)\in \mathcal M_2(\mathbb R)$.
A rotation $U_{\varphi}:=\left(\begin{array}{cc} \cos(\varphi)& \sin(\varphi)\\-\sin(\varphi)&\cos(\varphi) \end{array}\right)$ yields
by matrix multiplication
\[
U_{\varphi} D U_{\varphi}^t=\left(\begin{array}{cc} \cos(2\varphi)& -\sin(2\varphi)\\-\sin(2\varphi)&-\cos(2\varphi) \end{array}\right).
\]
\end{enumerate}
The choice of $\varphi=\pi/2$ therefore switches the order of the entries of $D$, i.\ e., $U_{\pi/2}DU_{\pi/2}^t=\diag(-1,1)$.
Define $U,\lambda$ as the classes of $(U_{\varepsilon})_{\varepsilon}, (\lambda_{\varepsilon})_{\varepsilon}$ defined by
\[
U_{\varepsilon}:=\begin{cases} I:\;\varepsilon\in I\cap\mathbb Q\\ U_{\pi/2}:\;\mbox{else} \end{cases},
\]
\[
\lambda_{\varepsilon}:=\begin{cases} 1:\;\varepsilon\in I\cap\mathbb Q\\ -1\;\mbox{else} \end{cases},
\]
further define $\mu\in\widetilde{\mathbb R} $ by $\mu+\lambda=0$.
Then we have for $A:= [(D)_{\varepsilon}]$:
\[
UDU^t=\diag(\lambda,\mu).
\]
\end{remark}
Therefore as shown above, $D-\lambda \mathbb I,\; D-\mu \mathbb I$ are not injective considered as maps $\widetilde{\mathbb R}^n\rightarrow\widetilde{\mathbb R}^n$.
But neither $\lambda$, nor $\mu$ are eigenvalues of $D$.
\begin{definition}\label{indexmatrixdef}
Let $A\in\widetilde{\mathbb R}^{n^2}$. We denote by $\nu_{+}(A)$ (resp.\ $\nu_{-}(A)$) the number of
strictly positive (resp.\ strictly negative) eigenvalues, counting multiplicity. Furthermore, if $\nu_{+}(A)+\nu_{-}(A)=n$,
we simply write $\nu(A):=\nu_{-}(A)$. If $A$ is symmetric and $\nu(A)=0$, we call $A$ a positive definite symmetric matrix. If $A$ is symmetric and $\nu_{+}(A)+\nu_{-}(A)=n$ and $\nu(A)=1$, we say $A$ is a symmetric $L$-matrix.
\end{definition}
The following corollary shows that for a symmetric non-degenerate matrix in $\widetilde{\mathbb R}^{n^2}$ counting $n$ strictly positive resp.\ negative eigenvalues is equivalent to having a (symmetric) representative for which any $\varepsilon$-component has the same number (total $n$) of positive resp.\ negative real eigenvalues. We skip the proof.
\begin{corollary}\label{charindexmatrix}
Let $A\in\widetilde{\mathbb R}^{n^2}$ be symmetric and non-degenerate and $j\in\{1,\dots,n\}$. The following are equivalent:
\begin{enumerate}
\item $\nu_{+}(A)+\nu_{-}(A)=n$, $\nu(A)=j$.
\item \label{charindexmatrix2} For each symmetric representative $(A_{\varepsilon})_{\varepsilon}$ of $A$ there exists some $\varepsilon_0 \in I$ such that for any $\varepsilon<\varepsilon_0$ we have for the eigenvalues
$\lambda_{1,\varepsilon}\geq\dots\geq\lambda_{n,\varepsilon}$  of $A_{\varepsilon}$:
\[
\lambda_{1,\varepsilon},\dots,\lambda_{n-j,\varepsilon}>0,\;\;\lambda_{n-j+1,\varepsilon},\dots,\lambda_{n,\varepsilon}<0.
\]
\end{enumerate}
\end{corollary}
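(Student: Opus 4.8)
The plan is to leverage Lemma \ref{schur} together with Lemma \ref{symmetry}: the eigenvalues $\lambda_1,\dots,\lambda_n\in\widetilde{\mathbb R}$ may be computed from \emph{any} symmetric representative $(A_\varepsilon)_\varepsilon$ of $A$ by ordering, for each fixed $\varepsilon$, the real eigenvalues of $A_\varepsilon$ decreasingly, and two symmetric representatives yield eigenvalue nets differing only by a negligible net. The decisive extra ingredient is non-degeneracy: by Lemma \ref{nondeg} each $\lambda_k$ is invertible, hence strictly non-zero, so any representative $(\lambda_{k,\varepsilon})_\varepsilon$ satisfies $|\lambda_{k,\varepsilon}|\ge\varepsilon^{m_k}$ for all small $\varepsilon$, with some $m_k\ge 0$. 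This is precisely what bridges the gap between the pointwise sign conditions in (\ref{charindexmatrix2}) and the notion of strict (in)definiteness of the $\lambda_k$ entering Definition \ref{indexmatrixdef}.

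\textbf{(\ref{charindexmatrix2}) $\Rightarrow$ (1).} I would pick any symmetric representative $(A_\varepsilon)_\varepsilon$ and the corresponding $\varepsilon_0$, so that $\lambda_{k,\varepsilon}>0$ for $k\le n-j$ and $\lambda_{k,\varepsilon}<0$ for $k>n-j$ whenever $\varepsilon<\varepsilon_0$. Combining this with $|\lambda_{k,\varepsilon}|\ge\varepsilon^{m_k}$ (valid below some threshold) gives $\lambda_{k,\varepsilon}\ge\varepsilon^{m_k}$ for $k\le n-j$, resp.\ $\lambda_{k,\varepsilon}\le-\varepsilon^{m_k}$ for $k>n-j$, for all $\varepsilon$ below a common $\varepsilon_1$. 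Modifying each of these nets on $[\varepsilon_1,1]$ to be constantly $1$ (resp.\ $-1$) alters it only by a net vanishing near $0$, hence by a negligible net, and produces a representative of $\lambda_k$ which is $\ge\varepsilon^{m_k}$ (resp.\ $\le-\varepsilon^{m_k}$) on all of $I$. Thus $\lambda_k>0$ for $k\le n-j$ and $\lambda_k<0$ for $k>n-j$, whence $\nu_+(A)=n-j$, $\nu_-(A)=j$, so $\nu_+(A)+\nu_-(A)=n$ and $\nu(A)=j$.

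\textbf{(1) $\Rightarrow$ (\ref{charindexmatrix2}).} For an arbitrary symmetric representative $(A_\varepsilon)_\varepsilon$, let $(\lambda_{k,\varepsilon})_\varepsilon$ be the associated decreasingly ordered eigenvalue nets. If $\lambda_k>0$ it is in particular positive, hence has a representative $(\rho_\varepsilon)_\varepsilon$ with $\rho_\varepsilon\ge 0$; since $(\rho_\varepsilon-\lambda_{k,\varepsilon})_\varepsilon\in\mathcal N(\mathbb R)$ while $|\lambda_{k,\varepsilon}|\ge\varepsilon^{m_k}$ for small $\varepsilon$, the alternative $\lambda_{k,\varepsilon}\le-\varepsilon^{m_k}$ is excluded for small $\varepsilon$ (it would force $|\rho_\varepsilon-\lambda_{k,\varepsilon}|\ge\varepsilon^{m_k}$), so $\lambda_{k,\varepsilon}>0$ for all sufficiently small $\varepsilon$; symmetrically $\lambda_k<0$ forces $\lambda_{k,\varepsilon}<0$ eventually. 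By (1) and Definition \ref{indexmatrixdef}, exactly $n-j$ of the $\lambda_k$ are strictly positive and $j$ strictly negative. Taking $\varepsilon_0$ below all finitely many thresholds just produced, for $\varepsilon<\varepsilon_0$ each strictly positive $\lambda_k$ has $\lambda_{k,\varepsilon}>0$ and each strictly negative one $\lambda_{k,\varepsilon}<0$; the monotonicity $\lambda_{1,\varepsilon}\ge\dots\ge\lambda_{n,\varepsilon}$ then forces the strictly positive indices to be exactly $\{1,\dots,n-j\}$ and the strictly negative ones $\{n-j+1,\dots,n\}$, since otherwise a strictly negative index would precede a strictly positive one, contradicting $\lambda_{k,\varepsilon}\ge\lambda_{l,\varepsilon}$ for $k<l$. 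This is (\ref{charindexmatrix2}).

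The step I expect to demand the most care is the interplay of the two flavours of positivity: (\ref{charindexmatrix2}) asserts only pointwise signs, with no uniform lower bound, and it is solely the non-degeneracy of $A$ — invertibility of all eigenvalues via Lemma \ref{nondeg} — that upgrades these signs to the $\varepsilon^m$-estimates characterising $\lambda_k>0$ and $\lambda_k<0$. Dually, one must check that strict (in)definiteness of $\lambda_k$ propagates to an eventual pointwise sign simultaneously for \emph{every} symmetric representative, which is exactly where the representative-independence in Lemma \ref{schur} is indispensable.
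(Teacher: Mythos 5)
The paper explicitly skips the proof of this corollary, so there is no in-text argument to compare against; your proof is correct, fills the gap with the natural ingredients the paper has already assembled (representative-independence of the ordered eigenvalue nets from Lemma \ref{schur}, invertibility of all eigenvalues from non-degeneracy via Lemma \ref{nondeg}), and cleanly handles the one genuinely delicate point — upgrading eventual pointwise sign conditions to the $\varepsilon^{m}$-estimates defining strict (in)definiteness in the forward direction, and propagating strict (in)definiteness to eventual pointwise signs for \emph{every} symmetric representative in the reverse. The final observation that the per-$\varepsilon$ monotone ordering forces the strictly positive eigenvalues to occupy exactly the indices $1,\dots,n-j$ is the right way to close the argument.
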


%
\section{Causality and the inverse Cauchy-Schwarz inequality}
In a free module over a commutative ring $R\neq \{0\}$, any two bases have the same cardinality. Therefore, any free module $\mathfrak M_n$ of dimension $n\geq 1$ (i.\ e., with a basis having $n$ elements) is isomorphic to $R^n$ considered as module over $R$ (which is free, since it has the canonical basis). As a consequence we may confine ourselves to considering the module $\widetilde{\mathbb R}^n$ over $\widetilde{\mathbb R}$ and its submodules. We further assume that from now on $n$, the dimension of $\widetilde{\mathbb R}^n$, is greater than $1$. It is quite natural to start with an appropriate version of the Steinitz exchange lemma:
\begin{proposition}\label{steinitzprop}
Let $\mathcal B=\{v_1,\dots,v_n\}$ be a basis for $\widetilde{\mathbb R}^n$. Let $w=\lambda_1 v_1+\dots+\lambda_nv_n\in \widetilde{\mathbb R}^n$ such that for some $j\;(1\leq j\leq n)$,
$\lambda_j$ is not a zero divisor. Then, also $\mathcal B':=\{v_1,\dots,v_{j-1},w,v_{j+1},\dots,v_n\}$ is a basis for $\widetilde{\mathbb R}^n$.
\end{proposition}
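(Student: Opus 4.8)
The plan is to observe that the hypothesis ``$\lambda_j$ is not a zero divisor'' is in fact equivalent to ``$\lambda_j$ is invertible in $\widetilde{\mathbb R}$'', and then to run the classical Steinitz exchange argument, which goes through unchanged as soon as the pivot coefficient is a unit. For the equivalence: $\widetilde{\mathbb R}$ is a nonzero commutative ring, so $0$ annihilates the nonzero element $1$ and is itself a zero divisor; hence a non-zero-divisor $\lambda_j$ is in particular nonzero, and by the fact recalled after Lemma~\ref{nondeg} (every nonzero non-invertible element of $\widetilde{\mathbb R}$ is a zero divisor) it must be invertible. Note also that, since $\mathcal B$ is a basis, the coefficients $\lambda_k$ of $w$ are uniquely determined, so the statement is unambiguous; put $u:=\lambda_j^{-1}$.

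For the generating property, from $w=\sum_{k=1}^n\lambda_k v_k$ one solves
\[
v_j = u\,w-\sum_{i\neq j}u\lambda_i\,v_i,
\]
so $v_j$ lies in the submodule $\langle\mathcal B'\rangle$ generated by $\mathcal B'$; since the remaining $v_i$ trivially lie there as well, $\langle\mathcal B'\rangle\supseteq\langle\mathcal B\rangle=\widetilde{\mathbb R}^n$. For linear independence, suppose $\mu_j w+\sum_{i\neq j}\mu_i v_i=0$; substituting $w=\sum_k\lambda_k v_k$ and collecting coefficients with respect to $\mathcal B$ yields $\mu_j\lambda_j\,v_j+\sum_{i\neq j}(\mu_j\lambda_i+\mu_i)\,v_i=0$, whence $\mu_j\lambda_j=0$ and $\mu_i=-\mu_j\lambda_i$ for $i\neq j$ because $\mathcal B$ is a basis. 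Here the hypothesis is used in its literal form: $\lambda_j$ not a zero divisor forces $\mu_j=0$, and then $\mu_i=0$ for every $i\neq j$. Thus $\mathcal B'$ is a linearly independent generating family of $n$ elements, i.e.\ a basis of $\widetilde{\mathbb R}^n$.

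The only genuine obstacle is the passage from ``not a zero divisor'' to ``invertible''; once that is granted, the remainder is the textbook exchange lemma and no specifically ``generalized'' phenomenon intervenes. As a streamlined alternative to the two-step argument, one may note that the matrix expressing $(v_1,\dots,v_{j-1},w,v_{j+1},\dots,v_n)$ in the basis $\mathcal B$ is the identity with its $j$-th row replaced by $(\lambda_1,\dots,\lambda_n)$; cofactor expansion along that row shows its determinant equals $\lambda_j$, which is invertible, so by Lemma~\ref{nondeg} the associated endomorphism of $\widetilde{\mathbb R}^n$ is bijective and therefore maps the basis $\mathcal B$ onto the basis $\mathcal B'$.
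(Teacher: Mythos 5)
Your main argument is essentially the paper's: reduce to the case where the pivot coefficient is invertible, then verify spanning and linear independence exactly as in the classical exchange lemma. You add two things the paper leaves implicit or omits: an explicit justification that ``not a zero divisor'' forces ``invertible'' in $\widetilde{\mathbb R}$ (the paper silently makes this jump when it writes ``that is, $\lambda_1$ is invertible''), and a second, slicker proof via the determinant of the change-of-basis matrix combined with Lemma~\ref{nondeg}. Both additions are correct; the determinant argument in particular is a clean alternative that avoids the element-wise computation entirely and isolates exactly where the special structure of $\widetilde{\mathbb R}$ (invertibility $\Leftrightarrow$ strictly nonzero determinant) enters.
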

\begin{proof}
Without loss of generality we may assume $j=1$, that is $\lambda_1$ is invertible. We we have to show that $\mathcal B':=\{w,v_2,\dots,v_n\}$ is a basis
for $\widetilde{\mathbb R}^n$. Assume we are given a vector $v=\sum_{i=1}^n\mu_iv_i\in \widetilde{\mathbb R}^n$, $\mu_i\in\widetilde{\mathbb R}$. Since $\lambda_1$ is invertible, we may write $v_1=\frac{1}{\lambda_1}w-\frac{\lambda_2}{\lambda_1}v_2-\dots-\frac{\lambda_n}{\lambda_1}v_n$. Thus we find
$ v=\frac{\mu_1}{\lambda_1}w+ \sum_{k=2}^n (\mu_k-\frac{\mu_1\lambda_k}{\lambda_1})v_k$, which proves that $\mathcal B'$ spans $\widetilde{\mathbb R}^n$. It remains to prove linear independence
of $\mathcal B'$:
Assume that for $\mu,\mu_2,\dots,\mu_n\in\widetilde{\mathbb R}$ we have $\mu w+\mu_2v_2+\dots+\mu_n v_n=0$. Inserting
$w=\sum_{i=1}^n\lambda_i v_i$ yields $\mu\lambda_1v_1+(\mu\lambda_2+\mu_2)v_2+\dots+(\mu\lambda_n+\mu_n)v_n=0$ and since $\mathcal B$ is a basis, it follows that $\mu\lambda_1=\mu\lambda_2+\mu_2=\dots=\mu\lambda_n\mu_n=0$. Now, since $\lambda_1$ is invertible, it follows that $\mu=0$. Therefore $\mu_2=\dots=\mu_n=0$ which proves that $w,v_1,\dots,v_n$ are linearly independent, and $\mathcal B'$ is a basis.
\end{proof}
\begin{definition}\label{bilformdef}
Let $b:\widetilde{\mathbb R}^n\times\widetilde{\mathbb R}^n\rightarrow \widetilde{\mathbb R}$ be a symmetric bilinear form on $\widetilde{\mathbb R}^n$. Let $j\in\mathbb N_0$. If for some basis $\mathcal B:=\{e_1,\dots,e_n\}$ of $\widetilde{\mathbb R}^n$ we have $\nu((b(e_i,e_j))_{ij})=j$ we call $j$ the index of $b$. If $j=0$ we say that $b$ is positive definite and if $j=1$ we call $b$ a symmetric bilinear form of Lorentz signature.
\end{definition}
Note that as in the classical setting, there is no notion of 'eigenvalues' of a symmetric bilinear form, since a change of coordinates that is not induced by an orthogonal matrix need not conserve the eigenvalues of the original coefficient matrix.
We are obliged to show that the notion above is well defined. The main argument is Sylvester's inertia law (cf.\ \cite{GF}, pp.\ 306):
\begin{proposition}
The index of a bilinear form $b$ on $\widetilde{\mathbb R^n}$ as introduced in Definition \ref{bilformdef} is well defined.
\end{proposition}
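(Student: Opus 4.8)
The plan is to reduce the claim to the classical Sylvester inertia law for real symmetric matrices by passing to representatives, and then to lift the conclusion back to $\widetilde{\mathbb R}$. Let $\mathcal B=\{e_1,\dots,e_n\}$ and $\mathcal B'=\{f_1,\dots,f_n\}$ be two bases of $\widetilde{\mathbb R}^n$, and write $f_i=\sum_{k=1}^n P_{ki}e_k$ with $P=(P_{ki})_{ki}\in\widetilde{\mathbb R}^{n^2}$. The transition matrix between two bases of a free module over a commutative ring is invertible (express the $e_i$ back in terms of the $f_i$ and use uniqueness of coordinates to obtain a two-sided inverse of $P$), so $P$ is invertible in $\widetilde{\mathbb R}^{n^2}$ and hence $\det P$ is invertible in $\widetilde{\mathbb R}$ by Lemma \ref{nondeg}. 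Putting $B:=(b(e_i,e_j))_{ij}$ and $B':=(b(f_i,f_j))_{ij}$, bilinearity of $b$ gives the congruence $B'=P^tBP$, and both matrices are symmetric since $b$ is. If $\nu(B)=j$ in the sense of Definition \ref{indexmatrixdef}, then $\nu_+(B)+\nu_-(B)=n$, so all eigenvalues of $B$ are invertible and $B$ is non-degenerate by Lemma \ref{schur}; consequently $\det B'=(\det P)^2\det B$ is invertible as well, so $B'$ is non-degenerate and, by Lemma \ref{schur} again, all its eigenvalues are invertible.

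Next I would descend to the level of representatives. Choose a symmetric representative $(B_\varepsilon)_\varepsilon$ of $B$ (Lemma \ref{symmetry}) and any representative $(P_\varepsilon)_\varepsilon$ of $P$. Invertibility of $\det P$ yields an $m$ and an $\varepsilon_1$ with $|\det P_\varepsilon|\ge\varepsilon^m$ for all $\varepsilon<\varepsilon_1$, so $P_\varepsilon$ is invertible over $\mathbb R$ for such $\varepsilon$, and $(P_\varepsilon^tB_\varepsilon P_\varepsilon)_\varepsilon$ is a symmetric representative of $B'$. Applying Corollary \ref{charindexmatrix} to the non-degenerate symmetric matrix $B$, there is an $\varepsilon_0$ such that for every $\varepsilon<\varepsilon_0$ the sorted eigenvalues of $B_\varepsilon$ satisfy $\lambda_{1,\varepsilon},\dots,\lambda_{n-j,\varepsilon}>0>\lambda_{n-j+1,\varepsilon},\dots,\lambda_{n,\varepsilon}$. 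For $\varepsilon<\delta:=\min(\varepsilon_0,\varepsilon_1)$ the real symmetric matrix $P_\varepsilon^tB_\varepsilon P_\varepsilon$ is congruent to $B_\varepsilon$, so the classical Sylvester inertia law (cf.\ \cite{GF}, pp.\ 306) shows that its sorted eigenvalues $\lambda'_{1,\varepsilon}\ge\dots\ge\lambda'_{n,\varepsilon}$ satisfy $\lambda'_{1,\varepsilon},\dots,\lambda'_{n-j,\varepsilon}>0>\lambda'_{n-j+1,\varepsilon},\dots,\lambda'_{n,\varepsilon}$.

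Finally I would lift this back to $\widetilde{\mathbb R}$. By Definition \ref{eigenvalues} and Lemma \ref{schur}, the nets $(\lambda'_{i,\varepsilon})_\varepsilon$ represent the (real) generalized eigenvalues $\lambda'_i$ of $B'$. Fix $i\le n-j$. Then $\lambda'_i$ is invertible (non-degeneracy of $B'$), and replacing $\lambda'_{i,\varepsilon}$ on the compact interval $[\delta,1]$ by $1$ changes the net only by an element of $\mathcal N(\mathbb R)$ (it then vanishes for $\varepsilon<\delta$); the resulting representative is $\ge0$ for all $\varepsilon\in I$, so $\lambda'_i$ is positive, hence strictly positive. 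In the same way $\lambda'_i$ is strictly negative for each $i>n-j$. Therefore $\nu_+(B')=n-j$ and $\nu_-(B')=j$, so $\nu_+(B')+\nu_-(B')=n$ and $\nu(B')=j$. Since the bases $\mathcal B$ and $\mathcal B'$ were arbitrary, the index of $b$ does not depend on the choice of basis.

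The only genuine difficulty I anticipate is this last ascent: since $\widetilde{\mathbb R}$ is a ring but not a field, Sylvester's argument cannot be carried out directly over $\widetilde{\mathbb R}$, and the point is that positivity of the $\varepsilon$-eigenvalue for all small $\varepsilon$ together with invertibility of the corresponding generalized eigenvalue is exactly what upgrades to strict positivity; the congruence $B'=P^tBP$ at the level of representatives together with Corollary \ref{charindexmatrix} then supplies everything else.
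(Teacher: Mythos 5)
Your proposal is correct and follows essentially the same route as the paper: write the change-of-basis congruence $B'=P^tBP$, pass to representatives $P_\varepsilon^tB_\varepsilon P_\varepsilon$, invoke the classical Sylvester inertia law for small $\varepsilon$ to transfer the sign pattern from the eigenvalues of $B_\varepsilon$ to those of $B'_\varepsilon$, and then lift strict positivity/negativity back to $\widetilde{\mathbb R}$. The only cosmetic difference is in the final ascent: the paper obtains the uniform lower bound $\lvert\lambda_{i,\varepsilon}\rvert\ge\varepsilon^{m_0}$ directly from non-degeneracy of $T=A^tBA$ and reads off the strict signs from that, whereas you first record the pointwise sign pattern and then combine it with invertibility of the generalized eigenvalues of $B'$ (via the ``invertible plus positive representative equals strictly positive'' characterization) — both are valid and essentially equivalent ways to finish.
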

\begin{proof}
Let $\mathcal B$, $\mathcal B'$ be bases of $\widetilde{\mathbb R^n}$ and let $A$ be a matrix describing a linear map
which maps $\mathcal B$ onto $\mathcal B'$ (this map is uniquely determined in the sense that it only depends on the order of the basis vectors of the resp.\ bases). Let $B$ be the coefficient matrix of the given bilinear form $b$ and let
further $k:=\nu(B)$. The change of bases
results in a 'generalized' equivalence transformation of the form
\[
B\mapsto T:=A^tBA,
\]
$T$ being the coefficient matrix of $h$ with respect to $\mathcal B'$. We only need to show that $\nu(B)=\nu(T)$. Since the index of a matrix is well defined (and this again follows from Lemma \ref{schur}, where it is proved that the eigenvalues of a symmetric generalized matrix are well defined), it is sufficient to show that for one (hence any) symmetric representative $(T_{\varepsilon})_{\varepsilon}$ of $T$ there exists an $\varepsilon_0\in  I$ such that for each $\varepsilon<\varepsilon_0$ we have
\[
 \lambda_{1,\varepsilon}>0,\dots,\lambda_{n-k,\varepsilon}>0,\lambda_{n-k+1,\varepsilon}<0,\dots,\lambda_{n-k,\varepsilon}<0,
\]
where $(\lambda_{i,\varepsilon})_{\varepsilon}$ ($i=1,\dots,n$) are the ordered eigenvalues of $(T_{\varepsilon})_{\varepsilon}$. To this end, let $(B_{\varepsilon})_{\varepsilon}$ be a symmetric representative
of $B$, and define by $(T_{\varepsilon})_{\varepsilon}$ a representative of $T$ component-wise via
\[
T_{\varepsilon}:=A_{\varepsilon}^tB_{\varepsilon}A_{\varepsilon}.
\]
Clearly $(T_{\varepsilon})_{\varepsilon}$ is symmetric. For each $\varepsilon$ let
$\lambda_{1,\varepsilon}\geq\dots\geq\lambda_{n,\varepsilon}$ be the ordered eigenvalues of $T_{\varepsilon}$
and let $\mu_{1,\varepsilon}\geq\dots\geq\mu_{n,\varepsilon}$ be the ordered eigenvalues of $B_{\varepsilon}$.
Since $A$ and $B$ are non-degenerate, there exists some $\varepsilon_0\in I$ and an integer $m_0$ such that
for each $\varepsilon<\varepsilon_0$ and for each $i=1,\dots,n$ we have
\[
\vert \lambda_{i,\varepsilon}\vert\geq \varepsilon^{m_0}\qquad\mbox{and}\qquad \vert \mu_{i,\varepsilon}\vert\geq \varepsilon^{m_0}.
\]
Furthermore due to our assumption $k=\nu(B)$, therefore taking into account the component-wise order of the eigenvalues
$\mu_{i,\varepsilon}$, for each $\varepsilon<\varepsilon_0$ we have:
\[
\mu_{i,\varepsilon}\geq \varepsilon^{m_0}\;\;(i=1,\dots,n-k)\qquad \mbox{and}\qquad \mu_{i,\varepsilon}\leq -\varepsilon^{m_0}\;\;(i=n-k+1,\dots,n).
\]
As a consequence of Sylvester's inertia law we therefore have for each $\varepsilon<\varepsilon_0$:
\[
\lambda_{i,\varepsilon}\geq \varepsilon^{m_0}\;\; (i=1,\dots,n-k)\qquad \mbox{and}\quad \lambda_{i,\varepsilon}\leq -\varepsilon^{m_0}\;\; (i=n-k+1,\dots,n),
\]
since for each $\varepsilon<\varepsilon_0$ the number of positive resp.\ negative eigenvalues of $B_{\varepsilon}$
resp.\ $T_{\varepsilon}$ coincides. We have thereby shown that $\nu(T)=k$ and we are done.
\end{proof}
\begin{definition}
Let $b:\widetilde{\mathbb R}^n\times\widetilde{\mathbb R}^n\rightarrow \widetilde{\mathbb R}$ be a symmetric bilinear form on $\widetilde{\mathbb R}^n$. A basis $\mathcal B:=\{e_1,\dots,e_k\}$ of $\widetilde{\mathbb R}^n$ is called an orthogonal basis with respect to $b$ if $b(e_i,e_j)=0$ whenever $i\neq j$.
\end{definition}
\begin{corollary}\label{exorthbasis}
Any symmetric bilinear form $b$ on $\widetilde{\mathbb R}^n$ admits an orthogonal basis.
\end{corollary}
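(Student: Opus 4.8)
The plan is to reduce the corollary to the orthogonal diagonalization of the coefficient matrix supplied by Lemma~\ref{schur}, rather than attempting a Gram--Schmidt procedure. First I would fix the canonical basis $\mathcal E=\{e_1,\dots,e_n\}$ of $\widetilde{\mathbb R}^n$ and let $B:=(b(e_i,e_j))_{ij}\in\widetilde{\mathbb R}^{n^2}$ be the coefficient matrix of $b$ with respect to $\mathcal E$; since $b$ is symmetric, $B=B^t$ in $\widetilde{\mathbb R}^{n^2}$. I would then record the elementary transformation rule for the coefficient matrix under a change of basis: if $A\in\widetilde{\mathbb R}^{n^2}$ is invertible and $f_i:=\sum_k A_{ki}e_k$, then $\{f_1,\dots,f_n\}$ is again a basis and $b(f_i,f_j)=(A^tBA)_{ij}$.

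The core step is to apply Lemma~\ref{schur} to the symmetric matrix $B$: it yields an orthogonal $U\in\widetilde{\mathbb R}^{n^2}$ with $UBU^t=\diag(\lambda_1,\dots,\lambda_n)$, where $\lambda_1,\dots,\lambda_n\in\widetilde{\mathbb R}$ are the eigenvalues of $B$. Since $U$ is orthogonal we have $UU^t=\mathbb I$ and $\det U=1$, so $\det U$ is invertible and, by Lemma~\ref{nondeg}, $U$ (hence $U^t$) is bijective; in particular $U^t$ is invertible. Setting $A:=U^t$, the vectors $f_i:=\sum_k (U^t)_{ki}e_k$ form a basis of $\widetilde{\mathbb R}^n$, and by the transformation rule above
\[
(b(f_i,f_j))_{ij}=A^tBA=UBU^t=\diag(\lambda_1,\dots,\lambda_n),
\]
so that $b(f_i,f_j)=0$ whenever $i\neq j$. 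Thus $\{f_1,\dots,f_n\}$ is an orthogonal basis for $b$, completing the argument.

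The only point that needs care -- and the reason the classical Gram--Schmidt argument cannot simply be transcribed -- is that in $\widetilde{\mathbb R}$ one cannot in general normalize or divide out coefficients, because $b(v,v)$ may be a zero divisor; the construction above sidesteps this entirely by invoking the $\varepsilon$-wise orthogonal diagonalization already packaged in Lemma~\ref{schur}. I do not expect a genuine obstacle here: once the transformation law $B\mapsto A^tBA$ and the invertibility of orthogonal matrices (via Lemma~\ref{nondeg}) are in place, the statement is immediate. One may additionally remark that no non-degeneracy hypothesis on $b$ is required, since Lemma~\ref{schur} applies to arbitrary symmetric $B\in\widetilde{\mathbb R}^{n^2}$.
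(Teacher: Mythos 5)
Your proof is correct and follows essentially the same route as the paper: apply Lemma~\ref{schur} to the coefficient matrix of $b$ to obtain an orthogonal $U$ with $UBU^t$ diagonal, then observe that $U$ (being non-degenerate) induces a change of basis under which $b$ becomes diagonal. You spell out the transformation law and the invertibility of $U$ in a bit more detail, but the argument is the same.
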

\begin{proof}
Let $\mathcal B:=\{v_1,\dots,v_n\}$ be some basis of $\widetilde{\mathbb R}^n$, then the coefficient matrix $A:=(b(v_i,v_j))_{ij}\in\widetilde{\mathbb R}^{n^2}$ is symmetric. Due to Lemma \ref{schur}, there is an orthogonal matrix $U\in\widetilde{\mathbb R}^{n^2}$ and generalized numbers
$\theta_i\;(1\leq i\leq n)$ (the so-called eigenvalues) such that $UAU^t=\diag( \theta_1,\dots,\theta_n)$. Therefore the (clearly non-degenerate) matrix $U$ induces a mapping $\widetilde{\mathbb R}^n\rightarrow\widetilde{\mathbb R}^n$ which maps $\mathcal B$ onto some basis $\mathcal B'$
which is orthogonal.
\end{proof}
\begin{definition}
Let $\lambda_1,\dots,\lambda_k\in\widetilde{\mathbb R}$ ($k\geq 1$). Then the span of $\lambda_i\; (1\leq i\leq k)$ is denoted by $\langle\{\lambda_1,\dots,\lambda_n\}\rangle$.
\end{definition}
We now introduce a notion of causality in our framework:
\begin{definition}\label{causaldef1}
Let $g$ be a symmetric bilinear form of Lorentzian signature on $\widetilde{\mathbb R}^n$. Then we call $u\in\widetilde{\mathbb R}^n$
\begin{enumerate}
\item time-like, if $g(u,u)<0$,
\item null, if $u=0$ or $u$ is free and $g(u,u)=0$,
\item space-like, if $g(u,u)>0$.
\end{enumerate}
Furthermore, we say two time-like vectors $u,v$ have the same time-orientation whenever $g(u,v)<0$.
\end{definition}
Note that there exist elements in $\widetilde{\mathbb R}^n$ which are neither time-like, nor null, nor space-like.

The next statement provides a characterization of free elements in $\widetilde{\mathbb R}^n$. We shall repeatedly make use of it in the sequel.
\begin{theorem}\label{freechar}
Let $v$ be an element of $\widetilde{\mathbb R}^n$. Then the following are equivalent:
\begin{enumerate}
\item\label{freechar1} For any positive definite symmetric bilinear form $h$ on $\widetilde{\mathbb R}^n$ we have
\[
h(v,v)>0
\]
\item\label{freechar2} The coefficients of $v$ with respect to some (hence any) basis span $\widetilde{\mathbb R}$.
\item \label{freechar3}$v$ is free.
\item \label{freechar4} The coefficients $v^i$ ($i=1,\dots,n$) of $v$ with respect to some (hence any) basis of $\widetilde{\mathbb R}^n$ satisfy the following: For any choice of representatives $(v^i_{\varepsilon})_{\varepsilon}\;(1\leq i\leq n)$ of $v^i$ there exists some $\varepsilon_0\in I$ such that for each
$\varepsilon<\varepsilon_0$ we have
\[
\max_{i=1,\dots,n} \vert v^i_{\varepsilon}\vert>0.
\]
\item \label{freeloader6} For each representative $(v_{\varepsilon})_{\varepsilon}\in\mathcal E_M(\mathbb R^n)$ of $v$ there exists some $\varepsilon_0\in I$
such that for each $\varepsilon<\varepsilon_0$ we have $v_{\varepsilon}\neq 0$ in $\mathbb R^n$.
\item \label{freeloader7} There exists a basis of $\widetilde{\mathbb R}^n$ such that the first coefficient $v^i$ of $v$ is strictly non-zero.
\item \label{freeloader8} $v$ can be extended to a basis of $\widetilde{\mathbb R}^n$.
\item \label{freechar5} Let $v^i$ ($i=1,\dots,n$) denote the coefficients of $v$ with respect to some arbitrary basis of $\widetilde{\mathbb R}^n$. Then we have
\[
\| v\widetilde{\|}:=\left(\sum_{i=1}^n (v^i)^2\right)^{1/2}>0.
\]
\end{enumerate}
\end{theorem}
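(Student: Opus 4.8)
The plan is to prove the theorem by establishing a cycle of implications among the eight conditions, routing everything through the $\varepsilon$-wise characterization \ref{freechar4}, which is the most concrete and the easiest to manipulate. First I would fix an arbitrary basis and identify $v$ with its coordinate vector $(v^1,\dots,v^n)$; since a change of basis is effected by an invertible matrix in $\widetilde{\mathbb R}^{n^2}$ (Lemma \ref{nondeg}), statements phrased "with respect to some (hence any) basis" are genuinely basis-independent, and this reduction should be dispatched at the outset. The natural backbone is
\[
\ref{freechar5}\;\Longleftrightarrow\;\ref{freechar4}\;\Longleftrightarrow\;\ref{freeloader6}\;\Longleftrightarrow\;\ref{freechar2}\;\Longrightarrow\;\ref{freeloader7}\;\Longrightarrow\;\ref{freeloader8}\;\Longrightarrow\;\ref{freechar3}\;\Longrightarrow\;\ref{freechar1}\;\Longrightarrow\;\ref{freechar5}.
\]

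For the elementary $\varepsilon$-wise equivalences: \ref{freechar5}$\Leftrightarrow$\ref{freechar4} follows because $\|v\widetilde\|$ has representative $\bigl(\sqrt{\sum_i (v^i_\varepsilon)^2}\bigr)_\varepsilon$, and $\sqrt{\sum_i (v^i_\varepsilon)^2} \geq \varepsilon^m$ eventually is equivalent, up to adjusting $m$ by a factor depending only on $n$, to $\max_i |v^i_\varepsilon| \geq \varepsilon^{m'}$ eventually; here one must be a little careful that \ref{freechar5} asks for strict positivity, hence by the remark in the Preliminaries for a representative bounded below by some $\varepsilon^m$, not merely nonzero — so the quantifier structure "for any choice of representatives, eventually" in \ref{freechar4} is exactly what matches. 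The equivalence \ref{freechar4}$\Leftrightarrow$\ref{freeloader6} is immediate since $v_\varepsilon \neq 0$ in $\mathbb R^n$ means $\max_i |v^i_\varepsilon| > 0$ (a real number), and the passage from "$\neq 0$ eventually for every representative" to "$\geq \varepsilon^{m}$ eventually" is the standard Colombeau argument: if no such $m$ worked, one could diagonally extract a subnet forcing a representative that is $0$ on an unbounded-toward-$0$ set of $\varepsilon$'s, contradicting \ref{freeloader6} for that representative. For \ref{freeloader6}$\Rightarrow$\ref{freechar2}: set $\alpha := \|v\widetilde\|^2 = \sum_i (v^i)^2 \in \widetilde{\mathbb R}$; by the just-proved equivalences $\alpha > 0$, hence $\alpha$ is invertible, and since each $v^i \cdot v^i \in \langle\{v^1,\dots,v^n\}\rangle$ we get $\alpha \in \langle\{v^1,\dots,v^n\}\rangle$, so this span contains an invertible element and therefore equals $\widetilde{\mathbb R}$. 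Conversely \ref{freechar2}$\Rightarrow$\ref{freeloader7}: if $\sum_i \mu_i v^i = 1$ for some $\mu_i \in \widetilde{\mathbb R}$, then on a representative level $\sum_i \mu_{i,\varepsilon} v^i_{\varepsilon}$ is bounded below eventually, so on an eventual set the index $i(\varepsilon)$ realizing $\max_i |\mu_{i,\varepsilon} v^i_{\varepsilon}|$ gives $|v^i_{\varepsilon}| \geq$ (something like) $\varepsilon^{m}/(n \max|\mu_{i,\varepsilon}|)$; by pigeonhole one index occurs on a set whose characteristic function is a unit complement, and multiplying $v$ by a permutation/idempotent bookkeeping one arranges a basis in which the first coordinate is strictly nonzero. (This pigeonhole-plus-$\chi_A$ gluing is the one slightly delicate point and should be written carefully.)

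The remaining arrows are the clean, conceptual ones. For \ref{freeloader7}$\Rightarrow$\ref{freeloader8}: if $v^1$ is strictly nonzero hence invertible (strictly nonzero elements of $\widetilde{\mathbb R}$ are invertible by \cite{KS1} Prop. 2.2), then Proposition \ref{steinitzprop} (the Steinitz exchange) lets us swap $v$ in for the first basis vector, yielding a basis. \ref{freeloader8}$\Rightarrow$\ref{freechar3} is the definition of free (being extendable to a basis exhibits $v$ as part of a free generating set; a single extendable vector generates a free rank-one direct summand). \ref{freechar3}$\Rightarrow$\ref{freechar1}: if $v$ is free it extends to a basis $\{v = f_1, f_2, \dots, f_n\}$, and for a positive definite symmetric $h$ its matrix in this basis has all strictly positive eigenvalues (Definition \ref{indexmatrixdef}, $\nu = 0$ with $\nu_+ = n$), so by Lemma \ref{schur} $h = U^t \operatorname{diag}(\lambda_1,\dots,\lambda_n) U$ with $\lambda_i > 0$; then $h(v,v) = h(f_1,f_1)$ is the $(1,1)$ entry, $= \sum_k \lambda_k (U_{k1})^2$, and writing $w := Uf_1$, $h(v,v) = \sum_k \lambda_k (w^k)^2 \geq (\min_k \lambda_k)\, \|w\widetilde\|^2$; since $U$ is invertible and $f_1$ is free, $w$ is free, so $\|w\widetilde\|^2 > 0$ by the part of the equivalence already established, whence $h(v,v) > 0$. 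Finally \ref{freechar1}$\Rightarrow$\ref{freechar5}: apply \ref{freechar1} to the specific positive definite form $h_0$ whose matrix in the chosen basis is the identity — this is positive definite since all its eigenvalues equal $1 > 0$ — giving $h_0(v,v) = \sum_i (v^i)^2 > 0$, i.e. $\|v\widetilde\|^2 > 0$, hence $\|v\widetilde\| > 0$.

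The main obstacle I anticipate is the implication \ref{freechar2}$\Rightarrow$\ref{freeloader7} (equivalently, extracting from "the coordinates span $\widetilde{\mathbb R}$" a \emph{single} coordinate that is strictly nonzero after a change of basis): the span condition gives invertibility of a linear combination, but invertibility in $\widetilde{\mathbb R}$ is a non-local, $\varepsilon$-wise statement, and one must decompose $I$ into at most $n$ pieces on each of which a \emph{fixed} coordinate dominates, then reassemble using the idempotents $\chi_A$ and the fact that a finite sum of such pieces covers a tail of $I$. Getting the bookkeeping right — in particular checking that the reassembled vector really can be taken as the first member of an honest basis, not merely that some coordinate is "often" large — is where care is needed; everything else is either a direct $\varepsilon$-estimate or an application of Lemma \ref{schur}, Proposition \ref{steinitzprop}, and Lemma \ref{nondeg}.
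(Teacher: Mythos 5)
Your overall organization (a single cycle routed through the coordinate-wise condition (\ref{freechar4})) is a viable alternative to the paper's, which establishes the separate cycle (\ref{freechar1}) $\Rightarrow$ (\ref{freechar2}) $\Rightarrow$ (\ref{freechar3}) $\Rightarrow$ (\ref{freechar1}) and bridges it to the $\varepsilon$-wise conditions. Your $\varepsilon$-wise equivalences are sound, and the ``pigeonhole-plus-$\chi_A$ gluing'' you flag as delicate in passing from (\ref{freechar2}) to (\ref{freeloader7}) is exactly what the paper does cleanly by assembling an $\varepsilon$-varying permutation matrix $A=[(A_\varepsilon)_\varepsilon]$ that puts the $\varepsilon$-dominant coordinate first; note that no single fixed index need occur on a set with invertible characteristic function, which your pigeonhole phrasing wrongly suggests.

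The genuine gap is in your step (\ref{freechar3}) $\Rightarrow$ (\ref{freechar1}), which is circular within your own chain. You begin by extending the free vector $v$ to a basis --- that is (\ref{freechar3}) $\Rightarrow$ (\ref{freeloader8}), but at that point you have only established (\ref{freeloader8}) $\Rightarrow$ (\ref{freechar3}), not the converse. You then conclude from ``$w$ is free'' that $\sum_k (w^k)^2 > 0$, which is (\ref{freechar3}) $\Rightarrow$ (\ref{freechar5}) applied to the auxiliary vector $w$ --- again a forward reference, since the only implication departing from (\ref{freechar3}) available at that stage is the one being proved. The paper's proof of this implication is self-contained and avoids both problems: diagonalize $h$ via Lemma \ref{schur} to write $h(v,v)=\sum_i \lambda_i (v^i)^2$ with each $\lambda_i>0$ and each summand eventually nonnegative; if $h(v,v)$ were not strictly positive it would be a nonnegative zero divisor, and since all summands are nonnegative they would share a common annihilator $\mu\neq 0$ with $\mu\lambda_i(v^i)^2=0$ for all $i$, hence (using invertibility of $\lambda_i$ and absence of nilpotents) $\mu v=0$, contradicting freeness. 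An alternative repair inside your chain is to first prove (\ref{freechar3}) $\Rightarrow$ (\ref{freechar4}) directly (if some representative of $v$ vanished along a zero sequence $\varepsilon_k$, then $\chi_{\{\varepsilon_k:\,k\}}\,v=0$ with $\chi_{\{\varepsilon_k:\,k\}}\neq 0$, contradicting freeness), and then obtain (\ref{freechar1}) from (\ref{freechar5}) using that the orthogonal $U$ from Lemma \ref{schur} preserves $\sum_i (v^i)^2$. As written, however, your (\ref{freechar3}) $\Rightarrow$ (\ref{freechar1}) does not go through.
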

\begin{proof}
We proceed by establishing the implications (\ref{freechar1}) $\Rightarrow$ (\ref{freechar2}) $\Rightarrow$
(\ref{freechar3}) $\Rightarrow$ (\ref{freechar1}), further the equivalence
(\ref{freechar1}) $\Leftrightarrow$ (\ref{freechar5}) as well as (\ref{freechar4}) $\Leftrightarrow$ (\ref{freechar5})
and (\ref{freechar4}) $\Leftrightarrow$ (\ref{freeloader6}) and end with the proof of
(\ref{freechar4}) $\Rightarrow$ (\ref{freeloader7}) $\Rightarrow$ (\ref{freeloader8}) $\Rightarrow$ (\ref{freechar5})
$\Rightarrow$ (\ref{freechar4}).\\
If $v=0$ the equivalences are trivial. We shall therefore assume $v\neq 0$.\\
(\ref{freechar1}) $\Rightarrow$ (\ref{freechar2}): Let $(h_{ij})_{ij}$ be the coefficient matrix of $h$ with respect to some fixed basis
$\mathcal B$ of $\widetilde{\mathbb R}^n$. Then $\lambda:=\sum_{1\leq i,j\leq n}h_{ij}v^iv^j=h(v,v)>0$, in particular
$\lambda$ is invertible and $\sum_j(\sum_i \frac{h_{ij}v^i}{\lambda})v^j=1$ which shows that
$\langle\{ v^1,\dots,v^n\}\rangle=\widetilde{\mathbb R}$. Since the choice of the basis was arbitrary, (\ref{freechar2}) is shown.
\\(\ref{freechar2}) $\Rightarrow$ (\ref{freechar3}):
We assume $\langle\{ v^1,\dots,v^n\}\rangle=\widetilde{\mathbb R}$ but that there exists some $\lambda\neq 0: \lambda v=0$, that is,
$\forall\;i: 1\leq i\leq n:\,\lambda v^i=0$. Since the coefficients of $v$ span $\widetilde{\mathbb R}$,
there exist $\mu_1,\dots,\mu_n$ such that $\lambda=\sum_{i=1}^n\mu_iv^i$. It follows that
$\lambda^2=\sum_{i=1}^n\mu_i(\lambda v^i)=0$ but this is impossible, since $\widetilde{\mathbb R}$ contains no nilpotent elements.
\\(\ref{freechar3}) $\Rightarrow$ (\ref{freechar1}):
Due to Lemma \ref{schur} we may assume that we have chosen a basis such that the coefficient matrix with respect to the latter
is in diagonal form, i.\ e., $(h_{ij})_{ij}=\diag (\lambda_1,\dots,\lambda_n)$ with $\lambda_i>0\; (1\leq i\leq n)$. We have to show that $h(v,v)=\sum_{i=1}^n\lambda_i (v^i)^2>0$. Since there exists
$\varepsilon_0\in I$ such that for all representatives of $\lambda_1,\dots,\lambda_n, v^1,\dots, v^n$ we have for $\varepsilon<\varepsilon_0$ that $\gamma_{\varepsilon}:=\lambda_{1\varepsilon} (v^1_{\varepsilon})^2+\dots+\lambda_{n\varepsilon} (v^n_{\varepsilon})^2\geq 0$,
$h(v,v)\not> 0$ would imply that there exists a zero sequence $\varepsilon_k\rightarrow 0$ ($k\rightarrow 0$) such that $\gamma_{\varepsilon_k}<\varepsilon^k$. This implies that $h(v,v)$ is a zero divisor
and it means that all summands share a simultaneous zero divisor, i.\ e., $\exists\;\mu\neq 0\,\forall\;i\in\{1,\dots,n\}:\,\mu \lambda_i (v^i)^2=0$. Since $v$ was free, this is a contradiction and we have shown that  (\ref{freechar1}) holds.\\ The equivalence (\ref{freechar1}) $\Leftrightarrow$ (\ref{freechar5}) is evident. We proceed by establishing the equivalence
(\ref{freechar4}) $\Leftrightarrow$ (\ref{freechar5}). First, assume (\ref{freechar5}) holds, and let $(v^i_{\varepsilon})_{\varepsilon}\;(1\leq i\leq n)$ be arbitrary representatives of $v^i\;(i=1,\dots,n)$. Then
\[
\left(\sum_{i=1}^n (v^i_{\varepsilon})^2\right)_{\varepsilon}
\]
is a representative of $(\|v\widetilde{\|})^2$ as well, and since $\|v\widetilde{\|}$ is strictly positive, there exists some $m_0$ and some $\varepsilon_0\in I$ such that
\[
\forall\;\varepsilon<\varepsilon_0:\sum_{i=1}^n (v^i_{\varepsilon})^2>\varepsilon^{m_0}.
\]
This immediately implies (\ref{freechar4}). In order to see the converse direction, we proceed indirectly. Assume (\ref{freechar5}) does not hold, that is, we assume there exist representatives $(v_{\varepsilon}^i)_{\varepsilon}$ of $v^i$ for $i=1,\dots,n$ such that
for some sequence $\varepsilon_k\rightarrow 0$ ($k\rightarrow\infty$) we have for each $k>0$ that
\[
\sum_{i=1}^n (v_{\varepsilon_k}^i)^2<\varepsilon_k^k.
\]
Therefore one may even construct representatives $(\widetilde v_{\varepsilon}^i)_{\varepsilon}$ for $v^i$ ($i=1,\dots,n$)
such that for each $k>0$ and each $i\in\{1,\dots,n\}$ we have $\widetilde v_{\varepsilon_k}^i=0$. It is now evident that $(\widetilde v_{\varepsilon}^i)_{\varepsilon}$ violate condition (\ref{freechar4}) and we are done with (\ref{freechar4}) $\Leftrightarrow$ (\ref{freechar5}).
(\ref{freechar4}) $\Leftrightarrow$ (\ref{freeloader6}) is evident. So we finish the proof by showing (\ref{freechar4}) $\Rightarrow$ (\ref{freeloader7}) $\Rightarrow$ (\ref{freeloader8}) $\Rightarrow$ (\ref{freechar4})
%
%
Clearly (\ref{freeloader8}) $\Rightarrow$ (\ref{freechar4}). To see (\ref{freechar4}) $\Rightarrow$ (\ref{freeloader7}) we first observe that the condition (\ref{freechar4}) implies that there exists some $m_0$
such that for suitable representatives $(v_{\varepsilon}^i)_{\varepsilon}$ of $v^i$ ($i=1,\dots,n$) we have for each $\varepsilon\in I$ $\max_{i=1,\dots,n}\vert v_{\varepsilon}^i\vert>\varepsilon^{m_0}$, i.\ e.,
\[
\forall\;\varepsilon\in I\;\exists\; i(\varepsilon)\in\{1,\dots,n\}:\vert v_{\varepsilon}^{i(\varepsilon)}\vert>\varepsilon^{m_0}.
\]
We may view $(v_{\varepsilon})_{\varepsilon}:=((v_{\varepsilon}^1,\dots,v_{\varepsilon}^n)^t)_{\varepsilon}\in\mathcal E_M(\mathbb R^n)$
as a representative of $v$ in $\mathcal E_M(\mathbb R^n)/\mathcal N(\mathbb R^n)$. Denote for each $\varepsilon\in I$ by $A_{\varepsilon}$ the representing matrix of the linear map $\mathbb R^n\rightarrow\mathbb R^n$ that merely permutes the $i(\varepsilon)$ th.\ canonical coordinate of $\mathbb R^n$ with the first one. Define $A:\widetilde{\mathbb R^n}\rightarrow\widetilde{\mathbb R^n}$ the bijective linear map with representing matrix
\[
A:=(A_{\varepsilon})_{\varepsilon}+\mathcal E_M(\mathcal M_n(\mathbb R)).
\]
What is evident now from our construction, is: The first coefficient of
\[
\widetilde v:=Av=(\mathcal A_{\varepsilon}v_{\varepsilon})_{\varepsilon}+\mathcal E_M(\mathbb R^n)
\]
is strictly nonzero and we have shown (\ref{freeloader7}). Finally we verify (\ref{freeloader7}) $\Rightarrow$ (\ref{freeloader8}). Let $\{e_i\mid 1\leq i\leq n\}$ denote the canonical basis of $\widetilde{\mathbb R}^n$. Point (\ref{freeloader7}) ensures the existence of a bijective linear map $A$ on $\widetilde{\mathbb R}^n$ such that
the first coefficient $\bar v^1$ of $\bar v=(\bar v^1,\dots,\bar v^n)^t:=Av$ is strictly non-zero; applying Proposition \ref{steinitzprop} yields another basis $\{\bar v,e_2,\dots,e_n\}$ of $\widetilde{\mathbb R}^n$. Since $A$ is bijective,
$\{v=A^{-1}\bar v,A^{-1}e_2,\dots,A^{-1}e_n\}$ is a basis of $\widetilde{\mathbb R}^n$ as well and we are done.
\end{proof}
We may add a non-trivial example of a free vector to the above characterization:
\begin{example}\rm
For $n>1$, let $\lambda_i\in\widetilde{\mathbb R}\; (1\leq i\leq n)$ have the following properties
\begin{enumerate}
\item \label{ex1} $\lambda_i^2=\lambda_i\;\forall\; i\in\{1,\dots,n\}$
\item \label{ex2} $\lambda_i\lambda_j=0\;\forall\;i\neq j$
\item \label{ex3} $\langle\{\lambda_1,\dots,\lambda_n\}\rangle=\widetilde{\mathbb R}$
\end{enumerate}
This choice of zero divisors in $\widetilde{\mathbb R}$ is possible (idempotent elements in $\widetilde{\mathbb R}$ are thoroughly discussed in \cite{A1}, pp.\ 2221--2224). Now, let $\mathcal B=\{e_1,\dots,e_n\}$ be the canonical basis of $\widetilde{\mathbb R}^n$. Theorem \ref{freechar} (\ref{freechar3}) implies that $v:=\sum_{i=1}^n (-1)^{(i+1)(n+1)}\lambda_i e_i$ is free. Furthermore let $\gamma\in\Sigma_n$ be the cyclic permutation which sends $\{1,\dots,n\}$ to $\{n,1,\dots,n-1\}$. Clearly the sign of $\gamma$ is positive if and only if $n$ is odd. Define $n$ vectors $v_j\;(1\leq j\leq n)$ by $v_1:=v$, and such that $v_j$ is given by $v_j:=\sum_{k=1}^n \lambda_{\gamma^{j-1}(k)}e_k$ whenever $j>1$. Let $A$ be the matrix having the $v_j$'s as column vectors. Then
\[
\det A=\sum_{l=1}^n\lambda_l^n=\sum_{l=1}^n\lambda_l.
\]
Due to properties (\ref{ex1},\ref{ex3}), $\det A$ is invertible. Therefore, $\mathcal B':=\{v,v_2,\dots,v_n\}$ is a basis of $\widetilde{\mathbb R}^n$, too. The reader is invited to check further equivalent properties of $v$ according to Theorem \ref{freechar}.
\end{example}
Since any symmetric bilinear form admits an orthogonal basis due to Corollary \ref{exorthbasis} we further conclude by means of Theorem
\ref{freechar}:
\begin{corollary}\label{corbilpos}
Let $b$ be a symmetric bilinear form on $\widetilde{\mathbb R}^n$. Then the following are equivalent:
\begin{enumerate}
\item \label{corbilpos1} For any free $v\in\widetilde{\mathbb R}^n$, $b(v,v)>0$.
\item $b$ is positive definite.
\end{enumerate}
\end{corollary}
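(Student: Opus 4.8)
The plan is to prove the two implications separately, relying on Corollary~\ref{exorthbasis} and Theorem~\ref{freechar} as the only substantial inputs.

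The implication (\ref{corbilpos2}) $\Rightarrow$ (\ref{corbilpos1}) is essentially immediate. Given a free $v\in\widetilde{\mathbb R}^n$, the chain of implications established in Theorem~\ref{freechar} — in particular (\ref{freechar3}) $\Rightarrow$ (\ref{freechar1}) — yields $h(v,v)>0$ for \emph{every} positive definite symmetric bilinear form $h$ on $\widetilde{\mathbb R}^n$. Since $b$ is assumed positive definite, specializing to $h=b$ gives $b(v,v)>0$, which is (\ref{corbilpos1}).

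For the converse (\ref{corbilpos1}) $\Rightarrow$ (\ref{corbilpos2}), the plan is: first invoke Corollary~\ref{exorthbasis} to pick an orthogonal basis $\mathcal B=\{e_1,\dots,e_n\}$ of $\widetilde{\mathbb R}^n$ with respect to $b$, so that the coefficient matrix of $b$ with respect to $\mathcal B$ is $B:=\diag(\lambda_1,\dots,\lambda_n)$ with $\lambda_i=b(e_i,e_i)$. Next observe that each $e_i$ is free: its coefficient vector relative to $\mathcal B$ is a canonical basis vector, one of whose entries equals $1$, so its coefficients span $\widetilde{\mathbb R}$ and freeness follows from the equivalence (\ref{freechar2}) $\Leftrightarrow$ (\ref{freechar3}) of Theorem~\ref{freechar} (equivalently, $e_i$ already sits in the basis $\mathcal B$, so (\ref{freeloader8}) applies). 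Applying hypothesis (\ref{corbilpos1}) to $v=e_i$ then forces $\lambda_i=b(e_i,e_i)>0$ for every $i$.

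It remains to deduce from $\lambda_1,\dots,\lambda_n>0$ that $\nu(B)=0$, i.e.\ that $B$ is positive definite in the sense of Definition~\ref{indexmatrixdef}, whence $b$ is positive definite by Definition~\ref{bilformdef}. By Definition~\ref{eigenvalues} (well-definedness guaranteed by Lemma~\ref{schur}) we may compute the generalized eigenvalues of $B$ from the representative $(\diag(\lambda_{1,\varepsilon},\dots,\lambda_{n,\varepsilon}))_\varepsilon$, whose $\varepsilon$-eigenvalues are precisely the $\lambda_{i,\varepsilon}$. Choosing representatives with $\lambda_{i,\varepsilon}\geq\varepsilon^{m_i}$ for all $\varepsilon\in I$ and setting $m_0:=\max_i m_i$, every $\lambda_{i,\varepsilon}$ is $\geq\varepsilon^{m_0}>0$; hence after re-ordering by size each of the $n$ ordered eigenvalue nets still satisfies this bound, so all $n$ generalized eigenvalues are strictly positive. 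Therefore $\nu_+(B)=n$, so $\nu_-(B)=\nu(B)=0$ and $b$ is positive definite. I expect the only (minor) obstacle to be exactly this bookkeeping — passing to a simultaneous lower bound $\varepsilon^{m_0}$ and tracking the size-ordering of the eigenvalues — while everything else is a direct appeal to the quoted results.
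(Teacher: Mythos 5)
Your proof is correct and follows exactly the route the paper intends: the forward direction is Theorem~\ref{freechar} (\ref{freechar3})$\Rightarrow$(\ref{freechar1}), and the converse uses Corollary~\ref{exorthbasis} to diagonalize, freeness of the orthogonal basis vectors via Theorem~\ref{freechar}, and the observation that a diagonal coefficient matrix with strictly positive diagonal entries has index~$0$. The paper leaves these details implicit (it simply says the corollary follows from Corollary~\ref{exorthbasis} and Theorem~\ref{freechar}), and your write-up fills them in faithfully.
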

For showing further algebraic properties of $\widetilde{\mathbb R}^n$ (cf.\ section \ref{semi}), also the following lemma will be crucial:
\begin{lemma}\label{posprop}
Let $h$ be a positive definite symmetric bilinear form. Then we have the following:
\begin{enumerate}
\item \label{h1} $\forall\; v\in\widetilde{\mathbb R}^n: h(v,v)\geq 0$ and $h(v,v)=0\Leftrightarrow v=0$.
\item \label{h2}Let $\mathfrak m$ be a free submodule of $\widetilde{\mathbb R}^n$. Then $h$ is a positive definite symmetric bilinear form on $\mathfrak m$.
\end{enumerate}
\end{lemma}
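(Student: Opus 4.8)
The plan is to reduce both statements to the characterization of free vectors in Theorem \ref{freechar} together with the orthogonal diagonalization from Lemma \ref{schur}. For part (\ref{h1}): using Corollary \ref{exorthbasis} (or directly Lemma \ref{schur}) I would pick an orthogonal basis for $h$, so that with respect to it the coefficient matrix is $\diag(\lambda_1,\dots,\lambda_n)$ with all $\lambda_i>0$ since $h$ is positive definite and $\nu(h)=0$. Then for $v$ with coefficients $v^i$ one has $h(v,v)=\sum_i \lambda_i (v^i)^2$. Choosing representatives with $\lambda_{i,\varepsilon}\ge \varepsilon^{m_0}$ for $\varepsilon<\varepsilon_0$ (possible by invertibility/strict positivity of the $\lambda_i$), every summand is $\ge 0$ eventually, hence $h(v,v)$ has a representative that is nonnegative for small $\varepsilon$, i.e.\ $h(v,v)\ge 0$. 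For the equivalence, if $v=0$ clearly $h(v,v)=0$; conversely if $h(v,v)=0$ then $\sum_i \lambda_i (v^i)^2 = 0$, and since each $\lambda_i(v^i)^2\ge 0$ and they sum to something negligible, each $\lambda_i (v^i)^2$ is itself negligible (a finite sum of nonnegative nets being negligible forces each to be negligible); multiplying by $\lambda_i^{-1}$ gives $(v^i)^2=0$, hence $v^i=0$ since $\widetilde{\mathbb R}$ has no nilpotents, so $v=0$.

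For part (\ref{h2}): a free submodule $\mathfrak m$ of dimension $k$ has a basis $\{f_1,\dots,f_k\}$, and I must show the restriction $h|_{\mathfrak m}$ is again a positive definite symmetric bilinear form in the sense of Definition \ref{bilformdef}, i.e.\ that $\nu\bigl((h(f_p,f_q))_{pq}\bigr)=0$. Symmetry of the restricted coefficient matrix is immediate. By Corollary \ref{corbilpos} it suffices to show $h(w,w)>0$ for every free $w\in\mathfrak m$; but a free element of $\mathfrak m$ is in particular a free element of $\widetilde{\mathbb R}^n$ (it can be extended to a basis of $\mathfrak m$, and then—since $\mathfrak m$ is a direct summand once we know free submodules split, or more elementarily by Theorem \ref{freechar}(\ref{freeloader6}): $w_\varepsilon\neq 0$ eventually for any representative—it is free in $\widetilde{\mathbb R}^n$). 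Hence $h(w,w)>0$ by Theorem \ref{freechar}(\ref{freechar1}), and Corollary \ref{corbilpos} applied inside $\mathfrak m$ gives that $h|_{\mathfrak m}$ is positive definite.

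The main obstacle I anticipate is the step "a free element of the submodule $\mathfrak m$ is free in $\widetilde{\mathbb R}^n$", since freeness is defined via annihilators and a priori having no nonzero annihilator could be affected by the ambient module—though in fact the annihilator of $w$ does not depend on the ambient module at all, so this is really just an unwinding of definitions, best phrased via the representative-level criterion Theorem \ref{freechar}(\ref{freeloader6}). A secondary technical point is the claim that a finite sum of nonnegative moderate nets which is negligible has all summands negligible; this is elementary ($0\le a_{i,\varepsilon}\le \sum_j a_{j,\varepsilon}$) but should be stated explicitly, as it is the crux of the "only if" direction in (\ref{h1}). Everything else is a direct application of Lemma \ref{schur}, Corollary \ref{exorthbasis}, Theorem \ref{freechar} and Corollary \ref{corbilpos}.
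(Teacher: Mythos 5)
Your proof is correct and follows essentially the same route as the paper's: for part (\ref{h1}) both pass to an orthogonal basis via Lemma \ref{schur}/Corollary \ref{exorthbasis} and inspect $\sum_i\lambda_i(v^i)^2$ term by term (you usefully spell out the step the paper leaves tacit, namely that a negligible finite sum of nonnegative nets has all summands negligible); for part (\ref{h2}) the paper diagonalizes the restricted coefficient matrix by hand and checks each $\lambda_i=h(e_i,e_i)>0$ via Theorem \ref{freechar}(\ref{freechar1}), whereas you invoke Corollary \ref{corbilpos} directly after noting that freeness in $\mathfrak m$ is the same as freeness in $\widetilde{\mathbb R}^n$ — but since Corollary \ref{corbilpos} is itself proved by exactly that diagonalization, this is a repackaging of the same argument rather than a different one.
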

\begin{proof}
First, we verify (\ref{h1}): Let $v^i\;(1\leq i\leq n)$ be the coefficients of
$v$ with respect to some orthogonal basis $\mathcal B$ for $h$. Then we can write $h(v,v)=\sum_{i=1}^n\lambda_i(v^i)^2$ with $\lambda_i$ strictly positive for each $i\in\{1,\dots,n\}$. Thus $h(v,v)\geq 0$, and $h(v,v)=0$ implies $\forall\; i\in\{1\dots  n\}:v^i=0$, i.\ e., $v=0$. This finishes the proof of part (\ref{h1}).
In order to show (\ref{h2}) we first notice that by definition, any free submodule admits a basis. Let $\mathcal B_{\mathfrak m}:=\{w_1,\dots,w_k\}$ be such for $\mathfrak m$ and denote by $h_{\mathfrak m}$ the restriction of $h$ to $\mathfrak m$. Then, due to Theorem \ref{freechar} (\ref{freechar1}), we have for all $1\leq i\leq k$, $h_{\mathfrak m}(w_i,w_i)>0$. Let $A:=(h_{\mathfrak m}(w_i,w_j))_{ij}$ be the coefficient matrix of $h_{\mathfrak m}$ with respect to $\mathcal B_{\mathfrak m}$. Since $h_{\mathfrak m}$ is symmetric, so is the matrix $A$
and thus, due to Lemma \ref{schur} there is an orthogonal matrix $U\in\widetilde{\mathbb R}^{k^2}$ and there are generalized numbers $\lambda_i\;(1\leq i\leq k)$
such that $UAU^t=\diag(\lambda_1,\dots,\lambda_k)$ which implies that the (orthogonal, thus non-degenerate) $U$ maps
$\mathcal B_{\mathfrak m}$ on an orthogonal basis $\mathcal B:=\{e_1,\dots,e_k\}$ of $\mathfrak m$ with respect to $h_{\mathfrak m}$ and again by Theorem \ref{freechar} (\ref{freechar1}) we have $\lambda_i>0\;(1 \leq i\leq k)$. By Definition \ref{bilformdef}, $h_{\mathfrak m}$ is also positive definite on $\mathfrak m$ and we are done.
\end{proof}
Since any time-like or space-like vector is free, we further have as a consequence of Theorem \ref{freechar}:
\begin{proposition}\label{coraustausch}
Suppose we are given a bilinear form of Lorentzian signature on $\widetilde{\mathbb R}^n$ and let $u\in\widetilde{\mathbb R}^n\setminus \{0\}$ be time-like, null or space-like. Then $u$ can be extended to a basis of $\widetilde{\mathbb R}^n$.
\end{proposition}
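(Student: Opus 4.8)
The plan is to reduce everything to the equivalence (\ref{freechar3}) $\Leftrightarrow$ (\ref{freeloader8}) of Theorem \ref{freechar}, which says precisely that a vector is free if and only if it extends to a basis of $\widetilde{\mathbb R}^n$ (recall that $n>1$ is in force throughout this section). So the only thing one actually has to check is that a nonzero time-like, null or space-like vector $u$ is free.

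For a null vector this is built into Definition \ref{causaldef1}: a nonzero null vector is free by fiat, so nothing is to be shown. For the remaining two cases I would argue as follows. If $u$ is space-like then $g(u,u)>0$; if $u$ is time-like then $g(u,u)<0$, hence $-g(u,u)>0$ and therefore $g(u,u)$ is again invertible. In either case $g(u,u)\in\widetilde{\mathbb R}$ is invertible. Now suppose $\lambda u=0$ for some $\lambda\in\widetilde{\mathbb R}$. Bilinearity of $g$ gives $\lambda\,g(u,u)=g(\lambda u,u)=g(0,u)=0$, and multiplying by $g(u,u)^{-1}$ yields $\lambda=0$. Hence $u$ is free, and Theorem \ref{freechar} then immediately furnishes the desired extension to a basis.

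There is essentially no obstacle here — the statement is a corollary, and the only point requiring a moment's thought is the observation that each of the defining inequalities $g(u,u)>0$ and $g(u,u)<0$ forces invertibility of $g(u,u)$, which is exactly what makes the zero-divisor-style argument run. Alternatively one could invoke characterization (\ref{freeloader6}): along a zero sequence $\varepsilon_k\to 0$ a vanishing representative $u_{\varepsilon_k}=0$ would force $g(u_{\varepsilon_k},u_{\varepsilon_k})=0$ on that set, contradicting invertibility of $g(u,u)$; but the purely algebraic argument above is shorter and avoids passing to representatives.
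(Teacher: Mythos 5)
Your proof is correct and takes essentially the same route as the paper: the paper reduces to the implication ``free $\Rightarrow$ extends to a basis'' from Theorem \ref{freechar}, noting that time-like and space-like vectors are free (and nonzero null vectors are free by Definition \ref{causaldef1}); you simply spell out the freeness step, via invertibility of $g(u,u)$, which the paper leaves implicit.
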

In the case of a time-like vector we know a specific basis in which the first coordinate is invertible:
\begin{remark}
Suppose we are given a bilinear form $b$ of Lorentzian signature on $\widetilde{\mathbb R}^n$, let $u$ be a time-like vector.
Due to the definition of $g$ we may suppose that we have a basis so that the scalar product of $u$ takes the form
\[
g(u,u)=-\lambda_1 (u^1)^2+\lambda_2 (u^2)^2\dots+\lambda_n (u^n)^2.
\]
with $\lambda_i$ strictly positive for each $i=1,\dots,n$. Since $g(u,u)<0$, we see that the first coordinate $u^1$
of $u$ must be strictly non-zero.
\end{remark}
It is worth mentioning that an analogue of the well known criterion of positive definiteness of matrices in $\mathcal M_n(\mathbb R)$ holds in our setting:
\begin{lemma}\label{criterion}
Let $A\in\widetilde{\mathbb R}^{n^2}$ be symmetric. If the determinants of all principal subminors of $A$ (that are the submatrices $A^{(k)}:=(a_{ij})_{1\leq i,j\leq k}\;(1\leq k\leq n)$) are strictly positive, then $A$ is positive definite.
\end{lemma}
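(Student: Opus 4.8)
The strategy is to reduce the statement to the classical Sylvester criterion applied component-wise on a suitable symmetric representative, taking care that the principal subminor conditions survive the passage to representatives with uniform $\varepsilon$-estimates. First I would pick a symmetric representative $(A_\varepsilon)_\varepsilon = ((a_{ij}^\varepsilon)_{ij})_\varepsilon \in \mathcal E_M(\mathcal M_n(\mathbb R))$ of $A$, which exists by Lemma \ref{symmetry}. For each $k \in \{1,\dots,n\}$ the net $(\det A_\varepsilon^{(k)})_\varepsilon$ is a representative of $\det A^{(k)}$, which is strictly positive by hypothesis; hence there exist $m_k \geq 0$ and $\varepsilon_k \in I$ such that $\det A_\varepsilon^{(k)} \geq \varepsilon^{m_k}$ for all $\varepsilon < \varepsilon_k$. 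Taking $m_0 := \max_k m_k$ and $\varepsilon_0 := \min_k \varepsilon_k$, we obtain a single threshold below which \emph{all} principal subminors of $A_\varepsilon$ have determinant bounded below by $\varepsilon^{m_0} > 0$, so in particular every $\det A_\varepsilon^{(k)}$ is strictly positive as a real number for $\varepsilon < \varepsilon_0$.

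Next I would invoke the classical criterion: for each fixed $\varepsilon < \varepsilon_0$, the real symmetric matrix $A_\varepsilon$ has all leading principal minors strictly positive, hence is positive definite in the classical sense, hence all its (real) eigenvalues $\lambda_{1,\varepsilon} \geq \dots \geq \lambda_{n,\varepsilon}$ are strictly positive. One still needs a uniform lower bound on the smallest eigenvalue to conclude strict positivity of the generalized eigenvalues $\lambda_i \in \widetilde{\mathbb R}$ (as constructed in Definition \ref{eigenvalues} and Lemma \ref{schur}), rather than mere positivity. This is the point requiring a little care: from $\det A_\varepsilon = \prod_i \lambda_{i,\varepsilon} \geq \varepsilon^{m_0}$ together with the moderateness bound $\lambda_{i,\varepsilon} \leq \|A_\varepsilon\|_F = O(\varepsilon^{-M})$ for some $M$ and all $\varepsilon$ small, one gets $\lambda_{n,\varepsilon} \geq \varepsilon^{m_0}/\prod_{i<n}\lambda_{i,\varepsilon} \geq \varepsilon^{m_0}\varepsilon^{(n-1)M} = \varepsilon^{m_0 + (n-1)M}$ for $\varepsilon$ sufficiently small, since each $\lambda_{i,\varepsilon} > 0$. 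Thus $\lambda_{n,\varepsilon} \geq \varepsilon^{m_1}$ for a suitable $m_1$ and all small $\varepsilon$, and a fortiori the same holds for $\lambda_{1,\varepsilon},\dots,\lambda_{n-1,\varepsilon}$. Hence every generalized eigenvalue $\lambda_i$ of $A$ has a representative bounded below by a power of $\varepsilon$, i.e. $\lambda_i > 0$ for $i = 1,\dots,n$.

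Finally, $\nu_+(A) + \nu_-(A) = n$ with $\nu(A) = \nu_-(A) = 0$ by Definition \ref{indexmatrixdef}, so by the same definition $A$ is a positive definite symmetric matrix, which completes the proof. I expect the only genuine obstacle to be the second step — converting the classical positivity of the $\varepsilon$-eigenvalues into a \emph{uniform} $\varepsilon$-power lower bound — and the argument above, using the product formula $\det A_\varepsilon = \prod_i \lambda_{i,\varepsilon}$ from the diagonalization in Lemma \ref{schur} together with moderateness, is the natural way to close that gap; alternatively one could note that strict positivity of $\det A^{(k)}$ for all $k$ already forces $\det A$ invertible, so $A$ is non-degenerate by Lemma \ref{nondeg}, and then combine non-degeneracy (all eigenvalues invertible) with positivity (all $\lambda_{i,\varepsilon} > 0$ for small $\varepsilon$) to get strict positivity of each $\lambda_i$ directly.
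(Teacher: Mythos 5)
Your proof is correct, and it is worth noting that your \emph{primary} argument takes a genuinely different route from the paper for the key step of passing from $\varepsilon$-wise positivity of the eigenvalues to strict positivity of the generalized eigenvalues. You close that gap constructively, using $\det A_\varepsilon = \prod_i \lambda_{i,\varepsilon} \geq \varepsilon^{m_0}$ together with the moderateness bound $\lambda_{i,\varepsilon} \leq \|A_\varepsilon\|_F = O(\varepsilon^{-M})$ to extract an explicit uniform lower bound $\lambda_{n,\varepsilon} \geq \varepsilon^{m_1}$. The paper instead argues more abstractly: it observes $\det A^{(n)} = \det A > 0$, so $A$ is non-degenerate by Lemma \ref{nondeg}, hence every generalized eigenvalue is \emph{invertible} by Lemma \ref{schur}; combined with the $\varepsilon$-wise positivity coming from the classical Sylvester criterion, this gives strict positivity via the characterization stated in the Preliminaries (strictly positive $\Leftrightarrow$ invertible and admitting a nonnegative representative). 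That is in fact exactly your ``alternative'' closing remark, so you have found both proofs. The paper's route is shorter because it delegates the quantitative bound to the already-established invertibility machinery; your direct route is more self-contained and makes the numerical power $m_1 = m_0 + (n-1)M$ visible, at the cost of invoking the Frobenius-norm bound on eigenvalues of symmetric matrices. One small point to tidy in a final write-up: your lower bound $\lambda_{n,\varepsilon} \geq \varepsilon^{m_1}$ holds only for $\varepsilon < \varepsilon_0$, so one should either modify the representative on $[\varepsilon_0,1]$ or appeal to the usual ``for sufficiently small $\varepsilon$'' form of the strict-positivity criterion (as in Proposition \ref{genpointinv}); this is routine but should be said.
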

\begin{proof}
Choose a symmetric representative $(A_{\varepsilon})_{\varepsilon}$ of $A$ (cf.\  Lemma \ref{symmetry}). Clearly the assumption
$\det A^{(k)}>0\; (1\leq k\leq n)$ implies that $\exists\; \varepsilon_0\;\exists\; m\;\forall\; k: 1\leq k\leq n\; \forall\; \varepsilon<\varepsilon_0:\det A^{(k)}_{\varepsilon}\geq \varepsilon^m$, that is, for each sufficiently small $\varepsilon$, $A_{\varepsilon}$ is a positive definite symmetric
matrix due to a well known criterion in linear algebra. Furthermore $\det A^{(n)}=\det A>0$ implies $A$ is non-degenerate which finally shows
that $A$ is positive definite.
\end{proof}
Before we go on we note that type changing of tensors on $\widetilde{\mathbb R}^n$ by means of a non-degenerate symmetric bilinear form $g$ clearly is possible.
Moreover, given a (generalized) metric $g\in\mathcal G^0_2(X)$ on a manifold $X$ (cf.\ section \ref{introducerepseudoriemannereetconnexione}), lowering (resp.\ raising) indices
of generalized tensor fields on $X$ (resp.\ tensors on $\widetilde{\mathbb R}^n$) is compatible with evaluation on compactly supported generalized points (which actually yields the resp.\  object on $\widetilde{\mathbb R}^n$). This basically follows from Proposition 3.9 (\cite{KS1}) combined
with Theorem 3.1 (\cite{KS1}). As usual we write the covector associated to $\xi\in\widetilde{\mathbb R}^n$ in abstract index notation as $\xi_a:=g_{ab}\xi^b$. We call $\xi_i\;(i=1,\dots,n)$ the covariant components of $\xi$.\\
The following technical lemma is required in the sequel:
\begin{lemma}\label{uvfree}
Let $u,v\in\widetilde{\mathbb R}^n$ such that $u$ is free and $u^tv=0$. Then for each representative $(u_\varepsilon)_\varepsilon$ of $u$ there exists a representative $(v_\varepsilon)_\varepsilon$ of $v$
such that for each $\varepsilon \in I$ we have $u^t_\varepsilon v_\varepsilon=0$.
\end{lemma}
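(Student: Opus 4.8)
The plan is to use the characterization of freeness from Theorem \ref{freechar}, specifically the equivalence with condition (\ref{freeloader6}): since $u$ is free, for the given representative $(u_\varepsilon)_\varepsilon$ there exists some $\varepsilon_0 \in I$ such that $u_\varepsilon \neq 0$ in $\mathbb R^n$ for all $\varepsilon < \varepsilon_0$. We start from an arbitrary representative $(\bar v_\varepsilon)_\varepsilon$ of $v$, so that $(u_\varepsilon^t \bar v_\varepsilon)_\varepsilon \in \mathcal N(\mathbb R)$ because $u^t v = 0$ in $\widetilde{\mathbb R}$. The idea is to correct $(\bar v_\varepsilon)_\varepsilon$ by a negligible net so that the corrected net becomes exactly orthogonal to $u_\varepsilon$ for each $\varepsilon$, namely by projecting $\bar v_\varepsilon$ orthogonally onto the hyperplane $u_\varepsilon^\perp$.

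Concretely, for $\varepsilon < \varepsilon_0$ I would set
\[
v_\varepsilon := \bar v_\varepsilon - \frac{u_\varepsilon^t \bar v_\varepsilon}{u_\varepsilon^t u_\varepsilon}\, u_\varepsilon,
\]
which is well defined since $u_\varepsilon \neq 0$, and for $\varepsilon \geq \varepsilon_0$ set $v_\varepsilon := \bar v_\varepsilon$ (the choice on this finite-measure tail is irrelevant). By construction $u_\varepsilon^t v_\varepsilon = u_\varepsilon^t \bar v_\varepsilon - u_\varepsilon^t \bar v_\varepsilon = 0$ for every $\varepsilon < \varepsilon_0$; adjusting the finitely problematic $\varepsilon$ does not affect membership in the relevant classes. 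It then remains to check that $(v_\varepsilon)_\varepsilon$ is still a representative of $v$, i.e. that $(v_\varepsilon - \bar v_\varepsilon)_\varepsilon = \left( -\frac{u_\varepsilon^t \bar v_\varepsilon}{u_\varepsilon^t u_\varepsilon} u_\varepsilon \right)_\varepsilon \in \mathcal N(\mathbb R^n)$.

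The main obstacle, and the only nontrivial point, is controlling the denominator $u_\varepsilon^t u_\varepsilon = \|u_\varepsilon\|^2$ from below: a priori it could decay faster than any power of $\varepsilon$, which would destroy moderateness of the correction term. Here I would invoke freeness of $u$ again, now via condition (\ref{freechar5}) of Theorem \ref{freechar}: $\|u\widetilde{\|} = \big(\sum_i (u^i)^2\big)^{1/2} > 0$, so $\big(\sum_i (u^i_\varepsilon)^2\big)_\varepsilon = (\|u_\varepsilon\|^2)_\varepsilon$ has a representative bounded below by $\varepsilon^{m_0}$ for some $m_0$ and all small $\varepsilon$; since two representatives of $\|u\widetilde\|^2$ differ by a negligible net, after possibly shrinking $\varepsilon_0$ we get $\|u_\varepsilon\|^2 \geq \tfrac12\varepsilon^{m_0}$ for $\varepsilon < \varepsilon_0$. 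Consequently $\big(\tfrac{1}{u_\varepsilon^t u_\varepsilon}\big)_\varepsilon$ is moderate, $(u_\varepsilon^t\bar v_\varepsilon)_\varepsilon$ is negligible, and $(u_\varepsilon)_\varepsilon$ is moderate, so their product is negligible; hence $(v_\varepsilon)_\varepsilon$ is indeed a representative of $v$, completing the proof.
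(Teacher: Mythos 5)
Your argument is correct and follows the same overall strategy as the paper (start from an arbitrary representative $\bar v_\varepsilon$, observe $(u_\varepsilon^t\bar v_\varepsilon)_\varepsilon$ is negligible, then kill it by a negligible correction, using freeness of $u$ to control the denominator). The concrete correction differs, though: you project $\bar v_\varepsilon$ orthogonally onto $u_\varepsilon^\perp$, i.e.\ you subtract $\tfrac{u_\varepsilon^t\bar v_\varepsilon}{\|u_\varepsilon\|^2}\,u_\varepsilon$, and you control the denominator via Theorem~\ref{freechar}~(\ref{freechar5}) ($\|u\widetilde{\|}>0$); the paper instead adjusts only the single coordinate $j(\varepsilon)$ in which $|u_\varepsilon^{j(\varepsilon)}|\geq\varepsilon^{m_0}$, subtracting $n_\varepsilon/u_\varepsilon^{j(\varepsilon)}$ there and controlling the denominator via Theorem~\ref{freechar}~(\ref{freechar4}). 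Your version is basis-free and perhaps geometrically more transparent, while the paper's is leaner (a rank-one, one-coordinate change). Both are valid. One small oversight: the lemma requires $u_\varepsilon^t v_\varepsilon=0$ for \emph{each} $\varepsilon\in I$, but for $\varepsilon\geq\varepsilon_0$ you set $v_\varepsilon:=\bar v_\varepsilon$, which generically does not satisfy this; you should set $v_\varepsilon:=0$ there (as the paper does) — this does not affect the equivalence class, since $\{\varepsilon\geq\varepsilon_0\}$ is bounded away from $0$.
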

\begin{proof}
Let $(u_\varepsilon)_\varepsilon$, $(\hat v_\varepsilon)_\varepsilon$ be representatives of $u,v$ respectively. Then there exists $(n_\varepsilon)_\varepsilon\in\mathcal N$ such that
\[
(u_\varepsilon^t)_\varepsilon(\hat v_\varepsilon)_\varepsilon=(n_\varepsilon)_\varepsilon.
\]
By Theorem \ref{freechar} (\ref{freechar4}) we conclude
\[
\exists\; \varepsilon_0\;\exists\; m_0\;\forall\;\varepsilon<\varepsilon_0\;\exists\; j(\varepsilon):\;\vert u_\varepsilon^{j(\varepsilon)}\vert\geq \varepsilon^{m_0}.
\]
Therefore we may define a new representative $(v_\varepsilon)_\varepsilon$ of $v$ in the following way: For $\varepsilon\geq\varepsilon_0$ we set
$v_\varepsilon:=0$, otherwise we define
\[
v_\varepsilon:=\begin{cases}\hat v_\varepsilon^{j}, \quad j\neq j(\varepsilon)\\\hat v_{\varepsilon}^{j(\varepsilon)}-\frac{n_{\varepsilon}}{u_\varepsilon^{j(\varepsilon)}}\quad \mbox{otherwise} \end{cases}
\]
and clearly we have $u^t_\varepsilon v_\varepsilon=0$ for each $\varepsilon\in I$.
\end{proof}
The following result in the style of \cite{FL1} (Lemma 3.1.1, p.\ 74) prepares the inverse Cauchy-Schwarz inequality in our framework.
We follow the book of Friedlander which helps us to calculate the determinant of the coefficient matrix of a symmetric bilinear form,
which then turns out to be strictly positive, thus invertible. This is equivalent to non-degenerateness of the bilinear form
(cf.\ Lemma \ref{nondeg}):
\begin{proposition}\label{procs}
Let $g$ be a symmetric bilinear form of Lorentzian signature. If $u\in\widetilde{\mathbb R}^n$ is time-like, then $u^{\perp}$
is an $n$$-$$1$ dimensional submodule of $\widetilde{\mathbb R}^n$ and $g\mid_{u^{\perp}\times u^{\perp}}$ is positive definite.
\end{proposition}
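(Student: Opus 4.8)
The plan is to work in a basis that diagonalizes $g$ and orders the signature, reduce the statement to a statement about an explicit coefficient matrix, and then compute its determinant following the classical argument of Friedlander. First I would invoke Lemma~\ref{schur} (applied to the coefficient matrix of $g$ in some basis) together with the fact that $g$ has Lorentzian signature to fix a basis $\mathcal B=\{e_1,\dots,e_n\}$ in which $g(u,u)=-\lambda_1(u^1)^2+\lambda_2(u^2)^2+\dots+\lambda_n(u^n)^2$ with all $\lambda_i>0$; rescaling the basis vectors I may even assume $g=\diag(-1,1,\dots,1)$. By the Remark following Proposition~\ref{coraustausch}, the first coordinate $u^1$ of the time-like vector $u$ is strictly non-zero, hence invertible.

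Next I would produce an explicit generating set for $u^{\perp}$. Using $u^1$ invertible, set $w_k := e_k - \frac{g(e_k,u)}{g(e_1,u)}\,e_1$ for $k=2,\dots,n$ (equivalently, solve the single linear equation $g(x,u)=0$ for $x^1$ in terms of $x^2,\dots,x^n$); each $w_k\in u^{\perp}$, and since their coordinate matrix is upper triangular with $1$'s on the relevant diagonal, $\{w_2,\dots,w_n\}$ is linearly independent and spans $u^{\perp}$ — so $u^{\perp}$ is a free submodule of dimension $n-1$. (One must check $\widetilde{\mathbb R}^n = \langle u\rangle \oplus u^\perp$, which again uses invertibility of $g(u,u)$, but for the statement it suffices that $u^\perp$ is free of rank $n-1$.)

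Then I would compute the coefficient matrix $G:=(g(w_i,w_j))_{2\le i,j\le n}$ and show $\det G$ is strictly positive. This is the Friedlander computation: after the substitution the restricted form has coefficient matrix of the shape (in the $g=\diag(-1,1,\dots,1)$ normalization) $\delta_{ij} - \dfrac{u^i u^j}{(u^1)^2}$ for $i,j\in\{2,\dots,n\}$, whose determinant equals $1 - \dfrac{(u^2)^2+\dots+(u^n)^2}{(u^1)^2} = -\dfrac{g(u,u)}{(u^1)^2}$. Since $u$ is time-like, $g(u,u)<0$, and $u^1$ is invertible so $(u^1)^2$ is strictly positive; hence $\det G = -g(u,u)/(u^1)^2 > 0$. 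The rank-one determinant identity $\det(I - ab^t)=1-b^ta$ holds verbatim over $\widetilde{\mathbb R}$ (it is a polynomial identity), so no new algebra is needed here.

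Finally, $\det G>0$ means $G$ is invertible, so by Lemma~\ref{nondeg} the restricted form $g|_{u^\perp\times u^\perp}$ is non-degenerate. To upgrade non-degeneracy to positive-definiteness I would argue by the index: $u^\perp$ is free of rank $n-1$, so by Corollary~\ref{corbilpos} it suffices to show $g(w,w)>0$ for every free $w\in u^\perp$; equivalently, by Lemma~\ref{schur} applied to $G$ and Sylvester's inertia law (as in the proof that the index of a bilinear form is well defined), I show the index of $g|_{u^\perp}$ is $0$. Concretely, pick a symmetric representative; on components, $G_\varepsilon$ is a principal submatrix of the (Lorentzian, hence exactly one negative eigenvalue) matrix $g_\varepsilon$ restricted to a complement of $u_\varepsilon$, on which $g_\varepsilon$ is classically known to be positive definite whenever $u_\varepsilon$ is time-like and $\det G_\varepsilon>0$; the uniform bounds $|\det G_\varepsilon|\ge\varepsilon^m$ coming from strict positivity of $\det G$ let this pass to the quotient. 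The main obstacle is this last step — making the classical "restriction of a Lorentzian form to the orthogonal complement of a time-like vector is positive definite" argument work uniformly in $\varepsilon$ over representatives, i.e.\ ruling out that the restricted form acquires a sign change on a vanishing sequence of $\varepsilon$; strict positivity of $\det G$ (not mere invertibility) is exactly what prevents this, and invoking Corollary~\ref{charindexmatrix} makes the bookkeeping clean.
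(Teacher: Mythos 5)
Your argument is correct and takes a genuinely different route from the paper's. The paper chooses a basis in which $u$ is the first basis vector, observes that $\eta\in u^{\perp}$ forces the covariant component $\eta_1=0$, exhibits a basis $\xi_{(k)}^j:=g^{ij}\delta_i^k$ of $u^{\perp}$ coming from the inverse metric, reads $\det g^{AB}\det g_{ij}=g_{11}$ off a block-matrix multiplication, and then — the subtle step — establishes the identity $u^{\perp}=\{[(v_\varepsilon)_\varepsilon]: v_\varepsilon\in u_\varepsilon^\perp\}$ by appealing to Lemma~\ref{uvfree}, which is what lets it apply the classical Friedlander lemma $\varepsilon$-wise to an arbitrary $v\in u^\perp$. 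You instead normalize $g=\diag(-1,1,\dots,1)$, use the Remark after Proposition~\ref{coraustausch} to get $u^1$ invertible, and write down the explicit basis $w_k=e_k-\frac{g(e_k,u)}{g(e_1,u)}e_1$ of $u^\perp$; the crucial payoff is that, once a representative $(u_\varepsilon)_\varepsilon$ is fixed, the nets $w_{k,\varepsilon}$ are representatives of $w_k$ that lie in $u_\varepsilon^\perp$ \emph{by construction}, so the paper's Lemma~\ref{uvfree} and identity~(\ref{identitynormalspaces}) are not needed — a genuine simplification. Your determinant evaluation $\det G=-g(u,u)/(u^1)^2$ via the rank-one identity is also neater than the paper's block-matrix route and gives the same conclusion. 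The one place where your writeup is a bit loose is the final upgrade: strict positivity of $\det G$ alone does not give individual eigenvalue bounds, and "$|\det G_\varepsilon|\ge\varepsilon^m$ lets this pass to the quotient" is not the operative fact; the correct mechanism is the one you already name, namely Corollary~\ref{charindexmatrix} applied to the (non-degenerate, hence eigenvalue-invertible) $G$: for small $\varepsilon$ the $G_\varepsilon$ are classically positive definite, so all $n-1$ eigenvalues of $G_\varepsilon$ are positive, and combined with invertibility of the generalized eigenvalues this forces $\nu(G)=0$. With that phrasing tightened, the argument is complete.
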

\begin{proof}
Due to Proposition
\ref{coraustausch} we can choose a basis of $\widetilde{\mathbb R}^n$ such that $\Pi:=\langle\{u\}\rangle$ is spanned by the first vector, i.\ e.,
\[
\Pi=\{\xi\in\widetilde{\mathbb R}^n\vert \xi^A=0, A=2,\dots,n\}.
\]
Consequently we have
\[
\langle \xi,\xi\rangle\vert_{\Pi\times\Pi}=g_{11}(\xi^1)^2,
\]
and $g_{11}=\langle u,u\rangle<0$. If $\eta\in \Pi':=u^{\perp}$, then $\langle \xi,\eta\rangle=\xi^i\eta_i$, hence the covariant component $\eta_1$ must vanish (set $\xi:=u$, i.\ e., $\langle \xi,\eta\rangle=\langle u,\eta\rangle=\eta_1=0$). Therefore we have
\begin{equation}\label{uperp}
\langle \eta ,\theta\rangle\vert_{\Pi'\times\Pi'}=g^{AB}\eta_A\theta_B.
\end{equation}
Our first observation is that $u^{\perp}$ is a free ($n-1$ dimensional) submodule
with the basis $\xi_{(2)},\dots,\xi_{(n)}$ given in terms of the chosen coordinates above
via
\[
\xi_{(k)}^j:=g^{ij}\delta_i^k,\quad k=2,\dots,n
\]
(cf.\ (\ref{matmulti}) below, these are precisely the $n-1$ row vectors there!)
Due to the matrix multiplication
\begin{equation}\label{matmulti}
\left (\begin{array}{cccc} 1 & 0 &\dots &0 \\ g^{21}& g^{22}&\dots&g^{2n}\\ \dots &\dots&\dots&\dots\\g^{n1}& g^{n2}&\dots&g^{nn}\end{array}\right)(g_{ij})=\left(\begin{array}{cc} g_{11}& *\\ 0&\mathbb I_{n-1}\end{array} \right)
\end{equation}
evaluation of the determinants yields
\[
\det g^{AB}\det g_{ij}=g_{11}.
\]
And it follows from $\det g_{ij}<0, g_{11}<0$ that $\det g^{AB}>0$ which in particular shows that $g^{AB}$ is a non-degenerate symmetric matrix, $g\mid_{u^{\perp}\times u^{\perp}}$ therefore being a non-degenerate symmetric bilinear form on an $n-1$ dimensional free submodule. What is left to prove is positive definiteness of $g^{AB}$.
We claim that for each  $u\in v^{\perp}$, $g(v,v)\geq 0$. In conjunction with the fact that $g\mid_{u^{\perp}}$ is non-degenerate, it follows that $g(v,v)>0$ for any free $v\in u^{\perp}$ (this can be seen by using a suitable basis for $u^{\perp}$ which diagonalizes $g\mid_{u^{\perp}\times u^{\perp}}$, cf.\ Corollary \ref{corbilpos}) and we are done.

To show the subclaim we have to undergo an $\varepsilon$-wise argument. Let $(u_{\varepsilon})_{\varepsilon}\in\mathcal E_M(\mathbb R^n)$ be a representative of $u$ and let
$((g^{\varepsilon}_{ij})_{ij})_{\varepsilon}\in\mathcal E_M(\mathcal M_n(\mathbb R))$ be a symmetric representatives of $(g_{ij})_{ij}$, where $(g_{ij})_{ij}$ is the coefficient matrix of $g$ with respect to the canonical basis of $\widetilde{\mathbb R}^n$. For each $\varepsilon$ we denote by $g_{\varepsilon}$ the symmetric bilinear form
induced by $(g^{\varepsilon}_{ij})_{ij}$, that is, the latter shall be the coefficient matrix of $g_{\varepsilon}$ with respect to the canonical basis of $\mathbb R^n$. First we show that
\begin{equation}\label{identitynormalspaces}
u^{\perp}=\{(v_{\varepsilon})_{\varepsilon}\in\mathcal E_M(\mathbb R^n):\; \forall\; \varepsilon>0: v_{\varepsilon}\in u_{\varepsilon}^{\perp}\}+\mathcal N(\mathbb R^n),
\end{equation}
Since the inclusion relation $\supseteq$ is clear, we only need to show that $\subseteq$ holds. To this end,
pick $v\in u^{\perp}$. Then $g(u,v)=g_{ij}u^iv^j=0$ and the latter implies that for each representative $(\hat v_{\varepsilon})_{\varepsilon}$ of $v$ there exists $(n_{\varepsilon})_{\varepsilon}\in\mathcal N$
such that
\[
(g_{ij}^{\varepsilon}u_{\varepsilon}^i\hat v_{\varepsilon}^j)_{\varepsilon}=(n_{\varepsilon})_{\varepsilon}.
\]
We may interpret $(g_{ij}^{\varepsilon}u_{\varepsilon}^i) (j=1,\dots,n)$ as the representatives of the coefficients of a vector $w$ with coordinates  $w_j:=g_{ij}u^i$, and $w$ is free, since $u$ is free and $g$ is non-degenerate. Therefore we may employ Lemma \ref{uvfree} which yields a representative $(v_{\varepsilon}^j)_{\varepsilon}$ of $v$
such that
\[
(g_{ij}^{\varepsilon}u_{\varepsilon}^i
 v_{\varepsilon}^j)_{\varepsilon}=0.
\]
This precisely means that there exists a representative $(v_{\varepsilon})_\varepsilon$ of $v$ such
that for each $\varepsilon$ we have $v_\varepsilon\in u_\varepsilon^\perp$. We have thus finished the proof of identity (\ref{identitynormalspaces}).

To finish the proof of the claim, that is $g(v,v)\geq0$, we pick a representative $(v_{\varepsilon})_{\varepsilon}$ of $v$ and an $\varepsilon_0\in I$ such that for each
$\varepsilon<\varepsilon_0$ we have
\begin{enumerate}
\item each $g_{\varepsilon}$ is of Lorentzian signature
\item $u_{\varepsilon}$ is time-like
\item $v_{\varepsilon}\in u_{\varepsilon}^\perp$.
\end{enumerate}
Note that this choice is possible due to (\ref{identitynormalspaces}). Further, by the resp.\ classic result of Lorentz geometry (cf.\ \cite{FL1}, Lemma 3.\ 1.\ 1) we have
$g_{\varepsilon}(v_{\varepsilon},v_\varepsilon)\geq 0$ unless $v_{\varepsilon}=0$. Since
$(g_{ij}^{\varepsilon}v_\varepsilon^i v_\varepsilon^j)_\varepsilon$ is a representative of
$g(v,v)$ we have achieved the subclaim.
\end{proof}
\begin{corollary}\label{dirsum1}
Let $u\in\widetilde{\mathbb R}^n$ be time-like. Then $u^{\perp}:=\{v\in \widetilde{\mathbb R}^n:\langle u,v\rangle=0\}$ is a submodule of $\widetilde{\mathbb R}^n$ and  $\widetilde{\mathbb R}^n=\langle \{u\}\rangle \oplus u^{\perp}$.
\end{corollary}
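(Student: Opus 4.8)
The plan is to deduce Corollary~\ref{dirsum1} directly from Proposition~\ref{procs}, which already supplies the structural facts about $u^\perp$. First I would note that $u^\perp$ is a submodule simply because the map $v\mapsto\langle u,v\rangle$ is $\widetilde{\mathbb R}$-linear, so $u^\perp$ is its kernel. By Proposition~\ref{procs}, $u^\perp$ is a free $(n-1)$-dimensional submodule on which $g$ restricts to a positive definite form; denote by $\xi_{(2)},\dots,\xi_{(n)}$ the basis of $u^\perp$ exhibited there (in the coordinates adapted to $u$, namely the row vectors of $(g^{ij})$ below the first row). Since $u$ is time-like it is free, hence by Proposition~\ref{coraustausch} (or Theorem~\ref{freechar}) we may work in the basis used in the proof of Proposition~\ref{procs}, where $\Pi:=\langle\{u\}\rangle$ is spanned by the first coordinate vector.

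Next I would verify $\langle\{u\}\rangle\cap u^\perp=\{0\}$. If $w$ lies in both, then $w=\lambda u$ for some $\lambda\in\widetilde{\mathbb R}$ and $0=\langle u,w\rangle=\lambda\langle u,u\rangle=\lambda g_{11}$; since $g_{11}=g(u,u)<0$ is strictly negative, hence invertible, it follows that $\lambda=0$, so $w=0$. For the sum being everything, I would show that $\{u,\xi_{(2)},\dots,\xi_{(n)}\}$ is a basis of $\widetilde{\mathbb R}^n$: in the adapted coordinates the matrix whose columns are these $n$ vectors is, up to the obvious block rearrangement, the matrix appearing in equation~(\ref{matmulti}), whose determinant equals $\det(g^{AB})$ up to sign, and this was shown to be invertible (indeed $\det g^{AB}>0$) in the proof of Proposition~\ref{procs}. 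Hence by Lemma~\ref{nondeg} the change-of-basis matrix is bijective, so $\{u,\xi_{(2)},\dots,\xi_{(n)}\}$ spans $\widetilde{\mathbb R}^n$; together with the trivial intersection just established this gives $\widetilde{\mathbb R}^n=\langle\{u\}\rangle\oplus u^\perp$.

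Alternatively, and perhaps more cleanly, I would give the decomposition by an explicit projection formula: for arbitrary $w\in\widetilde{\mathbb R}^n$ write
\[
w=\frac{\langle u,w\rangle}{\langle u,u\rangle}\,u+\left(w-\frac{\langle u,w\rangle}{\langle u,u\rangle}\,u\right),
\]
which makes sense because $\langle u,u\rangle$ is invertible; the first term lies in $\langle\{u\}\rangle$ and a one-line computation using $\langle u,u\rangle\cdot\langle u,u\rangle^{-1}=1$ shows the second term is annihilated by $\langle u,\,\cdot\,\rangle$, hence lies in $u^\perp$. Combined with the trivial-intersection argument above, this yields the direct sum immediately and avoids reopening the coordinate computations of Proposition~\ref{procs}.

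The only mild subtlety — the ``main obstacle'', such as it is — is making sure invertibility of $\langle u,u\rangle$ is genuinely used: in a ring with zero divisors one cannot cancel, so the step $\lambda g_{11}=0\Rightarrow\lambda=0$ relies essentially on $g(u,u)<0$ being strictly negative (Definition of strictly negative, together with ``clearly any strictly positive number is invertible''). Everything else is formal module theory once Proposition~\ref{procs} is invoked, so the corollary is essentially a restatement packaging; I would keep the proof to a few lines, citing Proposition~\ref{procs} for freeness and non-degeneracy of $g|_{u^\perp}$ and the projection formula for the splitting.
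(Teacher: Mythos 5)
Your submodule observation, projection formula, and trivial-intersection argument are all correct, and the projection formula $v = P_u(v) + (v - P_u(v))$ with $P_u(v) = \frac{\langle u,v\rangle}{\langle u,u\rangle}u$ is exactly the decomposition the paper uses; so the two proofs agree on the main mechanism. Where you genuinely diverge is the intersection step. The paper takes the element $\xi\in\langle\{u\}\rangle\cap u^{\perp}$ and argues via signs: $\xi\in\langle\{u\}\rangle$ gives $\langle\xi,\xi\rangle\leq 0$, while $\xi\in u^{\perp}$ together with Proposition~\ref{procs} gives $\langle\xi,\xi\rangle\geq 0$; antisymmetry of the partial order forces $\langle\xi,\xi\rangle=0$, and then Lemma~\ref{posprop}(\ref{h1}) applied to the positive definite form $g\mid_{u^{\perp}\times u^{\perp}}$ yields $\xi=0$. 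You instead write $\xi=\lambda u$, note $0=\langle u,\xi\rangle=\lambda\langle u,u\rangle$, and cancel $\langle u,u\rangle$ since it is strictly negative hence invertible. Your route is more economical: it isolates the single fact actually needed (invertibility of $\langle u,u\rangle$, which is immediate from time-likeness) and bypasses both Proposition~\ref{procs} and Lemma~\ref{posprop} for this step. What the paper's route buys instead is a unified picture — the same positive-definiteness fact that powers the inverse Cauchy--Schwarz inequality also disposes of the intersection — but as a proof of this particular corollary your version is the leaner one.

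Your first alternative (exhibit $\{u,\xi_{(2)},\dots,\xi_{(n)}\}$ as a basis via the determinant identity from~(\ref{matmulti})) is also sound, though note that the matrix with rows $u,\xi_{(2)},\dots,\xi_{(n)}$ is literally the left-hand matrix of~(\ref{matmulti}) (no rearrangement or sign needed), with determinant exactly $\det g^{AB}>0$, so Lemma~\ref{nondeg} gives bijectivity of the change-of-basis map. This is more work than the projection formula and does genuinely import the coordinate computations of Proposition~\ref{procs}, so your instinct to prefer the projection argument is right.
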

\begin{proof}
The first statement is obvious. For $v\in\widetilde{\mathbb R}^n$, define the orthogonal projection of $v$ onto $\langle\{u\}\rangle$ as $P_u(v):=\frac{\langle u, v\rangle}{\langle u,u\rangle}u$. Then one sees that $v=P_u(v)+(v-P_u(v))\in \langle \{u\}\rangle+u^{\perp}$.
Finally, assume $\widetilde{\mathbb R}^n\neq \langle \{u\}\rangle \oplus u^{\perp}$, i.\ e., $\exists\; \xi\neq 0, \xi\in \langle \{u\}\rangle \cap u^{\perp} $. It follows $\langle \xi,\xi\rangle\leq 0$ and due to the preceding proposition $\xi\in u^{\perp}$ implies $\langle \xi,\xi\rangle \geq 0$. Since we have a partial ordering $\leq $, this is impossible unless $\langle \xi,\xi\rangle=0$. However by Lemma \ref{posprop} (\ref{h1}) we have $\xi=0$. This contradicts our assumption and proves that $\widetilde{\mathbb R}^n$ is the direct sum of $u$ and its orthogonal complement.
\end{proof}
The following statement on the Cauchy--Schwarz inequality is a crucial result in generalized Lorentz Geometry. It slightly differs from the classical result as is shown in Example \ref{csex}. However it seems to coincide with the classical inequality in physically relevant cases, since
algebraic complications which mainly arise from the existence of zero divisor in our scalar ring
of generalized numbers, presumably are not inherent in the latter. Our proof follows the lines of the proof of the analogous classic statement in O'Neill's book (\cite{ON}, chapter 5, Proposition 30, pp.\ 144):
\begin{theorem}(Inverse Cauchy--Schwarz inequality)\label{cs}
Let $u,\;v \in \widetilde{\mathbb R}^n$ be time-like vectors. Then
\begin{enumerate}
\item \label{cs1} $\langle u,v\rangle^2\geq \langle u,u\rangle \langle v,v\rangle$, and
\item \label{cs2} equality in (\ref{cs1}) holds if $u,v$ are linearly dependent over $\widetilde{\mathbb R}^*$, the units
in $\widetilde{\mathbb R}$.
\item \label{cs3} If $u,v$ are linearly independent, then $\langle u,v\rangle^2>\langle u,u\rangle \langle v,v\rangle$.
\end{enumerate}
\end{theorem}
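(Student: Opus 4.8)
The plan is to follow O'Neill's argument for the reverse Cauchy--Schwarz inequality, using Proposition \ref{procs} to split off the direction of $u$. First I would note that since $u$ is time-like it is free, and by Corollary \ref{dirsum1} we have the orthogonal decomposition $\widetilde{\mathbb R}^n=\langle\{u\}\rangle\oplus u^\perp$ with $g\mid_{u^\perp\times u^\perp}$ positive definite by Proposition \ref{procs}. Writing $v=au+w$ with $a\in\widetilde{\mathbb R}$ and $w\in u^\perp$ (here $a=\langle u,v\rangle/\langle u,u\rangle$, which makes sense because $\langle u,u\rangle<0$ is invertible), I compute
\[
\langle v,v\rangle=a^2\langle u,u\rangle+\langle w,w\rangle,\qquad \langle u,v\rangle=a\langle u,u\rangle,
\]
whence
\[
\langle u,v\rangle^2-\langle u,u\rangle\langle v,v\rangle=a^2\langle u,u\rangle^2-\langle u,u\rangle\bigl(a^2\langle u,u\rangle+\langle w,w\rangle\bigr)=-\langle u,u\rangle\,\langle w,w\rangle.
\]
Since $\langle u,u\rangle<0$ (so $-\langle u,u\rangle>0$) and $\langle w,w\rangle\geq 0$ by Lemma \ref{posprop}(\ref{h1}) applied to the positive definite form $g\mid_{u^\perp\times u^\perp}$, the right-hand side is $\geq 0$ (a product of a strictly positive and a nonnegative generalized number is nonnegative), which gives (\ref{cs1}).

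For (\ref{cs2}), if $u,v$ are linearly dependent over the units $\widetilde{\mathbb R}^*$, say $v=\mu u$ with $\mu$ invertible, then $w=0$, so $\langle w,w\rangle=0$ and the displayed identity gives equality; conversely one does not need the converse for the statement as phrased. For (\ref{cs3}), suppose $u,v$ are linearly independent. Then in the decomposition $v=au+w$ we must have $w\neq 0$: otherwise $v=au$, contradicting linear independence (note $a$ need not be a unit, but $v=au$ is already a nontrivial dependence relation $v-au=0$ with coefficient $1\neq 0$ on $v$). Now $w\neq 0$ together with part (\ref{h1}) of Lemma \ref{posprop} for the positive definite form $g\mid_{u^\perp\times u^\perp}$ yields $\langle w,w\rangle>0$, and since $-\langle u,u\rangle>0$ is invertible, the product $-\langle u,u\rangle\langle w,w\rangle$ is strictly positive, giving $\langle u,v\rangle^2>\langle u,u\rangle\langle v,v\rangle$.

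The main obstacle, and the only place where the generalized setting genuinely departs from the classical proof, is the step ``$w\neq 0\Rightarrow\langle w,w\rangle>0$'': in a ring with zero divisors, nonzero vectors can in principle be null for an indefinite form, so it is essential here that $g\mid_{u^\perp\times u^\perp}$ is not merely non-degenerate but \emph{positive definite}, which is exactly the content of Proposition \ref{procs} and why that proposition was proved first. A secondary point to be careful about is that strict inequalities $>$ in $\widetilde{\mathbb R}$ are not a total order, so I must argue using the explicit sign facts (invertibility of $\langle u,u\rangle$, strict positivity of $\langle w,w\rangle$ when $w\neq 0$) rather than manipulating inequalities formally; in particular, to conclude (\ref{cs3}) I use that the product of a strictly positive invertible element and a strictly positive element is strictly positive, which follows from the representative-wise characterization of $>0$ in the Preliminaries. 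Finally I would record that linear independence over $\widetilde{\mathbb R}$ is the relevant notion in (\ref{cs3}) (no invertibility of coefficients required), so the hypothesis there is genuinely weaker than the negation of the hypothesis in (\ref{cs2}), which is consistent with Example \ref{csex} showing the equality case is more subtle than in the classical theory.
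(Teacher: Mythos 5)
Your proof of parts (\ref{cs1}) and (\ref{cs2}) matches the paper's argument exactly: the decomposition $v=au+w$ via Corollary \ref{dirsum1}, the identity $\langle u,v\rangle^2-\langle u,u\rangle\langle v,v\rangle=-\langle u,u\rangle\langle w,w\rangle$, and the conclusion $\geq 0$ from $\langle w,w\rangle\geq 0$, with the easy computation for the dependent-over-units case. That part is correct.

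However, in part (\ref{cs3}) you have a genuine gap, and it sits precisely in the step you flagged as ``the main obstacle.'' You argue that $w\neq 0$, and then invoke Lemma \ref{posprop}(\ref{h1}) to conclude $\langle w,w\rangle>0$. But Lemma \ref{posprop}(\ref{h1}) says only that $h(w,w)\geq 0$ and that $h(w,w)=0$ iff $w=0$; from $w\neq 0$ you therefore get $\langle w,w\rangle\geq 0$ \emph{and} $\langle w,w\rangle\neq 0$, and in $\widetilde{\mathbb R}$ this does \emph{not} imply $\langle w,w\rangle>0$, since $>0$ means strictly positive (hence invertible). A nonnegative nonzero generalized number can perfectly well be a zero divisor: take for instance $\chi_D\cdot[(\varepsilon)_\varepsilon]$ for a suitable $D\subset I$; correspondingly, for a nonzero non-free $w$ such as $\chi_D e_1$, one has $h(w,w)=\chi_D\,h(e_1,e_1)$, which is a zero divisor, not strictly positive. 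Positive definiteness guarantees $h(w,w)>0$ only for \emph{free} $w$ — that is exactly the content of Theorem \ref{freechar}(\ref{freechar1}) and Corollary \ref{corbilpos}, not of Lemma \ref{posprop}(\ref{h1}).

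What the paper actually does, and what you need, is the stronger observation that linear independence of $u,v$ over $\widetilde{\mathbb R}$ forces $w$ to be \emph{free}, not merely nonzero: if $\lambda w=0$ for some $\lambda\neq 0$, then $\lambda v-\lambda a\,u=0$ is a nontrivial relation (the coefficient of $v$ is $\lambda\neq 0$), contradicting linear independence. Once $w$ is known to be free in the free submodule $u^\perp$, Theorem \ref{freechar}(\ref{freechar1}) (applied with the positive definite form $g\mid_{u^\perp\times u^\perp}$ from Proposition \ref{procs}) gives $\langle w,w\rangle>0$, and the rest of your argument goes through. So the fix is small — replace ``$w\neq 0$'' by ``$w$ is free'' and cite Theorem \ref{freechar} instead of Lemma \ref{posprop} — but as written the step does not follow from what you cite, and it is exactly the step where the distinction between nonzero and free (i.e., between $\neq 0$ together with $\geq 0$, versus $>0$) is essential in the generalized setting.
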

\begin{proof}
In what follows, we keep the notation of the preceding corollary. Due to Corollary \ref{dirsum1}, we may decompose $u$ in a unique way
$v=a u+w$ with $a\in\widetilde{\mathbb R},\, w\in u^{\perp}$. Since $u$ is time-like,
\[
\langle v,v\rangle=a^2 \langle u,u\rangle+\langle w,w\rangle<0.
\]
Then
\begin{equation}\label{eqcs}
\langle u,v\rangle^2=a^2\\\langle u,u\rangle^2=(\langle v,v\rangle-\langle w,w\rangle)\langle u,u\rangle\geq \langle u,u\rangle \langle v,v\rangle
\end{equation}
since $\langle w,w\rangle\geq 0$ and this proves (\ref{cs1}). \\In order to prove (\ref{cs2}), assume $u,v$ are linearly dependent over $\widetilde{\mathbb R}^*$, that is, there exist $\lambda,\,\mu$, both units in $\widetilde{\mathbb R}$ such that
$\lambda u+\mu v=0$. Then $u=-\frac{\mu}{\lambda} v$ and equality in (\ref{cs2}) follows.\\ Proof of (\ref{cs3}): Assume now, that
$u,v$ are linearly independent. We show that this implies that $w$ is free.
For the sake of simplicity we assume without loss of generality that $\langle u,u\rangle=\langle v,v\rangle=-1$ and we choose
a basis $\mathcal B=\{e_1,\dots,e_n\}$ with $e_1=u$ due to Proposition \ref{coraustausch}.
Then with respect to the new basis we can write $u=(1,0,\dots,0)^t$, $v=(v^1,\dots,v^n)^t$, $w=v-P_u(v)=(v^1-(-g(v,e_1)), v^2,\dots,v^n)^t=(0,w^2,\dots,w^n)^t$.
Assume $\exists\; \lambda\neq0: \lambda w=0$, then
\[
(\lambda v^1)u+\lambda v=\lambda v^1 e_1-\lambda g(v,e_1) e_1=\lambda v^1 e_1-\lambda v^1 e_1=0
\]
which implies that $u,v$ are linearly dependent. This contradicts the assumption in (\ref{cs3}). Thus $w$ indeed is free.
Applying Theorem \ref{freechar} yields $\langle w,w\rangle>0$. A glance at
(\ref{eqcs}) shows that the proof of (\ref{cs3}) is finished.
\end{proof}
The following example indicates what happens when in \ref{cs} (\ref{cs2}) linear dependence over the units in $\widetilde{\mathbb R}$ is replaced by
linear dependence over $\widetilde{\mathbb R}$:
\begin{example}\label{csex}
Let $\lambda\in\widetilde{\mathbb R}$ be an idempotent zero divisor, and write $\alpha:=[(\varepsilon)_{\varepsilon}]$. Let $\eta=\diag(-1,1\dots,1)$ be the Minkowski metric.
Define $u=(1,0,\dots,0)^t,v=(1,\lambda\alpha,0,\dots,0)^t$. Clearly $\langle u,u\rangle=-1,\langle v,v\rangle=-1+\lambda^2\alpha^2<0$
But
\[
\langle u,v\rangle^2=1\neq \langle u,u\rangle\langle v,v\rangle=-(-1+\lambda^2\alpha^2)=1-\lambda^2\alpha^2.
\]
However, also the strict relation fails, i.\ e., $\langle u,v\rangle^2 \not> \langle u,u\rangle\langle v,v\rangle$, since $\lambda$
is a zero divisor.
\end{example}
\section[Further algebraic properties of $\widetilde{\mathbb R}^n$]{Further algebraic properties of finite dimensional modules over the ring of generalized
numbers} This section is devoted to a discussion of direct summands
of submodules inside $\widetilde{\mathbb R}^n$. The question first
involves free submodules of arbitrary dimension. However,
we establish a generalization of Theorem \ref{freechar}
(\ref{freeloader8}) not only with respect to the dimension of the
submodule; the direct summand we construct is also an orthogonal
complement with respect to a given positive definite symmetric
bilinear form. Having established this in \ref{semi}, we
subsequently show that $\widetilde{\mathbb R}^n$ is not semisimple,
i.\ e., non-free submodules in our module do not admit direct
summands.
\subsection{Direct summands of free submodules}\label{semi}
The existence of positive bilinear forms on $\widetilde{\mathbb R}^n$ ensures the existence of direct summands
of free submodules of $\widetilde{\mathbb R}^n$:
\begin{theorem}
Any free submodule $\mathfrak m$ of $\widetilde{\mathbb R}^n$ has a direct summand.
\end{theorem}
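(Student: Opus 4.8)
The plan is to carry over the construction used in Corollary \ref{dirsum1}, replacing the Lorentzian scalar product by a positive definite one and allowing $\mathfrak m$ to have arbitrary dimension. Concretely, equip $\widetilde{\mathbb R}^n$ with the standard positive definite symmetric bilinear form $h(x,y):=x^ty=\sum_{i=1}^n x^iy^i$; its coefficient matrix with respect to the canonical basis is $\mathbb I$, all of whose eigenvalues equal $1>0$, so $h$ is positive definite in the sense of Definition \ref{bilformdef}. Fix a basis $\{w_1,\dots,w_k\}$ of $\mathfrak m$ and take as candidate direct summand the orthogonal complement
\[
\mathfrak m^{\perp}:=\{\,v\in\widetilde{\mathbb R}^n:\ h(v,w)=0\ \text{for all }w\in\mathfrak m\,\}=\bigcap_{i=1}^k\ker\bigl(v\mapsto h(v,w_i)\bigr),
\]
which, being an intersection of kernels of linear maps, is a submodule of $\widetilde{\mathbb R}^n$. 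It then remains to prove $\mathfrak m\cap\mathfrak m^{\perp}=\{0\}$ and $\mathfrak m+\mathfrak m^{\perp}=\widetilde{\mathbb R}^n$, i.e.\ that $\widetilde{\mathbb R}^n=\mathfrak m\oplus\mathfrak m^{\perp}$.

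The decisive point is that the Gram matrix $G:=(h(w_i,w_j))_{ij}\in\widetilde{\mathbb R}^{k^2}$ is invertible. Indeed, by Lemma \ref{posprop}(\ref{h2}) the restriction $h|_{\mathfrak m}$ is again a positive definite symmetric bilinear form on the free module $\mathfrak m\cong\widetilde{\mathbb R}^k$, so $G$ is a positive definite symmetric matrix; its eigenvalues are strictly positive by Lemma \ref{schur} (invoked via Theorem \ref{freechar}(\ref{freechar1}), exactly as in the proof of Lemma \ref{posprop}), hence $\det G=\prod_i\lambda_i$ is invertible and $G^{-1}=:(g^{ij})_{ij}$ exists in $\widetilde{\mathbb R}^{k^2}$. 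From this both equalities follow. If $v=\sum_i a^iw_i\in\mathfrak m\cap\mathfrak m^{\perp}$, then $h(v,w_l)=\sum_i a^iG_{il}=0$ for every $l$, i.e.\ $a^tG=0$ with $a=(a^1,\dots,a^k)^t$, so $a=0$ and $v=0$ (equivalently, one may note $h(v,v)=0$ and apply Lemma \ref{posprop}(\ref{h1}) to $h|_{\mathfrak m}$). For the sum, define for $v\in\widetilde{\mathbb R}^n$ the orthogonal projection
\[
P_{\mathfrak m}(v):=\sum_{i,j=1}^k g^{ij}\,h(w_j,v)\,w_i\ \in\ \mathfrak m,
\]
and check, using symmetry of $G$ and $\sum_i G_{li}g^{ij}=\delta_{lj}$, that for each $l$
\[
h\!\left(w_l,\ v-P_{\mathfrak m}(v)\right)=h(w_l,v)-\sum_{i,j=1}^k g^{ij}h(w_j,v)\,h(w_l,w_i)=h(w_l,v)-\sum_{j=1}^k\delta_{lj}\,h(w_j,v)=0,
\]
so $v-P_{\mathfrak m}(v)\in\mathfrak m^{\perp}$ and $v=P_{\mathfrak m}(v)+(v-P_{\mathfrak m}(v))\in\mathfrak m+\mathfrak m^{\perp}$.

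I do not anticipate a genuine obstacle beyond the invertibility of $G$, and that is precisely the step where the positivity theory developed in this section (Theorem \ref{freechar}, Lemma \ref{posprop}, Lemma \ref{schur}) is essential: over a commutative ring with zero divisors the Gram matrix of a basis need not be invertible for an arbitrary bilinear form, so the choice of a \emph{positive definite} $h$ is what makes the classical projection argument go through verbatim. This statement thus simultaneously generalizes Corollary \ref{dirsum1} (from the one-dimensional, Lorentzian case) and Theorem \ref{freechar}(\ref{freeloader8}) (from a single free vector to a free submodule), and yields the stronger conclusion that the direct summand can be taken to be an orthogonal complement with respect to any prescribed positive definite symmetric bilinear form.
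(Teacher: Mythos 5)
Your proof is correct and follows essentially the same route as the paper: pick a positive definite symmetric bilinear form $h$, invoke Lemma \ref{posprop}(\ref{h2}) to see that its restriction to $\mathfrak m$ is again positive definite, and use that to build an orthogonal projection onto $\mathfrak m$ whose kernel is $\mathfrak m^\perp$. The only (cosmetic) difference is that the paper first passes to an $h$-orthonormal basis of $\mathfrak m$ via Lemma \ref{schur} and writes $P_{\mathfrak m}(v)=\sum_i\langle v,e_i\rangle e_i$, whereas you keep an arbitrary basis $\{w_1,\dots,w_k\}$ and express $P_{\mathfrak m}$ through the inverse of the Gram matrix $G$, which avoids extracting square roots of strictly positive generalized numbers; both variants hinge on exactly the same facts (invertibility of the Gram matrix of $h|_{\mathfrak m}$, and Lemma \ref{posprop}(\ref{h1}) for disjointness).
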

\begin{proof}
Denote by $\mathfrak m$ the free submodule in question with $\dim \mathfrak m=k$, let $h$ be a positive definite symmetric bilinear form on $\mathfrak m$ and $h_{\mathfrak m}$ its restriction to $\mathfrak m$. Now, due to Lemma \ref{posprop} (\ref{h2}), $h_{\mathfrak m}$ is a positive definite symmetric bilinear form. In particular, there exists an orthogonal basis $\mathcal B_{\mathfrak m}:=\{e_1,\dots,e_k\}$ of $\mathfrak m$ with respect to $h_{\mathfrak m}$. We further may assume that the latter one is orthonormal. Denote by $P_{\mathfrak m}$ the orthogonal projection on $\mathfrak m$ which due to the orthogonality of $\mathcal B_{\mathfrak m}$ may be written in the form
\[
P_{\mathfrak m}:\;\widetilde{\mathbb R}^n\rightarrow \mathfrak m,\; v\mapsto \sum_{i=1}^k\langle v,e_i\rangle e_i.
\]
Finally, we show ${\mathfrak  m}^{\perp}=\ker P_{\mathfrak m}$:
\begin{eqnarray}\nonumber
{\mathfrak m}^{\perp}&=&\{v\in\widetilde{\mathbb R}^n\mid \forall\; u\in \mathfrak m: h(v,u)=0\}=\\\nonumber
&=&\{v\in\widetilde{\mathbb R}^n\mid \forall\; i=1,\dots,k: h(v,e_i)=0\}=\\\nonumber
&=&\{v\in \widetilde{\mathbb R}^n\mid P_{\mathfrak m}(v)=0\}=\ker P_{\mathfrak m}.
\end{eqnarray}
Where both of the last equalities are due to the definition of $P_{\mathfrak m}$ and the fact that $B_{\mathfrak m}$ is a basis of $\mathfrak m$.
As always in modules, ${\mathfrak m}^{\perp}=\ker P_{\mathfrak m}\Leftrightarrow {\mathfrak m}^{\perp}$ is a direct summand and we are done.
An alternative end of this proof is provided by Lemma \ref{posprop}: Since we have $\mathfrak m+\mathfrak m^{\perp}=\widetilde{\mathbb R}^n$,
we only need to show that this sum is a direct one. But Lemma \ref{posprop} (\ref{h1}) shows that $0 \neq u \in \mathfrak m\cap {\mathfrak m}^{\perp}$ is absurd, since $h$ is positive definite.
\end{proof}
We thus have also shown (cf.\ Theorem \ref{freechar}):
\begin{corollary}\label{orthdecriem}
Let $w\in\widetilde{\mathbb R}^n$ be free and let $h$ be a positive definite symmetric bilinear form. Then $\widetilde{\mathbb R}^n=\langle \{w\}\rangle \oplus w^{\perp}$.
\end{corollary}
We therefore have added a further equivalent property to Theorem \ref{freechar}.
\subsection{$\widetilde{\mathbb R}^n$ is not semisimple}\label{secsemi}
In this section we show that $\widetilde{\mathbb R}^n$ is not semisimple. Recall that a module $B$ over a ring $R$ is
called simple, if $RA\neq \{0\}$ and if $A$ contains no non-trivial strict submodules. For the convenience of the reader, we recall the following fact on modules (e.\ g., see \cite{Hungerford}, p.\ 417):
\begin{theorem}\label{charsemisimple}
The following conditions on a nonzero module $A$ over a ring $R$ are equivalent:
\begin{enumerate}
\item \label{charsemisimple1} $A$ is the sum of a family of simple submodules.
\item \label{charsemisimple2} $A$ is the direct sum of a family of simple submodules.
\item For every nonzero element a of $A$, $Ra\neq 0$; and every submodule $B$ of $A$ is a direct summand (that is, $A=B\oplus C$ for some submodule $C$.
\end{enumerate}
\end{theorem}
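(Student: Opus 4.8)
The plan is to prove the cycle of implications (\ref{charsemisimple1}) $\Rightarrow$ (\ref{charsemisimple2}) $\Rightarrow$ (iii) $\Rightarrow$ (\ref{charsemisimple1}), where (iii) denotes the third of the listed conditions. Everything rests on one exchange-type lemma that I would isolate at the outset: if $A=\sum_{i\in\Lambda}S_i$ is a sum of simple submodules and $B\subseteq A$ is \emph{any} submodule, then there is a subset $\Lambda'\subseteq\Lambda$ with $A=B\oplus\bigoplus_{i\in\Lambda'}S_i$. To prove this I would apply Zorn's lemma to the family of those $\Lambda'\subseteq\Lambda$ for which the sum $B+\sum_{i\in\Lambda'}S_i$ is direct; since directness only ever involves finitely many summands at a time, this family is closed under unions of chains. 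Taking a maximal $\Lambda'$ and setting $C:=B\oplus\bigoplus_{i\in\Lambda'}S_i$, if $C\neq A$ then some $S_i$ fails to lie in $C$, simplicity of $S_i$ forces $S_i\cap C=0$, and $\Lambda'\cup\{i\}$ contradicts maximality; hence $C=A$.

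Granting the lemma, the first two implications are immediate: (\ref{charsemisimple1}) $\Rightarrow$ (\ref{charsemisimple2}) is the case $B=0$, and (\ref{charsemisimple2}) $\Rightarrow$ (\ref{charsemisimple1}) is trivial. For (\ref{charsemisimple2}) $\Rightarrow$ (iii) I would argue that, writing $A=\bigoplus_i S_i$, the lemma applied to an arbitrary submodule $B$ supplies a complement, so every submodule is a direct summand; and for $a\neq 0$ one picks a nonzero component $\pi_i(a)$ in some simple summand $S_i$, notes that $\{x\in S_i:Rx=0\}$ is a proper submodule of $S_i$ (proper because $RS_i\neq0$, hence the zero submodule by simplicity), so $R\pi_i(a)\neq0$ and therefore $Ra\neq0$.

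The real work is (iii) $\Rightarrow$ (\ref{charsemisimple1}), and this is where I expect the main obstacle. First I would record that condition (iii) is inherited by submodules: if $M\subseteq N\subseteq A$, then writing $A=M\oplus M'$ and using the modular law gives $N=M\oplus(N\cap M')$, while the condition $Rx\neq0$ for $x\neq0$ is inherited trivially. The crux is then the assertion that every nonzero submodule of $A$ contains a simple submodule: given $0\neq a$ in such a submodule, choose $0\neq b\in Ra$ (possible by (iii)), and by Zorn pick a submodule $C\subseteq Ra$ maximal with respect to $b\notin C$; the inherited property yields $Ra=C\oplus D$ with $D\neq0$, and $D$ turns out to be simple --- a nontrivial splitting $D=D_1\oplus D_2$ would, by maximality of $C$, force $b\in C+D_1$ and $b\in C+D_2$, and comparing the two resulting expressions for $b$ inside $C\oplus D$ makes the $D$-components vanish, so $b\in C$, a contradiction; simplicity also requires $RD\neq0$, again from (iii). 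Finally, letting $A_0$ be the sum of all simple submodules of $A$ and writing $A=A_0\oplus A_1$ by (iii), a nonzero $A_1$ would contain a simple submodule $S$ by the previous step, forcing $S\subseteq A_0\cap A_1=0$ --- absurd; hence $A=A_0$ is a sum of simple submodules, which is (\ref{charsemisimple1}). The delicate point throughout is precisely this Zorn's-lemma extraction of a simple submodule from an arbitrary cyclic submodule under hypothesis (iii); the remaining steps are bookkeeping with the modular law and with direct sums.
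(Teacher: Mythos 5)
The paper cites this as a standard fact from Hungerford (p.\ 417) and does not reproduce a proof of it, so there is nothing in-paper to compare against; your argument is a correct and complete proof along exactly the standard textbook lines, isolating the Zorn's-lemma exchange lemma that realizes any submodule as a direct summand inside a sum of simples, using the modular law to show that condition (iii) passes to submodules, and applying Zorn a second time to extract a simple submodule of an arbitrary nonzero cyclic submodule. No gaps.
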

Such a module is called semisimple. However, property (\ref{charsemisimple1}) is violated in $\widetilde{\mathbb R}^n$ $(n\geq 1)$:
\begin{proposition}
Every submodule $A\neq \{0\}$ in $\widetilde{\mathbb R}^n$ contains a strict submodule.
\end{proposition}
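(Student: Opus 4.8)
The plan is to produce, inside an arbitrary nonzero submodule $A\subseteq\widetilde{\mathbb R}^n$, a \emph{cyclic} submodule $B$ with $\{0\}\subsetneq B\subsetneq A$. The engine is the supply of nontrivial idempotents $\chi_S\in\widetilde{\mathbb R}$: if $0\neq a\in A$ and $e=\chi_S$ is a suitably chosen such idempotent, then $ea\in A$ will be nonzero, while $(1-e)a\in A$ will also be nonzero and will witness that $ea$ cannot generate all of $A$, because $(1-e)(ea)=0$.

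Concretely, first I would fix $0\neq a\in A$ together with a moderate representative $(a_\varepsilon)_\varepsilon$. Since $a\neq 0$, the net $(a_\varepsilon)_\varepsilon$ is not negligible, so there are $m\in\mathbb N$ and a (without loss of generality strictly decreasing) sequence $\varepsilon_k\to 0$ with $|a_{\varepsilon_k}|>\varepsilon_k^{\,m}$ for every $k$. Split $(\varepsilon_k)_k$ into two disjoint subsequences $(\varepsilon^{(1)}_k)_k$ and $(\varepsilon^{(2)}_k)_k$ (say, the odd- and even-indexed terms), both still tending to $0$, and put $S:=\{\varepsilon^{(1)}_k:k\in\mathbb N\}$ and $e:=\chi_S$, so that $e^2=e$. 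The element $ea\in A$ has representative $(\chi_S(\varepsilon)\,a_\varepsilon)_\varepsilon$, which agrees with $(a_\varepsilon)_\varepsilon$ on $S$; since $|a_{\varepsilon^{(1)}_k}|/(\varepsilon^{(1)}_k)^{m+1}>1/\varepsilon^{(1)}_k\to\infty$, this representative is not $O(\varepsilon^{m+1})$, hence not negligible, so $ea\neq 0$. Running the same argument along $(\varepsilon^{(2)}_k)_k\subseteq I\setminus S$ with the representative $((1-\chi_S(\varepsilon))a_\varepsilon)_\varepsilon$ of $(1-e)a$ shows $(1-e)a\neq 0$.

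Now set $B:=\widetilde{\mathbb R}\,(ea)$, the cyclic submodule of $\widetilde{\mathbb R}^n$ generated by $ea$. Clearly $B\subseteq A$, and $B\neq\{0\}$ by the previous step. It remains to check $B\neq A$, and for this I would use $(1-e)a\in A$: were $(1-e)a\in B$, we would have $(1-e)a=\mu\,ea$ for some $\mu\in\widetilde{\mathbb R}$; multiplying by $1-e$ and using $e(1-e)=0$ and $(1-e)^2=1-e$ yields $(1-e)a=0$, contradicting the previous paragraph. Hence $(1-e)a\in A\setminus B$, so $\{0\}\subsetneq B\subsetneq A$; that is, $B$ is a non-trivial strict submodule of $A$, which is what the proposition asserts.

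The only delicate point is the choice of $S$: one must arrange that $a$ "survives" multiplication by both $\chi_S$ and $\chi_{I\setminus S}$, i.e.\ that neither $ea$ nor $(1-e)a$ is negligible. This is exactly why one passes to two disjoint subsequences of the witnessing sequence $(\varepsilon_k)_k$ instead of taking $S=\{\varepsilon_k:k\in\mathbb N\}$ wholesale — for the latter choice $(1-e)a$ could well vanish, namely if $(a_\varepsilon)_\varepsilon$ happens to be negligible off $\{\varepsilon_k:k\in\mathbb N\}$. Everything else is routine idempotent arithmetic in $\widetilde{\mathbb R}$. Finally, combined with Theorem \ref{charsemisimple}\,(\ref{charsemisimple1}) this gives that $\widetilde{\mathbb R}^n$ is not semisimple: a nonzero module that is a sum of simple submodules must contain a simple submodule, which the proposition forbids.
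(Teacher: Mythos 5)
Your proof is correct and follows essentially the same approach as the paper's: both arguments exploit the idempotents $\chi_S\in\widetilde{\mathbb R}$ by chopping a nonzero $u\in A$ along a subset $S$ of the index set $I$ on which a representative stays bounded below, so that $\chi_S u$ and $(1-\chi_S)u$ (equivalently, $u$ restricted to a complementary sub-sequence) are both nonzero and annihilate each other. Your write-up is a bit more explicit than the paper's: the paper first tries $D:=\{\varepsilon_k\}$ and, if $\langle\chi_D u\rangle=A$, passes to $\bar D:=\{\varepsilon_{2k}\}$ "in the same way" without spelling out why $\langle\chi_{\bar D}u\rangle\subsetneq A$; you instead split into two disjoint subsequences from the start and verify the key strictness step directly, namely that $(1-e)a\notin\langle ea\rangle$ because multiplication by $1-e$ kills $ea$. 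One cosmetic difference: the paper tracks a single coordinate $\lambda_1\neq 0$ of $u$ while you use the full vector $(a_\varepsilon)_\varepsilon$; both give the needed subsequence bound. Your closing remark correctly links the proposition to Theorem~\ref{charsemisimple} to conclude non-semisimplicity, exactly as the paper does.
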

\begin{proof}
Let $u\in A$, $u\neq 0$. We may write $u$ in terms of the canonical basis $e_i\;(i=1,\dots,n)$, $u=\sum_{i=1}^n \lambda_ie_i$ and without loss of generality we may assume $\lambda_1\neq 0$. Denote a representative of $\lambda_1$ by $(\lambda_1^{\varepsilon})_{\varepsilon}$.
$\lambda_1\neq 0$ in particular ensures the existence of a zero sequence  $\varepsilon_k \searrow 0$ in $I$ and an $m>0$ such that
for all $k\geq 1$, $\vert \lambda_1^{\varepsilon_k}\vert\geq \varepsilon_k^m$. Define $D:=\{\varepsilon_k\mid k\geq 1\}\subset I$, let
$\chi_D\in\widetilde{\mathbb R}$ be the characteristic function on $D$. Clearly, $\chi_D u\in A$, furthermore, if the submodule generated by $\chi_D u$ is not a strict submodule of $A$, one may replace $D$ by $\bar D:=\{\varepsilon_{2k}\mid k\geq 1\}$ to achieve one in the same way, which however is a strict submodule of $A$ and we are done.
\end{proof}
The preceding proposition in conjunction with Theorem \ref{charsemisimple} gives rise to the following conclusion:
\begin{corollary}
$\widetilde{\mathbb R}^n$ is not semisimple.
\end{corollary}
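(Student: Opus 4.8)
The plan is to obtain the corollary immediately by combining the preceding Proposition with the characterization of semisimplicity recorded in Theorem~\ref{charsemisimple}. First I would note that $\widetilde{\mathbb R}^n$ is a nonzero $\widetilde{\mathbb R}$--module, since by standing assumption $n\geq 1$. Next, recalling that a simple submodule is by definition a nonzero submodule admitting no non-trivial strict submodule, I would read off from the Proposition --- which states that \emph{every} nonzero submodule $A$ of $\widetilde{\mathbb R}^n$ contains a strict submodule --- that $\widetilde{\mathbb R}^n$ contains no simple submodule at all.

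The rest is a contradiction argument. Were $\widetilde{\mathbb R}^n$ semisimple, condition~(\ref{charsemisimple1}) of Theorem~\ref{charsemisimple} would express it as the sum of a family of simple submodules; this family cannot be empty, as the empty sum is the zero module while $\widetilde{\mathbb R}^n\neq\{0\}$, so at least one simple submodule would sit inside $\widetilde{\mathbb R}^n$, contradicting the previous step. Hence condition~(\ref{charsemisimple1}) fails and, by Theorem~\ref{charsemisimple}, $\widetilde{\mathbb R}^n$ is not semisimple.

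I do not expect any genuine obstacle here: the substantive content has already been isolated in the preceding Proposition, whose proof splits off a proper nonzero submodule of an arbitrary nonzero submodule using the characteristic functions $\chi_D$ attached to suitable zero sequences. The only point that deserves an explicit word is the non-emptiness of the indexing family of simple submodules, which is forced by $n\geq 1$; everything else is a direct invocation of the two cited results.
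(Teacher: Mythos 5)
Your argument is correct and follows exactly the route the paper intends: the preceding Proposition rules out the existence of any simple submodule, and Theorem~\ref{charsemisimple}(\ref{charsemisimple1}) then immediately forbids semisimplicity of the nonzero module $\widetilde{\mathbb R}^n$. The paper simply states "in conjunction with Theorem~\ref{charsemisimple}" without writing out the contradiction; you have merely made explicit the step the paper leaves implicit.
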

\section{Energy tensors and a dominant energy condition}\label{energygeneralizedsection} In this section we
elaborate a dominant energy condition in the spirit of Hawking and
Ellis (\cite{HE}) for generalized energy tensors. The latter will be
constructed as tensor products of generalized Riemann metrics derived
from a (generalized) Lorentzian metric and time-like vector fields.
They shall be helpful for an application of the Stokes theorem to
generalized energy integrals in the course of establishing a (local)
existence and uniqueness theorem for the wave equation on a
generalized space-time (cf. \cite{VW}, however ongoing research
treats a wide range of generalized space-times, cf.\ chapter \ref{chapterwaveeq}). Throughout this
section $g$ denotes a symmetric bilinear form of Lorentz signature
on $\widetilde{\mathbb R}^n$, and for $u,v\in\widetilde{\mathbb
R}^n$ we write $\langle u,v\rangle:=g(u,v)$. We introduce the notion
of a (generalized) Lorentz transformation:
\begin{definition}
We call a linear map $L:\widetilde{\mathbb R}^n\rightarrow\widetilde{\mathbb R}^n$ a Lorentz transformation, if it preserves
the metric, that is
\[
\forall \xi \in \widetilde{\mathbb R}^n:\;\langle L\xi,L\eta\rangle=\langle \xi,\eta\rangle
\]
or equivalently,
\[
L^{\mu}_{\lambda}L^{\nu}_{\rho}g_{\mu\nu}=g_{\lambda\rho}.
\]
\end{definition}
In the original (classical) setting the following lemma is an exercise in a course on relativity \cite{RB}:
\begin{lemma}\label{lorentz}
Let $\xi, \eta\in \widetilde{\mathbb R}^n$ be time-like unit vectors with the same time-orientation. Then
\[
L^{\mu}_{\lambda}:=\delta^{\mu}_{\lambda}-2\eta^{\mu}\xi_{\lambda}+\frac{(\xi^{\mu}+\eta^{\mu})(\xi_{\lambda}+\eta_{\lambda})}{1-\langle \xi,\eta\rangle}
\]
is a Lorentz transformation with the property $L\xi=\eta$.
\end{lemma}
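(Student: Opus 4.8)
The statement is the exact generalized analogue of a classical computation, so the natural strategy is to verify the two claimed properties — namely $L^\mu_\lambda L^\nu_\rho g_{\mu\nu} = g_{\lambda\rho}$ and $L\xi = \eta$ — by direct algebraic manipulation in $\widetilde{\mathbb R}$, using only the ring axioms together with the normalization $\langle\xi,\xi\rangle = \langle\eta,\eta\rangle = -1$ and the hypothesis $\langle\xi,\eta\rangle < 0$. The one genuinely new point over the classical case is that the formula divides by $1 - \langle\xi,\eta\rangle$, so the first thing I would do is check that this denominator is invertible in $\widetilde{\mathbb R}$: since $\langle\xi,\eta\rangle < 0$ (same time-orientation), we have $-\langle\xi,\eta\rangle > 0$, hence $1 - \langle\xi,\eta\rangle = 1 + (-\langle\xi,\eta\rangle)$ is a sum of a strictly positive number and $1$, so it is strictly positive and therefore invertible. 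This legitimizes writing the fraction and lets the rest of the argument proceed purely formally.

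Next I would verify $L\xi = \eta$. Contracting $L^\mu_\lambda$ with $\xi^\lambda$ and using $\xi_\lambda\xi^\lambda = \langle\xi,\xi\rangle = -1$ and $\eta_\lambda\xi^\lambda = \langle\xi,\eta\rangle$, one gets
\[
L^\mu_\lambda \xi^\lambda = \xi^\mu - 2\eta^\mu(-1) + \frac{(\xi^\mu+\eta^\mu)(-1 + \langle\xi,\eta\rangle)}{1-\langle\xi,\eta\rangle} = \xi^\mu + 2\eta^\mu - (\xi^\mu+\eta^\mu) = \eta^\mu,
\]
where the key cancellation is that $(-1+\langle\xi,\eta\rangle)/(1-\langle\xi,\eta\rangle) = -1$, valid once the denominator is invertible. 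This is a finite sequence of identities in $\widetilde{\mathbb R}$, each of which holds because it holds for the corresponding classical scalars, so no subtlety arises here.

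The main bulk of the work is checking $L^\mu_\lambda L^\nu_\rho g_{\mu\nu} = g_{\lambda\rho}$. Expanding the product of the two copies of $L$ produces nine terms; I would group them and use repeatedly the relations $\xi_\mu\xi^\mu = -1$, $\eta_\mu\eta^\mu = -1$, $\xi_\mu\eta^\mu = \langle\xi,\eta\rangle =: c$, together with $g_{\mu\nu}\eta^\mu = \eta_\nu$ etc. to collapse everything. The computation is formally identical to the classical one (the reference \cite{RB} does it), and since every manipulation is an identity between polynomial/rational expressions in the generalized scalars $\xi^i, \eta^i, g_{ij}, (1-c)^{-1}$ — all of which are legitimate elements of $\widetilde{\mathbb R}$ once $(1-c)$ is shown invertible — the identity transfers verbatim. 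The only obstacle, and it is a mild one, is bookkeeping: one must be careful that no step implicitly uses cancellation of a non-invertible element or an order property that fails in $\widetilde{\mathbb R}$; inspecting the classical derivation shows it uses only ring operations and the single inversion of $1-c$, so the argument goes through. Concretely I would carry it out by choosing representatives and noting that $\langle\xi,\eta\rangle < 0$ gives, for $\varepsilon$ small, $\langle\xi_\varepsilon,\eta_\varepsilon\rangle \le 0$, hence $1 - \langle\xi_\varepsilon,\eta_\varepsilon\rangle \ge 1$; then $L_\varepsilon^\mu{}_\lambda$ is the classical Lorentz boost taking $\xi_\varepsilon$ to $\eta_\varepsilon$, so $L_\varepsilon^\mu{}_\lambda L_\varepsilon^\nu{}_\rho g^\varepsilon_{\mu\nu} = g^\varepsilon_{\lambda\rho}$ and $L_\varepsilon\xi_\varepsilon = \eta_\varepsilon$ hold exactly for each such $\varepsilon$, and passing to classes in $\widetilde{\mathbb R}$ finishes the proof.
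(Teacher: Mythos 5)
The paper offers no proof of this lemma: it simply cites it as an exercise from a relativity course (\cite{RB}), so there is nothing to compare your argument to, and your plan is the natural one. Your identification of the essential new ingredient is exactly right: once $1-\langle\xi,\eta\rangle$ is seen to be strictly positive (hence invertible) because $-\langle\xi,\eta\rangle>0$ and adding $1$ preserves strict positivity, the verifications $L\xi=\eta$ and $L^\mu_\lambda L^\nu_\rho g_{\mu\nu}=g_{\lambda\rho}$ reduce to ring identities in $\widetilde{\mathbb R}$ that use only $\langle\xi,\xi\rangle=\langle\eta,\eta\rangle=-1$ and the inversion of $1-\langle\xi,\eta\rangle$; your explicit check of $L\xi=\eta$ is correct, and expanding the nine terms of $L^\mu_\lambda L^\nu_\rho g_{\mu\nu}$ using $(\xi^\mu+\eta^\mu)(\xi_\mu+\eta_\mu)=-2(1-\langle\xi,\eta\rangle)$ does collapse to $g_{\lambda\rho}$.

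One caution about your final ``concrete'' $\varepsilon$-wise realization: the claim that $L_\varepsilon\xi_\varepsilon=\eta_\varepsilon$ and $L^\mu_{\varepsilon\,\lambda}L^\nu_{\varepsilon\,\rho}g^\varepsilon_{\mu\nu}=g^\varepsilon_{\lambda\rho}$ hold \emph{exactly} for small $\varepsilon$ is not right as stated, because the normalization $\langle\xi,\xi\rangle=\langle\eta,\eta\rangle=-1$ holds only modulo $\mathcal N(\mathbb R)$ at the level of representatives, so $\xi_\varepsilon$, $\eta_\varepsilon$ are not honest $g_\varepsilon$-unit vectors and the classical boost identities hold only up to negligible corrections. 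This does not endanger the proof --- your preceding algebraic argument over $\widetilde{\mathbb R}$ is self-contained and already complete --- but the $\varepsilon$-wise paragraph should be either dropped, or rephrased as an estimate showing that the failure to be exact is a negligible net and hence vanishes upon passing to classes.
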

The following proposition is a crucial ingredient in the subsequent proof of the (generalized) dominant energy condition for certain energy tensors of this section:
\begin{proposition}\label{metrconstr}
Let $u,v \in \widetilde{\mathbb R}^n$ be time-like vectors such that $\langle u,v\rangle<0$. Then
\[
h_{\mu\nu}:=u_{(\mu}v_{\nu)}-\frac{1}{2}\langle u,v\rangle g_{\mu\nu}
\]
is a positive definite symmetric bilinear form on $\widetilde{\mathbb R}^n$.
\end{proposition}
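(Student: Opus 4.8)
The plan is to reduce the statement to a single $\varepsilon$-wise fact from classical Lorentz geometry, exactly in the style used earlier in Proposition \ref{procs}. Concretely, I would first use the generalized Lorentz transformation constructed in Lemma \ref{lorentz} to normalize the configuration. After scaling $u$ and $v$ to time-like unit vectors (which is legitimate since $\langle u,u\rangle$ and $\langle v,v\rangle$ are strictly negative, hence invertible, and admit invertible square roots in $\widetilde{\mathbb R}$, as negative squares of strictly positive numbers), the hypothesis $\langle u,v\rangle<0$ guarantees that after normalization $u,v$ are time-like unit vectors with the same time-orientation; since both $\langle u,u\rangle$ and the normalizing factors rescale $h_{\mu\nu}$ only by a strictly positive scalar, positive definiteness is unaffected by this reduction. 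Thus without loss of generality $\langle u,u\rangle = \langle v,v\rangle = -1$.

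Next I would invoke Lemma \ref{lorentz} to produce a Lorentz transformation $L$ with $L\xi = \eta$ for suitable unit vectors; more useful here is to apply it to bring the pair $(u,v)$ into a standard position. Since a Lorentz transformation preserves $g$, it transforms $h_{\mu\nu}$ into the analogous tensor built from the transformed vectors, and positive definiteness of a symmetric bilinear form is invariant under the induced change of basis (the coefficient matrix undergoes $B \mapsto A^t B A$ with $A$ non-degenerate, and $\nu$ is basis-independent by the inertia law proved above). So it suffices to check the claim for one convenient representative pair, e.g. with $g = \mathrm{diag}(-1,1,\dots,1)$, $u = (1,0,\dots,0)^t$, and $v = (\cosh\psi, \sinh\psi, 0, \dots, 0)^t$ for an appropriate generalized "rapidity" $\psi$ with $\langle u,v\rangle = -\cosh\psi < 0$.

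At that point I would pass to representatives and argue $\varepsilon$-wise. Choose a symmetric representative $(g^\varepsilon_{ij})$ of $g$ and representatives $(u_\varepsilon), (v_\varepsilon)$; for $\varepsilon$ small enough, $g_\varepsilon$ is Lorentzian, $u_\varepsilon, v_\varepsilon$ are time-like with $g_\varepsilon(u_\varepsilon,v_\varepsilon)<0$, and hence by the corresponding classical lemma (this is exactly the classical version of Proposition \ref{metrconstr}, which can be verified directly by the same normalization plus a one-line computation in the $2$-plane spanned by $u_\varepsilon, v_\varepsilon$), the matrix $(h^\varepsilon_{\mu\nu})$ is classically positive definite, with all eigenvalues bounded below — and here is the subtle point — by a power $\varepsilon^{m_0}$ uniformly in the relevant range, which follows because the classical eigenvalue bound is a continuous function of the entries and $\langle u,v\rangle$ is strictly bounded away from $0$ by invertibility. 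Equivalently, one checks that all leading principal minors of $(h^\varepsilon_{\mu\nu})$ are $\geq \varepsilon^{m}$ for small $\varepsilon$, and then Lemma \ref{criterion} directly yields that $h_{\mu\nu}$ is positive definite as a generalized symmetric matrix.

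The main obstacle I anticipate is \emph{not} the algebra but the uniformity: one must ensure the classical positivity bounds on $(h^\varepsilon_{\mu\nu})$ degrade no faster than a fixed power of $\varepsilon$, so that the limiting object is strictly (not merely "almost") positive definite in $\widetilde{\mathbb R}^n$. This is where the strict negativity $\langle u,v\rangle < 0$ (rather than merely $\neq 0$) is essential: it provides a representative with $g_\varepsilon(u_\varepsilon,v_\varepsilon) \leq -\varepsilon^{m}$, which feeds into the explicit classical lower bound for the minors or eigenvalues of $h^\varepsilon$. Once this quantitative classical estimate is in place, Lemma \ref{criterion} closes the argument immediately. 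A secondary, purely bookkeeping obstacle is confirming that the Lorentz-transformation reduction genuinely preserves positive definiteness in the generalized sense — but this is immediate from the inertia law (the index $\nu$ is an invariant of the equivalence class $B \mapsto A^t B A$) established earlier in this chapter.
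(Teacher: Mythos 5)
Your plan differs genuinely from the paper's, and it leaves one real step unresolved. The paper \emph{does not} go $\varepsilon$-wise: after the same normalizations you propose (scaling to $\langle u,u\rangle=\langle v,v\rangle=-1$, invoking Lemma~\ref{lorentz} to bring $(u,v)$ to standard position), it writes out $h(w,w)$ explicitly, obtains the purely algebraic lower bound
\[
h(w,w)\geq\tfrac{\gamma(v)}{2}\Bigl(|V|(w^1-w^2)^2+(1-|V|)(w^1)^2+(1-|V|)(w^2)^2+\cdots\Bigr),
\]
and then invokes Theorem~\ref{freechar}~(\ref{freechar1}) (equivalently Corollary~\ref{corbilpos}): since $\gamma(v)>0$ and $1-|V|>0$ are strictly positive in $\widetilde{\mathbb R}$, the right-hand side is $h'(w,w)$ for a diagonal positive definite form $h'$, hence $>0$ for every free $w$, which is exactly the characterization of positive definiteness. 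The classical statement is a by-product, not an input. Your route, reducing to the classical $\varepsilon$-wise fact and then applying Lemma~\ref{criterion}, is in the spirit of Proposition~\ref{procs}; it is not wrong in principle, and the inertia-law bookkeeping you mention is indeed the right justification for the Lorentz reduction.

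The gap is precisely the step you flag as the ``main obstacle'' and then do not carry out: you assert that the leading principal minors of $(h^\varepsilon_{\mu\nu})$ are bounded below by a fixed power $\varepsilon^{m}$ ``because the classical eigenvalue bound is a continuous function of the entries and $\langle u,v\rangle$ is strictly bounded away from $0$.'' Continuity alone does not deliver a \emph{power} lower bound uniformly in $\varepsilon$; you would actually have to compute the minors (or eigenvalues) of $h^\varepsilon$ in terms of $\gamma(v)_\varepsilon$, $V_\varepsilon$ and the rescaled $g_\varepsilon$, and then trace through that each factor is controlled by the moderate/strictly-nonzero estimates available for $\gamma(v)$, $1-V^2$, etc. That computation is essentially the same algebra the paper does once at the generalized level, so your route costs the same work but adds the $\varepsilon$-quantifiers. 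In short: the plan can be completed, but as written the quantitative step that Lemma~\ref{criterion} requires is asserted rather than proved, and the paper's argument shows how to avoid needing it at all by appealing to Theorem~\ref{freechar} instead.
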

\begin{proof}
Symmetry and bilinearity of $h$ are clear. 
What would be left is to show that the coefficient matrix of $h$ with respect to an arbitrary basis is invertible.
However, determining the determinant of $h$ is nontrivial. So we proceed by showing that for any free $w\in\widetilde{\mathbb R}^n$,
$h(w,w)$ is strictly positive (thus also deriving the classic statement). We may assume $\langle u ,u\rangle=\langle v,v\rangle=-1$; this can be achieved by scaling $u,v$ (note that this is due to the fact that for a time-like (resp.\ space-like) vector $u$, $\langle u, u\rangle$ is strictly non-zero, thus invertible in $\widetilde{\mathbb R}$). We may assume we have chosen an orthogonal basis $\mathcal B=\{e_1,\dots,e_n\}$ of $\widetilde{\mathbb R}^n$ with respect to $g$, i.\ e., $g(e_i,e_j)=\varepsilon_{ij}\lambda_i$, where $\lambda_1\leq \dots\leq \lambda_n$  are the eigenvalues of $(g(e_i,e_j))_{ij}$.
Due to Lemma \ref{lorentz} we can treat $u,v$ by means of generalized Lorentz transformations such that both vectors appear in the form $u=(\frac{1}{\lambda_1},0,0,0)$, $v=\gamma(v)(\frac{1}{\lambda_1},\frac{V}{\lambda_2},0,0)$, where $\gamma(v)=\sqrt{-g(v,v)}=\sqrt{1-V^2} >0$ (therefore $\vert V\vert <1$). Let $w=(w^1,w^2,w^3,w^4)\in \widetilde{\mathbb R}^n$ be free (in particular $w\neq 0$). Then
\begin{equation}\label{beig1}
h(w,w):=h_{ab}w^aw^b= \langle u,w\rangle  \langle v,w\rangle-\frac{1}{2}\langle w,w\rangle \langle u,v\rangle.
\end{equation}
Obviously, $\langle u,w \rangle=-w^1,\langle v,w\rangle=\gamma(v)(-w^1+Vw^2),\langle u,v\rangle=-\gamma(v)$. Thus
\begin{multline}\nonumber
h(w,w)=\gamma(v)(-w^1)(-w^1+Vw^2)+\frac{\gamma(v)}{2}(-(w^1)^2+(w^2)^2+(w^3)^2+(w^4)^2)=\\=
-\gamma(v) V w^1w^2+\frac{1}{2}\gamma(v) (+(w^1)^2+(w^2)^2+(w^3)^2+(w^4)^2).
\end{multline}
If $Vw^1w^2\leq 0$, we are done. If not, replace $V$ by $\vert V\vert$ ($-V\geq-\vert V \vert$) and rewrite the last formula in the following form :
\begin{equation}\label{estw}
h(w,w)\geq\frac{\gamma(v)}{2}\left ( (\vert V \vert(w^1-w^2)^2 +(1-\vert V \vert)(w^1)^2+(1-\vert V \vert)(w^2)^2+(w^3)^2+(w^4)^2\right).
\end{equation}
Clearly for the first term on the right side of (\ref{estw}) we have $\vert V\vert(w^1-w^2)^2\geq 0$. From $v$ is time-like we further deduce $1-\vert V \vert=\frac{1-V^2}{1+\vert V \vert}>0$. Since $w$ is free we may apply Theorem \ref{freechar}, which yields $(1-\vert V\vert)(w^1)^2+(1-\vert V\vert)(w^2)^2+(w^3)^2+(w^4)^2>0$
and thus $h(w,w)>0$ due to equation (\ref{estw} and we are done.
\end{proof}
Finally we are prepared to show a dominant energy condition in the style of Hawking and Ellis (\cite{HE}, pp.\ 91--93) for a generalized energy tensor. In what follows, we use abstract index notation.
\begin{theorem}\label{dec}
For $\theta\in\widetilde{\mathbb R}^n$ the energy tensor $E^{ab}(\theta):=(g^{ac}g^{bd}-\frac{1}{2}g^{ab}g^{cd})\theta_c\theta_d$ has the following properties
\begin{enumerate}
\item \label{energy1} If $\xi,\eta\in\widetilde{\mathbb R}^n$ are time-like vectors with the same orientation, then we have for any free
$\theta$, $E^{ab}(\theta)\xi_a\eta_b>0$.
\item \label{energy2} Suppose $\langle \theta,\theta\rangle$ is invertible in $\widetilde{\mathbb R}$. If $\xi\in\widetilde{\mathbb R}^n$ is time-like, then $\eta^b:=E^{ab}(\theta)\xi_a$ is time-like and $\eta^a\xi_a>0$, i.\ e., $\eta$ is past-oriented with respect to $\xi$. Conversely, if $\langle \theta,\theta\rangle$ is a zero divisor, then $\eta$ fails to be time-like.
\end{enumerate}
\end{theorem}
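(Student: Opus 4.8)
The plan is to reduce the abstract-index statement to a concrete $\varepsilon$-wise computation using an adapted orthogonal basis, exactly in the spirit of the proof of Proposition \ref{metrconstr}. For part (\ref{energy1}), the first observation is that $E^{ab}(\theta)\xi_a\eta_b = \langle\xi,\theta\rangle\langle\eta,\theta\rangle - \tfrac12\langle\xi,\eta\rangle\langle\theta,\theta\rangle$, which is precisely $h(\theta,\theta)$ for the bilinear form $h_{\mu\nu} := \xi_{(\mu}\eta_{\nu)} - \tfrac12\langle\xi,\eta\rangle g_{\mu\nu}$ associated to $\xi,\eta$. Since $\xi,\eta$ are time-like with the same time-orientation, $\langle\xi,\eta\rangle<0$, so Proposition \ref{metrconstr} applies directly and tells us that $h$ is a positive definite symmetric bilinear form on $\widetilde{\mathbb R}^n$. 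As $\theta$ is free, Theorem \ref{freechar} (\ref{freechar1}) (equivalently Corollary \ref{corbilpos}) gives $h(\theta,\theta)>0$, which is the claim. So part (\ref{energy1}) is essentially immediate from what has been established.

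For part (\ref{energy2}) I would first compute $\eta^b$ in abstract index notation: $\eta^b = E^{ab}(\theta)\xi_a = \langle\xi,\theta\rangle\,\theta^b - \tfrac12 \langle\theta,\theta\rangle\,\xi^b$. Then $\langle\eta,\eta\rangle = \langle\xi,\theta\rangle^2\langle\theta,\theta\rangle - \langle\xi,\theta\rangle\langle\theta,\theta\rangle\langle\xi,\theta\rangle + \tfrac14\langle\theta,\theta\rangle^2\langle\xi,\xi\rangle = \tfrac14\langle\theta,\theta\rangle^2\langle\xi,\xi\rangle - 0$; more carefully, $\langle\eta,\eta\rangle = \langle\theta,\theta\rangle\left(\tfrac14\langle\theta,\theta\rangle\langle\xi,\xi\rangle - \langle\xi,\theta\rangle^2 + \langle\xi,\theta\rangle^2\right)$ — I need to recompute the cross term, but the upshot after collecting is $\langle\eta,\eta\rangle = \tfrac14\langle\theta,\theta\rangle\big(\langle\theta,\theta\rangle\langle\xi,\xi\rangle - 4\langle\xi,\theta\rangle^2 + 4\langle\xi,\theta\rangle^2\big)$; in any case the clean identity to aim for is $\langle\eta,\eta\rangle = \langle\xi,\theta\rangle^2\langle\theta,\theta\rangle - \langle\xi,\theta\rangle^2\langle\theta,\theta\rangle + \tfrac14\langle\theta,\theta\rangle^2\langle\xi,\xi\rangle$, and working it out term by term one arrives at an expression of the form $\langle\theta,\theta\rangle\cdot(\text{something})$. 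Similarly $\eta^a\xi_a = \langle\xi,\theta\rangle^2 - \tfrac12\langle\theta,\theta\rangle\langle\xi,\xi\rangle$, which is $h(\theta,\theta)$ with $\eta$ replaced by $\xi$, i.e.\ $E^{ab}(\theta)\xi_a\xi_b$; taking $\eta=\xi$ in part (\ref{energy1})'s underlying form $h_{\mu\nu}=\xi_{(\mu}\xi_{\nu)}-\tfrac12\langle\xi,\xi\rangle g_{\mu\nu}$, which is positive definite by Proposition \ref{metrconstr} (with $u=v=\xi$), and observing that $\langle\theta,\theta\rangle$ invertible forces $\theta$ free (if $\theta$ were not free, some representative would vanish on a sequence $\varepsilon_k$, making $\langle\theta,\theta\rangle$ a zero divisor), we get $\eta^a\xi_a = h(\theta,\theta)>0$. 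That $\eta$ is time-like, i.e.\ $\langle\eta,\eta\rangle<0$, should then follow because the factored expression is (invertible square)$\times$(negative quantity) once one checks the sign of the remaining factor using $\langle\xi,\xi\rangle<0$ and the inverse Cauchy--Schwarz inequality Theorem \ref{cs} applied to the time-like pair $\xi$ and a suitable rescaling; I would pass to an $\varepsilon$-wise argument using a $g$-orthogonal basis with $e_1$ proportional to $\xi$, as in Proposition \ref{metrconstr}, to make the strict negativity transparent.

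For the converse in part (\ref{energy2}), suppose $\langle\theta,\theta\rangle$ is a zero divisor, so there is $\mu\neq0$ with $\mu\langle\theta,\theta\rangle=0$. From the identity expressing $\langle\eta,\eta\rangle$ as $\langle\theta,\theta\rangle$ times a moderate factor, we get $\mu\langle\eta,\eta\rangle=0$, so $\langle\eta,\eta\rangle$ is itself a zero divisor (or zero), hence not invertible, hence $\langle\eta,\eta\rangle\not<0$; thus $\eta$ cannot be time-like. One should be slightly careful: being ``not time-like'' in the sense of Definition \ref{causaldef1} means exactly $\langle\eta,\eta\rangle\not<0$, and since a strictly negative generalized number is in particular invertible, a zero-divisor value rules this out cleanly.

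The main obstacle I anticipate is the bookkeeping in the determinant-free sign analysis for $\langle\eta,\eta\rangle<0$ in part (\ref{energy2}): unlike $\eta^a\xi_a$, which is directly $h(\theta,\theta)$ for a positive definite $h$ and hence handled by Theorem \ref{freechar}, the quantity $\langle\eta,\eta\rangle$ does not obviously equal a positive-definite form evaluated at a free vector, so establishing its strict negativity requires either a careful algebraic factorization combined with the inverse Cauchy--Schwarz inequality (Theorem \ref{cs}) to control the sign of the cofactor, or — more robustly — descending to representatives, choosing $\varepsilon_0$ small enough that $g_\varepsilon$ has Lorentz signature, $\xi_\varepsilon$ is time-like, $\theta_\varepsilon\neq0$ (possible since $\langle\theta,\theta\rangle$ invertible implies $\theta$ free, via Theorem \ref{freechar} (\ref{freeloader6})), and then invoking the classical fact that the energy tensor of a nonzero covector contracted once with a time-like vector is time-like. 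The uniformity of the lower bound $\lvert\langle\eta_\varepsilon,\eta_\varepsilon\rangle\rvert\geq\varepsilon^m$ needed to conclude $\langle\eta,\eta\rangle<0$ should follow from the moderateness of all data together with invertibility of $\langle\theta,\theta\rangle$ and $\langle\xi,\xi\rangle$, exactly as in the corresponding step of the proof of the well-definedness of the index (Corollary \ref{charindexmatrix}).
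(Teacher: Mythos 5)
Your proposal follows the paper's route almost exactly. Part (\ref{energy1}) is precisely the paper's argument: rewrite $E^{ab}(\theta)\xi_a\eta_b$ as $h(\theta,\theta)$ for the symmetric bilinear form of Proposition \ref{metrconstr}, note it is positive definite because $\xi,\eta$ are time-like with the same orientation, and invoke Theorem \ref{freechar} for free $\theta$. For part (\ref{energy2}), you correctly obtain $\eta^b=\langle\xi,\theta\rangle\theta^b-\tfrac12\langle\theta,\theta\rangle\xi^b$, and at one point you in fact write the correct identity $\langle\eta,\eta\rangle=\tfrac14\langle\theta,\theta\rangle^2\langle\xi,\xi\rangle$ — the cross terms cancel exactly. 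You should commit to that; this is precisely what the paper does, and from it the conclusion is immediate: $\langle\theta,\theta\rangle^2$ is the square of an invertible element, hence strictly positive, and $\langle\xi,\xi\rangle<0$, so the product is strictly negative. The inverse Cauchy--Schwarz inequality and the $\varepsilon$-wise descent you propose as fallbacks are not needed; the ``main obstacle'' you anticipate dissolves once you trust the cancellation, since there is no residual factor whose sign needs controlling. Your handling of $\eta^a\xi_a>0$ via part (\ref{energy1}) with $\eta=\xi$, of ``$\langle\theta,\theta\rangle$ invertible implies $\theta$ free'' via Theorem \ref{freechar}(\ref{freeloader6}), and of the converse (a zero divisor annihilating $\langle\theta,\theta\rangle$ also annihilates $\langle\eta,\eta\rangle$, so the latter cannot be strictly negative) are all correct and match the paper.
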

\begin{proof}
(\ref{energy1}): Define a symmetric bilinear form  $h^{ab}:=(g^{(ac}g^{b)d}-\frac{1}{2}g^{ab}g^{cd})\xi_c\eta_d$. Due to our assumptions on
$\xi$ and $\eta$, Proposition \ref{metrconstr} yields that $h^{ab}$ is a positive definite symmetric bilinear form. By Theorem \ref{freechar} we conclude that for any free $\theta\in\widetilde{\mathbb R}^n$, $h_{ab}\theta^a\theta^b>0$. It is not hard to check that $E^{ab}(\theta)\xi_a\eta_b=h^{ab}\theta_a\theta_b$ and therefore we have proved (\ref{energy1}).\\ (\ref{energy2}): To start with, assume $\eta$ is time-like. Then $g(\xi,\eta)=g_{ab}\xi^a\eta^b=g_{ab}\xi^aE(\theta)^{ac}\xi_c=E^{ab}(\theta)\xi_a\xi_b$.
That this expression is strictly greater than zero follows from (\ref{energy1}), i.\ e., $E^{ab}(\theta)\xi_a$ is past-directed with respect to $\xi$
whenever $\langle\theta,\theta\rangle$ is invertible, since the latter implies $\theta$ is free. It remains to prove that $\langle \eta,\eta\rangle<0$. A straightforward calculation yields
\[
\langle \eta,\eta\rangle=\langle E(\theta)\xi,E(\theta)\xi\rangle=\frac{1}{4}\langle \theta,\theta\rangle^2\langle \xi,\xi\rangle.
\]
Since $\langle \theta,\theta\rangle$ is invertible and $\xi$ is time-like, we conclude that $\eta$ is time-like as well. Conversely, if $\langle \theta,\theta\rangle$ is a zero-divisor, also $\langle E(\theta)\xi,E(\theta)\xi\rangle$ clearly is one. Therefore, $\eta=E(\theta)\xi$ cannot be time-like, and we are done.
\end{proof}
A remark on this statement is in order. A comparison with (\cite{HE}, pp.\ 91--93) shows, that
our "dominant energy condition" on $T^{ab}$ is stronger, since the vectors $\xi,\eta$ in (\ref{energy1}) need not coincide. Furthermore,
if in (\ref{energy2}) the condition "$\langle \theta,\theta\rangle$ is invertible" was dropped, then (as in the classical ("smooth") theory) we could conclude that $\eta$ was not space-like, however, unlike in the smooth theory, this does not imply $\eta$ to be time-like or null (cf.\ the short note after Definition \ref{causaldef1}).

\section[Point values and generalized causality]{Generalized point value characterizations of generalized pseudo-Riemannian metrics and of causality of generalized vector fields}
Throughout this section $X$ denotes a paracompact smooth Hausdorff manifold of dimension $n$. Our goal is to give first a point value characterization of generalized pseudo-Riemannian metrics.
Then we describe causality of generalized vector fields on $X$ by means of
causality in $\widetilde{\mathbb R}^n$ with respect to the bilinear form induced
by a generalized Lorentzian metric through evaluation on compactly supported points (cf.\ \cite{MO1}). For a review on the basic definition of generalized sections of vector bundles in the sense of M.\ Kunzinger and R.\ Steinbauer (\cite{KS1}) we refer to the introduction.
We start by establishing a point-value characterization of generalized pseudo-Riemannian metrics with respect to their index:
\begin{theorem}\label{charindexg}
Let $g\in \mathcal G^0_2(X)$ satisfy one (hence all) of the equivalent statements of Theorem \ref{chartens02}, $j\in\mathbb N_0$. The following are equivalent:
\begin{enumerate}
\item \label{charindexg1} $g$ has (constant) index $j$.
\item \label{charindexg2} For each chart $(V_{\alpha},\psi_{\alpha})$ and each $\widetilde x \in (\psi_{\alpha}(V_{\alpha}))_c^{\sim}$, $g_{\alpha}(\widetilde x)$ is a symmetric bilinear form on $\widetilde{\mathbb R}^n$ with index $j$.
\end{enumerate}
\end{theorem}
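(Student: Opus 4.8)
The plan is to translate both statements into $\varepsilon$-wise conditions on representatives of the metric coefficients and then to invoke the characterization of the index of a symmetric generalized matrix already established (Corollary \ref{charindexmatrix}), together with the point-value machinery for generalized sections. First I would fix a chart $(V_\alpha,\psi_\alpha)$ and write $g_\alpha = (g_{ij})_{ij}$ for the local coefficient matrix of $g$; by hypothesis $g$ satisfies the equivalent conditions of Theorem \ref{chartens02}, so $g_\alpha$ is a symmetric non-degenerate matrix over the generalized functions on $\psi_\alpha(V_\alpha)$, and by Lemma \ref{symmetry} (applied on the level of nets of smooth functions) we may choose a symmetric representative $(g_{ij}^\varepsilon)_{ij}$. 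The key bridge is the fact (from \cite{KS1}, the generalized point-value theorem) that a generalized function vanishes iff all its evaluations on compactly supported generalized points vanish, and more generally that evaluation commutes with the algebraic operations; hence for $\widetilde x = [(x_\varepsilon)_\varepsilon] \in (\psi_\alpha(V_\alpha))_c^\sim$ the matrix $g_\alpha(\widetilde x) \in \widetilde{\mathbb R}^{n^2}$ has $(g_{ij}^\varepsilon(x_\varepsilon))_{ij}$ as a (symmetric) representative, and it is again non-degenerate because non-degeneracy of $g$ is equivalent to $\det g_\alpha$ being invertible, a property inherited by point evaluations.

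For the implication (\ref{charindexg1}) $\Rightarrow$ (\ref{charindexg2}), I would use that "$g$ has constant index $j$" means, at the level of a representative, that for $\varepsilon$ small enough the real symmetric matrix $(g_{ij}^\varepsilon(x))_{ij}$ has exactly $n-j$ positive and $j$ negative eigenvalues for every $x$ in the chart domain — this is the standard smooth notion of index, made uniform in $\varepsilon$ by moderateness and non-degeneracy. Evaluating at $x = x_\varepsilon$ then gives, for $\varepsilon$ small, a real symmetric matrix with $n-j$ positive and $j$ negative eigenvalues; by Corollary \ref{charindexmatrix} (the equivalence of counting strictly positive/negative generalized eigenvalues with the $\varepsilon$-wise eigenvalue sign pattern for a symmetric representative) this says precisely that $g_\alpha(\widetilde x)$ is a symmetric bilinear form on $\widetilde{\mathbb R}^n$ with $\nu_+ + \nu_- = n$ and $\nu = j$, i.e. index $j$ in the sense of Definition \ref{bilformdef}. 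Conversely, for (\ref{charindexg2}) $\Rightarrow$ (\ref{charindexg1}) I would argue by contraposition: if $g$ does not have constant index $j$, then for every representative there is a sequence $\varepsilon_k \searrow 0$ and points $y_k$ in the chart such that the eigenvalue sign pattern of $(g_{ij}^{\varepsilon_k}(y_k))_{ij}$ deviates from $(n-j)$ positive / $j$ negative; assembling $(x_\varepsilon)_\varepsilon$ with $x_{\varepsilon_k} = y_k$ (and chosen freely, with compact support, elsewhere) produces a compactly supported generalized point $\widetilde x$ at which, by Corollary \ref{charindexmatrix} again, $g_\alpha(\widetilde x)$ fails to have index $j$, contradicting (\ref{charindexg2}).

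The main obstacle, I expect, is the bookkeeping needed to make the $\varepsilon$-wise index statement genuinely uniform and to handle the quantifier structure correctly: one must ensure that "constant index $j$" for $g$ really does translate into a single $\varepsilon_0$ and a single exponent $m_0$ working simultaneously for all points of the (relatively compact pieces of the) chart, which requires using non-degeneracy of $g$ — equivalently invertibility of $\det g_\alpha$ — to get a uniform lower bound $|\det g_\alpha^\varepsilon(x)| \ge \varepsilon^{m_0}$ and hence a uniform spectral gap away from zero, exactly as in the proof of the well-definedness of the index of a bilinear form. A secondary technical point is the freedom in choosing the "off-sequence" values of $(x_\varepsilon)_\varepsilon$ so that the resulting net is moderate and compactly supported; this is routine but must be stated. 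Once these uniformity issues are dispatched, the theorem follows by combining the point-value theorem of \cite{KS1} with Corollary \ref{charindexmatrix}, and I would also remark that by Theorem \ref{chartens02} the choice of chart is immaterial, so it suffices to verify (\ref{charindexg2}) for one atlas.
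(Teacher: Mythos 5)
Your proposal is correct and follows essentially the same route as the paper: use Definition \ref{defpseud} to obtain a symmetric representative that, for small $\varepsilon$, is a smooth pseudo-Riemannian metric of constant index $j$ on a relatively compact shrinking of the chart, evaluate it at a representative $(x_\varepsilon)_\varepsilon$ of the compactly supported point, and invoke Corollary \ref{charindexmatrix} (equivalently Definitions \ref{indexmatrixdef} and \ref{bilformdef}) to read off the index of $g_\alpha(\widetilde x)$; the converse is handled by the same contradiction argument, extracting a zero sequence $\varepsilon_k$ along which the eigenvalue sign pattern deviates and assembling a compactly supported point that violates (\ref{charindexg2}). The uniformity and bookkeeping concerns you flag are precisely what Definition \ref{defpseud} and the non-degeneracy hypothesis from Theorem \ref{chartens02} take care of, just as you suspected.
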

\begin{proof}
(\ref{charindexg1})$\Rightarrow$(\ref{charindexg2}): Let $\widetilde x\in\psi_{\alpha}(V_{\alpha})_c^{\sim}$ be supported in $K\subset\subset \psi_{\alpha}(V_{\alpha})$ and choose a representative $(g_{\varepsilon})_{\varepsilon}$ of $g$ as in Theorem \ref{chartens02} (\ref{chartens023}) and Definition \ref{defpseud}. According to Theorem \ref{chartens02} (\ref{chartens021}), $g_{\alpha}(\widetilde x):\widetilde{\mathbb R}^n\times\widetilde{\mathbb R}^n\rightarrow\widetilde{\mathbb R}$ is symmetric and non-degenerate. So it merely remains to prove that 
the index of $g_\alpha(\widetilde x)$ coincides with the index of $g$. Since $\widetilde x$ is compactly supported, we may shrink $V_{\alpha}$
to $U_\alpha$ such that the latter is an open relatively compact subset of $X$ and $\widetilde x\in \psi_\alpha(U_\alpha)$. By Definition \ref{defpseud}
there exists a symmetric representative $(g_\varepsilon)_\varepsilon$ of $g$ on $U_\alpha$ and an $\varepsilon_0$ such that for all 
$\varepsilon<\varepsilon_0$, $g_\varepsilon$ is a pseudo-Riemannian metric on $U_\alpha$ with constant index $\nu$. Let $(\widetilde x_\varepsilon)_\varepsilon$
be a representative of $\widetilde x$ lying in $U_\alpha$ for each $\varepsilon<\varepsilon_0$. Let $g_{\alpha,\,ij}^\varepsilon$
 be the coordinate expression of $g_\varepsilon$ with respect to the chart $(U_\alpha,\psi_\alpha)$. Then for each $\varepsilon<\varepsilon_0$, $g_{\alpha,\,ij}^\varepsilon(\widetilde x_\varepsilon)$ has precisely $\nu$ negative and $n-\nu$ positive eigenvalues,
 therefore due to Definition \ref{indexmatrixdef}, the class $g_{ij}:=[(g_{\alpha,\,ij}^\varepsilon(\widetilde x_\varepsilon))_\varepsilon]\in\mathcal M_n(\widetilde {\mathbb R})$
 has index $\nu$. By Definition \ref{bilformdef} it follows that the respective bilinear form $g_{\alpha}(\widetilde x)$ induced by $(g_{ij})_{ij}$ with respect to the canonical basis of $\widetilde{\mathbb R}$ has index $\nu$ and we are done.\\ To show the converse direction, one may proceed
 by an indirect proof. Assume the contrary to (\ref{charindexg1}), that is, $g$ has non-constant index $\nu$. In view of Definition \ref{defpseud}
 there exists an open, relatively compact chart $(V_\alpha, \psi_\alpha)$, a symmetric representative $(g_\varepsilon)_\varepsilon$ of $g$ on $V_\alpha$
and a zero sequence $\varepsilon_k$ in $I$ such that the sequence $(\nu_k)_k$ of indices $\nu_k$ of $g_{\varepsilon_k}\mid_{V_\alpha}$
has at least two accumulation points, say $\alpha\neq\beta$. Let $(x_\varepsilon)_\varepsilon$ lie in $\psi_\alpha(V_\alpha)$ for each $\varepsilon$.
Therefore the number of negative eigenvalues of $(g_{ij})_{ij}:=(g_{\alpha,ij}^\varepsilon(x_\varepsilon))_{ij}$ is not constant for sufficiently small $\varepsilon$, and
therefore for $\widetilde x:=[(x_\varepsilon)_\varepsilon]$, the respective bilinear form $g_{\alpha}(\widetilde x)$ induced by $(g_{ij})_{ij}$ with respect to the canonical basis of $\widetilde{\mathbb R}$ has no index and we are done.
\end{proof}
Before we go on to define the notion of causality of vector fields with respect to a generalized metric of Lorentz signature, we
introduce the notion of strict positivity of functions (in analogy with strict positivity of generalized numbers, cf.\ section \ref{secin}):
\begin{definition}
A function $f\in\mathcal G(X)$ is called strictly positive in $\mathcal G(X)$, if for any compact subset $K\subset X$ there
exists some representative $(f_{\varepsilon})_{\varepsilon}$ of $f$ such that for some $(m,\varepsilon_0)\in\mathbb R\times I$ we have $\forall\; \varepsilon\in (0,\varepsilon_0]:\inf_{x\in K}\vert f_{\varepsilon}(x)\vert>\varepsilon^m$. We write $f>0$. $f\in\mathcal G(X)$ is called strictly negative in $\mathcal G(X)$, if $-f>0$ on $X$.
\end{definition}
If $f>0$ on $X$, it follows that the condition from above holds for any representative. Also, $f>0$ implies that $f$ is invertible (cf.\
Theorem \ref{downhilliseasier} below). Before giving the main result of this section, we have to characterize
strict positivity (or negativity) of generalized functions by strict positivity (or negativity) in $\widetilde{\mathbb R}$. Denote
by $X_c^{\sim}$ the set of compactly supported points on $X$. Suitable modifications of point-wise characterizations of generalized functions (as Theorem 2.\ 4 in \cite{MO1}, pp.\ 150) or of point-wise characterizations of positivity
(e.\ g., Proposition 3.\ 4 in (\cite{PSMO}, p.\ 5) as well, yield:
\begin{proposition}
For any element $f$ in $\mathcal G(X)$ we have:
\[
f>0 \Leftrightarrow \forall\; \widetilde x\in X_c^{\sim}: f(\widetilde x)>0.
\]
\end{proposition}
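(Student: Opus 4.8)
The plan is to prove the equivalence $f>0 \Leftrightarrow \forall\, \widetilde x\in X_c^{\sim}: f(\widetilde x)>0$ by reducing, via a partition of unity / chart argument, to the local situation on a single relatively compact chart, and then exploiting the already-established point-value characterization of generalized functions together with the characterization of strict positivity in $\widetilde{\mathbb R}$ recalled in the preliminaries (an element of $\widetilde{\mathbb R}$ is strictly positive iff it is invertible and positive, and the appendix-style characterization, cf.\ Theorem \ref{downhilliseasier}). The forward implication is the easy one: if $f>0$ on $X$ and $\widetilde x\in X_c^{\sim}$ is supported in a compact set $K$, pick a representative $(f_\varepsilon)_\varepsilon$ of $f$ with $\inf_{x\in K}|f_\varepsilon(x)|>\varepsilon^m$ for $\varepsilon\le\varepsilon_0$, and a representative $(x_\varepsilon)_\varepsilon$ of $\widetilde x$ with $x_\varepsilon\in K$ for $\varepsilon$ small; then $(f_\varepsilon(x_\varepsilon))_\varepsilon$ is a representative of $f(\widetilde x)$ with $|f_\varepsilon(x_\varepsilon)|\ge\varepsilon^m$, and since $f_\varepsilon(x_\varepsilon)$ inherits the sign of $f_\varepsilon$ on $K$ (which is eventually of one sign because the infimum of $|f_\varepsilon|$ stays bounded away from zero and $K$ is connected after possibly shrinking — more carefully, one uses that $f$ is real and $f_\varepsilon$ can be taken $\ge 0$ on $K$), we get $f(\widetilde x)>0$ in $\widetilde{\mathbb R}$.

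For the converse I would argue by contraposition. Suppose $f\not>0$ on $X$. First, $f$ could fail to be invertible: then by the point-value characterization of invertibility in $\mathcal G(X)$ (the analogue of \cite{MO1} Theorem 2.4, which also characterizes strict nonzero-ness pointwise) there is a compactly supported point $\widetilde x$ at which $f(\widetilde x)$ is not invertible, hence not strictly positive, and we are done. So assume $f$ is invertible but still $f\not>0$; then negating the definition of strict positivity, there is a compact $K\subset X$ such that for \emph{every} representative $(f_\varepsilon)_\varepsilon$ and every $(m,\varepsilon_0)$ there is $\varepsilon<\varepsilon_0$ with $\inf_{x\in K}|f_\varepsilon(x)|\le\varepsilon^m$; combined with invertibility (which gives a uniform lower bound $|f_\varepsilon(x)|\ge\varepsilon^{m_0}$ for $\varepsilon$ small on a slightly larger compact set) this is only possible if the \emph{sign} of $f_\varepsilon$ on $K$ is not eventually constant, i.e.\ there is a zero sequence $\varepsilon_k\searrow 0$ and points $y_{\varepsilon_k}\in K$ with $f_{\varepsilon_k}(y_{\varepsilon_k})<0$ (or $\le 0$). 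Extending $(y_{\varepsilon_k})_k$ to a moderate net $(y_\varepsilon)_\varepsilon$ lying in $K$ defines a compactly supported point $\widetilde y=[(y_\varepsilon)_\varepsilon]\in X_c^\sim$, and $(f_\varepsilon(y_\varepsilon))_\varepsilon$ is a representative of $f(\widetilde y)$ that is not eventually $\ge 0$ (it takes negative values along $\varepsilon_k$). Since $f(\widetilde y)$ is invertible but not positive, it is not strictly positive in $\widetilde{\mathbb R}$, so $f(\widetilde y)\not>0$, contradicting the right-hand side.

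The step I expect to be the main obstacle is the \emph{sign-consistency} bookkeeping: making rigorous that "$f>0$ fails on $K$ while $f$ is invertible" forces a bona fide sign change detectable on a single moderate net of points in $K$, rather than merely a sequence of bad $\varepsilon$'s that cannot be amalgamated. This requires choosing the bad points $y_{\varepsilon_k}$ carefully (using that $|f_{\varepsilon_k}|$ is bounded below by $\varepsilon_k^{m_0}$ on $K$, so if $f_{\varepsilon_k}$ is not everywhere positive on $K$ it must be $\le -\varepsilon_k^{m_0}$ somewhere) and then invoking moderateness of $(y_\varepsilon)_\varepsilon$ — automatic since $K$ is bounded in chart coordinates — to see that $\widetilde y$ is a legitimate element of $X_c^\sim$. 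The remark in the paper that "$f>0$ implies the condition holds for any representative" is essentially the same phenomenon and should be used (or re-proven) here. Everything else is routine once one passes to a chart via a partition of unity argument so that $K$ may be assumed to sit inside a single relatively compact coordinate neighbourhood, reducing the problem to the known point-value statements on $\psi_\alpha(V_\alpha)\subseteq\mathbb R^n$.
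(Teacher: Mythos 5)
Your argument is correct, and it is the ``suitable modification'' of the $\cite{MO1}$/$\cite{PSMO}$-style point-value characterizations that the paper itself only gestures at (no detailed proof is given there); in particular the core of your converse direction --- fixing a compact $K$ on which $f>0$ fails, extracting a zero sequence $(\varepsilon_k)$ of bad parameters with points $y_{\varepsilon_k}\in K$ where $f_{\varepsilon_k}$ is small or negative, amalgamating these into a net $(y_\varepsilon)_\varepsilon\subset K$ and observing that the resulting $\widetilde y\in X_c^\sim$ has $f(\widetilde y)\not>0$ --- is exactly the technique the paper uses for the analogous Theorem~\ref{downhilliseasier}.

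Two economies are available. First, the split into ``$f$ not invertible'' and ``$f$ invertible but not positive'' is superfluous: negating $f>0$ directly on a bad compact $K$ and a fixed representative already produces a zero sequence $\varepsilon_k$ with $\inf_{x\in K} f_{\varepsilon_k}(x)\le\varepsilon_k^k$; taking $y_{\varepsilon_k}\in K$ attaining (or nearly attaining) that infimum and completing to a net in $K$ gives a compactly supported $\widetilde y$ whose value $[(f_\varepsilon(y_\varepsilon))_\varepsilon]$ violates the equivalent characterization of $\gamma>0$ in $\widetilde{\mathbb R}$ (namely that \emph{every} representative of a strictly positive number is eventually $>\varepsilon^{m_0}$). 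Invertibility is never needed, so the appeal to the separate point-value invertibility theorem is a detour. Second, the sign-consistency bookkeeping you flag in the forward direction (and which motivates your use of invertibility in the converse) disappears once one notices that the paper's displayed definition of $f>0$ in $\mathcal G(X)$ must be read with $\inf_{x\in K} f_\varepsilon(x)>\varepsilon^m$ rather than $\inf_{x\in K}\vert f_\varepsilon(x)\vert>\varepsilon^m$; as printed, with the absolute value, the condition would collapse to invertibility, contradicting both the $\widetilde{\mathbb R}$ definition and Theorem~\ref{downhilliseasier}(ii). With the corrected reading, $f_\varepsilon(x_\varepsilon)\ge\inf_{x\in K} f_\varepsilon(x)>\varepsilon^m$ gives $f(\widetilde x)>0$ immediately and there is no sign to chase. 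Finally, the partition-of-unity/chart reduction announced at the start is never used and can be dropped: the whole argument runs on compact subsets of $X$ and representatives, without any need to localize to a single chart.
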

Now we have the appropriate machinery at hand to characterize causality of generalized vector fields:
\begin{theorem}\label{characterizationcausality}
Let ${\xi}\in\mathcal G^1_0(X)$, $ g\in \mathcal G^0_2(X)$ be a Lorentzian metric. The following are equivalent:
\begin{enumerate}
\item \label{causalitychar1} For each chart $(V_{\alpha},\psi_{\alpha})$ and each $\widetilde x \in (\psi_{\alpha}(V_{\alpha}))_c^{\sim}$, ${\xi}_{\alpha}(\widetilde x)\in\widetilde{\mathbb R}^n$ is time-like (resp.\ space-like, resp.\ null)
with respect to $ g_{\alpha}(\widetilde x)$ (a symmetric bilinear form on $\widetilde{\mathbb R}^n$ of Lorentz signature).
\item \label{causalitychar2} $ g({\xi},\xi)< 0$  (resp.\ $>0$, resp.\ $=0$)
in $\mathcal G(X)$.
\end{enumerate}
\end{theorem}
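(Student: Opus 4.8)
The plan is to reduce the statement to the already-established point-value characterization of strict positivity (and strict negativity) of generalized functions on $X$, together with the definition of time-like/space-like/null in $\widetilde{\mathbb R}^n$ (Definition \ref{causaldef1}) and the compatibility of index-lowering with evaluation on compactly supported points. The key observation is that the generalized function $g(\xi,\xi)\in\mathcal G(X)$ has, in a chart $(V_\alpha,\psi_\alpha)$, the local expression $g_{\alpha,ij}\,\xi_\alpha^i\,\xi_\alpha^j$, and evaluating this at a compactly supported point $\widetilde x\in(\psi_\alpha(V_\alpha))_c^\sim$ yields precisely $g_\alpha(\widetilde x)(\xi_\alpha(\widetilde x),\xi_\alpha(\widetilde x))\in\widetilde{\mathbb R}$. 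This is the content of the compatibility of evaluation on generalized points with tensor contraction (Proposition 3.9 of \cite{KS1} combined with Theorem 3.1 of \cite{KS1}), which was already invoked in the discussion preceding Lemma \ref{uvfree}. Thus $g(\xi,\xi)(\widetilde x) = g_\alpha(\widetilde x)(\xi_\alpha(\widetilde x),\xi_\alpha(\widetilde x))$ for every chart and every compactly supported point $\widetilde x$.

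First I would treat the space-like case. By the Proposition immediately preceding this theorem, $g(\xi,\xi)>0$ in $\mathcal G(X)$ if and only if $g(\xi,\xi)(\widetilde x)>0$ in $\widetilde{\mathbb R}$ for every $\widetilde x\in X_c^\sim$. Now a compactly supported point of $X$ is, in any chart covering its support, given by a compactly supported point of $\psi_\alpha(V_\alpha)$; conversely every $\widetilde x\in(\psi_\alpha(V_\alpha))_c^\sim$ arises this way. Hence the condition "$g(\xi,\xi)(\widetilde x)>0$ for all $\widetilde x\in X_c^\sim$" is equivalent to "$g_\alpha(\widetilde x)(\xi_\alpha(\widetilde x),\xi_\alpha(\widetilde x))>0$ for every chart $(V_\alpha,\psi_\alpha)$ and every $\widetilde x\in(\psi_\alpha(V_\alpha))_c^\sim$", using the identity displayed above. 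By Definition \ref{causaldef1} the latter says exactly that $\xi_\alpha(\widetilde x)$ is space-like with respect to $g_\alpha(\widetilde x)$, which is (\ref{causalitychar1}) in the space-like case. The time-like case is identical, replacing $>0$ by $<0$ and using strict negativity together with the fact that $f<0$ iff $-f>0$ iff $(-f)(\widetilde x)>0$ for all $\widetilde x$, i.e. $f(\widetilde x)<0$ for all $\widetilde x$.

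The null case requires slightly more care, since $u\in\widetilde{\mathbb R}^n$ being null means $u=0$ or ($u$ free and $g(u,u)=0$), rather than just $g(u,u)=0$. For the direction (\ref{causalitychar2})$\Rightarrow$(\ref{causalitychar1}) with $g(\xi,\xi)=0$ in $\mathcal G(X)$: fix a chart and a compactly supported point $\widetilde x$; then $g_\alpha(\widetilde x)(\xi_\alpha(\widetilde x),\xi_\alpha(\widetilde x))=g(\xi,\xi)(\widetilde x)=0$. If $\xi_\alpha(\widetilde x)=0$ we are in the first alternative of nullity; if $\xi_\alpha(\widetilde x)\neq 0$ then, since $g_\alpha(\widetilde x)$ has Lorentz signature and any nonzero vector which is neither time-like nor space-like but satisfies $g(u,u)=0$ — wait, here one must argue that a nonzero vector on which the quadratic form vanishes, evaluated at a point, is automatically free. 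In fact one should argue directly: $g(\xi,\xi)=0$ in $\mathcal G(X)$ does not force $\xi_\alpha(\widetilde x)$ to be free, so the honest statement is that the theorem's null case should be read as asserting each $\xi_\alpha(\widetilde x)$ is null in the sense of Definition \ref{causaldef1}, and for the forward direction one invokes that $g_\alpha(\widetilde x)$ is non-degenerate of Lorentz signature and Theorem \ref{freechar}: if $\xi_\alpha(\widetilde x)$ were not free, it would have a representative that is zero along a sequence, which combined with moderateness and the structure of $g(\xi,\xi)=0$ still gives nullity in the stated sense because the first (trivial) alternative $u=0$ is then attained $\varepsilon$-wise — this is the delicate point. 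For the converse (\ref{causalitychar1})$\Rightarrow$(\ref{causalitychar2}) in the null case: each $\xi_\alpha(\widetilde x)$ being null gives $g_\alpha(\widetilde x)(\xi_\alpha(\widetilde x),\xi_\alpha(\widetilde x))=0$ in either alternative, so $g(\xi,\xi)(\widetilde x)=0$ for all $\widetilde x\in X_c^\sim$, and by the point-value characterization of generalized functions (Theorem 2.4 in \cite{MO1}) this forces $g(\xi,\xi)=0$ in $\mathcal G(X)$.

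The main obstacle I anticipate is precisely the null case: reconciling the asymmetry between the pointwise condition $g(\xi,\xi)=0$ and the definition of a null vector (which demands freeness in the nonzero case). The time-like and space-like cases are essentially a formal transcription of the two cited point-value propositions together with the contraction–evaluation compatibility, and should be quick. I would therefore organize the proof as: (i) record the identity $g(\xi,\xi)(\widetilde x)=g_\alpha(\widetilde x)(\xi_\alpha(\widetilde x),\xi_\alpha(\widetilde x))$ from \cite{KS1}; (ii) dispatch space-like and time-like via the strict-positivity point-value proposition; (iii) handle null carefully via Theorem \ref{freechar} and Theorem 2.4 of \cite{MO1}, noting that the two alternatives in the definition of a null vector correspond, $\varepsilon$-wise, to the vanishing of the representative of $\xi$ on the relevant index set.
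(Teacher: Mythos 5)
Your treatment of the time-like and space-like cases is exactly the paper's own argument: a chain of equivalences through the point-value characterization of strict positivity (the proposition immediately preceding the theorem) and the compatibility of evaluation at compactly supported points with tensor contraction from \cite{KS1}. The paper's proof in fact only spells out the time-like chain and tacitly assumes the other two cases go through by symbol replacement.

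On the null case you have correctly identified a genuine subtlety that the paper glosses over, but your attempted patch does not work. Definition \ref{causaldef1} declares $u$ null when $u=0$ \emph{or} ($u$ is free and $g(u,u)=0$); this disjunction is a property of the generalized vector $u$ as a whole, and there is no sense in which the trivial alternative $u=0$ can be ``attained $\varepsilon$-wise'' to rescue the argument. Concretely, with the Minkowski metric on a chart, let $A\subset I$ be such that both $A$ and $I\setminus A$ accumulate at $0$, and take $\xi$ with $\xi_\varepsilon=(1,1,0,\dots,0)$ for $\varepsilon\in A$ and $\xi_\varepsilon=0$ otherwise. Then $g(\xi,\xi)=0$ in $\mathcal G(X)$, yet $\xi_\alpha(\widetilde x)=\chi_A\cdot(1,1,0,\dots,0)$ is neither $0$ nor free (it is annihilated by $\chi_{I\setminus A}\neq 0$), so it is not null in the sense of Definition \ref{causaldef1}. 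Thus the implication (\ref{causalitychar2})$\Rightarrow$(\ref{causalitychar1}) fails for the null case under the strict reading of ``null.'' The theorem's null clause is correct only if one reads ``null'' as ``$g(u,u)=0$,'' which is what the paper's chain implicitly does but conflicts with Definition \ref{causaldef1}. The honest move is to flag this discrepancy (either weaken the definition of null or strengthen the hypothesis in the theorem, e.g.\ by requiring $\xi$ to be pointwise free), not to argue around it.
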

\begin{proof}
(\ref{causalitychar2})$\Leftrightarrow$ $\forall\;\widetilde x\in X_c^{\sim}: g(\xi,\xi)(\widetilde x)<0$ (due to the preceding proposition) $\Leftrightarrow$ for each chart $(V_{\alpha},\psi_{\alpha})$ and for all $\widetilde x_c\in \psi_{\alpha}(V_{\alpha})_c^{\sim}: g_{\alpha}(\widetilde x)(\xi_{\alpha}(\widetilde x),\xi_{\alpha}(\widetilde x))<0$ in $\widetilde{\mathbb R}$ $\Leftrightarrow$ (\ref{causalitychar1}).
\end{proof}
The preceding theorem gives rise to the following definition:
\begin{definition}\label{defcausalityglobal}
A generalized vector field $ \xi\in \mathcal G^1_0(X)$ is called time-like (resp.\ space-like, resp.\ null) if
it satisfies one of the respective equivalent statements of Theorem \ref{characterizationcausality}. Moreover, two time-like vector fields $\xi,\eta$  are said to have the same time orientation, if
$\langle \xi,\eta\rangle<0$. Due to the above, this notion is consistent with the point-wise one given in
\ref{causaldef1}.
\end{definition}
We conclude this section by harvesting constructions of generalized pseudo-\\Riemannian metrics by means of
point-wise results of the preceding section in conjunction with the point-wise characterizations of the global objects of this chapter:
\begin{theorem}\label{genRiemannmetricconstrglobal}
Let $g$ be a generalized Lorentzian metric and let $\xi,\eta\in \mathcal G ^1_0(X)$ be time-like vector fields with the same time orientation. Then
\[
h_{ab}:=\xi_{(a}\eta_{b)}-\frac{1}{2}\langle \xi,\eta\rangle g_{ab}
\]
is a generalized Riemannian metric.
\end{theorem}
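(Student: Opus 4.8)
The plan is to reduce this global statement about generalized tensor fields on $X$ to the purely algebraic Proposition \ref{metrconstr} by means of the point-value machinery developed in this section. First I would recall that by Theorem \ref{charindexg} (applied with $j=0$), a candidate $h\in\mathcal G^0_2(X)$ which is symmetric and non-degenerate is a generalized Riemannian metric precisely when, for each chart $(V_\alpha,\psi_\alpha)$ and each compactly supported generalized point $\widetilde x\in(\psi_\alpha(V_\alpha))_c^\sim$, the induced bilinear form $h_\alpha(\widetilde x)$ on $\widetilde{\mathbb R}^n$ is positive definite (i.e. has index $0$). So the whole proof amounts to checking that $h_{ab}=\xi_{(a}\eta_{b)}-\tfrac12\langle\xi,\eta\rangle g_{ab}$ is symmetric, that its point evaluations are positive definite bilinear forms, and that it therefore satisfies the equivalent conditions of Theorem \ref{chartens02}.

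The key steps, in order: (1) Symmetry and $\mathcal G^0_2$-regularity of $h$ are immediate from the construction — symmetrization $\xi_{(a}\eta_{b)}$ is manifestly symmetric, $\langle\xi,\eta\rangle\in\mathcal G(X)$ by the usual calculus of generalized sections, and $g_{ab}$ is symmetric; so $h$ is a well-defined symmetric generalized $(0,2)$-tensor field. (2) Fix a chart and a point $\widetilde x\in(\psi_\alpha(V_\alpha))_c^\sim$. Since $\xi,\eta$ are time-like vector fields with the same time orientation, Definition \ref{defcausalityglobal} together with Theorem \ref{characterizationcausality} gives that $\xi_\alpha(\widetilde x),\eta_\alpha(\widetilde x)\in\widetilde{\mathbb R}^n$ are time-like with respect to $g_\alpha(\widetilde x)$ and that $\langle\xi_\alpha(\widetilde x),\eta_\alpha(\widetilde x)\rangle<0$ in $\widetilde{\mathbb R}$ (this last point uses that $\langle\xi,\eta\rangle<0$ in $\mathcal G(X)$ is equivalent, via the strict-positivity point-value proposition, to strict negativity of all point evaluations). (3) Because lowering indices and tensor operations are compatible with evaluation on compactly supported points — the remark preceding Lemma \ref{uvfree}, resting on Propositions 3.9 and Theorem 3.1 of \cite{KS1} — the point value $h_\alpha(\widetilde x)$ is exactly the algebraic bilinear form $u_{(\mu}v_{\nu)}-\tfrac12\langle u,v\rangle g_{\mu\nu}$ of Proposition \ref{metrconstr} with $u=\xi_\alpha(\widetilde x)$, $v=\eta_\alpha(\widetilde x)$ and $g=g_\alpha(\widetilde x)$. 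Proposition \ref{metrconstr} then yields that $h_\alpha(\widetilde x)$ is positive definite, hence in particular non-degenerate, for every such $\widetilde x$. (4) Non-degeneracy of $h$ as a generalized metric follows by the point-value characterization of non-degeneracy (Theorem \ref{chartens02}), and then Theorem \ref{charindexg} with $j=0$ upgrades "index $0$ at every point" to "$h$ is a generalized Riemannian metric", completing the proof.

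The main obstacle is step (3): one must be careful that the bilinear form obtained by first forming $h$ on $X$ and then evaluating at $\widetilde x$ genuinely coincides with the bilinear form obtained by first evaluating $\xi,\eta,g$ at $\widetilde x$ and then applying the algebraic formula. This is exactly the "compatibility of index raising/lowering and tensor products with point evaluation" that the excerpt flags as following from \cite{KS1}, but it needs to be invoked explicitly and with the correct bookkeeping of covariant versus contravariant components (the formula mixes $\xi_a,\eta_b$ with $g_{ab}$, so one should phrase everything in terms of the covariant components $\xi_i,\eta_i$ and the coefficient matrix $g_{ij}$ of $g_\alpha(\widetilde x)$). Once that identification is in place, the result is an essentially formal consequence of Proposition \ref{metrconstr} and Theorem \ref{charindexg}, and no further $\varepsilon$-level estimates are required.
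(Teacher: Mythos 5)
Your proposal is correct and takes precisely the approach the paper intends: the paper's own proof is a single sentence, ``Use Proposition \ref{metrconstr} together with Theorem \ref{characterizationcausality} and Theorem \ref{charindexg},'' and your four steps are exactly the expansion of that citation chain, including the necessary but easily overlooked checks that $h$ satisfies the hypotheses of Theorem \ref{chartens02} before Theorem \ref{charindexg} can be applied, and that point evaluation commutes with the tensor operations used to form $h$.
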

\begin{proof}
Use Proposition \ref{metrconstr} together with Theorem \ref{characterizationcausality} and Theorem \ref{charindexg}.
\end{proof}
\section[Appendix. Invertibility revisited]{Appendix. Invertibility and strict positivity in generalized function algebras revisited}\label{secin}
This section is devoted to elaborating a new characterization of invertibility as well as of strict positivity of generalized numbers resp.\ functions. The first investigation on which many works in this field are based was done by M.\ Kunzinger and R.\ Steinbauer in \cite{KS1}; the authors of the latter work
established the fact that invertible generalized numbers are precisely such for which the modulus of any representative is bounded from below by a fixed power of the smoothing parameter (cf.\ the proposition below). It is, however, remarkable, that (as the following statement shows) component-wise invertibility suffices: We here show that a number is invertible if each component of any representative is invertible for sufficiently small smoothing parameter.
\begin{proposition}\label{genpointinv}
Let $\gamma\in\widetilde{\mathbb R}$. The following are equivalent:
\begin{enumerate}
\item \label{inv1} $\gamma$ is invertible.
\item \label{inv2} $\gamma$ is strictly nonzero, that is: for some (hence any) representative $(\gamma_{\varepsilon})_{\varepsilon}$ of $\gamma$ there exists a $m_0$ and a $\varepsilon_0\in I$ such that for each $\varepsilon<\varepsilon_0$ we have
$\vert \gamma_{\varepsilon}\vert>\varepsilon^{m_0}$.
\item \label{inv3} For each representative $(\gamma_{\varepsilon})_{\varepsilon}$ of $\gamma$ there exists some $\varepsilon_0\in I$ such that for all
$\varepsilon<\varepsilon_0$ we have $\alpha_{\varepsilon}\neq 0$.
\item \label{inv4} $\vert \gamma\vert$ is strictly positive.
\end{enumerate}
\end{proposition}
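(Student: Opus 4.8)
The plan is to anchor everything on the equivalence (\ref{inv1})$\Leftrightarrow$(\ref{inv2}), which is already available: it is exactly the characterization of invertibility (equivalently, of strict non-zeroness) of generalized numbers obtained in \cite{KS1} (Propositions 2.\ 1 and 2.\ 2). It then suffices to fold in conditions (\ref{inv3}) and (\ref{inv4}). The implication (\ref{inv2})$\Rightarrow$(\ref{inv3}) is immediate, since $\vert\gamma_\varepsilon\vert>\varepsilon^{m_0}>0$ on $(0,\varepsilon_0)$ forces $\gamma_\varepsilon\neq 0$ there, and — as two representatives differ by a negligible net — this carries over to every representative. Similarly, I would treat (\ref{inv2})$\Leftrightarrow$(\ref{inv4}) as essentially the definition of strict positivity applied to $\vert\gamma\vert$: noting that $(\vert\gamma_\varepsilon\vert)_\varepsilon$ is a representative of $\vert\gamma\vert$, a bound $\vert\gamma_\varepsilon\vert>\varepsilon^{m_0}$ valid for $\varepsilon<\varepsilon_0$ upgrades to a representative of $\vert\gamma\vert$ bounded below by $\varepsilon^{m_0}$ on all of $I$ by altering it on $[\varepsilon_0,1]$ (a set bounded away from $0$, so any alteration there is negligible); conversely a representative of $\vert\gamma\vert$ bounded below by $\varepsilon^{m}$ on all of $I$ yields, after subtracting the appropriate negligible net, $\vert\gamma_\varepsilon\vert\geq\frac{1}{2}\varepsilon^{m}>\varepsilon^{m+1}$ for small $\varepsilon$, i.\ e., (\ref{inv2}).

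The substance is the converse (\ref{inv3})$\Rightarrow$(\ref{inv2}), which I would prove indirectly. Suppose $\gamma$ is not strictly non-zero. Then for every representative $(\gamma_\varepsilon)_\varepsilon$ and all $m$, $\varepsilon_0\in I$ there is $\varepsilon<\varepsilon_0$ with $\vert\gamma_\varepsilon\vert\leq\varepsilon^{m}$; applying this inductively with $m:=k$ and $\varepsilon_0:=\varepsilon_{k-1}$ produces a strictly decreasing null sequence $\varepsilon_k\searrow 0$ such that $\vert\gamma_{\varepsilon_k}\vert\leq\varepsilon_k^{\,k}$ for all $k\geq 1$. Define a new net by $\widetilde\gamma_\varepsilon:=\gamma_\varepsilon$ for $\varepsilon\notin\{\varepsilon_k\mid k\geq 1\}$ and $\widetilde\gamma_{\varepsilon_k}:=0$. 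The net $(\widetilde\gamma_\varepsilon-\gamma_\varepsilon)_\varepsilon$ is supported on $\{\varepsilon_k\}$, and for $\varepsilon=\varepsilon_k$ with $k\geq m$ we have $\vert\widetilde\gamma_{\varepsilon_k}-\gamma_{\varepsilon_k}\vert=\vert\gamma_{\varepsilon_k}\vert\leq\varepsilon_k^{\,k}\leq\varepsilon_k^{\,m}$; hence $(\widetilde\gamma_\varepsilon-\gamma_\varepsilon)_\varepsilon\in\mathcal N(\mathbb R)$ and $(\widetilde\gamma_\varepsilon)_\varepsilon$ is again a representative of $\gamma$. But this representative vanishes identically along $(\varepsilon_k)_k$, contradicting (\ref{inv3}). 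This closes the chain of implications.

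I expect the only genuine difficulty to lie in the negligibility bookkeeping of that last step: one must extract the null sequence so that $\vert\gamma_{\varepsilon_k}\vert$ decays faster than every fixed power of $\varepsilon$ (the exponent $k$ growing with the index), which is precisely what makes zeroing the net out along it a negligible perturbation. This is also the conceptual point of the proposition — it explains why component-wise invertibility for small $\varepsilon$ already forces genuine invertibility in $\widetilde{\mathbb R}$, a fact which at first sight looks surprising.
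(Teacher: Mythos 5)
Your proof is correct and follows essentially the same route as the paper's: anchor on the known equivalence (\ref{inv1})$\Leftrightarrow$(\ref{inv2}) from \cite{KS1}, dispose of (\ref{inv4}) as definitional, and prove (\ref{inv3})$\Rightarrow$(\ref{inv2}) by contradiction, zeroing out the representative along a null sequence where $\vert\gamma_\varepsilon\vert$ decays faster than every power — a negligible perturbation that produces a representative violating (\ref{inv3}). One small but real gap in your extraction step: choosing $\varepsilon_0:=\varepsilon_{k-1}$ alone yields a strictly decreasing sequence, but does not force $\varepsilon_k\searrow 0$; if $\varepsilon_k\to\ell>0$, the modified representative $\widetilde\gamma$ vanishes only along a sequence accumulating at $\ell$, so $\widetilde\gamma_\varepsilon=\gamma_\varepsilon\neq 0$ may still hold for all $\varepsilon<\ell$, and no contradiction with (\ref{inv3}) arises. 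The fix is trivial — take $\varepsilon_0:=\min(\varepsilon_{k-1},1/k)$ at the $k$-th step — but it is needed; the paper sidesteps the issue by simply asserting the existence of such a null sequence.
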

\begin{proof}
Since (\ref{inv1}) $\Leftrightarrow$ (\ref{inv2}) by (\cite{KS1}, Theorem 1.2.38) and (\ref{inv1}) $\Leftrightarrow $ (\ref{inv4}) follows from the definition of
strict positivity, we only need to establish the equivalence (\ref{inv2}) $\Leftrightarrow$ (\ref{inv3}) in order to complete
proof.
As the reader can easily verify, the definition of strictly non-zero is independent of the representative, that is for each representative $(\gamma_{\varepsilon})_{\varepsilon}$ of $\gamma$ we have
some $m_0$ and some $\varepsilon_0$ such that for all $\varepsilon<\varepsilon_0$ we have $\vert\gamma_{\varepsilon}\vert>\varepsilon^{m_0}$. By this consideration (\ref{inv3}) follows from (\ref{inv2}).
In order to show the converse direction, we proceed by an indirect argument. Assume there exists some representative $(\gamma_{\varepsilon})_{\varepsilon}$
of $\gamma$ such that for some zero sequence $\varepsilon_k\rightarrow 0$ ($k\rightarrow \infty$) we have $\vert\gamma_{\varepsilon_k}\vert<\varepsilon_k^k$ for each $k>0$.
Define a moderate net $(\hat{\gamma}_{\varepsilon})_{\varepsilon}$ in the following way:
\[
\hat{\gamma}_{\varepsilon}:=\begin{cases} 0\qquad\mbox{if}\qquad \varepsilon=\varepsilon_k\\ \gamma_{\varepsilon} \qquad\mbox{otherwise} \end{cases}.
\]
It can then easily be seen that $(\hat{\gamma}_{\varepsilon})_{\varepsilon}-(\gamma_{\varepsilon})_{\varepsilon}\in \mathcal N(\mathbb R)$
which means that $(\hat{\gamma}_{\varepsilon})_{\varepsilon}$ is a representative of $\gamma$ as well. However the latter violates
(\ref{inv3})  and we are done.
\end{proof}
Analogously we can characterize the strict order relation on the generalized real numbers:
\begin{proposition}
Let $\gamma\in\widetilde{\mathbb R}$. The following are equivalent:
\begin{enumerate}
\item \label{inv11} $\gamma$ is strictly positive, that is: for some (hence any) representative $(\gamma_{\varepsilon})_{\varepsilon}$ of $\gamma$ there exists an $m_0$ and an $\varepsilon_0\in I$ such that for each $\varepsilon<\varepsilon_0$ we have
$\gamma_{\varepsilon}>\varepsilon^{m_0}$.

\item \label{inv21} $\gamma$ is strictly nonzero and has a representative $(\gamma_{\varepsilon})_{\varepsilon}$ which is positive for each index $\varepsilon>0$.
\item \label{inv31} For each representative $(\gamma_{\varepsilon})_{\varepsilon}$ of $\gamma$ there exists some $\varepsilon_0\in I$ such that for all
$\varepsilon<\varepsilon_0$ we have $\alpha_{\varepsilon}> 0$.
\end{enumerate}
\end{proposition}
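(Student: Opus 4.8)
The plan is to run the cycle $(\ref{inv11})\Rightarrow(\ref{inv31})\Rightarrow(\ref{inv21})\Rightarrow(\ref{inv11})$, using throughout Proposition \ref{genpointinv} and the remark from the Preliminaries that a generalized number is strictly positive exactly when it is simultaneously invertible and positive (i.e.\ admits a representative that is $\geq 0$ at every index).

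For $(\ref{inv11})\Rightarrow(\ref{inv31})$ I would fix a representative $(\gamma_\varepsilon)_\varepsilon$ and $m_0,\varepsilon_0$ with $\gamma_\varepsilon>\varepsilon^{m_0}$ on $(0,\varepsilon_0)$; for an arbitrary representative $(\gamma'_\varepsilon)_\varepsilon$ one has $(\gamma'_\varepsilon-\gamma_\varepsilon)_\varepsilon\in\mathcal N(\mathbb R)$, hence $|\gamma'_\varepsilon-\gamma_\varepsilon|<\frac12\varepsilon^{m_0}$ for $\varepsilon$ below some $\varepsilon_1$, and therefore $\gamma'_\varepsilon>\frac12\varepsilon^{m_0}>0$ for $\varepsilon<\min(\varepsilon_0,\varepsilon_1)$; this is $(\ref{inv31})$, and it also disposes of the ``hence any'' clause in $(\ref{inv11})$. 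For $(\ref{inv31})\Rightarrow(\ref{inv21})$ I would note that $\gamma_\varepsilon>0$ forces $\gamma_\varepsilon\neq 0$, so $(\ref{inv31})$ is precisely condition $(\ref{inv3})$ of Proposition \ref{genpointinv}; hence $\gamma$ is invertible, in particular strictly nonzero. Moreover, starting from any representative and overwriting it by the constant $1$ on the tail $[\varepsilon_0,1]$ (with $\varepsilon_0$ as in $(\ref{inv31})$) yields a representative of $\gamma$ — the change is supported in $[\varepsilon_0,1]$, hence negligible — that is $\geq 0$ at every index. Together these are $(\ref{inv21})$. Finally, for $(\ref{inv21})\Rightarrow(\ref{inv11})$ I would take a representative $(\gamma^0_\varepsilon)_\varepsilon$ of $\gamma$ that is $\geq 0$ at every index; since strict nonzero-ness does not depend on the chosen representative (cf.\ the proof of Proposition \ref{genpointinv}), there are $m_0,\varepsilon_0$ with $|\gamma^0_\varepsilon|>\varepsilon^{m_0}$ for $\varepsilon<\varepsilon_0$, which, because $\gamma^0_\varepsilon\geq 0$, is nothing but $\gamma^0_\varepsilon>\varepsilon^{m_0}$ on $(0,\varepsilon_0)$, i.e.\ $(\ref{inv11})$.

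I do not expect a genuine obstacle: each step is a short manipulation of moderate and negligible nets. The only mildly delicate point — and the reason $(\ref{inv31})$ is not trivially equivalent to the rest — is that a net which is merely positive at each index need not be bounded below by any power of $\varepsilon$ along a sequence $\varepsilon\to 0$; this is neutralised by always pairing positivity with the uniform lower bound supplied by invertibility via Proposition \ref{genpointinv}. In effect the substantive work was already carried out in proving Proposition \ref{genpointinv}, and the present statement is obtained by recombining that result with the characterization ``strictly positive $=$ invertible $+$ positive'', exactly in parallel to the treatment of Proposition \ref{genpointinv} itself.
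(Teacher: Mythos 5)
Your proof is correct and is precisely the sort of argument the paper has in mind: the paper gives no explicit proof, saying only that the statement ``can be shown in a similar manner as the preceding one,'' and your cycle $(\ref{inv11})\Rightarrow(\ref{inv31})\Rightarrow(\ref{inv21})\Rightarrow(\ref{inv11})$ carries out exactly that, reusing Proposition \ref{genpointinv} for the strict-nonzero part and the ``strictly positive $=$ invertible $+$ positive'' remark from the Preliminaries for the sign part. The one small point worth making explicit (and you do, implicitly) is that in $(\ref{inv31})\Rightarrow(\ref{inv21})$ the tail-modification must be applied to a representative together with its own $\varepsilon_0$ from $(\ref{inv31})$, which is harmless since the change is supported away from $\varepsilon=0$ and hence negligible.
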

The statement can be shown in a similar manner as the the preceding one.

Next, we may note that the above has an immediate generalization to generalized functions. Here $X$ denotes a paracompact, smooth Hausdorff manifold of dimension $n$.
\begin{theorem}\label{downhilliseasier}
Let $u\in\mathcal G(X)$. The following are equivalent:
\begin{enumerate}
\item \label{invf1} $u$ is invertible (resp.\ strictly positive).
\item \label{invf2} For each representative $(u_{\varepsilon})_{\varepsilon}$ of $u$ and each compact set $K$ in $X$ there exists some
$\varepsilon_0\in I$ and some $m_0$ such that for all $\varepsilon<\varepsilon_0$ we have $\inf_{x\in K} \vert u_{\varepsilon}\vert>\varepsilon^{m_0}$ (resp.\ $\inf_{x\in K} u_{\varepsilon}>\varepsilon^{m_0}$).
\item  \label{invf3} For each representative $(u_{\varepsilon})_{\varepsilon}$ of $u$ and each compact set $K$ in $X$ there exists some
$\varepsilon_0\in I$ such that $\forall\; x\in K\;\forall\; \varepsilon<\varepsilon_0: u_{\varepsilon}\neq 0$ (resp.\ $u_{\varepsilon}>0$).
\end{enumerate}
\end{theorem}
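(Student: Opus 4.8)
The plan is to imitate the scalar argument of Proposition~\ref{genpointinv}, upgrading each of its "for sufficiently small $\varepsilon$" assertions to the corresponding uniform-on-compacta one. The equivalence of (\ref{invf1}) and (\ref{invf2}) is the well-known characterization of invertibility of a generalized function: $u$ is invertible in $\mathcal G(X)$ exactly when, on every compact $K\subset X$, some (hence every) representative has modulus bounded below by a fixed power of $\varepsilon$ for small $\varepsilon$; in the strictly positive case this is, in view of the remark recorded just after the definition of strict positivity, the definition itself together with the sign. So everything rests on the equivalence of (\ref{invf2}) and (\ref{invf3}), and there the direction (\ref{invf2})~$\Rightarrow$~(\ref{invf3}) is immediate: $\inf_{x\in K}\vert u_\varepsilon(x)\vert>\varepsilon^{m_0}>0$ for $\varepsilon<\varepsilon_0$ already prevents $u_\varepsilon$ from vanishing anywhere on $K$ (and $\inf_{x\in K}u_\varepsilon(x)>\varepsilon^{m_0}$ forces $u_\varepsilon>0$ on $K$).

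For (\ref{invf3})~$\Rightarrow$~(\ref{invf2}) I would argue contrapositively, reproducing the construction of Proposition~\ref{genpointinv}. If (\ref{invf2}) fails, there are a representative $(u_\varepsilon)_\varepsilon$ of $u$ and a compact $K\subset X$ such that for every $m$ and every $\varepsilon_0\in I$ one can find $\varepsilon<\varepsilon_0$ with $\inf_{x\in K}\vert u_\varepsilon(x)\vert\le\varepsilon^{m}$. Iterating with $m=1,2,\dots$ and successively smaller bounds produces a strictly decreasing sequence $\varepsilon_k\searrow 0$ in $I$ with $\inf_{x\in K}\vert u_{\varepsilon_k}(x)\vert\le\varepsilon_k^{k}$; since $K$ is compact and $u_{\varepsilon_k}$ continuous, this infimum is attained at some $x_k\in K$, so $\vert u_{\varepsilon_k}(x_k)\vert\le\varepsilon_k^{k}$. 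The point now is that one may \emph{not} simply set $u_{\varepsilon_k}$ to zero (that is not a negligible modification on a manifold); instead one subtracts the constant equal to the offending value, putting
\[
\hat u_\varepsilon:=\begin{cases} u_{\varepsilon_k}-u_{\varepsilon_k}(x_k) & \text{if } \varepsilon=\varepsilon_k \text{ for some } k,\\ u_\varepsilon & \text{otherwise.}\end{cases}
\]
Then $\hat u_{\varepsilon_k}(x_k)=0$ for every $k$, so $(\hat u_\varepsilon)_\varepsilon$ violates (\ref{invf3}) with this $K$ --- once we know it still represents $u$.

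That last verification, namely $(\hat u_\varepsilon-u_\varepsilon)_\varepsilon\in\mathcal N(X)$, is the only real obstacle, and it is where the manifold setting departs from the scalar one; it is, however, easily dispatched. At each $\varepsilon$ the difference $\hat u_\varepsilon-u_\varepsilon$ is a function \emph{constant in $x$} (equal to $0$, or to $-u_{\varepsilon_k}(x_k)$ at $\varepsilon=\varepsilon_k$), so in every chart and on every compact set its derivatives of order $\ge 1$ vanish identically, while its $\mathcal C^0$-seminorm is $\vert u_{\varepsilon_k}(x_k)\vert\le\varepsilon_k^{k}$ along the sequence and $0$ otherwise --- hence it decays faster than any power of $\varepsilon$, and all negligibility estimates hold. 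The strictly positive variant runs identically: if the failure of (\ref{invf2}) is realized with $\inf_{x\in K}u_{\varepsilon_k}(x)\le\varepsilon_k^{k}$ and this quantity is $\le 0$ infinitely often, then $(u_\varepsilon)_\varepsilon$ already violates (\ref{invf3}); otherwise $0<u_{\varepsilon_k}(x_k)\le\varepsilon_k^{k}$ and the same subtraction yields a representative with $\hat u_{\varepsilon_k}(x_k)=0\not>0$. As an alternative one could route (\ref{invf3})~$\Rightarrow$~(\ref{invf2}) through the scalar Proposition~\ref{genpointinv} evaluated at compactly supported generalized points (cf.\ \cite{MO1}), but the explicit construction above is more transparent.
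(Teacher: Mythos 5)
Your proof is correct and follows essentially the same route as the paper's: citing the known equivalence of (\ref{invf1}) and (\ref{invf2}), observing (\ref{invf2})$\Rightarrow$(\ref{invf3}) is immediate, and proving (\ref{invf3})$\Rightarrow$(\ref{invf2}) contrapositively by modifying the representative to $\hat u_{\varepsilon_k}:=u_{\varepsilon_k}-u_{\varepsilon_k}(x_k)$ along a bad sequence. You spell out the negligibility check for the modification more explicitly than the paper does (which only says ``similarly to the proof of Proposition~\ref{genpointinv}''), but the construction and the logic are identical.
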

\begin{proof}
We only show that the characterization of invertibility holds, the rest of the statement is then clear. Since (\ref{invf1})$\Leftrightarrow$(\ref{invf2}) due to (\cite{KS1}, Proposition 2.1) we only need to establish the equivalence
of the third statement. Since (\ref{invf2})$\Rightarrow$(\ref{invf3}) is evident, we finish the proof by showing the converse direction.
Assume (\ref{invf2}) does not hold, then there exists a compactly supported sequence $(x_k)_k\in X^{\mathbb N}$ such that for some representative $(u_{\varepsilon})_{\varepsilon}$ of $u$ we have $\vert u_{\varepsilon_k}(x_k)\vert<\varepsilon_k^k$
for each $k$. Similarly to the proof of Proposition \ref{genpointinv} we observe that $(\hat u_{\varepsilon})_{\varepsilon}$ defined as
\[
\hat{u}_{\varepsilon}:=\begin{cases} u_{\varepsilon}-u_{\varepsilon}(x_k)\qquad\mbox{if}\qquad \varepsilon=\varepsilon_k\\ u_{\varepsilon} \qquad\mbox{otherwise}\end{cases}
\]
yields another representative of $u$ which, however, violates (\ref{invf3}) and we are done.
\end{proof}

\chapter{The wave equation on singular
space-times}\label{chapterwaveeq} We are interested in a local
existence and uniqueness result for the scalar wave equation on a
generalized four dimensional space-time $(\mathcal M,g)$, the
Lorentzian metric $g$ being modeled as a symmetric generalized tensor
field $g\in \mathcal G_2^0(\mathcal M)$ with index $\nu=1$. 

As usual the d'Alembertian $\Box$ is defined by
\[
\Box:=\nabla^a\nabla_a:=g^{ab}\nabla_a\nabla_b
\]
where $\nabla$ denotes the covariant derivative induced by $g$. The
appropriate initial value problem for the wave equation shall be
formulated as soon as we have introduced the specific class of
generalized metrics subject to our discussion.
\section{Preliminaries}\label{preliminarywaves}
To start with, we collect some basic material from (smooth)
Lorentzian geometry and fix some notation. Throughout this section,
$(\mathcal M,g)$ denotes a smooth space-time. We follow the
convention that the signature of $g$ is $(-,+,+,+)$.  The (quite
standard) constructions revisited in the subsections
\ref{constrsubs}, \ref{energytensorsubs} below have suitable
generalizations in the Colombeau setting; these are established in
chapter \ref{chaptercausality}.
\subsection{Constructions of Riemannian metrics from Lorentzian
metrics}\label{constrsubs} The final results in the end of this
section involve point-wise arguments. Therefore, we start by
recalling elementary results from four-dimensional Minkowski
space-time $(M,\eta_{\mu\nu})$ (where $\eta=\diag (-1,1,1,1)$ and
$M=\mathbb R^4$). Following the convention concerning the signature
of the Lorentzian metric, we have the following conventions on
causality (using the notation $\langle
\xi,\eta\rangle:=g_{ab}\xi^a\eta^b$): A vector $\xi\in M$ is called
\begin{enumerate}
\item time-like, if $\langle \xi,\xi\rangle<0$,
\item space-like, if $\langle \xi,\xi\rangle>0$ and
\item null, if $\langle \xi,\xi\rangle=0$.
\end{enumerate}
It should be noted that we follow the convention that $\xi=0$ is
defined to be a null vector. To begin with we show:
\begin{lemma}
Let $u, v$ be time-like vectors in $(M,\eta_{\mu\nu})$ such that
$\langle u,u\rangle=\langle v,v\rangle=-1$ and $\langle
u,v\rangle<0$ (that is, $u$ and $v$ have the same time-orientation).
Then the following statements hold:
\[
L^{\mu}_{\nu}:=\delta^{\mu}_{\nu}-2v^{\mu}u_{\nu}+\frac{(u^{\mu}+v^{\mu})(u_{\nu}+v_{\nu})}{1-\langle
u,v\rangle}
\]
is a Lorentz Transformation, meaning \[
L^{\mu}_{\nu}L^{\lambda}_{\rho}\eta_{\mu\lambda}=\eta_{\nu\rho},
\]
and has the property $Lu=v$.
\end{lemma}
\begin{proof}
The first part of the statement is shown by means of simple
algebraic manipulations:
\begin{flushleft}
$L^{\mu}_{\nu}L^{\lambda}_{\rho}\eta_{\mu\lambda}=$\\$\left(\delta^{\mu}_{\nu}-2v^{\mu}u_{\nu}+\frac{(u^{\mu}+v^{\mu})(u_{\nu}+v_{\nu})}{1-\langle
u,v\rangle}\right)\left(\delta^{\lambda}_{\rho}-2v^{\lambda}u_{\rho}+\frac{(u^{\lambda}+v^{\lambda})(u_{\rho}+v_{\rho})}{1-\langle
u,v\rangle}\right)\eta_{\mu\lambda}=$\\$
\left(\delta^{\mu}_{\nu}-2v^{\mu}u_{\nu}+\frac{(u^{\mu}+v^{\mu})(u_{\nu}+v_{\nu})}{1-\langle
u,v\rangle}\right)\left(\eta_{\mu\rho}
-2v_{\mu}u_{\rho}+\frac{(u_{\mu}+v_{\mu})(u_{\rho}+v_{\rho})}{1-\langle
u,v\rangle}\right)=$\\$\eta_{\nu\rho}-2v_{\nu}u_{\rho}+\frac{(u_{\nu}+v_{\nu})(u_{\rho}+v_{\rho})}{1-\langle
u,v\rangle}-2 v_{\rho}u_{\nu}+ 4\langle v,v\rangle
u_{\nu}u_{\rho}+$\\$\frac{(-2 \langle u,v\rangle u_{\nu}-2 \langle
v,v\rangle u_{\nu})(u_{\rho}+v_{\rho})}{1-\langle
u,v\rangle}+\frac{(u_{\rho}+v_{\rho})(u_{\nu}+v_{\nu})}{1-\langle
u,v\rangle}+$\\$\frac{(-2\langle u,v\rangle u_{\rho}-2\langle
v,v\rangle u_{\rho})(u_{\nu}+v_{\nu})}{1-\langle
u,v\rangle}+\frac{(\langle u,u\rangle+\langle u,v\rangle+\langle
u,v\rangle+\langle v,v\rangle
)(u_{\nu}+v_{\nu})(u_{\rho}+v_{\rho})}{(1-\langle
u,v\rangle)^2}=$\\$\eta_{\nu\rho}-2
v_{\nu}u_{\rho}-2v_{\rho}u_{\nu}-4
u_{\nu}u_{\rho}+2u_{\nu}(u_{\rho}+v_{\rho})+2u_{\rho}(u_{\nu}+v_{\nu})=$\\$\eta_{\nu\rho}$.
\end{flushleft}
The other claim is obtained by a further calculation:
\begin{eqnarray}
L_\nu^{\mu}u^{\nu}&=&u^{\mu}-2 v^{\mu}\langle u,u\rangle
+\frac{(u^{\mu}+v^{\mu})(\langle u,u\rangle+\langle
u,v\rangle)}{1-\langle
u,v\rangle}=\\\nonumber&=&u^{\mu}+2v^{\mu}-u^{\mu}-v^{\mu}=v^{\mu},
\end{eqnarray}
that is, $Lu=v$ and we are done.
\end{proof}
Constructions of Riemannian metrics by means of Lorentzian metrics and
time-like vector fields will be used later on. Here is the
result in full generality (we will also use simpler constructions,
where $u=v$, cf.\ the corollary below):
\begin{lemma}\label{posdefmetricfromguv}
Let $u,v$ be time-like vectors in ($M,\eta$) with the same
time-orientation. Then
\[
h_{ab}^{u v}:=u_{(a}v_{b)}-\frac{1}{2}\langle u,v\rangle \eta_{ab}
\]
is a symmetric positive definite bilinear form on $M$.
\end{lemma}
\begin{proof}

{\it Step 1.} \\ By scaling $u,v$ appropriately it can be seen that we
may assume without loss of generality that $u^2=v^2=-1$ and that
$u,v$ lie in the future light cone.\\
 {\it Step 2.} \\By the preceding lemma, the Lorentz group acts
 transitively on the future light cone. Therefore, there exists a
 Lorentz transformation $L_1$ such that $\bar u:=L_1 u=(1,0,0,0)$ and we set
 $\bar v:=L_1v$. By means of a rotation $L_2$ of the space coordinates
it can further be achieved that $\hat u:=L_2\bar u=(1,0,0,0)$ and
$\hat v:=L_2\bar v=L_2L_1v=\gamma(V)(1, V,0,0)$ with
$\gamma(V)=(1-V^2)^{-1/2},\;\vert V\vert <1$.\\ {\it Step 3.}\\ We
denote by $L:=L_2L_1$ the composition of the two Lorentz
transformations $L_1,L_2$. With this notation we have by the above,
$\hat u=Lu,\, \hat v=Lv$. Since $h_{ab}^{u v}$ is evidently a
symmetric bilinear form, we only need to show that for each non-zero
vector $w$, we have $h^{u v}(w,w)>0$. Since for $\hat w:=Lw$, $h^{u
v}(w,w)=h^{\hat u \hat v}(\hat w,\hat w)$, and since $L$ is a linear
isomorphism, it therefore suffices to show that for each non-zero
$w$, $h^{\hat u \hat v}(w,w)>0$. Let $w=(w^1,w^2,w^3,w^4)\in M,
w\neq 0$ and set $h=h^{\hat u\hat v}$. Then we have
\[
h(w,w):=h_{ab}w^aw^b= \langle \hat u,w\rangle  \langle \hat
v,w\rangle-\frac{1}{2}\langle w,w\rangle \langle \hat u,\hat
v\rangle.
\]
Obviously, $\langle \hat u,w \rangle=-w^1,\langle \hat
v,w\rangle=\gamma(V)(-w^1+Vw^2),\langle \hat u,\hat
v\rangle=-\gamma(V)$. Thus
\begin{eqnarray}\nonumber
h(w,w)&=&\gamma(V)(-w^1)(-w^1+Vw^2)+\frac{1}{2}\gamma(V)(-(w^1)^2+(w^2)^2+(w^3)^2+(w^4)^2)\\\nonumber&=&-\gamma(V)
V w^1w^2+\frac{1}{2}\gamma(V) (+(w^1)^2+(w^2)^2+(w^3)^2+(w^4)^2)
\end{eqnarray}
If $Vw^1w^2\leq 0$, then we are done. Otherwise $Vw^1w^2=\vert
V\vert \vert w^1\vert \vert w^2\vert< \vert
w^1w^2\vert\leq\frac{(w^1)^2+(w^2)^2}{2}$, because of $\vert V\vert
<1$ and $Vw^1w^2\neq 0$. Inserting this information into the latter
equation yields
\begin{eqnarray}\nonumber
h(w,w)&=&-\gamma(V)
Vw^1w^2+\frac{1}{2}\gamma(V)(+(w^1)^2+(w^2)^2+(w^3)^2+(w^4)^2)>\\\nonumber
&>& \frac{1}{2}\gamma(V)((w^3)^2+(w^4)^2)\geq 0,
\end{eqnarray}
i.\ e.\ $h(w,w)>0$ and we are done.
\end{proof}
An immediate corollary is:
\begin{corollary}\label{corollaryriemannianmetricconstruction}
Let ($\mathcal M,g$) be a smooth space-time. Let $\xi,\eta$ be
time-like vector fields on ($\mathcal M,g$) with the same time
orientation. Then $h_{ab}:=\xi_{(a}\eta_{b)}-\frac{1}{2}\langle
\xi,\eta\rangle g_{ab}$ is a Riemannian metric on $\mathcal M$. As a
consequence we have: if $\theta$ is a time-like unit vector field,
then also $k_{ab}:=g_{ab}+2\theta_a\theta_b$ is a Riemannian metric.
\end{corollary}
\begin{proof}
Let $p\in M$ and choose a local chart $(U,\xi)\ni p$ such that the
coordinate expression of $g$ is Minkowskian at $p$. Then we are in
the setting of Lemma \ref{posdefmetricfromguv}, according to which $h_{ab}$ is a positive definite bilinear form at $p$.
Furthermore $h_{ab}$ is smooth, since $\xi,\eta$ and $g$ are.

To prove the second assertion, we set $\xi=\eta=\theta$. Due to the first claim,
$k_{ab}=2 (\xi_{(a}\eta_{b)}-\frac{1}{2}\langle
\xi,\eta\rangle g_{ab})=g_{ab}+2\theta_a\theta_b$ is a Riemannian metric, and we are done.
\end{proof}
A remark on the Riemannian metric constructed above is in order. The first observation is, that in general
$h^{ab}:=2(\xi^{(a}\eta^{b)}-\frac{1}{2}\langle
\xi,\eta\rangle g^{ab})$ is not the inverse of $h_{ab}=2(\xi_{(a}\eta_{b)}-\frac{1}{2}\langle
\xi,\eta\rangle g_{ab}$) as defined in the preceding corollary, but just the metric equivalent covariant tensor. However, if
$\xi=\eta$, then it is the case! For the sake of simplicity, we assume $\langle \theta,\theta\rangle=-1$. Then
we have $k_{ab}=2 h_{ab}=g_{ab}+2\theta_a\theta_b$, and similarly, $k^{bc}=2 h^{bc}=g^{bc}+2\theta^b\theta^c$. Therefore we obtain
\begin{equation}
k_{ab}k^{bc}=(g_{ab}+2\theta_a\theta_b)(g^{bc}+2\theta^b\theta^c)=\delta_a^c+2\theta_a\theta^c+2\theta_a\theta^c-4\theta_a\theta^c=\delta_a^c,
\end{equation}
and we have shown the assertion.

We shall make use of such metric constructions in the definition of
certain energy integrals (cf.\ section \ref{energyestimates1}).
However, in order to entirely understand their structure we
investigate further in energy tensors and certain positivity
statements, which in the physics literature are referred to as
"dominant energy condition(s)":
\subsection{Energy tensors and dominant energy
condition}\label{energytensorsubs} Let $(\mathcal M, g)$ be a smooth
space-time. The statement of this section are to be understood
point-wise. We start to revisit a notion of (\cite{HE}, pp.\ 90).
\begin{definition}\label{DECsmooth}
A symmetric tensor $T^{ab}$ is said to satisfy the dominant energy
condition if for every time like vector $\xi^a$,
$\eta^b:=T^{ab}\xi_a$ is not space-like and if further
$T^{ab}\xi_a\xi_b\geq 0$.
\end{definition}
A remark on this is in order: The condition $T^{ab}\xi_a\xi_b\geq 0$
implies that the non-space like vector $-\eta^b=-T^{ab}\xi_a$ has
the same time-orientation as $\xi^a$. This follows from
\[
-\eta^b \xi_b=-T^{ab}\xi_a\xi_b\leq 0,
\]
that is $g(\xi,-\eta)\leq 0$, which is equivalent to saying that
$\xi,-\eta$ have the same time-orientation. \\
A consequence of the
dominant energy condition is the following
\begin{lemma}\label{Declemmaconsequence}
Let $T^{ab}$ be a symmetric tensor satisfying the dominant energy
condition. Then for any time-like vectors $\xi^a,\eta^b$ with the
same time-orientation, we have
$T^{ab}\xi_a\eta_b\geq 0$.
\end{lemma}
\begin{proof}
By the dominant energy condition, $\theta^b:=T^{ab}\xi_a$ is
time-like or null, and $-\theta^a$ has the same time-orientation as
$\xi^a$, that is $g_{ab}\xi^a(-\theta^b)\leq 0$. Therefore, by
assumption, $-\theta^b$ also has the same time-orientation as
$\eta^c$. As a consequence we have
\[
-T^{ab}\xi_a\eta_b=g_{ab}\eta^a(-\theta^b)\leq 0,
\]
and we are done.
\end{proof}
Following J.\ Vickers and J.\ Wilson (\cite{VW}) we define a class of (symmetric) energy tensors $T^{ab,k}$. Let $e_{ab}$ be
a Riemannian metric with $e^{ab}$ its inverse, let $W_{a_1\dots a_k}$
be an arbitrary tensor of type $(0,k)$, $k\geq 0$ and let
$\xi^a,\eta^b$ be time-like vectors with the same time-orientation.
We define for $k=0$
\[
T^{ab,0}(W):=-\frac{1}{2}g^{ab}W^2,
\]
and for $k\geq 1$, we set
\[
T^{ab,k}(W):=(g^{ac}g^{bd}-\frac{1}{2}g^{ab}g^{cd})e^{p_1q_1}\dots
e^{p_{k-1}q_{k-1}}W_{cp_1\dots p_{k-1}}W_{dq_1\dots q_{k-1}}.
\]
Then we have the following:
\begin{proposition}\label{Butter}
For each $k\geq 0$, $T^{ab,k}(W)$ is a symmetric tensor which satisfies the dominant energy
condition.
\end{proposition}
\begin{proof}
The case $k=0$ is trivial. Hence we start with $k=1$. We have
\begin{eqnarray}\nonumber
\eta^b:=(g^{ac}g^{bd}-\frac{1}{2}g^{ab}g^{cd})\xi_a
W_cW_d&=&(\xi^cg^{bd}-\frac{1}{2}\xi^b g^{cd})W_cW_d=\\\nonumber
&=&\xi^cW_cW^b-\frac{1}{2}\xi^b W^d W_d=\\\nonumber
&=&W(\xi)W^b-\frac{1}{2}\xi^b \langle W,W\rangle.
\end{eqnarray}
From this we obtain
\begin{eqnarray}\nonumber
g(\eta,\eta)&=&\eta^b\eta_b=(W(\xi)W^b-\frac{1}{2}\xi^b\langle W,
W\rangle)(W(\xi)W_b-\frac{1}{2}\xi_b\langle W,
W\rangle)=\\\nonumber&=&\frac{1}{4}\langle \xi,\xi\rangle \langle
W,W\rangle^2\leq 0,
\end{eqnarray}
where the last inequality holds because $\xi^a$ is time-like. We
have therefore shown that $\eta^b=T^{ab,1}\xi_a$ is time-like or
null. It remains to show that the time-orientation of $-\eta^b$ is
the same as the one of $\xi^a$:
\[
T^{ab,1}(W)\xi_a\xi_b=\{(g^{ac}g^{bd}-\frac{1}{2}g^{ab}g^{cd})\xi_a\xi_b\}W_cW_d=\xi^c
W_c\xi^d W_d-\frac{1}{2} \xi^a\xi_a W^b W_b.
\]
Due to Corollary \ref{corollaryriemannianmetricconstruction},
\[
\{(g^{ac}g^{bd}-\frac{1}{2}g^{ab}g^{cd})\xi_a\xi_b\}=\xi^c\xi^d-\frac{1}{2}\langle \xi,\xi\rangle g^{cd}
\]
is a Riemannian metric, therefore,
\[
T^{ab,1}(W)\xi_a\xi_b\geq 0
\]
and we are done with the case $k=1$.

We reduce the proof for higher orders $k>1$ to the case $k=1$. To
this end, fix $p\in \mathcal M$ and let $\mathcal B:=\{b_1,\dots,
b_4\}$ be an orthonormal basis of $(T_p M)^*$ with respect to
$e^{ab}$. With respect to this basis $T^{ab,k}(W)$ reads
\begin{eqnarray}\nonumber
T^{ab,k}(W):&=&(g^{ac}g^{bd}-\frac{1}{2}g^{ab}g^{cd})\delta
^{p_1q_1}\dots \delta^{p_{k-1}q_{k-1}}W_{cp_1\dots
p_{k-1}}W_{dq_1\dots q_{k-1}}=\\\nonumber &=&\sum_{p_1\dots
p_{k-1}}(g^{ac}g^{bd}-\frac{1}{2}g^{ab}g^{cd}) W_{cp_1\dots
p_{k-1}}W_{d p_1\dots p_{k-1}}.
\end{eqnarray}
Now for each tupel $(p_1,\dots,p_{k-1})$ we have as in the case
$k=1$,
\[
(g^{ac}g^{bd}-\frac{1}{2}g^{ab}g^{cd}) W_{cp_1\dots p_{k-1}}W_{d
p_1\dots p_{k-1}}\xi_a\xi_b\geq 0.
\]
Therefore, by summing over all these indices, we have
\[
T^{ab,k}(W)\xi_a\xi_b\geq 0.
\]
It remains to show that $T^{ab,k}(W)\xi_a$ is time-like or null, supposing
that $\xi_a$ is time-like. To show this, we use the following
property of the light cone: For each $\lambda,\mu\geq 0, \lambda+\mu>0$ and each
$v^a,w^a$ in the future (resp.\ past) light cone, also $\lambda v^a+\mu w^a$
lies in the future (resp.\ past) light cone.

Again, we may reduce to the case $k=1$, and see that for each tuple
$(p_1,\dots,p_{k-1})$,
\[
-\theta^b_{p_1,\dots,p_{k-1}}:=-(g^{ac}g^{bd}-\frac{1}{2}g^{ab}g^{cd}) W_{cp_1\dots p_{k-1}}W_{d p_1\dots p_{k-1}}\xi_a
\]
lies in the same light cone as $\xi_a$. Therefore, by the convexity
property of the light cone, also the sum over all such indices does,
that is,
\[
-T^{ab,k}(W)\xi_a=\sum_{p_1\dots p_{k-1}}-\theta^b_{p_1,\dots,p_{k-1}}
\]
is time-like or null, and we are done.
\end{proof}
As a consequence of Lemma \ref{Declemmaconsequence} and Proposition
\ref{Butter}, we have for all time-like vectors with the same
time-orientation,
\[
T^{ab,k}(W)\xi_a\eta_b\geq 0.
\]
This also may be concluded by  directly applying Corollary
\ref{corollaryriemannianmetricconstruction} by means of which we have the even stronger result:
\begin{corollary}
For each non-zero tensor $W_{a_1,\dots,a_k}$, and for all time-like
vectors $\xi^a,\eta^b$ with the same time-orientation, we have
\begin{equation}
T^{ab,k}(W)\xi_a\eta_b>0
\end{equation}
\end{corollary}
\begin{proof}
By corollary \ref{corollaryriemannianmetricconstruction},
\[
h^{cd}:=(g^{a(c}g^{d)b}-\frac{1}{2}g^{ab}g^{cd})\xi_a\eta_b
\]
is a Riemannian metric. Therefore, $h^{cd}e^{p_1q_1}\dots
e^{p_{k-1}q_{k-1}}$ is a Riemannian metric on $\otimes_{i=1}^k (TM)^*$as
well, and since $W\neq 0$, we have
\[
T^{ab,k}(W)\xi_a\eta_b=h^{cd}e^{p_1q_1}\dots
e^{p_{k-1}q_{k-1}}W_{cp_1\dots p_{k-1}}W_{dq_1\dots q_{k-1}}>0
\]
and we have shown the claim.
\end{proof}
Finally, we mention that the dominant energy condition has
recently been generalized to a so-called super energy condition on
super-energy tensors (cf.\ \cite{Senovilla}). 
\subsection{The d'Alembertian in
local coordinates} The aim of this section is to justify the
coordinate form of the d'Alembertian.
\begin{lemma}\label{Camenbert}
Let $g$ be a smooth Lorentzian metric. In local coordinates $(x^i)$
$(i=1,\dots,4)$, the d'Alembertian takes the form
\begin{equation}\label{coordformdAlembertian}
\Box u=\vert g\vert ^{-\frac{1}{2}}\partial_i(\vert g\vert
^{\frac{1}{2}}g^{ij}\partial_ju).
\end{equation}
\end{lemma}
\begin{proof}
Let $U$ be the domain of the coordinate chart system
$\xi=(x^1,\dots,x^4)$. By (\cite{ON}, Lemma 19, p.\ 195), there
exists a volume Element $\omega$ on $U$ such that
\begin{equation}\label{lemma19}
\omega(\partial_1,\dots,\partial_4)=\vert g\vert^{\frac{1}{2}}
\end{equation}
(the proof essentially uses local orthogonal frame fields). A
further fact (\cite{ON}, Lemma 21, p.\ 195) is that for any local
volume element $\omega$ on $\mathcal M$ we have
\begin{equation}\label{lemma21}
\mathcal (L_\xi \omega )_{bcde}=(\nabla_a \xi^a) \omega_{bcde}
\end{equation}
We claim that the divergence of $\xi$ can be decomposed in the
following way:
\begin{equation}\label{subclaimnabla}
\nabla_a\xi^a=\vert g\vert ^{-\frac{1}{2}}\;\partial_a(\vert g\vert
^{\frac{1}{2}}\xi^a).
\end{equation}
Assuming that this identity holds, we may set $\xi^a:=\nabla^a u$
and derive
\begin{eqnarray}\nonumber
\Box u=\nabla_a(\nabla^a u)&=& \vert g\vert
^{-\frac{1}{2}}\;\partial_a(\vert g\vert ^{\frac{1}{2}}\nabla^a u
)=\\\nonumber &=&\vert g\vert ^{-\frac{1}{2}}\;\partial_a(\vert
g\vert ^{\frac{1}{2}}g^{ab}\nabla_b u )=\\\nonumber&=& \vert g\vert
^{-\frac{1}{2}}\;\partial_a(\vert g\vert
^{\frac{1}{2}}g^{ab}\partial_b u )
\end{eqnarray}
and we have proved the lemma. In order to show the subclaim, we
calculate the left and right hand side of (\ref{subclaimnabla})
separately. We make use of (\ref{lemma21}) and the fact that, since
 we are dealing with a $4$-form $\omega$, it is sufficient to
evaluate the formula at $(\partial_1,\dots,\partial_4)$ only: the
right side of (\ref{lemma21}) yields by means of (\ref{lemma19})
\begin{equation}\label{rightsideoflemma21}
(\nabla_a\xi^a)\;\omega(\partial_1,\dots,\partial_4)=\vert
g\vert^{\frac{1}{2}}\nabla_a\xi^a.
\end{equation}
The left side of (\ref{lemma21}) yields:
\begin{eqnarray}\label{leftsideoflemma21eq1}
\mathcal L_\xi \omega (\partial_1,\dots,\partial_4)&=&\\\nonumber
\mathcal L_\xi(\omega (\partial_1,\dots,\partial_4))-\sum_i
\omega(\partial_1,\dots,\mathcal L_\xi\partial_i,\dots,\partial_4).
\end{eqnarray}
Now we have
\begin{equation}\label{lieklammerV} \mathcal
L_\xi\partial_i=[\xi,\partial_i]=\sum_j
[\xi^j\partial_j,\partial_i]=\sum_j(\xi^i\partial_j\partial_i-\partial_i(\xi^j\partial_j))=-\sum_j(\partial_i\xi^j)\partial_j.
\end{equation}
By (\ref{lemma19}) and (\ref{lieklammerV}) we therefore obtain
\begin{eqnarray}\label{leftsideoflemma21}
\mathcal L_\xi \omega (\partial_1,\dots,\partial_4)&=&\mathcal
L_\xi(\vert g\vert^{\frac{1}{2}})+\sum_{i,j} \frac{\partial \xi
^j}{\partial
x^i}\omega(\partial_1,\dots,\partial_j,\dots,\partial_4)=\\\nonumber
&=&\sum_{i,j} \xi^i\frac{\partial (\sqrt{\vert g\vert})}{\partial
x^i}+\sum_i\frac{\partial \xi^i}{\partial
x^i}(\delta_{ij}\sqrt{\vert g\vert })=\\\nonumber
&=&\sum_i\frac{\partial}{\partial x^i}(\sqrt{\vert g\vert }\xi^i).
\end{eqnarray}
Since (\ref{leftsideoflemma21})$\equiv$(\ref{rightsideoflemma21})
because of (\ref{lemma21}) we have succeeded to show
(\ref{subclaimnabla}) and we are done with the subclaim.
\end{proof}
\subsection{General Lorentzian metrics in suitable coordinates} For
computational purposes it is advisable to find coordinates in which
the metric has a special form, such that calculations can be carried out
more easily. In this section we first recall what a metric looks
like in Gaussian normal coordinates, and we finish by showing that
in suitable coordinates a static metric can be written without $(t,x^\mu)$--cross terms. At the end of section (\ref{settingsection}) we shall
return to this topic from a generalized point of view.
\begin{theorem}\label{theoremgaussian}
Let $\Sigma$ be a three dimensional space-like manifold. Any point
$p\in\Sigma$ has a neighborhood such that in Gaussian normal
coordinates, the Lorentzian metric $g$ on $\mathcal M$ locally takes
the form
\begin{equation}\label{metrgaussiancoord}
ds^2=-V^2(t,x^\gamma)dt^2+g_{\alpha\beta}(t,x^\gamma)dx^\alpha
dx^\beta,
\end{equation}
that is, without $(t,x^\mu)$--cross terms (here the variables in
Greek letters are ranging between $1$ and $3$, therefore $x^\alpha$
denote the space-variables, whereas $x^0=t$ is the time variable).
It can further be achieved that $V^2\equiv 1$.
\end{theorem}
\begin{proof}
For the proof of this statement we follow the lines of (\cite{Wald},
pp.\ 42-43). A proof for the respective statement in a more general
context can be found in (\cite{ON}, pp.\ 199-200, Lemma 25). Since
$\Sigma$ is space-like, the normal $n^a$ is time-like at each point
of $\Sigma$. Fix $p\in\Sigma$ and assume $n^a$ (initially only
defined on $\Sigma$) is extended to a geodesically convex
neighborhood $U$ of $p$. Through each point $q\in U$ we construct the
unique geodesic $\gamma_q(t)$ with $\dot\gamma_q(t=0)=n^a(q)$. We
may now label each $q\in U\cap\Sigma$ by coordinates $x^\mu
\;(\mu=1,2,3)$, and choose $t$ as the parameter along the geodesic
$\gamma_q(t)$. Then $(U, (t(q),x^\mu(q))$ is a local chart at $p$,
and $\partial_t|_{t=0}=n^a|_{\Sigma\cap U}$. From $n^a \perp_g
\Sigma$ it follows that the $(t,x^\mu)$ cross-terms $g_{0\mu}$ of the metric
vanish at $t=0$, since $g_{0\mu}(t=0,x^\mu)=g(\partial_t,\partial_\mu)|_{t=0}$. Moreover,
since parallel transport is an isometry, we have that
$g(\partial_t,\partial_\mu)\equiv 0$ on all of $U$. We have thus proved
(\ref{metrgaussiancoord}). Since $n^a$ is time-like, we can
normalize
 it by the condition $g_{ab}n^an^b=-1$, and therefore it
can even be achieved that $V^2\equiv 1$. This completes the proof of
the theorem.
\end{proof}
Next, we define certain space-time symmetries:
\begin{definition}
A space-time $(\mathcal M,g)$ is called stationary, if there exists
a time-like vector field $\xi^a$ such that $\nabla_{(a}\xi_{a)}=0$.
This is equivalent to $\mathcal L_\xi g=0$. $\xi^a$ is called a time-like Killing vector.

A stationary space-time $(\mathcal M, g)$ with time-like Killing
vector $\xi^a$ is called static, if $\xi^a$ is
hypersurface-orthogonal, that is, through each point $p$ there is a
three dimensional space-like hypersurface $\Sigma$ such that $\xi^a$
is orthogonal to $\Sigma$.
\end{definition}

 In general, the coefficients $-V^2,\, g_{\alpha\beta}$ in (\ref{metrgaussiancoord}) which determine the metric via
 Theorem \ref{theoremgaussian}, are not
independent of the time $t$. However, if $g$ is a static space time,
we have (for a proof cf.\  the respective statement in the generalized setting, \ref{Lemmastaticgeneralized}):
\begin{theorem}\label{statmetrformtheorem}
A static space-time $(\mathcal M,g)$ can locally be written as
\begin{equation}\label{staticformofmetric}
ds^2=-V^2(x^\gamma)dt^2+g_{\alpha\beta}(x^\gamma)dx^\alpha dx^\beta.
\end{equation}
\end{theorem}
Such coordinates we call {\it static} coordinates throughout. As a
consequence of the preceding theorem, we see that the d'Alembertian
takes a quite simple form in static coordinates:
\begin{proposition}\label{staticform}
Let $(\mathcal M,g)$ be a static space-time. Let $V,g_{\alpha\beta}$
be the coefficients of $g$ in static coordinates as given in Theorem
\ref{statmetrformtheorem}. Then the d'Alembertian
takes the following form:
\begin{equation}
\Box u= -V^{-2}\partial_t^2 u+\vert g\vert
^{-1/2}\partial_\alpha\left(\vert g\vert ^{1/2}
g^{\alpha\beta}\partial_\beta\right)u.
\end{equation}
\end{proposition}
\begin{proof}
This follows basically from Lemma \ref{Camenbert} and Theorem
\ref{statmetrformtheorem}: in static coordinates the time
derivatives $\partial_t g_{ab}$ vanish, and the $(t,x^\mu)$ cross
terms of the metric vanish as well. As a consequence, we have
$\partial_t V^{-2}\equiv 0,\; \partial_t g^{\alpha\beta}\equiv 0,\;\partial_t
\vert g\vert\equiv 0$, and we are done.
\end{proof}
\subsection{The wave equation on a smooth space-time}
We begin with recalling causality notions. Let $(\mathcal M,g)$ be a
smooth time-orientable space time. For a point $q$ in $\mathcal M$,
we call $D^+(q)$ the future dependence domain of $q$, that is the
set of all points $p$ which can be reached by future
directed time-like geodesics through $p$. Furthermore, for a set $S$,
$D^+(S):=\bigcup_{q\in S} D^+(q)$ is the future emission of $S$. The
closure of the latter is denoted by $J^+(S):=\overline{D^+(S)}$.
Reversing the time-orientation, we may similarly define $D^-(q)$,
$D^-(S)$ and $J^-(S)$.

A set $S$ is called past-compact if the intersection $S\cap J^-(q)$
is compact for each $q\in S$.

 Let $S$ be a relatively compact three dimensional space-like submanifold
and let $\xi$ be a time like vector field. In the smooth setting,
local smooth solutions for the initial value problem
\begin{eqnarray}\nonumber
\Box u=f\\\label{smoothwaves} u|_{S}=v\\\nonumber \nabla^a\xi_a u|_{S}=w
\end{eqnarray}
are guaranteed to exist by the following theorem (\cite{FL1}, Theorem 5.3.2):
\begin{theorem}\label{exuniquesmoothsolweq}
Let S be a past-compact space-like hypersurface, such that $\partial
J^+(S)=S$. Suppose that $f$ is $C^{\infty}$ and that $C^{\infty}$
Cauchy data $v,w$ are given on $S$. Then the Cauchy problem
(\ref{smoothwaves}) has a unique solution in $J^+(S)$ such that
$u\in C^\infty(J^+(S))$.
\end{theorem}
\subsection{Leray forms}\label{leray}
This section is dedicated to recalling how to decompose volume
integrals inside a foliated domain.

Suppose $(\mathcal M,g)$ is a smooth space-time. Denote by $\mu$ the
volume form induced by $g$ (as mentioned above in the proof of Lemma 
\ref{Camenbert}); in coordinates we may write $\mu$ as
$\mu=\vert g\vert^{\frac{1}{2}} dt\wedge dx^1\dots\wedge dx^3$, with
$\vert g\vert$, the absolute value of the determinant of $g$ (that is $\vert g\vert=-g$).

Let $\Omega$ be an open domain in $\mathcal M$, and let $S$ be in
$C^\infty (\Omega)$ with $dS\neq 0$ on $\Omega$. Choose coordinates
$x^i (i=0,\dots,3)$ such that $S=t:=x^0$. By (\cite{FL1}, Lemma
2.9.2), we may decompose $\mu$ as
\[
\mu=dS\wedge \mu_S,
\]
with a $3-$form $\mu_S$ and the restriction of $\mu_S$ on
$S_\tau:=\{(t,x^\mu)| t=\tau\}$ is unique. We shall write
$\mu_S|_{S_\tau}=:\mu_\tau$. More explicitly, we have
\[
\mu_\tau=\vert g\vert^{\frac{1}{2}} dx^1\wedge dx^2\wedge dx^3.
\]

A consequence of Fubini's theorem in this setting is (\cite{FL1},
Lemma 2.9.3): Any locally integrable function $\psi$ with compact
support in $\Omega$ may be integrated as follows
\begin{equation}\label{fubini}
\int \psi\mu=\int d\tau\int_{S_\tau}\psi \mu_\tau.
\end{equation}
\subsection{Foliations and
integration}\label{smoothsettingfoliation} In this section we show
in which way we shall integrate energy integrals subsequently.

In particular we discuss aspects of integration and local foliations
of compact subregions of space-time which will be tailored to our
needs in such a way that Stokes' theorem can be applied in a
convenient way. This will be needed later on when we derive
estimates for an infinite hierarchy of (generalized) energy
integrals. We point out that the setting of this section is still
the smooth one; this, however, is sufficient for displaying the
concepts which will finally be used in the generalized setting .

From now on we shall suppose that the given space-time $(\mathcal M,g)$
has the following feature: Each point $p$ on a a given initial
space-like surface $\Sigma$ admits a region $\Omega$ with $p\in \Omega$
space-like boundary $S$ and $S_0$, with $S_0:=\Sigma\cap \Omega$ (cf. figure 1. Note that $\Omega$ is not a neighborhood
of $p$ in the usual topology). We call such a region semi-neighborhood of $p$.
Furthermore, we assume that $\Omega$ lies entirely in a region of
space-time which can be foliated by three dimensional space-like
hypersurfaces $\Sigma_\tau$ meaning that there exists a coordinate
system $(t,x^\mu)$ such that
\[
\Sigma_\tau:= \{(t,x ^\mu)\,|\, t=\tau)\}.
\]
Furthermore, $\Sigma=\Sigma_{\tau=0}$ and we define
$S_\tau:=\Sigma_\tau\cap \Omega$.

Let $\gamma>0$. We shall integrate over the compact region
$\Omega_\gamma$ which is the part of $\Omega$ which lies between
$\Sigma_0$ and $\Sigma_\gamma$.

Therefore, the boundary of $\Omega_\gamma$ is given by $S_0$,
$S_\gamma$ and $S_{\Omega,\gamma}:=S\cap \Omega_\gamma$ (cf.\ Figure
\ref{figure1}; note that the boundary is space-like throughout).

At the end of the present section we shall prove that in static
space-times $(\mathcal M, g)$ any point $p$ in $\Sigma$, (the local
space-like manifold through $p$ orthogonal to the given symmetry
$\xi^a$) admits such a semi-neighborhood $\Omega$, and in a
subsequent section we establish an analogous result for generalized
static space-times.
\begin{figure}\label{figure1}
   \begin{center}
   \fbox{\includegraphics[width=5.21in]{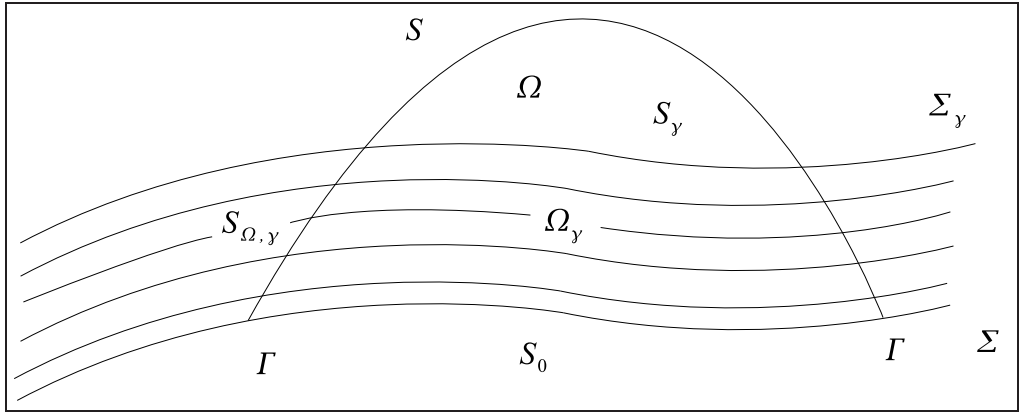}}
   \end{center}
   \caption{Local foliation of space-time}
\end{figure}
Finally, we show how to use this to integrate energies. \newline Assume
$T^{ab}$, a symmetric tensor-field of type (2,0) is given, which
satisfies the dominant energy condition. Let $\xi^a$ be a time-like Killing
vector field, and let $\Sigma_t$ be orthogonal to $\xi^a$. We denote
by $n^a$ the unit normal vector field to $S_{\Omega,\gamma}$. Let
$\mu$ be the volume element induced by the metric. We seek to
calculate
the following integral on $\Omega_\gamma$:
\begin{equation}\label{VolumeKn}
\int_{\Omega_\gamma} \xi_b\nabla_a T^{ab}\mu.
\end{equation}
First, we apply Stokes's theorem in the following fashion (cf.\
Wald, pp.\ 432--434):
\begin{theorem}
Let $N$ be an $n$--dimensional compact oriented manifold with
boundary $\partial N$, $\mu$ the natural volume element induced by the
metric $g$, and $\mu_{\partial N}$ the respective surface form on $\partial N$. Assume $\partial N$ is nowhere null. Let
further $v^a\in\mathfrak X(\mathcal M)$ and denote by $n_a$ the unit normal to $\partial N$ (that is $g^{ab}n_an_b=\pm 1$). Then we have:
\[
\int_N \nabla_a v^a \mu=\int_{\partial N} n_a v^a \mu_{\partial N}
\]
\end{theorem}
In the present setting, the boundaries of $\Omega_\gamma$ are $S_0,
S_\gamma$ with time-like normal $\xi^a$, the Killing vector, and
$S_{\Omega,\gamma}$ with normal $n^a$. In general, $\xi$
is not a unit vector field. Denote therefore by $\hat \xi:=\frac{\xi}{\sqrt{-g(\xi,\xi)}}$ the respective unit vector field.
Since $\xi^a$ is a Killing vector and $T^{ab}$ is symmetric, we have:
\[
\nabla_b(T^{ab}\xi_a)=\xi_a(\nabla_b T^{ab})+T^{ab}\nabla_b
\xi_a=\xi_b(\nabla_a T^{ab})+T^{(ab)}\nabla_{[b}
\xi_{a]}=\xi_b(\nabla_a T^{ab})+0.
\]
The integral (\ref{VolumeKn}) can therefore be decomposed in the
following way by Stokes's Theorem:
\begin{equation}\label{VolumeKn1}
\int_{\Omega_\gamma} \nabla_b(T^{ab}\xi_a)\mu=\int_{\Omega_\gamma}
\xi_b\nabla_aT^{ab}\mu=\int_{S_\gamma}T^{ab}\xi_a
\hat\xi_b\mu_\gamma-\int_{S_{0}}T^{ab}\xi_a
\hat\xi_b\mu_0+\int_{S_{\Omega,\gamma}}T^{ab}\xi_a n_b\mu_{S_{\Omega,\gamma}}.
\end{equation}
However, since $T^{ab}$ satisfies the dominant energy condition, we
have by Lemma \ref{Declemmaconsequence}:
\[
\int_{S_{\Omega,\gamma}}T^{ab}\xi_a n_b\mu_{S_{\Omega,\gamma}}\geq
0.
\]
Using this fact we conclude by means of (\ref{VolumeKn1}) that
\begin{equation}\label{VolumeKn2}
\int_{S_\gamma}T^{ab}\xi_a
\hat\xi_b\mu_\gamma\leq\int_{S_{0}}T^{ab}\xi_a
\hat\xi_b\mu_0+\int_{\Omega_\gamma} \xi_b\nabla_aT^{ab}\mu.
\end{equation}
\section{Description of the method}
We are going to prove an existence and uniqueness theorem for the scalar wave equation
in $\mathcal G(\mathcal M)$ following the method of J.\ Vickers and J.\ Wilson
(\cite{VW}) developed in the context of conical space times. Hence we generalize the result in
(\cite{VW}) from conical space times to generalized static space times. The program is as follows:
\begin{enumerate}
\item We start with specifying the ingredients of the theorem; these are in particular the
\begin{enumerate}
\item assumptions on the generalized Lorentzian metric in terms of a certain asymptotic growth behavior of the representatives. The
metric is designed for admitting local foliations of space-time by
space-like hypersurfaces. 
\item Energy integrals and Sobolev norms are introduced.
\end{enumerate}
\item Part A of the proof establishes that energy integrals (on the
three--dimensional submanifolds
$S_\tau$) and the three-dimensional Sobolev norms as defined below are equivalent. This enables us to
work with energies of arbitrary order instead of Sobolev norms.
\item Part B is devoted to providing moderate bounds on initial energies via moderate bounds on the initial data.
\item In part C we plug in the information from the wave equation into the energy integrals in order to derive an energy
inequality.
\item Part D employs Gronwall's Lemma and shows that, if the initial energies of all orders
are moderate nets of real numbers, then the same holds for all
energies for all times $0\leq\tau\leq \gamma$.
 \item Part E  employs the Sobolev embedding theorem to show that the desired asymptotic growth properties
of the solutions and their derivatives follow from the respective
growth of energies of all orders.
\item In Part F, an existence and uniqueness result is achieved by putting the pieces $A,B,C,D$ and $E$ of the puzzle together.
\item In Part G we show that the solution is independent of the choice of (symmetric) representatives of the
metric.
\end{enumerate}
It should be mentioned that Part A of the method is the crucial part
(the appropriate statement is lemma 1 in \cite{VW}); the rest of the
proof of the main theorem basically follows the lines of \cite{VW},
however, with a few modifications. Instead of using a
pseudo-foliation as Vickers and Wilson (the three dimensional
submanifolds intersect in a two dimensional submanifold of
space-time) we use the natural foliation $\Sigma_\tau:=\{t=\tau\}$
stemming from the static coordinates. Furthermore, for the purpose
of integration, we make use of the fact that the tensor-fields
$T_{ab,\varepsilon}^k(u)$ satisfy the dominant energy condition. As
a consequence of the chosen foliation, we do not need to deal with
improper integrals, as has been done in \cite{VW}.
\section{The assumptions}\label{assumptions}
\subsection{Introduction. Generalized static space-times.}
We begin with introducing a generalized static space-time.
\begin{definition}\label{defstaticgen}
Let $g\in\mathcal G^0_2(\mathcal M)$ be a generalized Lorentz
metric on $\mathcal M$. We say $(\mathcal M,g)$ is static if the
following two conditions are satisfied:
\begin{enumerate}
\item $(\mathcal M,g)$ is stationary, that is, there exists a smooth time-like vector field $\mathcal \xi$
such that $\nabla_{(a}\xi_{b)}=0$; this vector field we call Killing
as in the smooth setting and the one parameter group of isometries \footnote{To see this, note that due to identity (\ref{equivliekill})
we have $\mathcal L_\xi g \equiv 0$ in $\mathcal G$. Therefore $\frac{d}{dt}((Fl_t^\xi)^*g)(x)=(\mathcal L_\xi g)(Fl^\xi_t(x))\equiv 0$
in $\mathcal G$. This implies that $(Fl_t^\xi)^*g=((Fl_0^\xi)^*g)(x)=g$ holds in $\mathcal G$, and
we have proven that $\phi_t$ is a generalized group of isometries of $g$.}
generated by the flow of $\xi$ we denote by $\phi_t$. Following the
new concept of causality in this generalized setting (Definition
\ref{causaldef1}), $\xi$ time-like means that $g(\xi,\xi)$ is a
strictly negative generalized function on $\mathcal M$ (cf.\ Definition
\ref{defcausalityglobal}).
\item There is a three dimensional space-like hypersurface $\Sigma$
through each point of $\mathcal M$ which
is orthogonal to the orbits of the symmetry.
\end{enumerate}
\end{definition}
An important observation is the following:
\begin{theorem}\label{Lemmastaticgeneralized}
Let $(\mathcal M,g)$ be a generalized static space time. Then for
each point $p\in \mathcal U$ there exist a relatively compact open local coordinate chart
$(U,(t,x^\mu))$, $p\in U$, such that for each $\varepsilon>0$ the generalized line
element takes the form
\begin{equation}\label{metricformgenstatic}
ds_\varepsilon^2=-V^2_\varepsilon(x^1,x^2,x^3)dt^2+h_{\mu\nu}^\varepsilon(x^1,x^2,x^3)dx^\mu
dx^\nu=g^\varepsilon_{ab}dx^a dx^b
\end{equation}
where $(g_\varepsilon)_\varepsilon$ is a suitable symmetric
representative of $g$. Also in this setting we call the respective
coordinates static. Further, $V^2(x^1,x^2,x^3)$ is a strictly
positive function, and $h_{\mu\nu}(x^1,x^2,x^3)$ is a generalized
Riemannian metric on $U$.
\end{theorem}
\begin{proof}
On a relatively compact open neighborhood of $p$ we pick a symmetric representative $(g_\varepsilon)_\varepsilon$ of $g$
such that on for each $\varepsilon>0$, $g_\varepsilon$ is Lorentz (cf.\ Definition \ref{defpseud} and Theorem
\ref{chartens02} (\ref{chartens023})). Further, denote by $\nabla^\varepsilon$ the covariant
derivative induced by the metric $g_\varepsilon$.
 To show the claim we proceed in two steps.\\
 {\it Step 1.}
  As in the standard setting, an algebraic manipulation shows the
 equivalence
 \begin{equation}\label{equivliekill}
\mathcal L_\xi g_{ab}=0\Leftrightarrow \nabla_{(a}\xi_{b)}=0.
\end{equation}
Let $p\in \mathcal
 M$ lie in a relatively compact neighborhood $\Omega$ of $\Sigma$
 which can be reached by unique orbits of $\xi^a$ through $\Sigma$. Choose arbitrary
coordinates $x^\mu$ labeling $\Sigma$ and let $t$ be the Killing
parameter. Then $(t,x^\mu)$ are local coordinates near $p$ \footnote{To see this, assume the contrary, that is
$\xi_p=\xi|_p\in T_p\Sigma$. Since $\Sigma$ is space-like also $\xi_p$ is space-like, but this contradicts the assumption 
that $\xi_p$ is time-like.}. In
view of the above equivalence (\ref{equivliekill}) we have a negligible symmetric tensor
field $(n_{ab}^\varepsilon)_\varepsilon $ on $\Omega$ such that
\[
(\partial_t
g_{ab}^\varepsilon(t,x^\mu))_\varepsilon=(n_{ab}^\varepsilon(t,x
^\mu))_\varepsilon.
\]
Since $\Omega$ is relatively compact, we may replace
$g_{ab}^\varepsilon(t,x^\mu))_\varepsilon$ by $\hat
g_{ab}^\varepsilon(t,x^\mu))_\varepsilon$ which is again a local
representation of a suitable representative of the metric, given for
each $\varepsilon$ by:
\[
\hat
g_{ab}^\varepsilon(t,x^\mu):=g_{ab}^\varepsilon(t,x^\mu)-\int_0^t
n_{ab}^\varepsilon(\tau,x ^\mu)d\tau.
\]
For this representative we have in static coordinates by definition:
\[
(\partial_t \hat g_{ab}^\varepsilon(t,x^\mu))_\varepsilon=0.
\]
{\it Step 2.} Finally, we have to show that for a suitable
representative $(\widetilde g_\varepsilon)_\varepsilon$, the $(t,x)$
cross terms vanish. This is easily seen: By the hypersurface
orthogonality we know that $\langle \frac{\partial}{\partial
t},\frac{\partial}{\partial x^\mu}\rangle=g_{0\mu}=0$ in $\mathcal
G(\varphi(\Omega))$ for $\mu=1,2,3$ (($\Omega,\varphi$) denoting the
local chart) Therefore we have negligible nets
$(m_{0,\mu}^\varepsilon)_\varepsilon$ such that
\[
\hat g_{\mu,0}^\varepsilon=\hat
g_{0,\mu}^{\varepsilon}=m_{0,\mu}^\varepsilon.
\]
Since $\Omega$ was chosen to be relatively compact, we may even set
the $(t,x^\mu)$ cross terms zero and still have a local
representation of a suitable representative of $g$. We have shown
that the line element of the metric takes the form
(\ref{metricformgenstatic}).

A simple observation is, that $-V^2=g(\xi,\xi)$, therefore $V^2$ is
a strictly positive function, and $h_{\mu\nu}$ is a generalized
Riemannian metric.
\end{proof}
This concludes the general discussion of generalized space-times.
From a theoretical point of view, however, it is interesting to
further investigate characterizations of generalized space-times
$(\mathcal M,g)$ via standard space-times. We finish this section with
the following conjecture
\begin{conjecture}
On relatively compact open sets, a generalized stationary
space-time $(\mathcal M,g)$ admits a (symmetric) representative
$(g_\varepsilon)_\varepsilon$ of the metric $g$ such that $(\mathcal
M,g_\varepsilon)$ is stationary (with Killing vector $\xi^a$) for each $\varepsilon>0$.
\end{conjecture}

We are now prepared to present the setting of this note:
\subsection{The setting. Assumptions on the
metric}\label{settingassumptions} Throughout the rest of the chapter
we suppose $(\mathcal M,g)$ is a generalized static space-time.
Furthermore we shall work on $(U, ((t, x^\mu))$, ($p\in U$), an open
relatively compact chart such that according to Theorem
\ref{Lemmastaticgeneralized}, $(t, x^\mu)$ are static coordinates at
$p$.

$\xi^a$ shall denote the Killing vector field on $U$ and $\Sigma$ is
the three dimensional space-like hypersurface through $p\in U$, in
static coordinates given by $t=0$.

Let $m_{ab}$ be a background Riemannian metric on $U$ and denote by
$\|\;\|_m$ the norm induced on the fibres of the
respective tensor bundle on $U$. We further impose the following
assumptions on the metric $g$ and the Killing vector $\xi$:
\begin{enumerate}\label{settingmetric}
\item \label{setting2} $\forall\; K\subset\subset U$ and for one (hence any) symmetric representative
$(g_\varepsilon)_\varepsilon$ we have:
\[
\sup_{p\in K}\| g^\varepsilon_{ab}(p)\|_m=O(1),\qquad \sup_{p\in
K}\| g^{ab}_\varepsilon(p)\|_m=O(1)\quad\quad
(\varepsilon\rightarrow 0).
\]
\item \label{setting3} $\forall\;K\subset\subset U\;\forall\; k\in\mathbb N_0\;\forall\; \xi_1,\dots,\xi_k\in \mathfrak
X(U)$ and for one (hence any) symmetric representative
$(g_\varepsilon)_\varepsilon$ we have:
\[
\sup_{p\in K}\| \mathcal L_{\xi_1}\dots\mathcal L_{\xi_k}
g^\varepsilon_{ab}\|_m=O(\varepsilon^{-k})\quad\quad
(\varepsilon\rightarrow 0).
\]
\item \label{setting4} $\forall\;K\subset\subset  U\;\forall\; \eta\in \mathfrak X(U):$
\[
\sup_{p\in K}\| \mathcal L_\eta \hat\xi_\varepsilon
\|_m=O(1),\quad\quad (\varepsilon\rightarrow 0).
\]
where $(\hat\xi_\varepsilon)_\varepsilon:=\frac{\xi}{\sqrt{-g_\varepsilon(\xi,\xi)}}$ is a
representative of the (generalized) observer field $\hat\xi$
given by $\hat \xi:=\frac{\xi}{\sqrt{-\langle
\xi,\xi\rangle}}$. This is well defined by the fact that
$-g(\xi,\xi)=-\langle \xi,\xi\rangle$ is a strictly positive
function on $U$, the square root of the latter is strictly positive
as well, and this means $\sqrt{-\langle \xi,\xi\rangle}$ is
invertible. Hence $\hat\xi$ in fact is a generalized unit
vector field on $U$, i.\ e., $g(\hat\xi,\hat\xi)=-1$
in $\mathcal G(U)$.
\item \label{setting6} For each symmetric representative
$(g_\varepsilon)_\varepsilon$ of the metric $g$ on $U$, for
sufficiently small $\varepsilon$, $\Sigma$ is a past-compact
space-like hypersurface such that $\partial
J_\varepsilon^+(\Sigma)=\Sigma$. Here $J_\varepsilon^+(\Sigma)$
denotes the topological closure (with respect to the topology
inherited by $U$) of the future emission
$D^+_\varepsilon(\Sigma)\subset U$ of $\Sigma$ with respect to
$g_\varepsilon$. Moreover, there exists an open set $A\subseteq \mathcal M$
and an $\varepsilon_0$ such that
\[
A\subseteq\bigcap_{\varepsilon<\varepsilon_0} J_\varepsilon^+(\Sigma).
\]
\end{enumerate}

Note, that (\ref{setting6}) is necessary to ensure existence
of smooth solutions on the level of representatives (cf.\ Theorem
\ref{exuniquesmoothsolweq}): For each sufficiently small $\varepsilon$ there exists
  a unique smooth function $u_\varepsilon$ on at least $A\subseteq\bigcap_{\varepsilon<\varepsilon_0} J_\varepsilon^+(\Sigma)$. 
  Furthermore the conditions (\ref{setting2})--(\ref{setting4}) are independent of
the Riemannian metric $m$.

Property (\ref{setting6}) is an assumption on {\it each} symmetric
representative. A conjecture, however, is the following:
\begin{conjecture}
If for one symmetric representative $(g_\varepsilon)_\varepsilon$ of
the metric $g$, for sufficiently small $\varepsilon$, $\Sigma$ is a
past-compact space-like hypersurface such that $\partial
J_\varepsilon^+(S)=S$, so it is for every symmetric representative
of the metric.
\end{conjecture}

In the remainder of this section we interpret the setting of
Definition \ref{settingmetric} in terms of the static coordinates
$(t,x^\mu)$ of Theorem \ref{Lemmastaticgeneralized}. With respect to
these coordinates, condition (\ref{setting2}) means that all the coefficients
of $g_\varepsilon$ are bounded by  a positive constant $M_0$ for
sufficiently small $\varepsilon$, and so are the coefficients of the
inverse of the metric. Finally, condition (\ref{setting3}) reads in
static coordinates $(t,x^\mu)$: For each $k>0$ there exists a
positive constant $M_k$ such that for sufficiently small
$\varepsilon$ we have
\[
\vert\partial_{\rho_1}\dots
\partial_{\rho_k} g_{ab}^\varepsilon\vert\leq
\frac{M_k}{\varepsilon^k},\quad\vert\partial_{\rho_1}\dots
\partial_{\rho_k}
g^{ab}_\varepsilon\vert\leq \frac{M_k}{\varepsilon^k},
\]
where $\partial_{\rho_i}(i=1,2,3)$ are partial derivatives with respect to
the space variables $x^\mu$ ($\mu=1,2,3$); differentiation with
respect to time is not interesting, since in these coordinates time
dependent contributions to the metric coefficients are negligible,
anyway (cf.\ Theorem \ref{Lemmastaticgeneralized}).

Moreover, condition (\ref{setting2}) implies that there is a positive constant $M$ such that for
sufficiently small $\varepsilon$ we have for the scalar product of the
Killing vector $\xi$:
\begin{equation}\label{setting1}
g_\varepsilon(\xi,\xi)=g_{00}^\varepsilon=-V_\varepsilon^2\leq -M<0.
\end{equation}
\subsection{The setting. Formulation of the initial value
problem}\label{settingsection} Let $v,w\in\mathcal G(\Sigma)$. The
initial value problem we are interested in is the wave equation for
$u\in\mathcal G(\mathcal M)$ subject to the initial conditions:
\begin{eqnarray}\label{weqofsetting}
\Box u&=&0\\\nonumber u|_\Sigma&=&v\\\nonumber \xi^a \nabla_a
u|_\Sigma&=&w.\nonumber
\end{eqnarray}
An immediate consequence is that in static coordinates $(t,x^\mu)$
(cf.\ Theorem \ref{Lemmastaticgeneralized}) which employ the Killing
parameter $t$, on the level of representatives the initial value
problem (\ref{weqofsetting}) simply reads:
\begin{eqnarray}\label{weqofsettingeps}
\Box^\varepsilon u_\varepsilon&=&f_\varepsilon\\\nonumber
u_\varepsilon(t=0,x^\mu)&=&v_\varepsilon(x^\mu)\\\nonumber
\partial_tu_\varepsilon(t=0,x^\mu)&=&w_\varepsilon(x^\mu)\nonumber,
\end{eqnarray}
since $\Sigma$ is locally parameterized as $t=0$. Here
$(f_\varepsilon)_\varepsilon\in\mathcal N(\varphi(\Omega))$, and
$(v_\varepsilon)_\varepsilon, (w_\varepsilon)_\varepsilon\in\mathcal
E_M(\varphi(\Omega\cap \Sigma))$ are local representations of
arbitrary representatives of $v,w$ and $\Box^\varepsilon$ is the
d'Alembertian with respect to an arbitrary {\it symmetric}
representative of $g$.

However, from now on we pick a representative of the metric which in
local coordinates takes the form of Theorem
\ref{Lemmastaticgeneralized}. Based on this choice we establish an
existence and uniqueness result in the sense of Colombeau. Only in
the last section we justify this choice in the sense that we show
that choosing any other symmetric representative would have lead to
the same generalized solution. Except for Part A we also use the
fact that $(u_\varepsilon)_\varepsilon$ is a solution of the initial
value problem on the level of representatives, i.\ e.,
$u_\varepsilon$ satisfies (\ref{weqofsettingeps}) for each
$\varepsilon$.

A remark on the setting is in order. We have chosen the static
setting basically for the reason that the initial value problem
(\ref{weqofsetting}) can be translated to (\ref{weqofsettingeps})
for each $\varepsilon>0$. In particular, this means that we can
treat all equations in one and the same coordinate patch; in
particular local asymptotic estimates, which are required for a
proof of existence and uniqueness of the wave equation, can be
achieved nicely in coordinates. However, in general, a convenient
coordinate form of the metric representative
$(g_\varepsilon)_\varepsilon$ cannot be achieved jointly for each
$\varepsilon>0$. For instance, suppose the mere assumption that we
are given a generalized metric for which a three-dimensional
submanifold $\Sigma$ is space-like (in the sense of chapter 2,
Definition \ref{causaldef1}). Assume $(g_\varepsilon)_\varepsilon$
is a symmetric representative. Let $p\in \Sigma$. Then for each
$\varepsilon>0$ it is possible to introduce Gaussian normal
coordinates at $p$ such that the metric can be written without
$(t,x^\mu)$ cross-terms (cf.\ Theorem \ref{theoremgaussian}).
However, the metric $g_\varepsilon$ will in general depend on
$\varepsilon$, the construction given in the mentioned theorem will
therefore depend on the resulting geodesics initially perpendicular
to $\Sigma$; for different $\varepsilon$ they will not coincide in
general. That means, for each $\varepsilon>0$ there could emerge
different coordinate charts, and the domain of these charts might
even shrink when $\varepsilon\rightarrow 0$.
\subsection{Locally foliated semi-neighborhoods.}
This section is devoted to showing that in the chosen setting, for
any point $p\in \Sigma$ there is a compact semi-neighborhood $\Omega_\gamma$
which can be foliated by space-like (in the generalized sense)
hypersurfaces $\Sigma_t$ (cf.\ figure 2). Throughout, we
follow the notation as has been set out in section
\ref{smoothsettingfoliation}. However, since the problem is a local
one, it suffices to construct the compact region $\Omega_\gamma$
(with space-like boundary throughout) in a coordinate chart. For the
sake of simplicity we will not distinguish notationally between the
image of the foliated region inside the coordinate chart and the
foliated region on the manifold.

Let $p\in\mathcal M$ and let $\Sigma$ be the initial surface through
$p$, perpendicular to $\xi^a$, the (smooth) Killing vector. Due to
Theorem \ref{Lemmastaticgeneralized} we have an open relatively
compact coordinate chart $(U, (t,x^\mu))$ at $p$ such that
$x^\mu(p)=0$, $\Sigma$ is parameterized by $t=0$ and $U$ is foliated
by the space-like hypersurfaces $\Sigma_\tau:\; t=\tau$ orthogonal
to $\xi=\partial/\partial_t$. Due to Theorem
\ref{Lemmastaticgeneralized}  we may find a representative
$(g_\varepsilon)_\varepsilon$ such that the line element associated
to $g_\varepsilon$  reads in these coordinates for sufficiently
small $\varepsilon$
\[
ds_\varepsilon^2=-V_\varepsilon^2(x^\alpha) dt^2+h^\varepsilon_{\mu\nu}(x^\alpha) dx^\mu
dx^\nu.
\]
Furthermore, we have positive constants such that on all of $U$,
$M^{-1}\leq V_\varepsilon^{-2}\leq M_0^{-1}$ and $\vert h^{\mu\nu}_\varepsilon\vert\leq M_0^{-1}$
for sufficiently small $\varepsilon$.

Let $h>0, \rho>0$. We take a paraboloid with boundary $t=0$ and
$S(t,x^\mu)=0, t\geq 0$, the zero level set of the function $S$
given by
\[
S:
=t-h\left(1-\frac{\sum_{\mu}(x^\mu)^2}{\rho^2}\right)=:t-h(1-\frac{\|
x\|^2}{\rho^2}),
\]
where height $h$ and maximal radius $\rho$ of the paraboloid shall
be determined in such a way that the boundary $S$ is space-like with
respect to the generalized metric (cf.\ below). $\Omega$ is the
compact region with boundaries $S$ and $S_0$, the subregion of
$\Sigma$, in coordinates given by $t=0$, $\|x\|\leq \rho$. $\Omega$
is therefore foliated by the three dimensional hypersurfaces
$S_{\zeta}$, the intersection of $\Sigma_\zeta: t=\zeta$ with
$\Omega$, all with normal vector $\xi^a$.

We fix for all that follows $\gamma$ with $0<\gamma<h$, and call
$\Omega_\gamma$ the part of $\Omega$ lying between $t=0$ and
$t=\gamma$. $S_{\Omega,\gamma}$ denotes the part of the boundary $S$
of $\Omega$ which lies between $t=0$ and $t=\gamma$. Therefore,
$\Omega_\gamma$ has boundaries $S_0, S_\gamma$ and
$S_{\Omega,\gamma}$.

Similarly, for $0\leq \tau\leq \gamma$ we use the notation
$S_{\Omega,\tau}, S_0, S_{\tau}$ for the boundaries of
$\Omega_{\tau}$.

Finally, we show that $n^a_\varepsilon:=g^{ab}_\varepsilon n_b$, the
normal to $S$ (hence to the subset $S_{\Omega,\gamma}$) given by the
($g_\varepsilon$-) metric equivalent covector $dS$, is time-like,
if the ratio $h/\rho\leq\frac{1}{2}\sqrt{\frac{M_0}{6M}}$. 
In local coordinates we have
\[
dS=dt+\frac{2h}{\rho^2}\delta_{ij}x^i dx^j.
\]
Therefore
\begin{equation}\label{showfol}
\langle
n^a_\varepsilon,n^a_\varepsilon\rangle_\varepsilon=-V_\varepsilon
^{-2}+\left(\frac{2h}{\rho^2}\right)^2
h^{ij}_\varepsilon\delta_{ik}\delta_{jl} x^k x^l\leq
-V_\varepsilon^{-2}+\left(\frac{2h}{\rho^2}\right)^2 (3 M_0^{-1}\|x\|^2).
\end{equation}
With $\sum_i (x^i)^2=\|x\|^2\leq \rho^2$ we obtain by means of
(\ref{showfol}) the estimate
\[
\langle n^a_\varepsilon,n^a_\varepsilon\rangle_\varepsilon\leq -\frac{1}{M}+12
(\frac{h}{\rho})^2\leq -\frac{1}{2M}
\]
for sufficiently small $\varepsilon$. We have shown that
$n^a_\varepsilon$ is time-like for each $\varepsilon$. In the
generalized sense of causality which is established in chapter
\ref{chaptercausality}, this means that $n^a:=g^{ab} n_b$ is
(generalized) space-like.
\begin{figure}\label{figure2}
   \begin{center}
   \fbox{\includegraphics[width=5.21in]{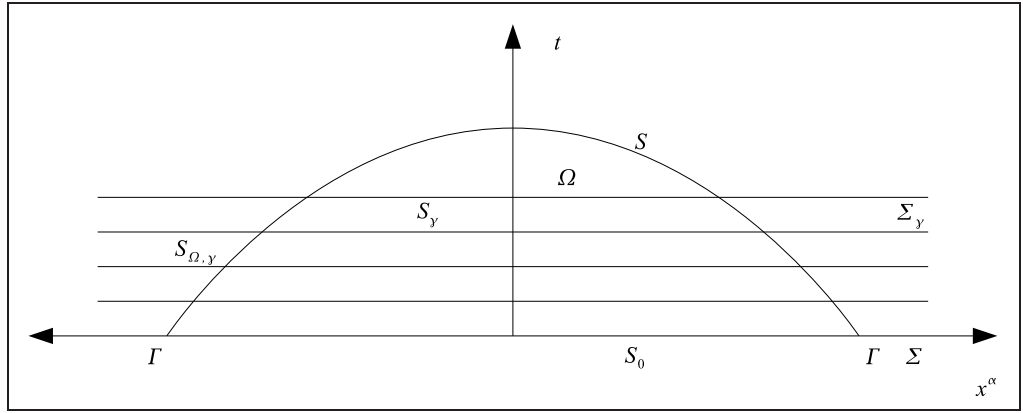}}
   \end{center}
   \caption{Local foliation of space-time}
\end{figure}
\subsection{Energy integrals and Sobolev norms}
Throughout this and all subsequent sections, we may assume that we
have picked a point $p\in \Sigma$ together with a semi-neighborhood
$\Omega_\gamma$ which entirely lies in an open relatively compact
coordinate patch $(U,(t,x^\mu))$, where  $(t,x^\mu)$ denote the
static coordinates at $p$, in which the metric $g$ takes the form
(\ref{metricformgenstatic}) on the level of representatives. All the
results will be proved on the level of representatives inside the
chosen coordinate patch. Since the Killing vector $\xi$ is a
standard vector field, we may always take the constant net
$(\xi)_\varepsilon$ as a representative of $\xi$.

We have revisited constructions of Riemannian metrics by means of
Lorentzian metrics in the preliminary section \ref{preliminarywaves}
and we have further mentioned that there are analogous constructions
in the generalized setting (cf.\ chapter \ref{chaptercausality},
section \ref{energygeneralizedsection}); these we apply now in order
to define Sobolev norms and energy integrals.

We shall deal with two different specific constructions of Riemann
metrics. For the {\it first}, we take $g$ and $\xi$, the given
Killing vector, and define the Riemannian metric
$e^{ab}:=[(e^{ab}_\varepsilon)_\varepsilon]$ on the level of
representatives by
\begin{equation}\label{riemannmetr1}
e_\varepsilon^{ab}:=g^{ab}_\varepsilon-\frac{2}{g_\varepsilon(\xi,\xi)}\xi^a\xi^b=g^{ab}_\varepsilon+\frac{2}{V_\varepsilon
^2}\xi^a\xi^b.
\end{equation}
For sufficiently small $\varepsilon>0$, $e_\varepsilon^{ab}$ is a Riemann
metric on $U$ due to Corollary
\ref{corollaryriemannianmetricconstruction}. 
Furthermore $e^{ab}$ is even a generalized Riemannian metric on $U$:
this follows, for instance, from the respective statement in the
generalized setting (cf. chapter \ref{chaptercausality}, Theorem
\ref{genRiemannmetricconstrglobal}). However, since $g^{ab}$ has
block diagonal form in static coordinates, and the metric
construction (\ref{riemannmetr1}) is quite simple, we can even
directly confirm that $e^{ab}$ is a generalized Riemannian metric.
Indeed, due to the assumptions of the setting, the metric $g$ has
the line element
\[
ds^2=-V^2 dt^2+h_{\mu\nu}dx^\mu dx^\nu,
\]
where $h_{\mu\nu}$ is a generalized Riemannian metric on $\Sigma_t\cap
U$ and $g(\xi,\xi)=-V^2$ is an invertible element of $\mathcal G(U)$
(which follows from the fact that $g$ is assumed to be
non-degenerate). Therefore, the line element of $e$ takes the form
\[
ds^2=+V^2 dt^2+h_{\mu\nu}dx^\mu dx^\nu.
\]
It follows that $ds^2$ is the line-element of a generalized Riemann
metric on $U$.

In the {\it second} construction, $g^{ab},\xi^a$ and $n_a$ are
involved; $\xi^a$ and $n^a:=g^{ab}n_b$ play the role of time-like
vector fields in the construction (cf.\ Corollary
\ref{corollaryriemannianmetricconstruction}, however in the
generalized setting: $\xi^a$ is the Killing vector (restricted to
$S_{\Omega,\gamma}=\Omega_\gamma\cap S$) and $n_a$ is the normal to
$S_{\Omega,\gamma}$. We define a Riemann
 metric on $S_{\Omega,\gamma}$ by
\begin{equation}
G^{cd}:=(g^{a(c}g^{d)b}-\frac{1}{2}g^{ab}g^{cd})\xi_a n_b.
\end{equation}
Since both $\xi_a$ and $n_b$ are time-like with the same
time-orientation, $G^{cd}$ is a generalized Riemannian metric on
$S_{\Omega,\gamma}$. This again follows from Theorem
\ref{genRiemannmetricconstrglobal}. We have omitted to explicitly
denote the restrictions of $g^{ab}$ and $\xi^a$ to
$S_{\Omega,\gamma}$. On the level of representatives $G^{cd}$ reads:
\begin{equation}
G^{cd}_\varepsilon:=(g^{a(c}_\varepsilon g^{d)b}_\varepsilon
-\frac{1}{2}g^{ab}_\varepsilon g^{cd}_\varepsilon )\xi^\varepsilon_a
n_b.
\end{equation}

We proceed now to energy tensors and energy integrals.\\

Let $u$ now be a smooth function defined on the coordinate patch
$U$, and let $\nabla^\varepsilon$ denote the covariant derivative
with respect to $g_\varepsilon$ for each $\varepsilon>0$. For each
non-negative integer $k$, we define energy tensors
$T^{ab,k}_{\varepsilon}(u)$ on $\Omega_\gamma$ as well as energies
$E^{k}_{\tau, \varepsilon}(u)$ on $S_\tau$ $(0\leq\tau\leq\gamma)$
of order $k$ as follows. For $k=0$ we set
\begin{equation}
T^{ab,0}_{\varepsilon}(u):=-\frac{1}{2}g^{ab}_\varepsilon u^2.
\end{equation}
For $k>0$ we define energy tensors
\begin{eqnarray}\label{tabkdef}
T^{ab,k}_{\varepsilon}(u)&:=&(g^{ac}_\varepsilon g^{bd}_\varepsilon
-\frac{1}{2}g^{ab}_\varepsilon g^{cd}_\varepsilon
)e^{p_1q_1}_\varepsilon\dots e^{p_{k-1}
q_{k-1}}_\varepsilon\\\nonumber &\times &(\nabla_c^\varepsilon\nabla_{p_1}^\varepsilon\dots\nabla_{p_{k-1}}^\varepsilon
u)(\nabla_d^\varepsilon\nabla_{q_1}^\varepsilon\dots\nabla_{q_{k-1}}^\varepsilon
u).
\end{eqnarray}
We are now prepared to define the energy integrals $E^k_{\tau,
\varepsilon}(u)$ via the energy tensors $T^{ab,k}_{\varepsilon}(u)$
of any order $k$. The energy integral of the $k$-th hierarchy is
given by
\begin{equation}
E^k_{\tau,\varepsilon}(u):=\sum_{j=0}^k\int_{S_\tau}
T^{ab,j}_\varepsilon(u)\xi_a^\varepsilon\hat\xi_b^\varepsilon\mu_\tau^\varepsilon.
\end{equation}
Here $\mu_\tau^\varepsilon$ is the unique three-form induced on
$S_\tau$ by $\mu^\varepsilon$ such that $d\tau \wedge
\mu_\tau^\varepsilon=\mu^\varepsilon$ holds on $S_\tau$. (cf.\ \cite{FL1}, p. 66, Lemma
2.9.2) . Furthermore, $\hat\xi_a^\varepsilon:=\frac{\xi_a}{\sqrt{-g_\varepsilon(\xi,\xi)}}$. Moreover, it should be noted that the tensors
$T^{ab,k}_\varepsilon(u)$ are symmetric tensors satisfying the
dominant energy condition. This holds due to Proposition
\ref{Butter}.

Since $\xi^a$ is a Killing vector field and
$T^{ab,k}_\varepsilon(u)$ is a symmetric vector satisfying the
dominant energy condition for each $\varepsilon$ (cf. Proposition
\ref{Butter}), we have as an application of Stokes's theorem (cf.\
(\ref{VolumeKn2})),
\begin{equation}\label{energyhierarchystokes}
E^k_{\tau,\varepsilon}(u)\leq
E^k_{\tau=0,\varepsilon}(u)+\sum_{j=0}^k\int_{\Omega_\tau}\xi_b^\varepsilon\nabla_a^\varepsilon
T^{ab,j}_\varepsilon(u) \mu_\varepsilon.
\end{equation}
The inequality is due to the fact that as a consequence of the
dominant energy condition, the integrand of the surface integral
over $S_{\Omega,\gamma}$ is non negative, hence can be neglected.

This inequality clearly holds for each $\varepsilon>0$ and each
$0\leq \tau\leq \gamma$.

In the remainder of the section we introduce Sobolev norms on the
coordinate patch $U$. Let $\varepsilon>0$ and $0\leq\tau\leq
\gamma$. The three dimensional Sobolev-norms are integrals
of the covariant derivative over $S_\tau$:

\begin{equation}\label{SobSDg}
 \SobSDt{u}{k}:=
\left(\sum_{j=0}^k\int_{S_\tau}\vert\nabla_\varepsilon^{(j)}(u)\vert^2\mu_\tau^\varepsilon\right)^{\frac{1}{2}}
\end{equation}
where, as usual, the integrand is expressed by contraction of the
covariant derivative of $j$th order of $u$ with the Riemannian metric
$e^{ab}$:
\begin{equation}
\vert\nabla_\varepsilon^{(j)}(u)\vert^2:=e^{p_1q_1}_\varepsilon\dots
e^{p_jq_j}_\varepsilon
\nabla^\varepsilon_{p_1}\dots\nabla^\varepsilon_{p_j}u\nabla^\varepsilon_{q_1}\dots\nabla^\varepsilon_{q_j}u.
\end{equation}
Similarly, the three dimensional Sobolev norms involving partial
derivatives only, are:
\begin{equation}\label{SobSdg}
 \SobSdt{u}{k}:=\left(
\sum_{{p_1\dots p_j \atop 0\leq j\leq
k}}\int_{S_\tau}\vert\partial_{p_1}\dots\partial_{p_j}u\vert^2\mu_\tau^\varepsilon\right)^{\frac{1}{2}}.
\end{equation}
On $\Omega_\tau$ we have the respective (usual) Sobolev norms given
by
\begin{equation}\label{SobD}
 \SobODt{u}{k}:=
\left(\sum_{j=0}^k\int_{\Omega_\tau}\vert\nabla_\varepsilon^{(j)}(u)\vert^2\mu^\varepsilon\right)^{\frac{1}{2}}
\end{equation}
as well as
\begin{equation}\label{Sobd}
 \SobOdt{u}{k}:=\left( \sum_{{p_1\dots p_j \atop 0\leq j\leq
k}}\int_{\Omega_\tau}\vert\partial_{p_1}\dots\partial_{p_j}u\vert^2\mu^\varepsilon\right)^{\frac{1}{2}}.
\end{equation}
\section[Equivalence: Energies and Sobolev norms]{Equivalence of energy integrals and Sobolev
norms (Part A)}\label{partA} We start by establishing that the three
dimensional Sobolev norms and the energy integrals are equivalent.
In this section, inequalities are meant to hold for sufficiently
small $\varepsilon$ and for each $0\leq\tau\leq\gamma$ and for each
smooth function $u$ given inside the coordinate patch $U$. In the
end of the section we shall give an interpretation of these
inequalities.

The main statement of this section is the following (the respective
statement in conical space-times is (\cite{VW}, Lemma 1)):
\begin{proposition}\label{lemma1}
For each $k\geq 0$, there exist positive constants $A, A'$ such
that for sufficiently small $\varepsilon$ we have
\begin{eqnarray}\label{ineqEXSD}
E^k_{\tau,\varepsilon}(u)&\leq& A(\SobSDt{u}{k})^2\\\label{ineqSDXE}
A'(\SobSDt{u}{k})^2&\leq&E^k_{\tau,\varepsilon}(u)
\end{eqnarray}
For each $k\geq 1$, there exist positive constants $B_k, B_k'$ such
that for sufficiently small $\varepsilon$ we have
\begin{eqnarray}\label{ineqSDXSd}
(\SobSDt{u}{k})^2&\leq&B_k'\sum_{j=1}^k\frac{1}{\varepsilon^{2(k-j)}}(\SobSdt{u}{j})^2\\\label{ineqSdXSD}
(\SobSdt{u}{k})^2&\leq&B_k\sum_{j=1}^k\frac{1}{\varepsilon^{2(k-j)}}(\SobSDt{u}{j})^2
\end{eqnarray}
Moreover, for $k=0$ we clearly have $(\SobSDt{u}{0})^2=(\SobSdt{u}{0})^2$.
\end{proposition}
Before we present the proof of the statement, we notice:
\begin{enumerate}
\item Note, that the term "sufficiently small $\varepsilon$" in the
statement in particular means that the index $\varepsilon_0$ from
which on the inequalities above hold, depends on the order $k$: For
the latter two inequalities this may happen; the two first
inequalities possess a uniform $\varepsilon_0$ from which on they
hold.
\item The four inequalities hold for each $0 \leq \tau\leq
\gamma$.
\item Note that the scalar product
\[
e^{p_1 q_1}_\varepsilon\dots e^{p_kq_k}_\varepsilon\eta_{p_1\dots
p_k}\eta_{q_1\dots q_k}
\]
and the euclidean scalar product defined on the coordinate patch
only,
\[
\delta^{p_1q_1}\dots \delta^{p_k q_k}\eta_{p_1\dots
p_k}\eta_{q_1\dots q_k}
\]
are equivalent on $U$ for small $\varepsilon$ in the sense that the
respective norms are, that is: there exist positive constants
$C_{k,1}, C_{k,2}$ such that for sufficiently small $\varepsilon$ we have
\begin{eqnarray}\label{ineqk1k2}
C_{k,1}\delta^{p_1q_1}\dots \delta^{p_k q_k}\eta_{p_1\dots
p_k}\eta_{q_1\dots q_k}&\leq& e^{p_1 q_1}_\varepsilon\dots
e^{p_kq_k}_\varepsilon\eta_{p_1\dots p_k}\eta_{q_1\dots
q_k}\\\nonumber &\leq& C_{k,2}\delta^{p_1q_1}\dots \delta^{p_k
q_k}\eta_{p_1\dots p_k}\eta_{q_1\dots q_k}.
\end{eqnarray}
Similarly there exist positive constants $D_{k,1}, D_{k,2}$ such
that for sufficiently small $\varepsilon$ we have
\begin{eqnarray}\label{ineqk1k2contra}
D_{k,1}\delta_{p_1q_1}\dots \delta_{p_k q_k}\eta^{p_1\dots p_k}\eta
^{q_1\dots q_k}&\leq& e_{p_1 q_1}^\varepsilon\dots
e_{p_kq_k}^\varepsilon\eta^{p_1\dots p_k}\eta^{q_1\dots
q_k}\\\nonumber &\leq& D_{k,2}\delta_{p_1q_1}\dots \delta_{p_k
q_k}\eta^{p_1\dots p_k}\eta^{q_1\dots q_k}.
\end{eqnarray}

It is sufficient to show this for $k=1$. Then (\ref{ineqk1k2contra})
may be reformulated as follows: There exist positive constants $C_1,
C_2$ such that for sufficiently small $\varepsilon$ we have
\begin{equation}\label{ineqkishoit1}
C_1\delta_{ab}\eta^a\eta^b\leq e_{ab}^\varepsilon \eta^a\eta^b\leq
C_1\delta_{ab}\eta^a\eta^b.
\end{equation}
Since there are positive constants $M,M_0$ such that for
sufficiently small $\varepsilon$ we have $M\leq V_\varepsilon^2\leq
M_0$, we may reduce the problem to the three space dimensions (the
greek letters therefore ranging between $2$ and $4$): We claim that on compact
subregions of $U$ we have for sufficiently small $\varepsilon$:
\begin{equation}\label{euclviagenmetr}
C_1\delta_{\mu\nu}\eta^\mu\eta^\nu\leq h_{\mu\nu}^\varepsilon \eta^\mu\eta^\nu\leq C_2\delta_{\mu\nu}\eta^\mu\eta^\nu.
\end{equation}
Since we locally have $\vert h_{\mu\nu}^\varepsilon\vert=O(1)\,(\varepsilon\rightarrow 0)$, the
right hand inequality of (\ref{euclviagenmetr}) is trivial. The proof of
the left hand inequality requires a little work: Let $x$ range in a
compact subset $K$ of $U$. We may assume that for small
$\varepsilon$, $h_{\mu\nu}^\varepsilon(x)
h^{\nu\rho}_\varepsilon(x)=\delta_\mu^\rho(x)+n_{\mu,\varepsilon}^\rho(x)$
with negligible $(n_{\mu,\varepsilon}^\rho)_\varepsilon$, therefore
for a negligible $(n_\varepsilon(x))_\varepsilon$
\begin{equation}
\det (h_{\nu\rho}^\varepsilon(x))=\frac{1+n_\varepsilon(x)}{\det(h^{\nu\rho}_\varepsilon(x))},
\end{equation}
since $\det(h^{\nu\rho}_\varepsilon(x))$ is invertible for sufficiently small $\varepsilon$.
Moreover, since $\vert h^{\nu\rho}_\varepsilon(x)\vert=O(1)$, we have $\vert
\det(h^{\nu\rho}_\varepsilon(x))\vert=O(1)$ holds on $K$. In view of
this and the fact that $(n_\varepsilon(x))_\varepsilon$ is
negligible (in particular we may assume that $\vert
n_\varepsilon(x)\vert<1/2$ for all $x$ in $K$ and for small
$\varepsilon$), there exists a positive constant $M'$ such that we have for all $x$ and sufficiently small
$\varepsilon$
\begin{equation}\label{eigenwertefuerdieabsch}
\vert \det (h_{\nu\rho}^\varepsilon(x))\vert\geq\frac{1}{2M'}.
\end{equation}
Furthermore we know that
\[
\det
(h_{\nu\rho}^\varepsilon(x))=\lambda_2^\varepsilon(x)\cdot\dots\cdot\lambda_4^\varepsilon(x),
\]
where $\lambda_i^\varepsilon(x)$ ($i=2,3,4$) are the eigenvalues of
$h_{\nu\rho}^\varepsilon(x)$ at $x$. Therefore, by using
(\ref{eigenwertefuerdieabsch}) and the fact that for each $i$  we have $\lambda_i^\varepsilon=O(1)$, we see that there is a positive
constant $C_1$ such that for each $i=2,\dots,4$ we have
\[
\vert \lambda_i^\varepsilon(x)\vert\geq C_1,
\]
whenever $\varepsilon$ is small enough.
Since for sufficiently small $\varepsilon$, $h_{\mu\nu}^\varepsilon$
is symmetric positive definite, we have
\begin{equation}
\inf_\eta\frac{h_{\mu\nu}^\varepsilon(x)\eta^\mu\eta^\nu}{\delta_{\mu\nu}(x)\eta^\mu\eta^\nu}=\min
_{i=1,\dots,n}\lambda_i^\varepsilon\geq C_1,
\end{equation}
which proves the left inequality of (\ref{euclviagenmetr}).
Therefore we have shown that (\ref{ineqkishoit1}) holds. In a
similar manner one can show the estimates (\ref{ineqk1k2}),
(\ref{ineqk1k2contra}).\end{enumerate}
We are ready to present a proof of Proposition \ref{lemma1}:
\begin{proof}
{\it Part 1: Inequalities (\ref{ineqEXSD}) and (\ref{ineqSDXE}).}\\
To establish these inequalities, we consider the cases $k=0$ and
$k>0$ separately.

For $k=0$, the situation is relatively simple. We have
\begin{equation}
T^{ab,0}_\varepsilon(u)\xi_a^{\varepsilon}\hat\xi_b^{\varepsilon}=-\frac{1}{2}g^{ab}_\varepsilon\xi_a^{\varepsilon}\hat\xi_b^{\varepsilon}u^2
=-\frac{1}{2}g_{ab}^\varepsilon\xi^a\hat\xi^bu^2=-\frac{1}{2}\sqrt{-g_\varepsilon(\xi,\xi)}u^2=\frac{V_\varepsilon}{2}u^2.
\end{equation}
By the assumption on the metric, there exist positive
constants $M,\,M_0$ such that for sufficiently small $\varepsilon$
\begin{equation}\label{Vepsupanddown}
M\leq V_\varepsilon\leq M_0
\end{equation}
It follows that for $A:=M_0/2$ and $A':=M/2$ we have
\begin{equation}\label{Tab0}
A'u ^2\leq
T^{ab,0}_\varepsilon(u)\xi_a^{\varepsilon}\hat\xi_b^{\varepsilon}\leq A
u^2.
\end{equation}
Integrating over $S_\tau$ yields
\begin{equation}
A'(\SobSDt{u}{0})^2\leq E^0_{\tau,\varepsilon}(u)\leq
A(\SobSDt{u}{0})^2
\end{equation}
and we are done with $k=0$.\\Next, we investigate the case $k>0$. To
start with, note that
\begin{eqnarray}\label{makeslemma1totriviality}
(g^{ac}_\varepsilon g^{bd}_\varepsilon-\frac{1}{2}g^{ab}_\varepsilon
g^{cd}_\varepsilon)\xi_a^\varepsilon\hat\xi_b^\varepsilon&=&\left(\xi^c\xi^d-\frac{1}{2}\langle
\xi,\xi\rangle_\varepsilon
g^{cd}_\varepsilon\right)\frac{1}{V_\varepsilon}\\\nonumber&=&\frac{1}{2}V_\varepsilon\left(g^{cd}_\varepsilon+\frac{2}{V_\varepsilon^2}\xi^c\xi^d\right)=\\\nonumber
&=& \frac{1}{2}V_\varepsilon e^{cd}_\varepsilon.
\end{eqnarray}
By the definition (\ref{tabkdef}) of $T^{ab,k}_\varepsilon(u)$ and
by (\ref {makeslemma1totriviality}), we therefore have
\begin{eqnarray}\label{dasistderletztestreich}
T^{ab,j}_{\varepsilon}(u)\xi_a^\varepsilon\hat\xi_b^\varepsilon:&=&\left((g^{ac}_\varepsilon
g^{bd}_\varepsilon -\frac{1}{2}g^{ab}_\varepsilon g^{cd}_\varepsilon
)\xi_a^\varepsilon\hat\xi_b^\varepsilon\right)
e^{p_1q_1}_\varepsilon\dots e^{p_{j-1}
q_{j-1}}_\varepsilon\times\\\nonumber&\times&(\nabla_c^\varepsilon\nabla_{p_1}^\varepsilon\dots\nabla_{p_{j-1}}^\varepsilon
u)(\nabla_d^\varepsilon\nabla_{q_1}^\varepsilon\dots\nabla_{q_{j-1}}^\varepsilon
u)=\\\nonumber &=& \frac{1}{2}V_\varepsilon e^{cd}_\varepsilon
e^{p_1q_1}_\varepsilon\dots e^{p_{j-1}
q_{j-1}}_\varepsilon\times\\\nonumber&\times&(\nabla_c^\varepsilon\nabla_{p_1}^\varepsilon\dots\nabla_{p_{j-1}}^\varepsilon
u)(\nabla_d^\varepsilon\nabla_{q_1}^\varepsilon\dots\nabla_{q_{j-1}}^\varepsilon
u).
\end{eqnarray}
By inequality (\ref{Vepsupanddown}) and
(\ref{dasistderletztestreich}), for all $1\leq j\leq k$ we have for
sufficiently small $\varepsilon$
\begin{equation}
\frac{M}{2}\vert \nabla^{(j)}_\varepsilon(u)\vert^2\leq
T^{ab,j}_\varepsilon(u)\xi_a^\varepsilon \hat\xi_b^\varepsilon\leq
\frac{M_0}{2}\vert \nabla^{(j)}_\varepsilon(u)\vert^2.
\end{equation}
Since $A=\frac{M_0}{2}, A'=\frac{M}{2}$, for all $1\leq j\leq k$ we
have for sufficiently small $\varepsilon$
\begin{equation}\label{Tab1k}
A'\vert \nabla^{(j)}_\varepsilon(u)\vert^2\leq
T^{ab,j}_\varepsilon(u)\xi_a^\varepsilon \hat\xi_b^\varepsilon\leq
A\vert \nabla^{(j)}_\varepsilon(u)\vert^2.
\end{equation}
Therefore we have by summing up (\ref{Tab1k}) and (\ref{Tab0}) the
following estimate:
\begin{equation}\label{Tab0plusTab1}
A'\sum_{j=0}^k\left( \vert
\nabla^{(j)}_\varepsilon(u)\vert^2\right)\leq \sum_{j=0}^k
T^{ab,j}_\varepsilon(u)\xi_a^\varepsilon \hat\xi_b^\varepsilon\leq
A\sum_{j=0}^k\left( \vert \nabla^{(j)}_\varepsilon(u)\vert^2\right).
\end{equation}
Integration over $S_\tau$ therefore yields
\begin{equation}
A'(\SobSDt{u}{k})^2\leq E^k_{\tau,\varepsilon}(u)\leq
A(\SobSDt{u}{k})^2
\end{equation}
and we are done with the proof for $k>0$.\\
{\it Part 2: Inequality (\ref{ineqSDXSd}).}\\
To prove this inequality, we use the asymptotic growth behavior of
partial derivatives of the metric as well as the formula which
expresses the covariant derivative of $u$ in terms of partial
derivatives of $u$ and Christoffel symbols (see identity (\ref{identityloccovpart}) below). The case $k=0$ is a
triviality. Also, the case $k=1$ is quite
simple. Independently of $\varepsilon$ we have
\[
\nabla^\varepsilon_a u=\partial_a u.
\]
There exists a constant $M_0'$ such that for sufficiently small $\varepsilon$ we have
$\vert e^{ab}_\varepsilon\vert\leq M_0'$. Therefore (\ref{ineqSDXSd}) holds for $B_1'':=2M_0'$, since
\begin{equation}\label{N1}
\vert\nabla_\varepsilon^{(1)}u\vert^2=e^{ab}_\varepsilon \partial_a
u\partial_b u\leq 2M_0' \sum_p (\partial_p u)^2=B_1''\sum_p
(\partial_p u)^2.
\end{equation}
With $B_1':=\max(1,B_1'')$ we obtain
\begin{equation}\label{N11}
u^2+\vert \nabla_\varepsilon^{(1)}u\vert^2\leq B_1' (u^2+\sum_p
(\partial_p u)^2
\end{equation}
and integration over $S_\tau$ yields
\[
(\SobSDt{u}{1})^2\leq B_1'(\SobSdt{u}{1})^2.
\]
This is the claim for $k=1$. So let $k=2$. Then
\begin{equation}\label{inductivebasisk2}
\nabla^\varepsilon_a\nabla^\varepsilon_b u=\nabla^\varepsilon
_a(\partial_bu)=\partial_a\partial_b u-\Gamma_{ab,\varepsilon}^c
\partial_c u.
\end{equation}
Since $e_\varepsilon^{ab}=O(1)$ on $\Omega_\gamma\supseteq
\Omega_\tau$ and
$\Gamma_{ab,\varepsilon}^c=O(\frac{1}{\varepsilon})$, there is a
positive constant $B_2'''$ such that
\begin{equation}\label{Npre2}
\sum_{p_1p_2}\vert \nabla^\varepsilon_{p_1}\nabla^\varepsilon_{p_2}u\vert^2 \leq B_2''' \left(
\frac{1}{\varepsilon^2}\sum_p(\partial_p u)^2+\sum_{p_1
p_2}(\partial_{p_1}\partial_{p_2} u)^2\right).
\end{equation}
Using the right hand side of (\ref{ineqk1k2}) we conclude that for the positive constant
$B_2'':=C_{2,2}B_2'''$ and for sufficiently small $\varepsilon$ we have
\begin{equation}\label{N2}
\vert \nabla_\varepsilon^{(2)}u\vert^2 \leq B_2'' \left(
\frac{1}{\varepsilon^2}\sum_p(\partial_p u)^2+\sum_{p_1
p_2}(\partial_{p_1}\partial_{p_2} u)^2\right).
\end{equation}
As a consequence of (\ref{N11}) and (\ref{N2}), there exists a
positive constant $B_2'$ such that for sufficiently small
$\varepsilon>0$ we have:
\begin{equation}
u^2+\vert \nabla_\varepsilon^{(1)}u\vert^2+\vert
\nabla_\varepsilon^{(2)}u\vert^2 \leq B_2'
\left(\frac{1}{\varepsilon^2}(u^2+\sum_p(\partial_p
u)^2)+(u^2+\sum_p(\partial_p u)^2+\sum_{p_1p_2}(\partial_{p_1p_2}
u)^2) \right)
\end{equation} and integrating this inequality over $S_\tau$ yields the claim for $k=2$.
The proof for arbitrary $k$ is inductive.

We claim that for each $2\leq j<k$ we may write the $j$th covariant
derivative of $u$ as
\begin{eqnarray}\label{basisinductcov}
\nabla_{a_1}^\varepsilon\dots\nabla_{a_j}^\varepsilon
u&=&\partial_{a_1}\dots\partial_{a_j}u+\\\nonumber
&+&\sum_{b_1,\dots,b_{j-1}}B_{a_1\dots a_j,\,\varepsilon}^{b_1\dots
b_{j-1}}\partial_{b_1}\dots\partial_{b_{j-1}}u+\\\nonumber
&+&\sum_{b_1,\dots,b_{j-2}}B_{a_1 \dots a_j,\,\varepsilon}^{b_1\dots
b_{j-2}}\partial_{b_1}\dots\partial_{b_{j-2}}u+\\\nonumber &+&
\dots\\\nonumber
&+&\sum_{b_1}B_{a_1 \dots a_j,\,\varepsilon}^{b_1}\partial_{b_1}u,
\end{eqnarray}
with functions (defined in the coordinate patch $(U,(t,x^i))$):
\[
B_{a_1 \dots a_j,\,\varepsilon}^{b_1\dots b_{j-r}}, 1\leq r\leq j-1,
\]
 where for each non-negative integer $m$, we have the following
 growth estimate on compact sets:
\begin{equation}\label{growthchristoferus}
B_{a_1 \dots a_j,\,\varepsilon}^{b_1\dots
b_{j-r}}=O\left(\frac{1}{\varepsilon^{r}}\right).
\end{equation}
Of course, some of the coefficient functions
$B_{a_1\dots a_j,\,\varepsilon}^{b_1\dots b_{j-r}}$ might even vanish.

We use the induction principle for the proof of this subclaim.
The inductive basis $k=2$  holds due to \ formula (\ref{inductivebasisk2}).
For the inductive step, basically two ingredients are needed: First,
the asymptotic growth of the Christoffel symbols on compact subsets
of $U$ when $\varepsilon\rightarrow 0$, which for every non-negative
integer $m$ is
\begin{equation}\label{ingredientno1}
\partial_{\rho_1}\dots\partial_{\rho_m}
\Gamma_{ab,\varepsilon}^c=O\left(\frac{1}{\varepsilon^{m+1}}\right).
\end{equation}
This formula follows by induction directly from the asymptotic
growth of the metric coefficients, the coefficients of the inverse
of the metric and their derivatives.

The second ingredient is the coordinate formula for the covariant derivative
of a tensor of type $(0,n)$, which is:
\begin{equation}\label{identityloccovpart}
\nabla_a \omega_{b_1\dots b_{n}}=\partial_a\omega_{b_1\dots
b_{n}}-\sum_{j=1}^n\Gamma_{ab_j}^c\omega_{b_1\dots b_{j-1} c b_{j+1}\dots
b_{n}}.
\end{equation}
By using the two ingredients (\ref{ingredientno1}) and (\ref{identityloccovpart}) the proof of claim
(\ref{basisinductcov}) is easily proven for $j=k$. 

Having showed decomposition (\ref{basisinductcov}) for each non-negative integer $k$, the proof
of inequality (\ref{ineqSDXSd}) lies at hand. One only needs the
right hand side of estimate (\ref{ineqk1k2}). Then it follows by
(\ref{basisinductcov}) that for some positive constant $A_k''$
\[
\vert \nabla^{(k)}_\varepsilon\vert^2\leq B_k''\left(\sum_{0\leq
j\leq k}\frac{1}{\varepsilon^{2(k-j)}}\sum_{p_1\dots p_j}\vert
\partial_{p_1\dots p_j}u\vert ^2\vert\right)
\]
(cf.\ inequality (\ref{N2}) in the case $k=2$) and integration of
the respective inequality for $j=0,\dots,k$ over $S_\tau$ yields
inequality (\ref{ineqSDXSd}) for any $k\geq 2$. We are done with
Part 2.
\\{\it Part 3. Inequality (\ref{ineqSdXSD}).}\\
This problem is analogous to inequality (\ref{ineqSDXSd}). One starts by
expressing partial derivatives in terms of covariant
derivatives using identity (\ref{identityloccovpart}). Then one may find estimates of squares of partial derivatives via squares of covariant
derivatives of the respective orders. Finally we may use the left hand
inequality of (\ref{ineqk1k2}) given in the preceding remark
and integrate the achieved inequalities over $S_\tau$ and we are done. 
\end{proof}
\section[Bounds on initial energies]{Bounds on initial energies via bounds on initial data (Part B)}\label{partb}
To start with, we establish asymptotic estimates of derivatives of
arbitrary order of the smooth net $(u_\varepsilon)_\varepsilon$ on
(the compact set) $S_0$. This may be used later to establish the
asymptotic growth behavior of the initial energies
$E^{k}_{\tau=0,\varepsilon}(u_\varepsilon)$. The following notation
is useful:
\begin{definition}
Let $O$ be an open subset of $\mathbb R^n$ and let $K\subset\subset
O$ be a compact subset. A net $(g_\varepsilon)_\varepsilon$ of
smooth functions on $O$ is said to satisfy moderate bounds on $K$,
if there exists a number $N$ such that
\[
\sup_{x\in K}\vert g_\varepsilon(x)\vert=O(\varepsilon^N)\qquad
(\varepsilon\rightarrow 0).
\]
\end{definition}
In our main reference \cite{VW} (cf.\ pp.\ 1341-1344), the set of
such functions is denoted by $\mathcal E_M(K)$, however, since this
notation is misleading, we shall not use it.



 We shall establish moderate (resp.\ negligible) bounds in all derivatives of the net of solutions
$(u_\varepsilon)_\varepsilon$ on a fixed compact set only, namely
$\Omega_\gamma$.
The first step is to establish moderate (resp.\ negligible) bounds of $(u_\varepsilon)_\varepsilon$ on S;
this is the subject of this section.

We may go on now by recalling that due to Proposition
\ref{staticform} the d'Alembertian takes the following form in
static coordinates:
\begin{equation}\label{eqstaticform}
\Box^\varepsilon u_\varepsilon= -V_\varepsilon^{-2}\partial_t^2
u_\varepsilon+\vert g_\varepsilon\vert
^{-1/2}\partial_\alpha\left(\vert g_\varepsilon\vert ^{1/2}
g_\varepsilon^{\alpha\beta}\partial_\beta u_\varepsilon\right).
\end{equation}
We may manipulate equation (\ref{eqstaticform}) by using
$\Box^\varepsilon u_\varepsilon=f_\varepsilon$ and receive a formula
for the second derivative of $u_\varepsilon$:
\begin{equation}\label{secondtimeder}
\partial_t^2u_\varepsilon=-V_\varepsilon^2\left ( f_\varepsilon- \vert g_\varepsilon\vert ^{-1/2}\partial_\alpha\left(\vert g_\varepsilon\vert ^{1/2} g_\varepsilon^{\alpha\beta}\partial_\beta u_\varepsilon \right)\right).
\end{equation}
In order to derive asymptotic bounds on initial energies we shall
need the following statement:
\begin{proposition}
If $(v_\varepsilon)_\varepsilon$, $(w_\varepsilon)_\varepsilon$ (as introduced in
(\ref{weqofsettingeps})) satisfy moderate (resp.\ negligible) bounds on $S_0$ in all derivatives, then for all $j,k\geq 0$ the derivative
\[
(\partial_t^j\partial_{\rho_1}\dots\partial_{\rho_k}u_\varepsilon)_\varepsilon
\]
satisfies moderate (resp.\ negligible bounds) on $S_0$.
\end{proposition}
\begin{proof}
{\it Part 1}\\
Recall that $(f_\varepsilon)_\varepsilon$ is negligible. We start by
proving the estimates on $S_0$ for time derivatives of
$(u_\varepsilon)_\varepsilon$ only. The inductive hypothesis is: If
$(v_\varepsilon)_\varepsilon, (w_\varepsilon)_\varepsilon$ satisfy
moderate (resp.\ negligible) bounds on $S_0$, so does for each
$j\geq0$, the net $\partial_t^j u_\varepsilon(0,x^\alpha)$. The
inductive basis may be $j=0$ or $j=1$: In these cases, the statement
holds trivially: Due to the initial value formulation
(\ref{weqofsettingeps}), we have
\[
\partial_t^0 u_\varepsilon(0,x^\alpha)=u_\varepsilon(0,x^\alpha)=v_\varepsilon(x^\alpha)
\]
and
\[
\partial_t u_\varepsilon(0,x^\alpha)=w_\varepsilon(x^\alpha)
\]
which are both moderate (resp.\ negligible) due to our assumption.
Employing the fact that $(u_\varepsilon)_\varepsilon$ solves
(\ref{weqofsettingeps}) as well as the identity
(\ref{secondtimeder}), we have:
\begin{equation}\label{secondtimederestimate}
\partial_t^2u_\varepsilon(0,x^\alpha)=-V_\varepsilon^2\left ( f_\varepsilon(0,x^\alpha)- \vert g_\varepsilon\vert ^{-1/2}\partial_\alpha\left(\vert g_\varepsilon\vert ^{1/2} g_\varepsilon^{\alpha\beta}\partial_\beta\right)v_\varepsilon\right).
\end{equation}
Here we have only explicitly written down the independent variables,
if the resp. functions are not functions of the space-variables
only.

To confirm that the claimed estimates hold for the second derivative
with respect to time, we only need to know that the product of a net
having moderate (resp.\ negligible) bounds with a net having
moderate bounds, has moderate (resp.\ negligible) bounds. Therefore,
the hypothesis holds for order $j=2$ as well, since
$(v_\varepsilon)_\varepsilon$ satisfies moderate (resp.\ negligible)
bounds (and of course, the representatives of the metric
coefficients are moderate by definition, and so are the determinant
and its inverse).

For the inductive step, assume that for $2\leq j<m$ $(m\geq 2)$ the
desired asymptotic growth is known on $S_0$. Differentiating
equation (\ref{secondtimeder}) $m$$-$$2$ times with respect to time
yields:
\begin{equation}\label{secondtimederschweinsbraten}
\partial_t^m u_\varepsilon=-V_\varepsilon^2\left ( \partial_t^{m-2}f_\varepsilon- \vert g_\varepsilon\vert ^{-1/2}\partial_\alpha\left(\vert g_\varepsilon\vert ^{1/2} g_\varepsilon^{\alpha\beta}\partial_\beta \partial_t^{m-2} u_\varepsilon \right)\right).
\end{equation}
Here again we have used the fact that $V_\varepsilon$ and the metric
coefficients are independent of the time variable $t$. Due to the
inductive hypothesis, $(\partial_t^{m-2} u_\varepsilon)_\varepsilon$
is moderate (resp.\ negligible), whereas
$(\partial_t^{m-2}f_\varepsilon)_\varepsilon$ is negligible by
assumption (since $(f_\varepsilon)_\varepsilon$ is). By a similar
reasoning as for the second derivative, we find that $(\partial_t^m
u_\varepsilon)_\varepsilon$ satisfies moderate (resp.\ negligible)
bounds on $S_0$ and we are done.\\
{\it Part 2}\\
Estimates for the derivatives of $(u_\varepsilon)_\varepsilon$ with
respect to space-variables are easily achieved, since
$(u_\varepsilon(0,x^\alpha))_\varepsilon=(v_\varepsilon)_\varepsilon$
is moderate (resp.\ negligible) due to our assumption, and derivation with respect to space-variables commutes with evaluation at $t=0$.\\
{\it Part 3}\\
It is left to be shown that mixed derivatives of
$(u_\varepsilon)_\varepsilon$ of any order have moderate (resp.\
negligible) bounds on $S_0$. Here again an inductive argument as in
the Part 1 applies. We first rewrite
(\ref{secondtimederschweinsbraten}) by using the Leibniz rule:
\begin{align}\label{secondtimederschweinsbratenknoedel}
\partial_t^m u_\varepsilon=-V_\varepsilon^2\left ( \partial_t^{m-2}f_\varepsilon- \vert g_\varepsilon\vert ^{-1/2}\partial_\alpha\left(\vert g_\varepsilon\vert ^{1/2} g_\varepsilon^{\alpha\beta}\right)\partial_\beta\partial_t^{m-2} u_\varepsilon-g_\varepsilon^{\alpha\beta}\partial_\alpha\partial_\beta\partial_t^{m-2}u_\varepsilon \right).
\end{align}
We may define the net
\[
G_\varepsilon^\beta(x^\mu):=\vert g_\varepsilon\vert
^{-1/2}\partial_\alpha\left(\vert g_\varepsilon\vert ^{1/2}
g_\varepsilon^{\alpha\beta}\right).
\]
It is worth mentioning that
$(G_\varepsilon^\beta(x^\mu))_\varepsilon$ is a moderate net in the coordinate patch for
each $\beta=1,2,3$. With this notation,
(\ref{secondtimederschweinsbratenknoedel}) reads
\begin{align}\label{secondtimederschweinsbratenknoedelnachmservieren}
\partial_t^m u_\varepsilon=-V_\varepsilon^2\left ( \partial_t^{m-2}f_\varepsilon- G_\varepsilon^\beta(x^\mu)\partial_\beta\partial_t^{m-2} u_\varepsilon-g_\varepsilon^{\alpha\beta}\partial_\alpha\partial_\beta\partial_t^{m-2}u_\varepsilon \right).
\end{align}
The inductive hypothesis now states that for each order $m$ we have
for each order $k$ that
\[
(\partial_{\rho_1}\dots\partial_{\rho_k}\partial_t^m
u_\varepsilon)(t=0,x^\alpha)
\]
is a moderate (resp.\ negligible) function. The basis of induction
is $m=0$ holds according to Case 2. Assume
therefore that the claim holds for $0\leq j\leq m$ of order of
time-derivatives of $u_\varepsilon$. Differentiating
(\ref{secondtimederschweinsbratenknoedelnachmservieren}) with
respect to time yields
\begin{align}\label{secondtimederschweinsbratenknoedelnachmservierenessen}
\partial_t^{m+1} u_\varepsilon=-V_\varepsilon^2\left ( \partial_t^{m-1}f_\varepsilon- G_\varepsilon^\beta(x^\alpha)\partial_\beta\partial_t^{m-1} u_\varepsilon-g_\varepsilon^{\alpha\beta}\partial_\alpha\partial_\beta\partial_t^{m-1}u_\varepsilon \right).
\end{align}
Now we may set $t=0$ and differentiate $k$ times with respect to
the space-variables. By assumption, for each $l\geq 0$, and for each
$n\leq m$
\[
(\partial_{\rho_1}\dots\partial_{\rho_l}\partial_t^{n}
u_\varepsilon)(t=0,x^\alpha)
\]
is a moderate (resp.\ negligible) function. Plugging this
information into the right hand side of
(\ref{secondtimederschweinsbratenknoedelnachmservierenessen}), we
see that
\[
(\partial_{\rho_1}\dots\partial_{\rho_k}\partial_t^{m+1}
u_\varepsilon)(t=0,x^\alpha)
\]
has moderate (resp.\ negligible) bounds for each $k\geq 0$, and we
are done.
\end{proof}
As a consequence of the preceding statement, we have
\begin{proposition}\label{initialenergiesviainitialdata}
If $(v_\varepsilon)_\varepsilon$, $(w_\varepsilon)_\varepsilon$ are
moderate (resp.\ negligible), then for each $k$ the initial energies
$(E^k_{0,\varepsilon})_\varepsilon$ are moderate (resp.\ negligible)
nets of real numbers.
\end{proposition}
\begin{proof}
This statement is a direct consequence of the form of
the energy integrals (rewritten in terms of partial derivatives using formula
(\ref{identityloccovpart})).
\end{proof}
\section[Energy inequalities]{Energy inequalities (Part C)}\label{energyestimates1}
From now on, we will use the fact that $(u_\varepsilon)_\varepsilon$
is a solution of (\ref{weqofsettingeps}) on $\Omega_\gamma$. 

We start with the simplest case $k=1$. Then we have the following
inequality:
\begin{proposition}\label{energyinequalitylevelk1}
There exist positive constants $C_1'$ and $C_1''$ such that we have
for each $0\leq\tau\leq\gamma$ and for sufficiently small
$\varepsilon$:
\begin{equation}\label{energyinequalitylevelk1formula}
E^1_{\tau,\varepsilon}(u_\varepsilon)\leq
E^1_{0,\varepsilon}(u_\varepsilon)+C_1'
(\SobODt{f_\varepsilon}{0})^2+C_1''\int_{\zeta=0}^\tau
E_{\zeta,\varepsilon}^1(u_\varepsilon) d\zeta.
\end{equation}
\end{proposition}
\begin{proof}
We start with (\ref{energyhierarchystokes}), which for $k=1$ reads
\begin{equation}\label{energyhierarchystokesk1}
E^1_{\tau,\varepsilon}(u_\varepsilon)\leq
E^1_{\tau=0,\varepsilon}(u_\varepsilon)+\int_{\Omega_\tau}\xi_b^\varepsilon\nabla_a^\varepsilon
T^{ab,0}_\varepsilon(u_\varepsilon)
\mu_\varepsilon+\int_{\Omega_\tau}\xi_b^\varepsilon\nabla_a^\varepsilon
T^{ab,1}_\varepsilon(u_\varepsilon) \mu_\varepsilon
\end{equation}
We calculate the integrals on the right hand side of the inequality
(\ref{energyhierarchystokesk1}). For $k=0$ the energy tensor is
defined by
\begin{equation}
T^{ab,0}_\varepsilon(u_\varepsilon)=-\frac{1}{2}g^{ab}_\varepsilon
u_\varepsilon^2.
\end{equation}
The covariant derivative is:
\begin{eqnarray}\nonumber
\nabla_a^\varepsilon T^{ab,0}_\varepsilon(u_\varepsilon)&=&
-\frac{1}{2}\nabla_a^\varepsilon g^{ab}_\varepsilon
u_\varepsilon^2-(\frac{1}{2}g_\varepsilon^{ab})(2u_\varepsilon
\nabla_a^\varepsilon u_\varepsilon)=\\\nonumber &=& 0-u_\varepsilon
\nabla_\varepsilon^b u_\varepsilon\\\label{Tab0wave} &=&
-u_\varepsilon \nabla_\varepsilon^b u_\varepsilon
\end{eqnarray}
Moreover, for $k=1$ the energy tensor reads
\[
T^{ab,1}_\varepsilon(u_\varepsilon)=(g^{ac}_\varepsilon
g^{bd}_\varepsilon-\frac{1}{2}g^{ab}_\varepsilon
g^{cd}_\varepsilon)\nabla_c^\varepsilon u_\varepsilon
\nabla_d^\varepsilon u_\varepsilon.
\]
Therefore we obtain for the covariant derivative
\begin{eqnarray}\nonumber
\nabla_a^\varepsilon T^{ab,1}_\varepsilon(u_\varepsilon)
&=&(g^{ac}_\varepsilon
g^{bd}_\varepsilon-\frac{1}{2}g^{ab}_\varepsilon
g^{cd}_\varepsilon)(\nabla_a^\varepsilon\nabla_c^\varepsilon
u_\varepsilon\nabla_d^\varepsilon u_\varepsilon+\nabla_c^\varepsilon
u_\varepsilon\nabla_a^\varepsilon\nabla_d^\varepsilon
u_\varepsilon)=\\\nonumber&=&(g^{ac}_\varepsilon
g^{bd}_\varepsilon-\frac{1}{2}g^{ab}_\varepsilon
g^{cd}_\varepsilon)(\nabla_a^\varepsilon\nabla_c^\varepsilon
u_\varepsilon\nabla_d^\varepsilon u_\varepsilon+\nabla_c^\varepsilon
u_\varepsilon\nabla_d^\varepsilon\nabla_a^\varepsilon
u_\varepsilon)=\\\nonumber&=&\nabla^c_\varepsilon\nabla_c^\varepsilon
u_\varepsilon\nabla^b_\varepsilon
u_\varepsilon+(\nabla^a_\varepsilon u_\varepsilon
\nabla^b_\varepsilon\nabla_a^\varepsilon
u_\varepsilon-\\\nonumber&-&\frac{1}{2}\nabla^b_\varepsilon\nabla_c^\varepsilon
u_\varepsilon\nabla^c_\varepsilon
u_\varepsilon-\frac{1}{2}\nabla^d_\varepsilon u_\varepsilon
\nabla^b_\varepsilon\nabla_d^\varepsilon u_\varepsilon)=\\\nonumber
&=&\nabla^c_\varepsilon\nabla_c^\varepsilon
u_\varepsilon\nabla^b_\varepsilon u_\varepsilon =(\Box^\varepsilon
u_\varepsilon) \nabla^b_\varepsilon u_\varepsilon=\\\label{Tab1wave}
&=&f_\varepsilon \nabla^b_\varepsilon u_\varepsilon.
\end{eqnarray}
We may now insert (\ref{Tab0wave}) and (\ref{Tab1wave}) into
(\ref{energyhierarchystokesk1}). This yields
\begin{eqnarray}\nonumber
E^1_{\tau,\varepsilon}(u_\varepsilon)&\leq&E^1_{\tau=0,\varepsilon}(u_\varepsilon)+\int_{\Omega_\tau}\xi_b^\varepsilon\nabla_a^\varepsilon
\left(T^{ab,0}_\varepsilon(u_\varepsilon)+T^{ab,1}_\varepsilon(u_\varepsilon)\right)\mu_\varepsilon=\\\nonumber
&=&
E^1_{0,\varepsilon}(u_\varepsilon)+\int_{\Omega_\tau}\xi_b^\varepsilon\nabla^b_\varepsilon
u_\varepsilon
(f_\varepsilon-u_\varepsilon)\mu_\varepsilon=\\\nonumber &=&
E^1_{0,\varepsilon}(u_\varepsilon)+\int_{\Omega_\tau}\xi^a\nabla_a^\varepsilon
u_\varepsilon (f_\varepsilon-u_\varepsilon)\mu_\varepsilon.
\end{eqnarray}
Using the Cauchy Schwarz inequality we further obtain
\begin{equation}\label{energyhierarchystokesk1endversion}
E^1_{\tau,\varepsilon}(u_\varepsilon)\leq
E^1_{0,\varepsilon}(u_\varepsilon)+
\left(\int_{\Omega_\tau}(\xi^a\nabla_a^\varepsilon
u_\varepsilon)^2\mu_\varepsilon\right)^{\frac{1}{2}}\left(\int_{\Omega_\tau}(f_\varepsilon-u_\varepsilon)^2\mu_\varepsilon\right)^{\frac{1}{2}}
\end{equation}
We may now estimate again by means of the Cauchy Schwarz inequality
for the scalar product induced in each tangent space by
$e^\varepsilon_{ab}$,
\begin{equation}
\xi^a\nabla_a^\varepsilon
u_\varepsilon=g_{ab}^\varepsilon\xi^a\nabla^b_\varepsilon
u_\varepsilon\leq e_{ab}^\varepsilon \xi^a\nabla^b_\varepsilon
u_\varepsilon\leq
\sqrt{e_\varepsilon(\xi,\xi)}\vert\nabla_\varepsilon^{(1)}u_\varepsilon\vert.
\end{equation}
Note that the first inequality holds due to the fact that
the difference between the line elements of $g_{ab}^\varepsilon$ and $e_{ab}^\varepsilon$
merely lies in the switch of signs in the first summand from
$-V_\varepsilon^2$ to $+V_\varepsilon^2$ (therefore this inequality is trivial).

Furthermore, there exists a positive constant $C_1$ such that
$\sqrt{e_\varepsilon(\xi,\xi)}\leq C_1$ on $\Omega_\gamma$ for
sufficiently small $\varepsilon$, because $\xi$ is smooth,
$e_{ab}^\varepsilon$ is locally bounded (because
$g_{ab}^\varepsilon$ is in our setting) and
$\Omega\subset\subset U$. It follows that
\[
\int_{\Omega_\tau}(\xi^a\nabla_a^\varepsilon
u_\varepsilon)^2\mu_\varepsilon\leq
C_1^2\int_{\Omega_\tau}\vert\nabla_\varepsilon^{(1)}
u_\varepsilon\vert^2\mu_\varepsilon.
\]
This information we plug into (\ref{energyhierarchystokesk1endversion}) and achieve
\begin{eqnarray}\nonumber
E^1_{\tau,\varepsilon}(u_\varepsilon)&\leq&
E^1_{0,\varepsilon}(u_\varepsilon)+ C_1\left(\int_{\Omega_\tau}
\vert \nabla_\varepsilon^{(1)}
u_\varepsilon \vert^2\mu_\varepsilon\right)^{\frac{1}{2}}\times\\\nonumber
&\times&\left(\left(\int_{\Omega_\tau}f_\varepsilon^2\mu_\varepsilon\right)^{\frac{1}{2}}+\left(\int_{\Omega_\tau}u_\varepsilon^2\mu_\varepsilon\right)^{\frac{1}{2}}
\right)=\\\label{wasweissmanschon} &=&
E^1_{0,\varepsilon}(u_\varepsilon) +
C_1\,\left(\SobODt{u_\varepsilon}{1}\right)^2+\frac{C_1}{2}\,\left(\SobODt{f_\varepsilon}{0}\right)^2,
\end{eqnarray}
where for the second integrand of the right hand side of
(\ref{energyhierarchystokesk1endversion}) we have used the triangle
inequality for the Sobolev norm (and further that $a(b+c)\leq
(a^2+c^2 )+\frac{b^2}{2}$). Next we employ inequality
(\ref{ineqSDXE}) of Proposition \ref{lemma1}: We have
\begin{equation}\label{plugin1}
\left(\SobODt{u_\varepsilon}{1}\right)^2=\int_{\zeta=0}^\tau
(\SobSDz{u_\varepsilon}{1})^2 d\zeta\leq\frac{1}{
A'}\int_{\zeta=0}^\tau E^1_{\zeta,\varepsilon}(u_\varepsilon)d\zeta.
\end{equation}
We may set $C_1':=\frac{C_1}{2},\,C_1'':=C_1/A'$. Plugging
(\ref{plugin1}) into (\ref{wasweissmanschon}) yields the claim
(\ref{energyinequalitylevelk1formula}).
\end{proof}

 For energies hierarchies larger than one, we similarly have:
\begin{proposition}\label{energyinequalitylevelk}
For each $k>1$ there exist positive constants $C_k',C_k'',C_k'''$
such that for each $0\leq\tau\leq\gamma$ and sufficiently small
$\varepsilon$ we have,
\begin{eqnarray}\label{energyinequalitylevelkformula}
E^k_{\tau,\varepsilon}(u_\varepsilon)&\leq&
E^k_{0,\varepsilon}(u_\varepsilon)+C_k'
(\SobODt{f_\varepsilon}{k-1})^2+C_k''\int_{\zeta=0}^\tau
E_{\zeta,\varepsilon}^k(u_\varepsilon) d\zeta+\\\nonumber
&+&C_k'''\sum_{j=1}^{k-1}\frac{1}{\varepsilon^{2(1+k-j)}}\int_{\zeta=0}^\tau
E_{\zeta,\varepsilon}^j(u_\varepsilon)d\zeta.
\end{eqnarray}
\end{proposition}
Before we prove this proposition, we establish a couple of technical
lemmas. The first one gives a formula for the covariant derivative
of the energy tensor $T^{ab,k}_\varepsilon(u)$. For the sake of
simplicity, we omit the smoothing parameter in the technical
lemmas. Moreover, we write $\nabla_I
u:=\nabla_{p_1}\dots\nabla_{p_{k-1}}u$ and for the tensor product
$e^{IJ}:=e^{p_1q_1}\dots e^{p_{k-1}q_{k-1}}$.
\begin{lemma}\label{technicality1}
For each $k\geq 2$, the divergence of
\[
T^{ab,k}(u)=(g^{ac}g^{bd}-\frac{1}{2}g^{ab}g^{cd})e^{IJ}\nabla_c\nabla_I
u\nabla_d\nabla_J u
\]
can be written in the following form:
\begin{eqnarray}\nonumber
\nabla_a T^{ab,k}(u)&=&\\\label{term1}& &(\nabla_a
e^{IJ})(g^{ac}g^{bd}-\frac{1}{2}g^{ab}g^{cd})\nabla_c\nabla_I u
\nabla_d\nabla_J u \\\label{term2}&+& e^{IJ}(g^{bd}\nabla_d\nabla_J
u)(g^{ac}\nabla_a\nabla_c\nabla_I u)\\\label{term3}&-&2
e^{IJ}(\nabla_d\nabla_J u)
(g^{ab}g^{cd}\nabla_{[a}\nabla_{c]}\nabla_I u).
\end{eqnarray}
\end{lemma}
\begin{proof}
We have
\begin{eqnarray}\nonumber
\nabla_a T^{ab,k}(u)&=&\\\nonumber & &(\nabla_a
e^{IJ})(g^{ac}g^{bd}-\frac{1}{2}g^{ab}g^{cd})\nabla_c\nabla_I u
\nabla_d\nabla_J u
\\&+&g^{ac}g^{bd}e^{IJ}(\nabla_a\nabla_c\nabla_Iu\nabla_d\nabla_Ju+\nabla_c\nabla_Iu\nabla_a\nabla_d\nabla_Ju)\\\nonumber
&-&
\frac{1}{2}g^{ab}g^{cd}e^{IJ}(\nabla_a\nabla_c\nabla_Iu\nabla_d\nabla_Ju+\nabla_c\nabla_Iu\nabla_a\nabla_d\nabla_Ju)=\\\nonumber
&=&(\nabla_a
e^{IJ})(g^{ac}g^{bd}-\frac{1}{2}g^{ab}g^{cd})\nabla_c\nabla_I u
\nabla_d\nabla_J
u\\\nonumber&+&g^{bd}e^{IJ}(g^{ac}\nabla_a\nabla_c\nabla_I
u)(\nabla_d\nabla_J u)+\nabla_c\nabla_I
u\nabla^c\nabla^b\nabla^Iu\\\nonumber&-&\frac{1}{2}(\nabla_d\nabla_Iu)(\nabla^b\nabla^d\nabla^Iu)
-\frac{1}{2}(\nabla_c\nabla_Iu)(\nabla^b\nabla^c\nabla^Iu)=\\\nonumber
&=& (\nabla_a
e^{IJ})(g^{ac}g^{bd}-\frac{1}{2}g^{ab}g^{cd})\nabla_c\nabla_I u
\nabla_d\nabla_J
u\\\nonumber&+&g^{bd}e^{IJ}(g^{ac}\nabla_a\nabla_c\nabla_I
u)(\nabla_d\nabla_J u)+\nabla_c\nabla_I
u\nabla^c\nabla^b\nabla^Iu\\\nonumber&-&\nabla_c\nabla_I
u\nabla^b\nabla^c\nabla^Iu\\\nonumber
&=& (\nabla_a
e^{IJ})(g^{ac}g^{bd}-\frac{1}{2}g^{ab}g^{cd})\nabla_c\nabla_I u
\nabla_d\nabla_J
u\\\nonumber&+&g^{bd}e^{IJ}(g^{ac}\nabla_a\nabla_c\nabla_I
u)(\nabla_d\nabla_J u)-2\nabla_c\nabla_I
u\nabla^{[b}\nabla^{c]}\nabla^Iu\\\nonumber
&=& (\nabla_a
e^{IJ})(g^{ac}g^{bd}-\frac{1}{2}g^{ab}g^{cd})\nabla_c\nabla_I u
\nabla_d\nabla_J
u\\\nonumber&+&g^{bd}e^{IJ}(g^{ac}\nabla_a\nabla_c\nabla_I
u)(\nabla_d\nabla_J u)-2\nabla_d\nabla_I
u\nabla^{[b}\nabla^{d]}\nabla^I u\\\nonumber
& =&(\nabla_a
e^{IJ})(g^{ac}g^{bd}-\frac{1}{2}g^{ab}g^{cd})\nabla_c\nabla_I u
\nabla_d\nabla_J u \\\nonumber&+& e^{IJ}(g^{bd}\nabla_d\nabla_J
u)(g^{ac}\nabla_a\nabla_c\nabla_I u)\\\nonumber&-&2
e^{IJ}(\nabla_d\nabla_J u)
(g^{ab}g^{cd}\nabla_{[a}\nabla_{c]}\nabla_I u).
\end{eqnarray}
\end{proof}
We shall consider (\ref{term1}), (\ref{term2}),(\ref{term3})
separately in the following lemmas. We start with (\ref{term1}):
\begin{lemma}\label{thesimplestterm1}
On $U$ we have
\[
\|\nabla_a^\varepsilon e^{IJ}_\varepsilon\|_m=O(1),\quad
(\varepsilon\rightarrow 0).
\]
\end{lemma}
\begin{proof}
This follows directly from the assumptions on the Killing vector
field $\xi$ (cf.\ (\ref{setting1}), and the assumption (\ref{setting4})
on the metric in section \ref {settingassumptions}) and the Leibniz rule:
\begin{eqnarray}\nonumber
\nabla_a^\varepsilon e^{bc}_\varepsilon&=&\nabla_a^\varepsilon
(g^{ab}_\varepsilon-\frac{2}{\langle
\xi,\xi\rangle_\varepsilon}\xi^b\xi^c)=-2\nabla_a^\varepsilon\left(\frac{\xi^
b}{\sqrt{-\langle \xi,\xi\rangle_\varepsilon}}\frac{\xi^
c}{\sqrt{-\langle
\xi,\xi\rangle_\varepsilon}}\right)=\\\label{term4}& &
-2\nabla_a^\varepsilon\left(\frac{\xi^ b}{\sqrt{-\langle
\xi,\xi\rangle_\varepsilon}}\right) \frac{\xi^ c}{\sqrt{-\langle
\xi,\xi\rangle_\varepsilon}}-2\nabla_a^\varepsilon\left(\frac{\xi^c}{\sqrt{-\langle
\xi,\xi\rangle_\varepsilon}}\right)\frac{\xi^ b}{\sqrt{-\langle
\xi,\xi\rangle_\varepsilon}}=O(1)
\end{eqnarray}
\end{proof}
Next we investigate (\ref{term3}):
\begin{lemma}\label{Mr. Ricci.applied}
\begin{eqnarray}\nonumber
-2\nabla_{[a}\nabla_{c]}\nabla_{p_1}\dots\nabla_{p_{k-1}} u&=&
R^d_{p_1 a c}\nabla_d\nabla_{p_2}\dots\nabla_{p_{k-1}}u\\&+&R^d_{p_2
a c}\nabla_{p_1}\nabla_d\dots\nabla_{p_{k-1}}u+\dots+\\\nonumber
&+&R^d_{p_{k-1} a c}\nabla_{p_1}\dots\nabla_{p_2}\dots\nabla_d u
\end{eqnarray}
\end{lemma}
\begin{proof}
The proof is a direct consequence of the Ricci Identities
\begin{equation}\label{MisterRicci}
-2\nabla_{[a}\nabla_{c]}X_{p_1\dots p_k}= R^d_{p_1 a c}X_{d p_2\dots
p_k}+R^d_{p_2 a c}X_{p_1 d\dots p_k}+\dots+R^d_{p_k a c}X_{p_1
p_2\dots d}.
\end{equation}
\end{proof}
This concludes the algebraic treatment of (\ref{term3}). What is left is to
give an asymptotic estimate on compact sets. First we need
information on the asymptotic growth of the Riemann tensor:
\begin{lemma}\label{asympMr.Riemann}
For each compact set $K$ in $U$ and for each $k\geq 0$, there exist
positive constants $F_k>0$ such that for sufficiently small
$\varepsilon$ the following holds on $K$:
\begin{equation}\label{term6}
\vert\partial_{\rho_1}\dots\partial_{\rho_k}R_{abc}^{d,\varepsilon}\vert\leq
\frac{F_k}{\varepsilon^{2+k}}
\end{equation}
and
\begin{equation}\label{term7}
\vert\nabla_{a_1}^\varepsilon\dots\nabla_{a_k}^\varepsilon
R_{abc}^{d,\varepsilon}\vert\leq \frac{F_k}{\varepsilon^{2+k}}.
\end{equation}
\end{lemma}
\begin{proof}
(\ref{term6}) is an immediate consequence of the formula for the
coefficients of the Riemann tensor in terms of Christoffel symbols
(hence in terms of partial derivatives of the metric coefficients)
and their asymptotic growth. For (\ref{term7}) one needs in addition
the formula expressing the covariant derivative in terms of partial
derivatives and Christoffel symbols.
\end{proof}
Lemma \ref{Mr. Ricci.applied} and Lemma \ref{asympMr.Riemann} in
conjunction yield:
\begin{lemma}\label{term3est}
For each compact set $K$ in $U$ and for each $k\geq 2$, there exist
positive constants $G_k>0$ such that for sufficiently small
$\varepsilon$ the following holds on $K$:
\begin{eqnarray}\nonumber
\vert
2\nabla_{[a}^\varepsilon\nabla_{c]}^\varepsilon\nabla_{p_1}^\varepsilon\dots\nabla_{p_{k-1}}^\varepsilon
u\vert^2\leq \frac{G_k}{\varepsilon^4} \sum_{p_1,\dots,p_{k-1}}
\vert\nabla_{p_1}^\varepsilon\nabla_{p_2}^\varepsilon\dots\nabla_{p_{k-1}}^\varepsilon
u\vert^2.
\end{eqnarray}
\end{lemma}
This is an immediate conclusion and therefore we omit the proof.
Finally, we investigate term (\ref{term2}). The next calculation is
a purely algebraic manipulation. Again, we omit to write down the
smoothing parameter $\varepsilon$ explicitly.
\begin{lemma}\label{wavetermtechnicality}
For each $k\geq 2$, we have
\begin{equation}\label{inductivehypothesisJ}
g^{ac}\nabla_a\nabla_c\nabla_{p_1}\dots\nabla_{p_{k-1}}
\,u=\nabla_{p_1}\dots\nabla_{p_{k-1}}\,\Box
u+\sum_{j=1}^{k-1}\mathcal R^{(k-1,j)} u,
\end{equation}
where $\mathcal R^{(k,j)} u$ represents a linear combination of
contractions of the $(k-j)$th covariant derivative of the Riemann
tensor with the $j$th covariant derivative of $u$, $0\leq j\leq k$.
\end{lemma}
\begin{proof}
Before we start, we note that we shall write
\[
\mathcal R^{(k,j)} u+\mathcal R^{(k,j)} u=\mathcal R^{(k,j)} u,
\]
to indicate that the sum of such linear combinations is a linear
combination of the same type (containing the same order of covariant
derivatives of the Riemann tensor and the function $u$). In this
sense, by the Leibniz rule we have:
\begin{equation}\label{diffMrRicci}
\nabla_{p_k}\sum_{j=1}^{k-1}\mathcal R^{(k-1,j)}
u=\sum_{j=1}^k\mathcal R^{(k,j)} u.
\end{equation}
We start by calculating the basis of induction, namely $k=2$. Since
the connection is torsion free, we have:
\begin{equation}
g^{ac}\nabla_a\nabla_c\nabla_{p_1}u=g^{ac}\nabla_a(\nabla_{p_1}\nabla_cu-2\nabla_{[p_1}\nabla_{c]}u)=g^{ac}\nabla_{p_1}\nabla_a\nabla_cu=\nabla_{p_1}\,\Box
u.
\end{equation}
So the claim holds in the case $k=2$ (since the linear combination $\mathcal R^{(1,j)}$ is allowed to vanish).\\
For the inductive step, assume (\ref{inductivehypothesisJ}) holds.
To manage the step $k-1\rightarrow k$, we have to repeatedly use the
Ricci identities (\ref{MisterRicci}) in order to shuffle the
covariant derivative indices of $u$. First, we shuffle the indices
$c,p_1$:
\begin{eqnarray}\nonumber
g^{ac} \nabla_a\nabla_c\nabla_{p_1}\dots\nabla_{p_k}u&=&
=g^{ac}\nabla_a\nabla_{p_1}\nabla_c\nabla_{p_2}\dots\nabla_{p_k}u-\\\nonumber
&-&2g^{ac}\nabla_a\nabla_{[p_1}\nabla_{c]}\nabla_{p_2}\dots\nabla_{p_k}u=\\\nonumber
&=&g^{ac}\nabla_a\nabla_{p_1}\nabla_c\nabla_{p_2}\dots\nabla_{p_k}u+\\\nonumber
&+&g^{ac}\nabla_a \left( \sum_{i=2}^k
R_{p_ip_1c}^d\nabla_{p_2}\dots\nabla_{p_{i-1}}\nabla_d\nabla_{p_{i+1}}\dots\nabla_{p_k}u\right)\\\nonumber
&=&g^{ac}\nabla_a\nabla_{p_1}\nabla_c\nabla_{p_2}\dots\nabla_{p_k}u+
\sum_{j=1}^k\mathcal R^{(k,j)}u.
\end{eqnarray}
Repeating the same procedure a second time by shuffling $p_1$ and
$a$, we receive
\begin{equation}\label{readyformydefense}
g^{ac}\nabla_a\nabla_c\nabla_{p_1}\dots\nabla_{p_k}u=\nabla_{p_1}(g^{ac}\nabla_a\nabla_c\nabla_{p_2}\dots
\nabla_{p_{k}}u)+\sum_{j=1}^k\mathcal R^{(k,j)}u.
\end{equation}
We may now use the induction hypothesis
(\ref{inductivehypothesisJ}). Inserting into
(\ref{readyformydefense}) yields by means of (\ref{diffMrRicci}),
\begin{eqnarray}\nonumber
g^{ac}\nabla_a\nabla_c\nabla_{p_1}\dots\nabla_{p_k}u&=&\nabla_{p_1}\dots\nabla_{p_k}\,
\Box u+ \nabla_{p_1}\left(\sum_{j=1}^{k-1}\mathcal
R^{(k-1,j)}u\right)+\sum_{j=1}^k\mathcal R^{(k,j)}u=\\\nonumber
&=&\nabla_{p_1}\dots\nabla_{p_k}\, \Box u+\sum_{j=1}^k\mathcal
R^{(k,j)}u.
\end{eqnarray}
and we are done.
\end{proof}
The last helpful estimate we establish before proving Proposition
\ref{energyinequalitylevelkformula} is the following:
\begin{lemma}\label{lasttechnicality}
For each compact set $K$ in $U$ and for each $k\geq 2$, there exist
positive constants $G_k>0$ such that for sufficiently small
$\varepsilon$ the following holds on $K$:
\begin{equation}
\vert \mathcal R^{(k-1,j)}_\varepsilon
u\vert^2\leq\frac{G_k}{\varepsilon^{2(k-j+1)}}\sum_{{q_1\dots q_j \atop 1\leq j\leq k-1}}\vert
\nabla_{q_1}^\varepsilon\dots\nabla_{q_j}^\varepsilon u\vert ^2.
\end{equation}
\end{lemma}
\begin{proof}
The proof follows directly from Lemma \ref{asympMr.Riemann} and the
definition of $ R^{(k-1,j)}_\varepsilon$ (a linear combination of
covariant derivatives of the Riemann tensor of $k-1-j$ order and
covariant derivatives of $u$ of order $j$).
\end{proof}

Finally, we are prepared to prove the main statement, Proposition
\ref{energyinequalitylevelk}: \\
\begin{proof}
We start with (\ref{energyhierarchystokes}), where we insert the
solution $(u_\varepsilon)_\varepsilon$ of the wave equation
(\ref{weqofsettingeps}):
\begin{equation}\label{energyhierarchystokesx}
E^k_{\tau,\varepsilon}(u_\varepsilon)\leq
E^k_{\tau=0,\varepsilon}(u_\varepsilon)+\sum_{j=0}^k\int_{\Omega_\tau}\xi_b^\varepsilon\nabla_a^\varepsilon
T^{ab,j}_\varepsilon(u_\varepsilon) \mu_\varepsilon.
\end{equation}
Hence, for each energy hierarchy $m$, we have to estimate the
divergence of $T^{ab,k}_\varepsilon(u_\varepsilon)$ for
each $2\leq k\leq m$ (the case $k=1$ has been proved in
Proposition \ref{energyinequalitylevelk1} and the case $k=0$ can be easily be derived from the information given in
the proof of Proposition \ref{energyinequalitylevelk1}). So let $k\geq 2$. By
Lemma \ref{technicality1}, we have
\begin{eqnarray}\nonumber
\nabla_a^\varepsilon
T^{ab,k}_\varepsilon(u_\varepsilon)&=&\\\label{term1eps}&
&(\nabla_a^\varepsilon e^{IJ}_\varepsilon)(g^{ac}_\varepsilon
g^{bd}_\varepsilon-\frac{1}{2}g^{ab}_\varepsilon
g^{cd}_\varepsilon)\nabla_c^\varepsilon\nabla_I^\varepsilon
u_\varepsilon \nabla_d^\varepsilon\nabla_J^\varepsilon u_\varepsilon
\\\label{term2eps}&+& e^{IJ}_\varepsilon(g^{bd}_\varepsilon\nabla_d^\varepsilon\nabla_J^\varepsilon
u_\varepsilon)(g^{ac}_\varepsilon\nabla_a^
\varepsilon\nabla_c^\varepsilon\nabla_I^\varepsilon
u_\varepsilon)\\\label{term3eps}&-&2
e^{IJ}_\varepsilon(\nabla_d^\varepsilon\nabla_J^\varepsilon
u_\varepsilon) (g^{ab}_\varepsilon
g^{cd}_\varepsilon\nabla_{[a}^\varepsilon\nabla_{c]}
^\varepsilon\nabla_I^\varepsilon u_\varepsilon).
\end{eqnarray}
We estimate the asymptotic growth of all the three terms
(\ref{term1eps}), (\ref{term2eps}), (\ref{term3eps}) by means of the
preceding lemmas. The first term (\ref{term1eps}) can be estimate
by means of Lemma \ref{thesimplestterm1} as follows. For each $k$
there exists a constant $T_k$ such that for sufficiently small
$\varepsilon$ we have
\begin{equation}\label{eingemachtes1}
\vert(\nabla_a^\varepsilon e^{IJ}_\varepsilon)(g^{ac}_\varepsilon
g^{bd}_\varepsilon-\frac{1}{2}g^{ab}_\varepsilon
g^{cd}_\varepsilon)\nabla_c^\varepsilon\nabla_I^\varepsilon
u_\varepsilon \nabla_d^\varepsilon\nabla_J^\varepsilon
u_\varepsilon\vert^2\leq
T_k\sum_{p_1,\dots,p_k}\vert
\nabla_{p_1}^\varepsilon\dots\nabla_{p_k}^\varepsilon u_\varepsilon\vert^2.
\end{equation}
So we are done with the first term. By Lemma
\ref{wavetermtechnicality}, we have
\begin{equation}\label{inductivehypothesisJcitation}
g^{ac}_\varepsilon\nabla_a^\varepsilon\nabla_c^\varepsilon\nabla_{p_1}^\varepsilon\dots\nabla_{p_{k-1}}^\varepsilon
\,u_\varepsilon=\nabla_{p_1}^\varepsilon\dots\nabla_{p_{k-1}}^\varepsilon\,\Box^\varepsilon
u_\varepsilon+\sum_{j=1}^{k-1}\mathcal R^{(k-1,j)}_\varepsilon
u_\varepsilon,
\end{equation}
and Lemma \ref{lasttechnicality} provides the asymptotic growth
behavior of the quantities \\ $\sum_{j=1}^{k-1}\mathcal
R^{(k-1,j)}_\varepsilon u_\varepsilon$. We further may use that
$(u_\varepsilon)_\varepsilon$ solves the initial value problem
(\ref{weqofsettingeps}) on the level of representatives; taking the
covariant derivative $k$ times this implies
\[
\nabla_{p_1}^\varepsilon\dots\nabla_{p_{k-1}}^\varepsilon\,\Box^\varepsilon
u_\varepsilon=\nabla_{p_1}^\varepsilon\dots\nabla_{p_{k-1}}^\varepsilon
f_\varepsilon.
\]
Hence, there exists a positive constant $T_k'$ such that the left
side of (\ref{inductivehypothesisJcitation}) is bounded for small
$\varepsilon$ by
\begin{eqnarray}\nonumber
\vert
g^{ac}_\varepsilon\nabla_a^\varepsilon\nabla_c^\varepsilon\nabla_{p_1}^\varepsilon\dots\nabla_{p_{k-1}}^\varepsilon
\,u_\varepsilon\vert^2&\leq&
T_k'\vert\nabla_{p_1}^\varepsilon\dots\nabla_{p_{k-1}}^\varepsilon
f_\varepsilon\vert ^2+\\\label{eingemachtes2} &+&T_k'\sum_{{q_1\dots
q_j\atop 1\leq j\leq k-1}}\frac{1}{\varepsilon^{2(1+k-j)}}\vert
\nabla_{q_1}^\varepsilon\dots\nabla_{q_j}^\varepsilon
u_\varepsilon\vert^2.
\end{eqnarray}
For term (\ref{term3eps}) we obtain by Lemma \ref{term3est} that locally there exists a constant $G_k>0$ such that for sufficiently small $\varepsilon$
we have:
\begin{equation}\label{eingemachtes3}
\vert
2\nabla_{[a}^\varepsilon\nabla_{c]}^\varepsilon\nabla_{p_1}^\varepsilon\dots\nabla_{p_{k-1}}^\varepsilon
u_\varepsilon\vert^2\leq \frac{G_k}{\varepsilon^4} \sum_{p_1,\dots,p_{k-1}}
\vert\nabla_{p_1}^\varepsilon\nabla_{p_2}^\varepsilon\dots\nabla_{p_{k-1}}^\varepsilon
u_\varepsilon\vert^2.
\end{equation}
We finally may use the estimates (\ref{eingemachtes1}), (\ref{eingemachtes2}) and (\ref{eingemachtes3})
to estimate the energies $(T^{a,b,k}_\varepsilon(u_\varepsilon)_\varepsilon)$. This yields
\begin{eqnarray}\nonumber
\vert \nabla_a^\varepsilon T^{ab,k}_\varepsilon(u_\varepsilon)\vert &\leq&\label{eingemachtes4} S_k\sum_{p_1\dots p_k}
\vert\nabla_{p_1}^\varepsilon\dots\nabla_{p_k}^\varepsilon u_\varepsilon\vert^2+\\\nonumber&+& S_k\sum_{p_1\dots p_{k-1}
}\vert\nabla_{p_1}^\varepsilon\dots\nabla_{p_{k-1}}^\varepsilon f_\varepsilon\vert^2\\\nonumber&+& S_k\frac{1}{\varepsilon^{2(1+k-j)}} \sum_{{q_1,\dots,q_j \atop 1\leq j\leq k-1}}
\vert\nabla_{q_1}^\varepsilon\nabla_{q_2}^\varepsilon\dots\nabla_{q_j}^\varepsilon
u_\varepsilon\vert^2.
\end{eqnarray}
Summation over $k=1\dots m$ and integration yields for positive constants $C_m'$
\begin{eqnarray}\nonumber
E^m_{\tau,\varepsilon}(u_\varepsilon)&\leq& E^
m_{0,\varepsilon}(u_\varepsilon)\\\label{lastmanstanding}
&+&C_m'\left((\SobODt{u_\varepsilon}{m})^2+(\SobODt{f_\varepsilon}{m-1})^2
+\sum_{j=1}^ {m-1}\frac{1}{\varepsilon^
{2(1+m-j)}}(\SobODt{u_\varepsilon}{j})^2\right).
\end{eqnarray}
This may be turned into an energy inequality by Proposition \ref{lemma1} (\ref{ineqSDXE}) and the information from section
\ref{leray}. Indeed, for each $j$, we have a positive constant $A_j'$ such that for small $\varepsilon$
\begin{equation}\label{lerayxxy}
(\SobODt{u_\varepsilon}{j})^2=\int_{\zeta=0}^\tau
(\SobSDt{u_\varepsilon}{j})^2\, d\zeta\leq A_j'\int_{\zeta=0}^\tau
E^j_{\zeta,\varepsilon}(u_\varepsilon)d \zeta.
\end{equation}
Inserting (\ref{lerayxxy}) into (\ref{lastmanstanding}) therefore yields:
\begin{eqnarray}\nonumber
E^m_{\tau,\varepsilon}(u_\varepsilon)&\leq& E^ m_{0,\varepsilon}(u_\varepsilon)\\\nonumber
&+&C_m'(\SobODt{f_\varepsilon}{m-1})^2 +C_m''\int_{\zeta=0}^\tau E^m_{\zeta,\varepsilon}(u_\varepsilon)d\zeta\\\nonumber
 &+&C_m'''\sum_{j=1}^ {m-1}\frac{1}{\varepsilon^ {2(1+m-j)}}\int_{\zeta=0}^\tau E^j_{\zeta,\varepsilon}(u_\varepsilon)d\zeta.
\end{eqnarray}
and the proof is finished.
\end{proof}
\section[Bounds on energies]{Bounds on energies via bounds on initial energies (Part D)}\label{energyestimates2}
If we apply Gronwall's Lemma to
(\ref{energyinequalitylevelkformula}) we obtain:
\begin{proposition}\label{applicationenergygronwall}
For each $k\geq 1$ there exist positive constants
$C_k',C_k'',C_k'''$ such that we have for each $\varepsilon>0$ and
for each $0\leq\tau\leq\gamma$,
\begin{equation}\label{applicationenergygronwallformula}
E^k_{\tau,\varepsilon}(u_\varepsilon)\leq
\left(E^k_{0,\varepsilon}(u_\varepsilon)+C_k'
(\SobODt{f_\varepsilon}{k-1})^2+C_k'''\sum_{j=1}^{k-1}\frac{1}{\varepsilon^{2(1+k-j)}}\int_{\zeta=0}^\tau
E_{\zeta,\varepsilon}^j(u_\varepsilon) d\zeta\right) e^{C_k''\tau}
\end{equation}
Note that $C_1'''=0$ (this refers to the empty sum when $k=1$)
\end{proposition}
A direct consequence of the preceding proposition is the following
statement:
\begin{proposition}\label{energiesviainitialenergies}
Let $0\leq\tau\leq\gamma$. If for each $k$, the initial energy
$(E^k_{0,\,\varepsilon}(u_\varepsilon))_\varepsilon$ determines a
moderate (resp.\ negligible) net of real numbers, then also \[(\sup
_{0\leq\zeta\leq\tau}
E^k_{\zeta,\,\varepsilon}(u_\varepsilon))_\varepsilon\] is moderate
(resp.\ negligible) for each $k$. \footnote{ In the statement of
\cite{VW}, a typing error occurs, and instead of $k$, $k-1$ is
written. Furthermore, for to prove Proposition \ref{finalprop},
it is not sufficient to have moderate resp.\ negligible nets
$(E^k_{\tau,\,\varepsilon}(u_\varepsilon))_\varepsilon$, but the
supremum of the energies over all $0\leq\zeta\leq\tau$ must be
moderate resp.\ negligible. }
\end{proposition}
\begin{proof}
The proof is inductive. The basis of induction is $k=1$: In this
case, the sum in the brackets of inequality
(\ref{applicationenergygronwallformula}) is empty, and since
$(f_\varepsilon)_\varepsilon$ is a negligible function (since it is
the representative of zero), the net of real numbers
$(\SobODt{f_\varepsilon}{0})_\varepsilon$ is negligible. By the
assumption, also
$(E^{j=0}_{0,\,\varepsilon}(u_\varepsilon))_\varepsilon$ is moderate
(resp.\ negligible). As a consequence of inequality
(\ref{applicationenergygronwallformula}), $(\sup
_{0\leq\zeta\leq\tau}
E^1_{\zeta,\,\varepsilon}(u_\varepsilon))_\varepsilon$ is moderate
(resp.\ negligible).

The inductive step is similar: \\ Assume for $0\leq j< k$ we know
that $(\sup _{0\leq\zeta\leq\tau}
E^j_{\zeta,\,\varepsilon}(u_\varepsilon))_\varepsilon$ is moderate
(resp.\ negligible). By assumption,
$(E^{k}_{0,\,\varepsilon}(u_\varepsilon))_\varepsilon$ is moderate
(resp.\ negligible) as well. Furthermore,
$(f_\varepsilon)_\varepsilon$ is a negligible function (since it is
the representative of zero), hence the net of real numbers
$(\SobODt{f_\varepsilon}{0})_\varepsilon$ is negligible. By applying
inequality (\ref{applicationenergygronwallformula}) we achieve that
$(\sup
_{0\leq\zeta\leq\tau}E^{k}_{\zeta,\,\varepsilon}(u_\varepsilon))_\varepsilon$
is moderate (resp.\ negligible) and we are done.
\end{proof}

\section[Sobolev estimates]{Estimates via a Sobolev embedding theorem (Part E)}
In order to translate the bounds on the energies $(E_{\zeta,\varepsilon}^j(u_\varepsilon))$ back to bounds on
the nets $(u_\varepsilon)_\varepsilon$ and its derivatives, we shall need the following
"generalized" Sobolev lemma expressed in terms of the energies $(E_{\zeta,\varepsilon}^j(u_\varepsilon))$.
\begin{lemma}
For $m>3/2$, there exists a constant $K$, a number $N$ and an
$\varepsilon_0$ such that for all $\phi\in C^{\infty}(\Omega_\tau)$
and for all $\zeta\in[0,\tau]$ and for all $\varepsilon<\varepsilon_0$ we have
\begin{equation}\label{lemmaadamsapplicanda}
\sup_{x\in S_\zeta}\vert \phi(x)\vert\leq
K\varepsilon^{-N}\sup_{0\leq\zeta\leq\tau}
E_{\zeta,\varepsilon}^m(\phi).
\end{equation}
\end{lemma}
Before we prove the statement, we note that since the right hand side of (\ref{lemmaadamsapplicanda})
is independent of $\zeta$, the statement is equivalent to
\begin{equation}\label{lemmaadamsapplicandasuperseda}
\sup_{x\in \Omega_\tau}\vert \phi(x)\vert\leq
K\varepsilon^{-N}\sup_{0\leq\zeta\leq\tau}
E_{\zeta,\varepsilon}^m(\phi).
\end{equation}
\begin{proof}
By (\cite{Adams}, Lemma 5.17), there exists\footnote{this follows from the fact that
boundary of the paraboloid $\Omega$ is Lipschitz} a constant $K$ such that for each
$0\leq\zeta\leq\tau$ we have for $m>3/2$
\begin{equation}\label{sobex2}
\sup_{x\in S_\zeta}\vert \phi(x)\vert\leq K\|\phi\|_{m, S_{\zeta}}.
\end{equation}
with $\|\phi\|_{m, S_{\zeta}}$, the three dimensional Sobolev norm
on $S_\zeta$ with the Volume form of $\mathbb R^3$, that is,
\[
\|\phi\|_{m,S_\zeta}=\int_{S_\zeta}\sum_{{\rho_1,\dots,\rho_j \atop 0\leq
j\leq m}}\vert\partial_{\rho_1}\dots\partial_{\rho_j}\phi\vert ^2
dx^1 dx^2 dx^3,
\]
where partial derivatives are only taken with respect to space-variables,
that is tangential to $S_\zeta$ for each $0\leq\zeta\leq \tau$. Note
that the expression is not invariant for two reasons. The first is
that partial derivatives are involved and not covariant derivatives.
Secondly, the volume element of $\mathbb R^3$ is taken. We
shall, however, derive an estimate by invariant expressions, namely,
the energies.

Next, we introduce the determinant of the metric into the Sobolev
norms. Note that on $\Omega_\gamma$, which is a compact set, the absolute value of the
determinant of the metric $\vert g_\varepsilon\vert $ for
sufficiently small $\varepsilon$ is bounded from below by a fixed
power of $\varepsilon$. This follows from invertibility of the
metric. In our case, however, where the metric and its inverse locally
are $O(1)$, there exists a positive constant $C$ and a $\varepsilon_0\in I$ such that for all
$\varepsilon<\varepsilon_0$ we have
\begin{equation}
\vert g_\varepsilon\vert^{\frac{1}{2}} \geq C
\end{equation}
holds on $\Omega_\gamma$. Therefore, for small $\varepsilon$ and
for all $\zeta$, $0\leq\zeta\leq\tau$, we have the estimate
\begin{equation}\label{sobex3}
\|\phi\|_{m, S_{\zeta}}\leq
C^{-1}\int_{S_\zeta}\sum_{{\rho_1,\dots,\rho_j \atop 0\leq
j\leq m}}\vert\partial_{\rho_1}\dots\partial_{\rho_j}\phi\vert ^2
\vert g_\varepsilon\vert^\frac{1}{2} dx^1 dx^2 dx^3.
\end{equation}
Clearly, this can further be estimated by the cruder three
dimensional Sobolev norm $\SobSdz{\phi}{m}$, which respects also
time-derivatives. Therefore, we may estimate (\ref{sobex3}) by
\begin{equation}\label{sobex4}
\forall\; \zeta \in[0,\tau]\;\forall\;\varepsilon<\varepsilon_0:\|\phi\|_{m, S_{\zeta}}\leq C^{-1}(\SobSdz{\phi}{m}).
\end{equation}
Inserting (\ref{sobex4}) into (\ref{sobex2}) yields the estimate
\begin{equation}\label{sobex5}
\forall\; \zeta \in[0,\tau]\;\forall\;\varepsilon<\varepsilon_0:\sup_{x\in S_\zeta}\vert \phi(x)\vert\leq K
C^{-1}(\SobSdz{\phi}{m}).
\end{equation}
Finally we apply Proposition \ref{lemma1} twice,
namely the estimates (\ref{ineqSdXSD}) and (\ref{ineqSDXE}). This yields
a number $N'$ such that for sufficiently small $\varepsilon$ and for
all $0\leq\zeta\leq\tau$ we have
\begin{equation}\label{sobex6}
\sup_{x\in S_\zeta}\vert \phi(x)\vert\leq
\varepsilon^{-N'}E^m_{\zeta,\varepsilon}(\phi).
\end{equation}
On the right side of (\ref{sobex6}) we may now take the supremum
over $\zeta\in[0,\tau]$ and achieve
\begin{equation}\label{sobex5}
\sup_{x\in S_\zeta}\vert \phi(x)\vert\leq
\varepsilon^{-N'}(\sup_{0\leq\zeta\leq
\tau}E^m_{\zeta,\varepsilon}(\phi)).
\end{equation}
\end{proof}
The main statement of this section is the
following:
\begin{proposition}\label{finalprop}
Let $0\leq\tau\leq\gamma$. If for each $k$,
$(\sup_{0\leq\zeta\leq\tau}
E^k_{\zeta,\varepsilon}(u_\varepsilon))_\varepsilon$ is moderate
(resp.\ negligible), then $(u_\varepsilon)_\varepsilon$ satisfies
moderate bounds (negligible bounds) on $\Omega_\tau$.
\end{proposition}
\begin{proof}
Inserting $(u_\varepsilon)_\varepsilon$ into
(\ref{lemmaadamsapplicanda}) yields
\begin{equation}\label{lemmaadamsapplicandaappl1}
\sup_{x\in S_\tau}\vert u_\varepsilon(x)\vert\leq
K\varepsilon^{-N}\sup_{0\leq\zeta\leq\tau}
E_{\zeta,\varepsilon}^m(u_\varepsilon).
\end{equation}
Similarly, for higher derivatives of $(u_\varepsilon)_\varepsilon$,
one achieves bounds via higher energies:
\begin{equation}\label{lemmaadamsapplicandaappl2}
\sup_{x\in \Omega_\tau}\vert
\partial_{\rho_1}\dots\partial_{\rho_k}\partial_t^lu_\varepsilon(x)\vert\leq
K\varepsilon^{-N}\sup_{0\leq\zeta\leq\tau}
E_{\zeta,\varepsilon}^{m+k+l}(u_\varepsilon).
\end{equation}
\end{proof}

\section[Existence and uniqueness]{Existence and uniqueness (Part F)} In this section we
collect all the preceding material and prove a local existence and
uniqueness result for the wave equation; this, however, is based on the
specific choice of representative of the metric $g_{ab}$. In the
next section we show that the generalized solution does indeed not
depend on the (symmetric) choice of the metric representative.

To begin with, we note that the wave equation for the static representative
written down in coordinates is time reversible, meaning: the differential equation
(\ref{weqofsettingeps}) is invariant under a transformation of the form $t\mapsto -t$.
In other words: If $(u_\varepsilon(t,x^i))_\varepsilon$ is a solution of (\ref{weqofsettingeps}) for $t\leq 0$, also
$(u_\varepsilon(-t,x^i))_\varepsilon$ solves (\ref{weqofsettingeps}), however for $t\geq 0$.

Therefore, similarly as in the above we may achieve estimates for $(u_\varepsilon(t,x^i))$ for $t\leq 0$.
The compact region on which the estimates are established we call $\Omega_{-\tau}$, $0\leq\tau\leq\gamma$
which is the (time--)reflected $\Omega_\tau$ (cf. figure 3). It is, however, also possible to
define the $\Omega_\tau$ as in section \ref{smoothsettingfoliation} and apply Stokes' theorem, thus
repeating the whole procedure on estimating of Part A to Part E, just with $\Omega_\tau$ replaced by
$\Omega_{-\tau}$, $0\leq\tau\leq \gamma$.

\begin{figure}\label{figure3}
   \begin{center}
   \fbox{\includegraphics[width=5.21in]{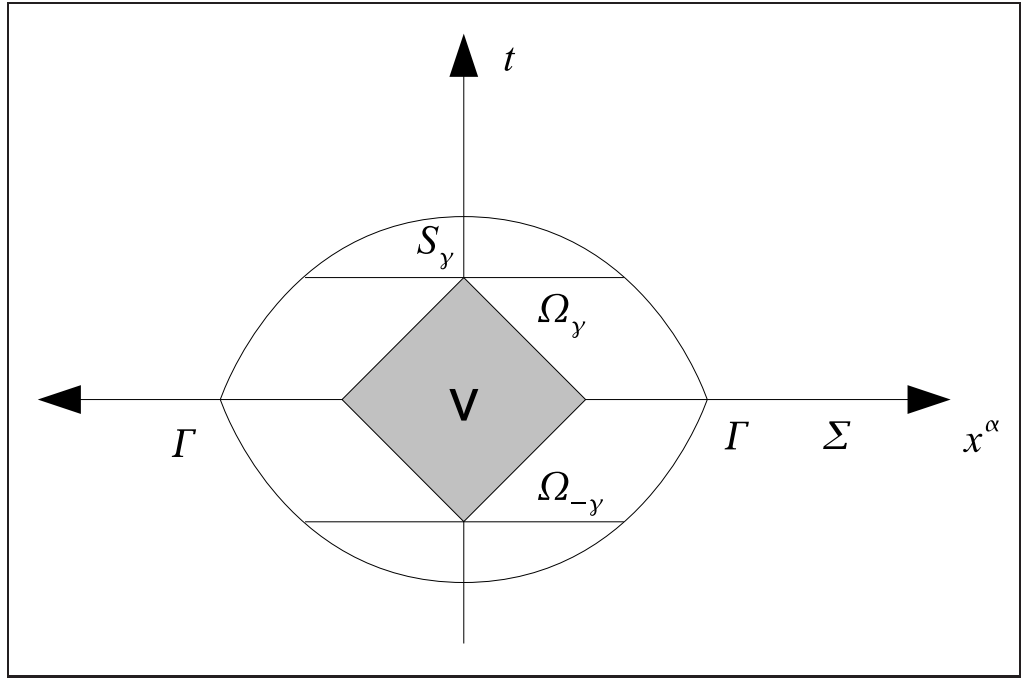}}
   \end{center}
   \caption{Choice of the open set $V$ for the existence result}
\end{figure}

We are now prepared to present the existence and uniqueness theorem for the Cauchy problem of the wave equation
in our setting:
\begin{theorem}\label{THETHEOREM}
For each point $p$ in $\Sigma$ there exists an open neighborhood $V\subset U$ on which a unique generalized
solution $u\in\mathcal G(V)$ of the initial value problem
(\ref{weqofsetting}) exists.
\end{theorem}
Even though there will be redundancies, we shall present a detailed
proof of the theorem.
\begin{proof}
Let $(U,(t,x^\mu))$ be an open relatively compact static coordinate chart at $p$. By Theorem
\ref{Lemmastaticgeneralized}, we choose a representative
$(g_{ab}^\varepsilon)_\varepsilon$ of the metric which is static for
each $\varepsilon$ and which (for small $\varepsilon$) satisfies the
respective bounds according to the setting. Furthermore, a
representative $(e_{ab}^\varepsilon)_\varepsilon$ of $e_{ab}$ may be
directly constructed from the representative
$(g_{ab}^\varepsilon)_\varepsilon$ of the metric.

Under these conditions Proposition \ref{lemma1} may be applied.

On the level of representatives the initial value problem
(\ref{weqofsetting}) takes the form (\ref{weqofsettingeps}) with
$(f_\varepsilon)_\varepsilon$ negligible, and
$(v_\varepsilon)_\varepsilon$, $(w_\varepsilon)_\varepsilon$
moderate.
\\{\it Part 1. Existence of a local moderate net of solutions}\\
The smooth theory then provides smooth solutions
$(u_\varepsilon)_\varepsilon$ on $U$.

We first show that the net $(u_\varepsilon)_\varepsilon$ satisfies moderate bounds on $\Omega_{\gamma}$: Moderate data
$(v_\varepsilon)_\varepsilon$, $(w_\varepsilon)_\varepsilon$
translate by means of Proposition
\ref{initialenergiesviainitialdata} to moderate initial energies
$(E^k_{0,\varepsilon}(u_\varepsilon))_\varepsilon$ for each
hierarchy $k$. Moreover, by means of Proposition
\ref{applicationenergygronwall}, moderate initial energies
$(E^k_{0,\varepsilon}(u_\varepsilon))_\varepsilon$ ($k\geq 1$)
translate to moderate energies
$(E^k_{\tau,\varepsilon}(u_\varepsilon))_\varepsilon$ ($k\geq 1$),
where $0\leq \tau\leq \gamma$, this is the statement of Proposition
\ref{energiesviainitialenergies}. Finally Proposition
\ref{finalprop} states that moderate energies
$(E^k_{\tau,\varepsilon}(u_\varepsilon))_\varepsilon$ ($k\geq
1,\,0\leq \tau\leq \gamma$) translate to moderate bounds of $(u_\varepsilon)_\varepsilon$ and of its derivatives of all orders
on $\Omega_{\gamma}$. Due to the preceding introductory remark, estimates of the same kind hold on $\Omega_{-\gamma}$. We pick an open subset $V$ of $\Omega_{-\gamma,\gamma}:=\Omega_{-\gamma}\cup\Omega_{\gamma}$ (see figure \ref{figure3}). Due to our considerations in the beginning of Part B (section \ref{partb}), we have therefore established
that $(u_\varepsilon)_\varepsilon$ is a moderate net on $V$.
\\{\it Part 2. Uniqueness of solutions}\\
We may now define a local generalized solution $u$ on $V$ by
\[
u:=[(u_\varepsilon)_\varepsilon],
\]
the class of $(u_\varepsilon)_\varepsilon$ from Part 1. What is left
to be shown is that the solution $u$ does not depend on the choice
of representatives of $(f_\varepsilon)_\varepsilon,\,
(v_\varepsilon)_\varepsilon,\, (w_\varepsilon)_\varepsilon$ of
$f\equiv 0, v, w $.

Let therefore $(\hat f_\varepsilon)_\varepsilon,\, (\hat
v_\varepsilon)_\varepsilon,\, (\hat w_\varepsilon)_\varepsilon$ be
further representatives of $f\equiv 0, v, w $, and let $(\hat
u_\varepsilon)_\varepsilon$ be the respective net of smooth
solutions.

Setting
\[
\widetilde u_\varepsilon:=u_\varepsilon-\hat u_\varepsilon,\;
\widetilde f_\varepsilon:=f_\varepsilon-\hat f_\varepsilon,\;
\widetilde v_\varepsilon:=v_\varepsilon-\hat v_\varepsilon,\;
\widetilde w_\varepsilon:=w_\varepsilon-\hat w_\varepsilon,
\]
we see that for each $\varepsilon>0$  $\widetilde u_\varepsilon$ is
a solution of the initial value problem
\begin{eqnarray}\nonumber
\Box^\varepsilon \widetilde u_\varepsilon&=&\widetilde
f_\varepsilon\\\nonumber \widetilde
u_\varepsilon(t=0,x^\mu)&=&\widetilde
v_\varepsilon(x^\mu)\\\nonumber
\partial_t\widetilde u_\varepsilon(t=0,x^\mu)&=&\widetilde w_\varepsilon(x^\mu)\nonumber,
\end{eqnarray}
Note, that here all the nets $(\widetilde
f_\varepsilon)_\varepsilon,(\widetilde
v_\varepsilon)_\varepsilon,(\widetilde w_\varepsilon)_\varepsilon$
are negligible. What is left to show is that the net $(\widetilde
u_\varepsilon)_\varepsilon$ is negligible, as well; uniqueness of
the above defined solution $u$ is then obvious, since $[(\widetilde
u_\varepsilon)_\varepsilon]=[(u_\varepsilon)_\varepsilon]=u$.

Negligible data
$(v_\varepsilon)_\varepsilon$, $(w_\varepsilon)_\varepsilon$
translate by means of Proposition
\ref{initialenergiesviainitialdata} to negligible initial energies
$(E^k_{0,\varepsilon}(u_\varepsilon))_\varepsilon$ for each
hierarchy $k$. Moreover, by means of Proposition
\ref{applicationenergygronwall}, negligible initial energies
$(E^k_{0,\varepsilon}(u_\varepsilon))_\varepsilon$ ($k\geq 1$)
translate to negligible energies
$(E^k_{\tau,\varepsilon}(u_\varepsilon))_\varepsilon$ ($k\geq 1$),
where $0\leq \tau\leq \gamma$, this is the statement of Proposition
\ref{energiesviainitialenergies}. Finally Proposition
\ref{finalprop} states that negligible energies
$(E^k_{\tau,\varepsilon}(u_\varepsilon))_\varepsilon$ ($k\geq
1,\,0\leq \tau\leq \gamma$) translate to a negligible bounds of $(u_\varepsilon)_\varepsilon$ and of its derivatives of all orders
on $\Omega_{\gamma}$. Due to the preceding introductory remark, estimates of the same kind hold on $\Omega_{-\gamma}$. Due to our considerations in the beginning of Part B (section \ref{partb}), we have therefore established
that $(u_\varepsilon)_\varepsilon$ is a negligible net on $V$. This proves uniqueness of the solution $u$ on $V$.
\end{proof}
\section[Dependence on the metric representative]{Dependence on the representative of the metric (Part G)}\label{independenceday}
So far, we have proved that on $V\subset\Omega_\gamma\cup\Omega_{-\gamma}$, a unique solution to
the initial value problem exists. We had, however, picked a
specific symmetric representative $(g_{ab}^\varepsilon)_\varepsilon$
of the metric $g_{ab}$ (to be more precise, these are coordinate
expressions of the metric components) and worked with one and the
same all the time. It is, therefore, advisable, to show that the
generalized solution $u$ of the wave equation is independent of the
choice of the representative of the metric. This is the aim of this
section.

There is only one further assumption we impose on the
representatives $(g_{ab}^\varepsilon)_\varepsilon$ of the metric:
they shall be symmetric (cf.\ the note in the end of the section).

The initial value problem with respect to
$(g_{ab}^\varepsilon)_\varepsilon$ is the following:
\begin{eqnarray}\label{weqhatless}
\Box^\varepsilon u_\varepsilon&=&f_\varepsilon\\\nonumber
u_\varepsilon(t=0,x^\alpha)&=&v_\varepsilon(x^\alpha)\\\nonumber
\partial_tu_\varepsilon(t=0,x^\alpha)&=&w_\varepsilon(x^\alpha)\nonumber
\end{eqnarray}
Now, let $(\hat g_{ab}^\varepsilon)_\varepsilon$ be another
symmetric representative of $g_{ab}$. We call $\hat
\Box^\varepsilon$ the \newline d'Alembertian operator induced by $(\hat
g_{ab}^\varepsilon)_\varepsilon$. The initial value problem with respect
to the latter reads quite similarly
\begin{eqnarray}\label{weqhat}
\hat \Box^\varepsilon \hat u_\varepsilon&=&f_\varepsilon\\\nonumber
\hat
u_\varepsilon(t=0,x^\alpha)&=&v_\varepsilon(x^\alpha)\\\nonumber
\partial_t\hat u_\varepsilon(t=0,x^\alpha)&=&w_\varepsilon(x^\alpha)\nonumber.
\end{eqnarray}

We may pause here for a moment and consider why Proposition \ref{lemma1} (and therefore all subsequent statements
based on the latter) also holds true for the alternative choice $(\hat g_{ab}^\varepsilon)_\varepsilon$ of metric representative:
First, the difference between $(\hat g_{ab}^\varepsilon)_\varepsilon$ and the static representative
$(g_{ab}^\varepsilon)_\varepsilon$ (according to Theorem \ref{Lemmastaticgeneralized}) is negligible by definition. As a consequence
the difference between estimates established on
compact sets and with respect to these different representative is negligible. Since we only work on the compact region $\Omega_\gamma$, the estimates according to
Proposition \ref{lemma1} hold as well for other (symmetric) representatives of the metric and
for small $\varepsilon$; however, presumably with modified positive constants $A, A', B_k, B_k'$.

The proof of Theorem \ref{THETHEOREM} (Part 1) provides moderate
solutions $(u_\varepsilon)_\varepsilon$ and $(\hat
u_\varepsilon)_\varepsilon$ of (\ref{weqhatless}) and
(\ref{weqhat}). It is only left to show that the difference
$(\widetilde
u_\varepsilon)_\varepsilon:=(u_\varepsilon)_\varepsilon-(\hat
u_\varepsilon)_\varepsilon$ is negligible on $\Omega_\tau$.
For this difference we have
\begin{eqnarray}\label{weqhatdiff}
\hat \Box^\varepsilon \widetilde
u_\varepsilon&=&f_\varepsilon-\hat\Box^\varepsilon
u_\varepsilon\\\nonumber \widetilde
u_\varepsilon(t=0,x^\alpha)&=&0\\\nonumber
\partial_t \widetilde u_\varepsilon(t=0,x^\alpha)&=&0\nonumber.
\end{eqnarray}
In view of the proof of Theorem \ref{THETHEOREM} (Part 2) we only
need to show that $f_\varepsilon-\hat\Box^\varepsilon u_\varepsilon$
is negligible. To this end we first manipulate the right hand side of
line 1 of (\ref{weqhatdiff}) as follows:
\begin{equation}\label{simplecalculationsareboringbutihavetodothem}
f_\varepsilon-\hat\Box^\varepsilon
u_\varepsilon=(f_\varepsilon-\Box^\varepsilon
u_\varepsilon)+(\Box^\varepsilon u_\varepsilon-\hat\Box^\varepsilon
u_\varepsilon)= \Box^\varepsilon u_\varepsilon-\hat\Box^\varepsilon
u_\varepsilon,
\end{equation} because $(u_\varepsilon)_\varepsilon$ solves
(\ref{weqhatless}). Therefore the problem is reduced to showing that
$(\Box^\varepsilon u_\varepsilon-\hat\Box^\varepsilon
u_\varepsilon)_\varepsilon$ is negligible. We calculate the
difference in local coordinates. We use $\vert \det
g_{ij}^\varepsilon\vert:=\vert g_\varepsilon\vert =-g_\varepsilon$
and for the sake of simplicity we further omit the index
$\varepsilon$. The difference then reads:
\begin{align} \label{diffmetr}
\Box u-\hat\Box u=(-g)^{-\frac{1}{2}}\partial_a(
(-g)^{\frac{1}{2}}g^{ab}\partial_b u)- (-\hat
g)^{-\frac{1}{2}}\partial_a( (-\hat g)^{\frac{1}{2}}\hat
g^{ab}\partial_b u)=\\\nonumber \left(
(-g)^{-\frac{1}{2}}\partial_a( (-g)^{\frac{1}{2}}g^{ab}\partial_b
u)-(-\hat
g)^{-\frac{1}{2}}\partial_a((-g)^{\frac{1}{2}}g^{ab}\partial_b
u)\right)+\\\nonumber+\left((-\hat
g)^{-\frac{1}{2}}\partial_a((-g)^{\frac{1}{2}}g^{ab}\partial_b
u)-(-\hat g)^{-\frac{1}{2}}\partial_a( (-\hat g)^{\frac{1}{2}}\hat
g^{ab}\partial_b u)\right)=\\\nonumber((-g)^{-\frac{1}{2}}-(-\hat
g)^{-\frac{1}{2}})\partial_a( (-g)^{\frac{1}{2}}g^{ab}\partial_b u)
+(-\hat
g)^{-\frac{1}{2}}\partial_a\left((-g)^{\frac{1}{2}}g^{ab}-(-\hat
g)^{\frac{1}{2}}\hat g^{ab}\right)\partial_b u
\end{align}
The differences within the brackets of the last line of
(\ref{diffmetr}) can easily be shown to be negligible. Indeed, since
$(g_{ab}^\varepsilon-\hat g_{ab}^\varepsilon)_\varepsilon$ is
negligible, also $g_\varepsilon-\hat g_\varepsilon$ is negligible,
therefore, as can be seen by the following elementary algebraic
manipulation, the difference
\begin{equation}\label{Plugyiing1}
(-g_\varepsilon)^{-\frac{1}{2}}-(-\hat
g_\varepsilon)^{-\frac{1}{2}}=\frac{
g_\varepsilon-\hat g_\varepsilon}{\sqrt{g_\varepsilon\hat
g_\varepsilon}(\sqrt{-\hat g_\varepsilon}+\sqrt{-g_\varepsilon})}
\end{equation}
is negligible. Also
\begin{equation}\label{Plugyiing2}
\sqrt{-g_\varepsilon}g^{ab}-\sqrt{-\hat g_\varepsilon} \hat
g^{ab}_\varepsilon=\sqrt {-g_\varepsilon} (g_\varepsilon^{ab}-\hat
g_\varepsilon^{ab})+\hat g_\varepsilon^{ab}\frac{\hat g_\varepsilon-
g_\varepsilon}{\sqrt {-g_\varepsilon}+\sqrt{-\hat g_\varepsilon}}
\end{equation}
is negligible. Plugging (\ref{Plugyiing1}) and (\ref{Plugyiing2})
into (\ref{diffmetr}), we derive that $(\Box^\varepsilon
u_\varepsilon-\hat\Box^\varepsilon u_\varepsilon)_\varepsilon$ is
negligible, and by identity
(\ref{simplecalculationsareboringbutihavetodothem}),
$(f_\varepsilon-\hat\Box^\varepsilon u_\varepsilon)_\varepsilon$ is
a negligible net of smooth functions as well. This is the right hand
side of the differential equation (\ref{weqhatdiff}). Therefore,
Part 2 of the proof of Theorem \ref{THETHEOREM}) ensures that
$(\widetilde u_\varepsilon)_\varepsilon=(u_\varepsilon-\hat
u_\varepsilon)_\varepsilon$ is negligible and we are done.

It goes without saying that non-symmetric perturbations of the
metric are not relevant. Another formulation of the latter would be
the following: The present method for solving the initial value
problem (\ref{weqofsetting}) basically lies in showing the existence
result on the level of representatives given an arbitrary choice of
representatives of the initial data as a well as a symmetric
representative of the metric. The resulting generalized solution
does not depend on the choice of symmetric representatives of the
metric and neither does it depend on the choice of representatives
of the initial data.
\section{Possible generalizations}
We finish this chapter by pointing out possible improvements of
Theorem \ref{THETHEOREM} concerning generality of the statement as well as reducing
the list of necessary assumption on the metric as given in section \ref{settingassumptions}.

First we conjecture that condition (\ref{setting6}) in section \ref{settingassumptions}, which guarantees existence of
smooth solutions on the level of representatives (that is with respect to each sufficiently small $\varepsilon$--component
of the representative of the metric), presumably follows from condition (\ref{setting2}).

Moreover, we believe that the Cauchy problem (\ref{weqofsetting}) also admits unique solutions in the special algebra of generalized functions
even if the condition (\ref{setting2}) are weakened to logarithmic growth properties of the metric coefficients. In this case,
the constants $A, A', B_k, B_k'$ of Proposition \ref{lemma1} might depend on $\varepsilon$, say $A(\varepsilon)=A \log(\varepsilon)$ with a positive constant $A$ etc.\ . Therefore, a later application of Grownwall's Lemma would yield
moderate growth of energies of arbitrary order, since
\[
(e^{A\log \varepsilon})_\varepsilon=(\varepsilon^A)_\varepsilon
\]
is moderate.

\chapter[Point values \& uniqueness questions]{Point values and uniqueness questions in algebras of generalized functions}\label{chapterpointvalues}
\section[Point values in Egorov algebras]{Point value characterizations of ultrametric Egorov
algebras}\label{chapteregorov}
As already mentioned in the introduction, a distinguishing feature (compared to spaces of distributions in the sense of
Schwartz) of Colombeau- and Egorov type algebras is the availability
of a generalized point value characterization for elements of such spaces (see \cite{MO1},
resp.\ \cite{KS} for the manifold setting). Such a characterization may be
viewed as a nonstandard aspect of the theory: for uniquely determining
an element of a Colombeau- or Egorov algebra, its values on classical ('standard')
points do not suffice: there exist elements which vanish on each classical
point yet are nonzero in the quotient algebra underlying the respective
construction. A unique determination can only be attained by taking into
account values on generalized points, themselves given as equivalence
classes of standard points. This characteristic feature is re-encountered
in practically all known variants of such algebras of generalized functions.

It therefore came as a surprise when in a series of papers (\cite{AKS2, AKS})
it was claimed that, contrary to the above general situation, in $p$-adic Colombeau-Egorov algebras
a general point value characterization using only standard points was available.
This chapter is dedicated to a thorough study of (generalized) point value characterizations
of $p$-adic Colombeau-Egorov algebras and to showing that in fact also in the $p$-adic
setting classical point values do not suffice to uniquely determine elements of such.

In the remainder of this section we recall some material from (\cite{AKS2, AKS}), using notation
from \cite{Bible}.
Let $\mathbb N$ be the natural numbers starting with $n=1$. For a fixed prime $p$, let $\mathbb Q_p$ denote the field of rational $p$-adic numbers.
Let $\mathcal D(\mathbb Q_p^n)$ denote the linear space of locally constant complex valued functions on
$\mathbb Q_p^n$ ($n\geq 1$) with compact support. Let further $\mathcal P(\mathbb Q_p^n)
:=\mathcal D(\mathbb Q_p^n)^{\mathbb N}$. $\mathcal P(\mathbb Q_p^n)$ is endowed with an algebra-structure by defining addition and multiplication of sequences component-wise. Let $\mathcal N(\mathbb Q_p^n)$ be the subalgebra of elements $\{(f_k)_k\}\in\mathcal P(\mathbb Q_p^n)$ such that for any compact set $K\subseteq \mathbb Q_p^n$ there exists an $N\in\mathbb N$ such that $\forall\;
x\in K\;\forall\; k\geq N: f_k(x)=0$. This is an ideal in $\mathcal P(\mathbb Q_p^n)$. The quotient algebra
$\mathcal G(\mathbb Q_p^n):=\mathcal P( \mathbb Q_p^n)/\mathcal N(\mathbb Q_p^n)$ is called the $p$-adic Colombeau-Egorov algebra.
Finally, so called Colombeau-Egorov generalized numbers $\widetilde {\mathcal C}$ are introduced in the following way:
Let $\bar {\mathbb C}$ be the one-point compactification of $\mathbb C\cup\{\infty\}$.\\ Factorizing $\mathcal A=\bar{\mathbb C}^{\mathbb N}$  by the ideal $\mathcal I:=\{u=(u_k)_k\in\mathcal A\mid \,\exists N\in\mathbb N\,\forall \;k\geq N: u_k=0\}$ yields then the ring $\widetilde{\mathcal C}$ of Colombeau-Egorov generalized numbers. We replace $\bar{\mathbb C}$ by $\mathbb C$ and construct similarly $\mathcal C$, the ring of generalized numbers: Clearly, $\bar{\mathbb C}$ is not needed in this context, since representatives of elements $f\in\mathcal G(\mathbb Q_p^n)$ merely take on values in $\mathbb C^{\mathbb N}$.
Let $f=[(f_k)_k]\in\mathcal G(\mathbb Q_p^n)$. It is clear that for a fixed $x\in\mathbb Q_p^n$,  the {\it point value of f at x}, $[(f_k(x))_k]$ is a well defined element of
${\mathcal C}$, i.e., we may consider $f$ as a map
\begin{equation}\label{pv}
f: \;\mathbb Q_p^n\rightarrow \mathcal C:\;\;x\mapsto f(x):=(f_k(x))_k+\mathcal I.
\end{equation}
Note that the above constitutes a slight abuse of notation:
The letter $f$ denotes both a generalized function (an element of $\mathcal G(\mathbb Q_p^n)$)
and a mapping on $\mathbb Q_p^n$. \\Finally, let $A$ be a set and let $R$ be a ring. For $B\subset A,\;\theta\in R$ we call the characteristic function of $B$ the map $\chi_{B,\theta}:\; A\rightarrow R$  which is identically $\theta$ on $B$
and which vanishes on $A\setminus B$. Furthermore, if $\theta=1\in R$ we simply write $\chi_B=\chi_{B,1}$.
\subsection[A counterexample]{Uniqueness via point values and a counterexample}
The following statement is proved in Theorem 4.4 of \cite{AKS}:
{\it Let $f\in\mathcal G(\mathbb Q_p^n)$, then:
\[
f=0 \;\;\mbox{in}\;\;\mathcal G(\mathbb Q_p^n)\Leftrightarrow\forall\; x\in\mathbb Q_p^n: f(x)=0\;\;\mbox{in}\;\; \mathcal C.
\]
}
However, inspired by (\cite{zAR}, p.\ 218) we construct the following counterexample to this claim,
which shows that point values cannot uniquely determine elements in $\mathcal G(\mathbb Q_p^n)$ uniquely.
For the sake of simplicity we assume that $n=1$.
\begin{example}\label{example}\rm
For any $l\in\mathbb N$, set
$$
B_l:=\{x\in\mathbb Z_p: \vert x-p^l\vert<\vert p^{2l}\vert\}\subset
\{x\in\mathbb Z_p: \vert x\vert=\vert p^l\vert\}\,.
$$
For any $i \in\mathbb N$, we set $f_i:=\chi_{B_i}$. Clearly $B_i\cap B_j=\emptyset$ whenever $i\neq j$ and since $f_i\in\mathcal D(\mathbb Q_p)$ for all natural numbers $i$,
$(f_i)_i$ is a representative of some $f\in\mathcal G(\mathbb Q_p)$. Now, for any $\alpha\in \mathbb Q_p$, $f(\alpha)=0$ in $\mathcal C$, since either $\alpha\in B_i$ for some $i\in\mathbb N$ (which implies that $f_j(\alpha)=0\;\forall\; j>i$)   or $\alpha\in\mathbb Q_p\setminus \bigcup B_i$, where each $f_i$ ($i\in\mathbb N$) is identically zero. Consider now the sequence $(\beta_i)_{i\geq1}\in\mathbb N^{\mathbb N}\subseteq \mathbb Z_p^{\mathbb N}$, where $\beta_i=p^i\;\forall\;i\in\mathbb N$. It follows that $f_i(\beta_i)=1\;\forall\; i\in\mathbb N$. In particular, for $K=\mathbb Z_p$ or any dressed ball containing $0$, there is no representative $(g_j)_j$ of $f$ such that  for some $N>0$, $g_j=0\; \forall\; j\geq N$. Hence $f\neq 0$ in $\mathcal G(\mathbb Q_p)$ although all standard point values of $f$ vanish.
\end{example}
\begin{remark}
By means of the above example we may analyze the proof of Theorem 4.4 in \cite{AKS}. Let $f$ be the generalized function from \ref{example}. As a compact set choose $K:=B_{\leq p^{-2}}(0)=p^2\mathbb Z_p$. For the representative $(f_k)_k$ constructed in \ref{example} and $x=0$ we have $N(0)=1$, which in the notation of \cite{AKS} means that for any $k\geq 1=N(0)$, $f_k(0)=0$. Also, recall that $B_{\gamma}(a)$ is the dressed ball
$B_{\leq p^{\gamma}}(a)$. The ``parameter of constancy'' (\cite{AKS}, p.\ 6) of $f_1$ at $x=0$, which is the maximal $\gamma$ such that $f_1$ is identically zero on $B_{\gamma}(0)$, is $l_0(0)=-2$. Now, there exists a covering of $K$ consisting of a single set, namely $B_{l_0(0)}(0)$. Thus we may replace the application of the Heine-Borel Lemma in \cite{AKS} by our singleton-covering. But then the claim that (4.1) and (4.2) imply that for all $k\geq N(0)=1$ we have $f_k(0+x')=f_k(0)=0\;\forall\, x'\in K$ does not hold. This indeed follows from the definition of the sequence $(f_k)_k$ of locally constant functions from above, since for any $k\in\mathbb N$ we have $f_k(p^k)=1$.
\end{remark}
\subsection[Ultrametric Egorov algebras]{Egorov algebras on locally compact ultrametric spaces}
In this section we investigate the problem of point value characterization in Egorov algebras in full generality: to this end we consider a
general locally compact ultrametric space $(M,d)$ instead of $\mathbb Q_p^n$, where $M$ need not have a field structure. Our aim is to show that even in such a general setting, the respective algebra cannot have a point value characterization, unless $M$ carries the discrete topology. Denote by $\mathcal E_d(M)$ the algebra of sequences of locally constant functions with compact support, taking values in a commutative ring $R\neq\{0\}$. Let $\mathcal N_d(M)$ be the set of negligible functions $\{(f_k)_k\}\in\mathcal E_d(M)$
such that for any compact set $K\subset M$ there exists an $N\in\mathbb N$ such that $\forall \;x\in K\;\forall\; k\geq N: f_k(x)=0$. The subset $\mathcal N_d(M)$ is an ideal in $\mathcal E_d(M)$ and the quotient algebra $\mathcal G(M, R):=\mathcal E_d(M)/\mathcal N_d(M)$ is called the ultrametric Egorov algebra associated with $(M,d)$. Furthermore, the ring of generalized numbers is defined by $\mathcal R:=R^{\mathbb N}/\sim$, where $\sim ~$ is the equivalence relation on $R^{\mathbb N}$ given by
\[
u\sim v\;\mbox{in}\;R^{\mathbb N}\Leftrightarrow \exists\,N\in\mathbb N\;\forall\;k\geq N: u_k-v_k=0.
\]
We call $\mathcal I(R):=\{w\in R^{\mathbb N}:w\sim 0\}$ the ideal of negligible sequences in $R$. Analogous to (\ref{pv}), for $f\in\mathcal G(M,R)$ evaluation on standard point values is introduced by means of the mapping:
\begin{equation}\label{pv1}
f: \;M\rightarrow \mathcal R:\;\;x\mapsto f(x):=(f_k(x))_k+\mathcal I(R).
\end{equation}
\begin{definition}
An ultrametric Egorov algebra $\mathcal G(M,R)$ is said to admit a standard point value characterization if for each $u\in\mathcal G(M,R)$ we have
\[
u=0\Leftrightarrow \forall\;x\in M: u(x)=0\;\mbox{in}\;\mathcal R.
\]
\end{definition}
Using this terminology, Example \ref{example} shows that $\mathcal G(\mathbb Q_p^n)$ does not admit a standard point value characterization. The main result of this section is the following generalization:
\begin{theorem}
Let $(M,d)$ be a locally compact ultrametric space and let $R\neq \{0\}$. Then $\mathcal G(M, R)$ does not admit a standard point value characterization unless $(M,d)$ is discrete.
\end{theorem}
\begin{proof}
The result follows by generalizing the construction of Example \ref{example}. Assume $(M,d)$ is not discrete, then there exists a point $x\in M$ and a sequence $(x_n)_n$ of distinct points in $M$ converging to $x$. We may assume that $d(x,x_i)>d(x,x_j)$ whenever $i<j$. Define stripped balls $(B_n)_{n\geq 1}$ with centers $(x_n)_{n \geq 1} $ by $B_n:=\{y\in M\mid d(x_n,y)<\frac{d(x_n,x)}{2}\}$. Due to the ultrametric property ``the strongest one wins'' we have $B_n\subset \{z\mid d(x,z)=d(x_n,x)\}$, which further implies that for all $i\neq j\; (i,\,j)\in\mathbb N$, the balls $B_i$, $B_j$ are disjoint sets in $M$. Since $R$ is a non-trivial ring, we may choose some $\theta\in R\setminus\{0\}$. Now we define a sequence $(f_k)_k$ of locally constant functions in the following way: For any $i\geq 1$ set $f_i=\chi_{B_i,\theta}$. Clearly, $f:=[(f_i)_i]\in\mathcal G(M, R)$, and similarly to Example \ref{example}, for any $\alpha\in M$, $f(\alpha)=0$  in $\mathcal R$. Nevertheless for the sequence $(x_n)_n$, which without loss of generality may be assumed to lie in a compact neighborhood of $x$, one has $f_i(x_i)=\theta\; \forall\; i\geq 1$ which implies that $f\neq 0$ in $\mathcal G(M,R)$.
\end{proof}
Recall that a discrete topological space $X$ has the following properties:
\begin{enumerate}
\item $X$ is locally compact.
\item Any compact set in $X$ contains finitely many points only.
\end{enumerate}
Therefore we know that for a set $D$ endowed with the discrete metric and for any commutative ring $R$, the respective ultrametric Egorov algebra $\mathcal G(D, R)$ admits a point wise characterization. We therefore conclude:
\begin{corollary}
For a locally compact ultrametric space $(M,d)$ and a non-trivial ring $R$, the following statements are equivalent:
\begin{enumerate}
\item $\mathcal G(M, R)$ admits a standard point value characterization.
\item The topology of $(M,d)$ is discrete.
\end{enumerate}
\end{corollary}
\subsection{Generalized point values}
In this section we give an appropriate generalized point value characterization in the style of (\cite{MO1}, pp.\ Theorem 2.\ 4) of
$\mathcal G(M, R)$, where $M$ is endowed with a non-discrete ultrametric $d$ for which $M$ is locally compact, and $R\neq \{0\}$. First, we have to introduce a set $\widetilde M_c$ of compactly supported generalized points over $M$. Let $\mathcal E=M^{\mathbb N}$, the ring of sequences in $M$, and identify
two sequences, if for some index $N\in\mathbb N$ one has $d(x_n,y_n)=0$ for each positive integer $n$, that is, $x_n=y_n\; \forall\; n\geq N$; we write $x\sim y$. We call $\widetilde M=\mathcal E/\sim$ the ring of generalized numbers. Finally, $\widetilde M_c$ is the subset
of such elements $x\in \widetilde M$ for which there exists a compact subset $K$  and some representative $(x_n)_n$ of $x$ such that for some $N>0$ we have $x_n\in K$ for all $n\geq N$. It follows that evaluating a function $u\in\mathcal G(M,R)$ at a compactly supported generalized point $x$ is possible, i.e., for representatives $(x_k)_k$, $(u_k)_k$ of $x$ resp.\  $u$, $[(u_k(x_k))_k]$ is a well defined element
of $\mathcal R$.
\begin{proposition}
In $\mathcal G(M,R)$, there is a generalized point value characterization, i.e.,
\[
u=0 \;\mbox{in}\;\mathcal G(M,R)\;\Leftrightarrow\; \forall\; x\in \widetilde M_c: u(x)=0\;\; \mbox{in}\;\; \mathcal R.
\]
\end{proposition}
\begin{proof}
The condition on the right side obviously is necessary. Conversely, let $u\in\mathcal G(M,R)$, $u\neq 0$. This means that there is a representative $(u_k)_k$ of $u$ and a compact set $K\subset\subset M$ such that $u_k$ does not vanish on $K$ for infinitely many $k\in\mathbb N$. In particular this means we have a sequence $(x_k)_k$ in $K$ such that for infinitely many $k\in\mathbb N$, $u_k(x_k)\neq 0$. Clearly this means that $u(x)\neq 0$ in $\mathcal R$, where we have set $x:=[(x_k)_k]$.
\end{proof}
\subsection{The $\delta$-distribution}
In \cite{AKS},  Theorem 4.4 is illustrated by some examples, to highlight the advantage of a point value concept in $\mathcal G(\mathbb Q_p^n)$. In this section we discuss the $\delta$-distribution (Example 4.5 on p.\ 12 in \cite{AKS}) and construct a generalized function $f\in\mathcal G(\mathbb Q_p)$ different from $\delta$ which however coincides with $\delta$ on all standard points in $\mathbb Q_p$. We first embed the $\delta$-distribution in $\mathcal G(\mathbb Q_p)$ as in \cite{AKS} (p.\ 9, Theorem 3.3) which yields $\iota(\delta)=(\delta_k)_k+\mathcal N_p(\mathbb Q_p)$, where $\delta_k(x):=p^k\Omega(p^k\vert x\vert_p)$ for each $k$, and
$\Omega$ is the bump function on $\mathbb R^+_0$ given by
\[
\Omega(t):=\begin{cases} 1,\;\;0\leq t\leq 1\\ 0,\;\;t>1\end{cases}.
\]
Evaluation of $\iota(\delta)$ on standard points is shown in Example 4.5 of \cite{AKS}. With $\tilde c:=(p^k)_k+\mathcal I\in\mathcal C$
one has:
\[
\iota(\delta)(x)=\begin{cases}\widetilde c,\;\;x=0\\ 0,\;\;x\neq 0\end{cases}\qquad (x\in\mathbb Q_p).
\]
Let $\varphi: \mathbb N\rightarrow \mathbb Z$ be a monotonous function such that $\lim_{k\rightarrow\infty}\varphi(k)=\infty$, and
such that the cardinality of $U_{\varphi}:= \{k:\varphi(k)>k\}$ is infinite. Consider an element $f\in\mathcal G(\mathbb Q_p)$ given by $f:=(f_k)_k+\mathcal N(\mathbb Q_p)$ where for any
$k\geq 1$, $f_k(x):=p^k\Omega(p^{\varphi(k)}\vert x\vert_p)$. Then the standard point values of $\iota(\delta)$ and $f$ coincide. Furthermore, they coincide on compactly supported generalized points $x\in\widetilde{\mathbb Q}_{p,c}$ with the property that for any representative $(x_k)_k$ of $x$ there exists an $N\in\mathbb N$ such that $\forall\;k\geq N:\;\vert x_k\vert_p>p^{-\min\{k,\varphi(k)\}}$, since in this case we have
$\delta_k(x_k)=f_k(x_k)=0$. However, there are compactly supported generalized points
violating this condition which yield different generalized point values of $\iota(\delta)$ resp. $f$: for instance, take the generalized point $x_0:=[(p^k)_k]\in\widetilde{\mathbb Q}_{p,c}$. Then $f(x_0)\neq \widetilde c$, since $\theta_k:=f_k(x_k)=0$ for any $k\in U_{\varphi}$ and thus $\theta_k=0$ for infinitely many $k\in\mathbb N$. But $\iota (\delta)(x_0)=\widetilde c$.
\section[Spherical completeness]{Spherical completeness of the ring of generalized numbers}\label{sectionsharp}
Let $(M,d)$ be an ultrametric space. For given $x\in M, r\in \mathbb
R^+$, we call $B_{\leq r}(x):=\{y\in M\mid d(x,y)\leq r\}$ the
dressed ball with center $x$ and radius $r$. Throughout $\mathbb N:=
\{1,2,\dots\}$ denote the {\it positive} integers. Let $(x_i)_i\in
M^{\mathbb N}$ and $(r_i)_i$ be a sequence of positive reals. We
call $(B_i)_i, \;B_i:=B_{\leq r_i}(x_i)\;(i\geq 1)$ a nested
sequence of dressed balls, if $r_1\geq r_2\geq r_3\dots$ and
$B_1\supseteq B_2\supseteq\dots$ . Following standard ultrametric
literature (cf.\ \cite{zAR}), nested sequences of dressed balls might have
empty intersection. The converse property is defined as follows:
\begin{definition}\label{defsph}
$(M,d)$ is called spherically complete, if every nested sequence of
dressed balls has a non-empty intersection.
\end{definition}
It is evident that any spherically complete ultrametric space is
complete with respect to the topology induced by its metric (using
the well known fact that topological completeness of $(M,d)$ is
equivalent to the property of Definition \ref{defsph} with radii
$r_i \searrow 0$) . However, there are popular non-trivial examples
in the literature, for which the converse is not true. As an example
we mention the field of complex $p$-adic numbers together with its
$p$-adic valuation considered as the completion of the algebraic
closure of the field $\mathbb C_p$ of rational $p$-adic numbers. Due
to Krasner, this field has nice algebraic properties (as it is
algebraically closed, and even isomorphic to the complex numbers
cf.\ \cite{zAR}, pp.\ 134--145), but it also has been shown, that
$\mathbb C_p$ is not spherically complete. This is mainly due to the
fact that the complex $p$-adic numbers are a separable, complete
ultrametric space with dense valuation (cf.\ \cite{zAR}, pp.\
143--144). However, for an ultrametric field $K$, spherical
completeness is necessary in order to ensure $K$ has the Hahn Banach
extension property (to which we refer as HBEP), that is, any normed
$K$-vector space $E$ admits continuous linear functionals previously
defined on a strict subspace $V$ of $E$ to be extended to the whole
space under conservation of their norm (cf. W. Ingleton's proof
\cite{Ingleton}). Since spherical completeness fails, it is natural
to ask if the $p$-adic numbers could at least be spherically
completed, i.e., if there existed a spherically complete ultrametric
field $\Omega$ into which $\mathbb C_p$ can be embedded. This
question has a positive answer (cf.\ \cite{zAR}). The necessity of
spherical completeness for the HBEP of $K=\mathbb C_p$ is evident:
even the identity map
\[
\varphi:\;\; \mathbb C_p\rightarrow\mathbb C_p,\quad \varphi(x):=x
\]
cannot be extended to a functional $\psi:\Omega\rightarrow\mathbb
C_p$ under conservation of its norm $\|\varphi\|=1$
 (here we consider $\Omega$ as a $\mathbb C_p$- vector space).\footnote{To check this, let $B_i:=B_{\leq r_i}(x_i)$ be a nested sequence of dressed balls in $\mathbb C_p$ with empty intersection. Then $\hat B_i:=B_{\leq r_i}(x_i)\subseteq\Omega$ have nonempty intersection, say $\Omega\ni\alpha\in\bigcap_{i=1}^{\infty}\hat B_i$. Assume further, the identity $\varphi$ on $\mathbb C_p$ can be extended to some linear map $\psi:\Omega\rightarrow\mathbb C_p$ under conservation of its norm. Then
\[
\vert \psi(\alpha)-x_i\vert_{\Omega}=\vert
\psi(\alpha)-\phi(x_i)\vert_{\Omega}\leq
\|\psi\|\vert\alpha-x_i\vert_{\mathbb
C_p}=\vert\alpha-x_i\vert_{\mathbb C_p},
\]
therefore $\psi(\alpha)\in \bigcap_{i=1}^{\infty}B_i$ which is a
contradiction and we are done.}

The present work is motivated by the question if some version of
Hahn-Banach's Theorem holds on differential algebras in the sense of
Colombeau considered as ultra pseudo normed modules over the ring of generalized
numbers  $\widetilde{\mathbb R}$ (resp.\ $\widetilde{\mathbb C}$).
Even though topological questions on topological $\widetilde{\mathbb C}$
modules have been recently investigated to a wide extent (cf. C. Garetto's recent
papers \cite{Garetto2,Garetto1} as well as \cite{DHPV}), a HBEP has not yet been established in the
literature.

The analogy with the $p$-adic case lies at hand, since the ring of generalized numbers can naturally
be endowed with an ultrametric pseudo-norm.
However, the presence of zero-divisor in $\widetilde{\mathbb R}$ as
well as the failing multiplicativity of the pseudo-norm turns the
question into a non-trivial one and Ingleton's ultrametric version
of the Hahn Banach Theorem cannot be carried over to our setting
unrestrictedly.

On our first step tackling this question we discuss spherical
completeness of the ring of generalized numbers endowed with the
given ultrametric (induced by the respective pseudo-norm, cf.\ the preliminary
section).

$\widetilde{\mathbb R}$ first was introduced as the set of values of
generalized functions at standard points; however, a subring
consisting of compactly supported generalized numbers turned out to
be the set of points for which evaluation determines uniqueness,
whereas standard points do not suffice do determine generalized
functions uniquely (cf. \cite{Eyb4, MO1}). A hint, that
$\widetilde{\mathbb R}$ (or $\widetilde{\mathbb C}$ as well), the
ring of generalized real (or complex) numbers is spherically
complete, is, that contrary to the above outlined situation on
$\mathbb C_p$, the generalized numbers endowed with the topology
induced by the sharp ultra-pseudo norm are not separable. This, for
instance, follows from the fact that the restriction of the sharp
valuation to the real (or complex) numbers is discrete.

Having motivated our work by now, we may formulate the aim of this section,
which is to prove the following:
\begin{theorem}\label{theoremsph}
The ring of generalized numbers is spherically complete.
\end{theorem}
We therefore have an independent proof of the fact (cf.\
\cite{Garetto1}, Proposition 1.\ 30):
\begin{corollary}
The ring of generalized numbers is topologically complete.
\end{corollary}
In the last section of this note we present a modified version of
Hahn-Banach's Theorem which bases on spherically completeness of
$\widetilde{\mathbb R}$ (resp.\ $\widetilde{\mathbb C}$). Finally, a remark on
the applicability of the ultra metric version of Banach fixed point theorem can be found in the Appendix.
\subsection{Preliminaries}
In what follows we repeat the definitions of the ring of (real or complex) generalized numbers along with its non-archimedean valuation function. The material is taken from different sources; as references we may recommend the recent works due to C.\ Garetto (\cite{Garetto2,Garetto1}) and A.\ Delcroix et al.\ (\cite{DHPV}) as well as one of the original sources of this topic due to D.\ Scarpalezos (cf.\ \cite{Scarpalezos1}).\\
Let $I:=(0,1]\subseteq \mathbb R$, and let $\mathbb K$ denote
$\mathbb R$ resp.\ $\mathbb C$. The ring of generalized numbers over
$\mathbb K$ is constructed in the following way: Given the ring of
moderate (nets of) numbers
\[
\mathcal E:=\{(x_{\varepsilon})_{\varepsilon}\in\mathbb K^I\mid
\exists\; m:\vert
x_{\varepsilon}\vert=O(\varepsilon^m)\;(\varepsilon\rightarrow 0)\}
\]
and, similarly, the ideal of negligible nets in $\mathcal E(\mathbb
K)$ which are of the form
\[
\mathcal N:=\{(x_{\varepsilon})_{\varepsilon}\in\mathbb K^I\mid
\forall\; m:\vert
x_{\varepsilon}\vert=O(\varepsilon^m)\;(\varepsilon\rightarrow 0)\},
\]
we may define the generalized numbers as the factor ring
$\widetilde{\mathbb K}:=\mathcal E_M/\mathcal N$. We define a (real
valued) valuation function $\nu:$ on $\mathcal E_M(\mathbb K)$ in
the following way:
\[
\nu((u_{\varepsilon})_{\varepsilon}):=\sup\,\{b\in\mathbb R\mid
\vert
u_{\varepsilon}\vert=O(\varepsilon^b)\;\;(\varepsilon\rightarrow
0)\}.
\]
This valuation can be carried over to the ring of generalized
numbers in a well defined way, since for two representatives of a
generalized number, the valuations above coincide (cf.\
\cite{Garetto1}, section 1). We then may endow $\widetilde{\mathbb
K}$ with an ultra-pseudo-norm ('pseudo' refers to
non-multiplicativity) $\vert \;\; \vert_e$ in the following way:
$\vert 0\vert_e:=0$, and whenever $x\neq 0$, $\vert
x\vert_e:=e^{-\nu(x)}$. With the metric $d_e$ induced by the above
norm, $\widetilde{\mathbb K}$ turns out to be a non-discrete
ultrametric space, with the following topological properties:
\begin{enumerate}
\item $(\widetilde{\mathbb K},d_e)$ is topologically complete (cf. \cite{Garetto1}),
\item $(\widetilde{\mathbb K},d_e)$ is not separable, since the restriction of $d_e$ onto $\mathbb K$ is discrete.
\end{enumerate}
The latter property holds, since on metric spaces second
countability and separability are equivalent and the well known fact
that the property of second countability is inherited by subspaces
(whereas separability is not in general).

In order to avoid confusion we henceforth denote closed balls in
$\mathbb K$ by $B_{\leq r}(x)$ in distinction with dressed balls in
$\widetilde{\mathbb K}$ which we denote by $\widetilde B_{\leq
r}(x)$. Similarly stripped balls and the sphere in the ring of
generalized numbers are denoted by $\widetilde B_{< r}(x)$ resp.\
$\widetilde S_r(x)$. \subsection{Euclidean models of sharp
neighborhoods} Throughout, a net of real numbers
$(C_\varepsilon)_\varepsilon$ is said to {\it increase monotonously
with} $\varepsilon\rightarrow 0$, if the following holds:
\[
\forall \eta,\eta'\in I:\;(\eta\leq\eta'\Rightarrow C_\eta\geq
C_{\eta'}).
\]
To begin with we formulate the following condition:\\
{\it Condition (E).}\\
A net $(C_\varepsilon)_\varepsilon$ of real numbers is said to
satisfy condition (E), if it is
\begin{enumerate}
\item positive for each $\varepsilon$ and
\item monotonically increasing with $\varepsilon\rightarrow 0$, and finally, if
\item the sharp norm is $\vert(C_\varepsilon)_\varepsilon\vert_e=1$.
\end{enumerate}
Next, we introduce the notion of euclidean models of sharp
neighborhoods of generalized points:
\begin{definition}\rm
Let $x\in\widetilde{\mathbb K}$, $\rho\in\mathbb R,\;
r:=\exp(-\rho)$. Let further
$(C_{\varepsilon})_{\varepsilon}\in\mathbb R^I$ be a net of real
numbers satisfying condition (E) and let
$(x_{\varepsilon})_{\varepsilon}$ be a representative of $x$. Then
we call the net of closed balls
$(B_{\varepsilon})_{\varepsilon}\subseteq \mathbb K^I$ given by
\[
B_{\varepsilon}:=B_{\leq
C_{\varepsilon}\varepsilon^{\rho}}(x_{\varepsilon})
\]
for each $\varepsilon\in I$ an euclidean model of $\widetilde
B(x,r)$.
\end{definition}
Note, that every dressed ball admits an euclidean model: let
$(x_\varepsilon)_\varepsilon$ be a representative of $x$ and define
$(C_\varepsilon)_\varepsilon$ by $C_{\varepsilon}:=1$ for each
$\varepsilon\in I$; then $B_{\leq
C_{\varepsilon}\varepsilon^{\rho}}(x_{\varepsilon})$ yields
determines an euclidean model of $\widetilde B_{\leq r}(x)$.\\We
need to mention that whenever we write
$(B^{(1)}_{\varepsilon})_{\varepsilon}\subseteq
(B^{(2)}_{\varepsilon})_{\varepsilon}$, we mean the inclusion
relation $\subseteq$ holds component wise (that is for each
$\varepsilon\in I$), and we say
$(B^{(2)}_{\varepsilon})_{\varepsilon}$ contains
$(B^{(1)}_{\varepsilon})_{\varepsilon}$.\\ The following lemma is
basic; however, in order to get familiar with the concept of
euclidean neighborhoods, we include a detailed proof:
\begin{lemma}\label{capture}
For $x\in\widetilde{\mathbb K}, r>0$, let
$(B_{\varepsilon})_{\varepsilon}$ be an euclidean model for
$\widetilde B_{\leq r}(x)$ and set$\rho=-\log r$. Then we have:
\begin{enumerate}
\item \label{capture1} Any $y\in \widetilde B_{<r}(x)$ has a representative $(y_{\varepsilon})_{\varepsilon}$ such that $y_{\varepsilon}\in B_{\varepsilon}$ for each $\varepsilon \in I$.
\item \label{capture2} There exist $y\in \widetilde S_r(x):=\{x'\in\widetilde{\mathbb K}:\vert x'-x\vert=r\}$ which cannot be caught by representatives lying in $(B_{\varepsilon})_{\varepsilon}$. However
one may blow $(B_{\varepsilon})_{\varepsilon}$ always up to a new
model $(\hat B_{\varepsilon})_{\varepsilon}$ which contains some
representative of $y$, i.e., there exists a net
$(D_{\varepsilon})_{\varepsilon}$ satisfying Condition (E) such that
for $\hat C_{\varepsilon}:=C_{\varepsilon}D_{\varepsilon}$, $\hat
B_{\varepsilon}:=B_{\leq \hat
C_{\varepsilon}\varepsilon^{\rho}}(x_{\varepsilon})$ yields a model
containing some representative of $y$
\item \label{capture3} In any case, it can be arranged, that $d(\partial \hat B_{\varepsilon},y_{\varepsilon})\geq \frac{C_{\varepsilon}}{2}\varepsilon^{\rho}$ for each $\varepsilon\in I$, for some model  $(\hat B_{\varepsilon})_{\varepsilon}$ of $\widetilde B_{\leq r}(x)$ containing $(B_{\varepsilon})_{\varepsilon}$.
\end{enumerate}
\end{lemma}
\begin{proof}
(\ref{capture1}): By definition of the sharp norm, $\vert
y-x\vert_e<r$ is equivalent to the situation, that for each
representative $(y_{\varepsilon})_{\varepsilon}$ of $y$ and for each
representative $(x_{\varepsilon})_{\varepsilon}$ of $x$, we have
\[
\sup\{b\in\mathbb R\mid \vert y_{\varepsilon}-
x_{\varepsilon}\vert=O(\varepsilon^b) (\varepsilon\rightarrow
0)\}>\rho,
\]
and this implies that there exists some $\rho'>\rho$ such that for
any representative $(y_{\varepsilon})_{\varepsilon}$ of $y$ and any
representative $(x_{\varepsilon})_{\varepsilon}$ of $x$ we have
\[
\vert
y_{\varepsilon}-x_{\varepsilon}\vert=o({\varepsilon}^{\rho'}),\quad
\varepsilon\rightarrow 0.
\]
This further implies that for any choice of representatives of $x$
resp.\ of $y$, there exists some $\eta\in I$ with
\begin{equation}\label{tinyradius}
\vert y_{\varepsilon}-x_{\varepsilon}\vert\leq \varepsilon^{\rho'}
\end{equation}
for each  $\varepsilon<\eta$. Since $C_{\varepsilon}>0$ for each $\varepsilon\in I$ and $C_\varepsilon$ is monotonously increasing with $\varepsilon\rightarrow0$, we have $\varepsilon^{\rho'}\leq C_{\varepsilon}\varepsilon^{\rho}$ for sufficiently small $\varepsilon$, therefore, a suitable 
choice of $y_{\varepsilon}$, for $\varepsilon\geq \eta$, yields the
first claim (for instance, one may set
$y_{\varepsilon}:=x_{\varepsilon}$ whenever $\varepsilon\geq \eta$).\\
We go on by proving (\ref{capture2}): For the first part, set
\[
y_{\varepsilon}:=2C_{\varepsilon}\varepsilon^{\rho}+x_{\varepsilon}
\]
Let $y$ denote the class of $(y_{\varepsilon})_{\varepsilon}$. It is
evident, that $y\in \widetilde B_{\leq r}(x)$. However,
$(y_{\varepsilon})\notin B_{\varepsilon}$ for each $\varepsilon\in
I$. Indeed,
\[
\forall\;\varepsilon\in I:\vert
y_{\varepsilon}-x_{\varepsilon}\vert= 2
C_{\varepsilon}\varepsilon^{\rho}>C_{\varepsilon}\varepsilon^{\rho},
\]
since $C_\varepsilon>0$ for each $\varepsilon$. We further show,
that the same holds for any representative $(\bar
y_{\varepsilon})_{\varepsilon}$ of $y$ for sufficiently small index
$\varepsilon$. Indeed, the difference of two representatives being
negligible implies that for any $N>0$ we have
\[
y_{\varepsilon}-\hat y_{\varepsilon}=o(\varepsilon^N)\;\;
(\varepsilon\rightarrow 0).
\]
Therefore, for $N>\rho$ and sufficiently small $\varepsilon$, we
have:
\[
\vert \hat y_{\varepsilon}-y_{\varepsilon}\vert\geq \vert \vert\hat
y_{\varepsilon}-y_{\varepsilon}\vert - \vert
y_{\varepsilon}-x_{\varepsilon}\vert\vert\geq
2C_{\varepsilon}\varepsilon^{\rho}-\varepsilon^N\geq
\frac{3}{2}C_{\varepsilon}\varepsilon^{\rho}>C_{\varepsilon}\varepsilon^{\rho}.
\]
Therefore we have shown the first part of (\ref{capture2}). Let
$y\in \widetilde S_r(x)$. We demonstrate how to blow up
$(B_{\varepsilon})_{\varepsilon}$ to catch some fixed representative
$(y_{\varepsilon})_{\varepsilon}$ of $y$. Since $\vert
y-x\vert=e^{-\rho}=r$, there is a net $C'_{\varepsilon}\geq 0$
($\vert (C'_{\varepsilon})_{\varepsilon}\vert_e=1$) such that
\[
\forall\varepsilon\in I:\;\vert
y_{\varepsilon}-x_{\varepsilon}\vert=C_{\varepsilon}'\varepsilon^{\rho}
\]
Set $C''_{\varepsilon}=\max_{\eta\geq\varepsilon}\{1,C'_{\eta}\}$.
This ensures that $(C''_{\varepsilon})$ is a monotonously increasing
with $\varepsilon\rightarrow 0$, above $1$ for each $\varepsilon\in
I$, and $\vert (C''_{\varepsilon})\vert_e=1$ is preserved. Define
$B'_{\varepsilon}:=B_{\leq
C_{\varepsilon}C''_{\varepsilon}\varepsilon^{\rho}}(x_{\varepsilon})$.
Then $(B'_{\varepsilon})_{\varepsilon}$ is a new model for
$\widetilde B_{\leq r}(x)$ containing the old model and
$(y_{\varepsilon})_{\varepsilon}$ as well, since the product
$C_{\varepsilon}C''_{\varepsilon}$ has the required properties, and
\[
\vert y_{\varepsilon}-x_{\varepsilon}\vert\leq
C''_{\varepsilon}\varepsilon^{\rho}\leq
C_{\varepsilon}''C_{\varepsilon}\varepsilon^{\rho}
\]
and we are done with (\ref{capture2}). \\ Proof of (\ref{capture3}):
So far, we have shown that for each $y\in \widetilde B_{\leq r}(x)$,
there exists an euclidean model  $(B_{\leq
C_{\varepsilon}\varepsilon^{\rho}}(x_{\varepsilon}))$ of $B_{\leq
r}(x)$ such that for some representative
$(y_{\varepsilon})_{\varepsilon}$ of $y\in \widetilde B_{\leq r}(x)$
we have
\[
\forall\;\varepsilon\in I:y_{\varepsilon}\in B_{\varepsilon}.
\]
Therefore, by replacing $C_{\varepsilon}$ by $2C_{\varepsilon}$
above, again a model for $\widetilde B_{\leq r}(x)$ is achieved,
however with the further property that $\vert
y_{\varepsilon}-x_{\varepsilon}\vert\leq
C_{\varepsilon}/2\varepsilon^\rho$ for each $\varepsilon\in I$ which
proves our claim.\end{proof} Before going on by establishing the
crucial statement which will allow us to translate decreasing
sequences of closed balls in the given ultrametric space
$\widetilde{\mathbb K}$ to decreasing sequences of their
(appropriately chosen) euclidean models, we introduce a useful term:
\begin{definition}\rm
Suppose, we have a nested sequence $(\widetilde B_i)_{i=1}^{\infty}$
of closed balls with centers $x_i$  and radius $r_i$ in
$\widetilde{\mathbb K}$ and for each $i\in\mathbb N$
we have an euclidean model $(B^{(i)}_{\varepsilon})_{\varepsilon}$. We say, this associated sequence of euclidean models is proper, 
if
$\left((B^{(i)}_{\varepsilon})_{\varepsilon}\right)_{i=1}^{\infty}$
is nested as well, that is, if we have:
\[
(B^{(1)}_{\varepsilon})_{\varepsilon}\supseteq
(B^{(2)}_{\varepsilon})_{\varepsilon}\supseteq(B^{(3)}_{\varepsilon})_{\varepsilon}\supseteq
\dots.
\]
\end{definition}
\subsection[The main theorem]{Proof of the main theorem} In order to prove the main
statement, we proceed by establishing two important preliminary
statements. First, a remark on the notation in the sequel: If
$(x_i)_i$, a sequence of points in the ring of generalized numbers,
is considered, then $(x_\varepsilon^{(i)})_\varepsilon$ denote
(certain) representatives of the $x_i$'s. Furthermore, for
subsequent choices of nets of real numbers
$(C^{(i)}_\varepsilon)_\varepsilon$, and positive radii $r_i$, we
denote by $\rho_i$ the negative logarithms of the $r_i$'s
($i=1,2,\dots,$) and the euclidean models of the balls $\widetilde
B_{\leq r_i}(x_i)$ with radii $r_\varepsilon
^{i}:=C_\varepsilon^{(i)}\varepsilon^{\rho_i}$ to be constructed are
denoted by
\[
B_\varepsilon^{(i)}:=B_{\leq
r_\varepsilon^{(i)}}(x_\varepsilon^{(i)}).
\]
We start with the fundamental proposition:
\begin{proposition}\label{lemmatique}
Let $x_1,\;x_2\in\widetilde{\mathbb K}$, and $r_1,\;r_2$ be positive
numbers such that $\widetilde B_{\leq r_1}(x_1)\supseteq \widetilde
B_{\leq r_2}(x_2)$.
Let $(x^{(1)}_\varepsilon)_\varepsilon$ be a representative of
$x_1$. Then the following holds:
\begin{enumerate}
\item \label{lemmatique1} There exists a net $(C^{(1)}_\varepsilon)_\varepsilon$ satisfying condition (E) such that for each $\varepsilon\in I$
\begin{equation}\label{eqcontained}
x_\varepsilon^{(2)}\in B_{\leq
\frac{C_\varepsilon^{(1)}\varepsilon^{\rho_1}}{2}}(x_\varepsilon^{(1)}).
\end{equation}
\item \label{lemmatique2} Furthermore, for each net $(C^{(2)}_\varepsilon)_\varepsilon$ satisfying condition (E)
there exists an $\varepsilon_0^{(1)}\in I$ such that for each
$\varepsilon<\varepsilon_0^{(1)}\in I$ we have
$B^{(2)}_{\varepsilon}\subseteq B^{(1)}_{\varepsilon}$.
\end{enumerate}
\end{proposition}
\begin{proof}
Proof of (\ref{lemmatique1}): Let
$(x_{\varepsilon}^{(2)})_{\varepsilon}$ be a representative of
$x_1$. We distinguish the following two cases:
\begin{enumerate}
\item $x_2\in \widetilde S_{r_1}(x_1)$, that is, $\vert x_2-x_1\vert_e=r_1$. Let $(x_{\varepsilon}^{(2)})_{\varepsilon}$ be a representative of $x_2$. Define $\hat C_{\varepsilon}^{(1)}:=\vert x_{\varepsilon}^{(1)}-x_{\varepsilon}^{(2)}\vert$. Now, set $ C_{\varepsilon}^{(1)}:=2\max(\{\hat C_{\eta}^{(1)} \vert \eta>\varepsilon\},1)$. Then not only $C_{\varepsilon}^{(1)}>0$ for each parameter $\varepsilon$, but also
the net $C_{\varepsilon}^{(1)}>0$ is monotonically increasing with
$\varepsilon\rightarrow 0$, furthermore (\ref{eqcontained}) holds,
and we are done with this case.
\item $x_2 \notin \widetilde S_{r_1}(x_1)$, that is, $\vert x_2-x_1\vert_e<r_1$. Set, for instance, $C_{\varepsilon}^{(1)}=1$.
 For each representative $(x_{\varepsilon}^{(2)})_{\varepsilon}$ of $x_2$ it follows that
 \[
 \vert x_\varepsilon^{(2)}-x_\varepsilon^{(1)}\vert=o(\varepsilon^{\rho_1})
 \]
 and a representative satisfying the desired properties is easily found.
\end{enumerate}
Proof of (\ref{lemmatique2}):\\
To show this we consider the asymptotic growth of
$(C_{\varepsilon}^{(1)})_{\varepsilon},(C_{\varepsilon}^{(2)})_{\varepsilon},\varepsilon^{\rho_1},\varepsilon^{\rho_2}$
as well as the monotonicity of $C_{\varepsilon}^{(1)}$: let $y\in
B_{\leq
C_{\varepsilon}^{(2)}\varepsilon^{\rho_2}}(x_{\varepsilon}^{(2)})$.
Then we have by the triangle inequality for each $\varepsilon\in I$:
\begin{equation}\label{est0}
\vert y-x_{\varepsilon}^{(1)}\vert\leq \vert
y-x_{\varepsilon}^{(2)}\vert +\vert
x_{\varepsilon}^{(2)}-x_{\varepsilon}^{(1)}\vert\leq
C_{\varepsilon}^{(2)}\varepsilon^{\rho_2}+\frac
{C_{\varepsilon}^{(1)}\varepsilon^{\rho_1}}{2}.
\end{equation}
We know further that by the monotonicity $\forall \varepsilon\in I:
C_{\varepsilon}^{(1)}\geq C_0^{(1)}:=C_0$ so that
\begin{equation}\label{est1}
\frac{C_{\varepsilon}^{(2)}}{C_{\varepsilon}^{(1)}}\varepsilon^{\rho_2-\rho_1}\leq
C_0C_{\varepsilon}^{(2)}\varepsilon^{\rho_2-\rho_1}.
\end{equation}
Moreover, since the sharp norm of $C_{\varepsilon}^{(2)}$ equals
$1$, for any $\alpha>0$ we have
\[
C_{\varepsilon}^{(2)}=o(\varepsilon^{-\alpha}),\;
(\varepsilon\rightarrow 0).
\]
which in conjunction with the fact that $\rho_2>\rho_1$ allows us to
further estimate the right hand side of (\ref{est1}): Obtaining
\[
\frac{C_{\varepsilon}^{(2)}}{C_{\varepsilon}^{(1)}}\varepsilon^{\rho_2-\rho_1}=o(1),\;(\varepsilon\rightarrow
0),
\]
we plug this information into (\ref{est0}). This yields for
sufficiently small $\varepsilon$, say
$\varepsilon<\varepsilon_0^{(1)}$:
\begin{equation}
\vert y-x_{\varepsilon}^{(1)}\vert\leq \frac
{C_{\varepsilon}^{(1)}\varepsilon^{\rho_1}}{2}+ \frac
{C_{\varepsilon}^{(1)}\varepsilon^{\rho_1}}{2}=C_{\varepsilon}^{(1)}\varepsilon^{\rho_1};
\end{equation}
the proof is finished.
\end{proof}
\begin{proposition}\label{propseq}
Any nested sequence of closed balls in $\widetilde{\mathbb K}$
admits a proper sequence of associated euclidean models.
\end{proposition}
\begin{proof}
We proceed step by step so that we may easily read off the inductive argument of the proof in the end.\\
We may assume that for each $i\geq 1$, $r_i>r_{i+1}$. Define $\rho_i:=-\log (r_i)$ (so that $\rho_i<\rho_{i+1}$ for each $i\geq 1$).\\
{\bf Step 1.}\\
Choose a representative $(x^{(1)}_\varepsilon)_\varepsilon$ of $x_1$.\\
{\bf Step 2.}\\
Due to Proposition \ref{lemmatique} (\ref{lemmatique1}) we may
choose a representative $(x^{(2)}_\varepsilon)_\varepsilon$ of $x_2$
and a net  $(C^{(1)}_\varepsilon)_\varepsilon$ of real numbers
satisfying condition (E) such that such that for each
$\varepsilon\in I$
\[
x_\varepsilon^{(2)}\in B_{\leq
\frac{C_\varepsilon^{(1)}\varepsilon^{\rho_1}}{2}}(x_\varepsilon^{(1)}).
\]
Denote by $\varepsilon_0^{(1)}\in I$ be the maximal $\varepsilon$
such that the inclusion relation
$B^{(2)}_{\varepsilon}\subseteq B^{(1)}_{\varepsilon}$  as in (cf.\ (\ref{lemmatique2}) of Proposition \ref{lemmatique}) holds.\\
{\bf Step 3.}\\
Similarly, take a representative $(\hat
x^{(3)}_\varepsilon)_\varepsilon$ of $x_3$ and a net  $(\hat
C^{(2)}_\varepsilon)_\varepsilon$ of real numbers satisfying
condition (E) such that such that for each $\varepsilon\in I$
\begin{equation}\label{inclpointball2}
\hat x_\varepsilon^{(3)}\in B_{\leq \frac{\hat
C_\varepsilon^{(2)}\varepsilon^{\rho_2}}{2}}(x_\varepsilon^{(2)}).
\end{equation}
We show now, how to adjust our choice of $\hat x_\varepsilon^{(3)},
\hat C_\varepsilon^{(2)}$ such that condition (E) as well as the
inclusion relation (\ref{inclpointball2}) is preserved, however, we
do this in a way such that we moreover achieve the inclusion
relation
\begin{equation}\label{inclrel2balls}
B^{(2)}_{\varepsilon}\subseteq B^{(1)}_{\varepsilon}
\end{equation}
for {\it each} $\varepsilon$ (for sufficiently small parameter this is guaranteed by the proceeding proposition).\\
For $\varepsilon \geq \varepsilon_0^{(1)}$ we leave the choice
unchanged, that is, we set
\[
 x_\varepsilon^{(3)}:=\hat x_\varepsilon^{(3)},\;C_\varepsilon^{(2)}:=\hat C_\varepsilon^{(2)};
\]
for $\varepsilon < \varepsilon_0^{(1)}$, however, we set
\begin{equation}\label{resetstep3}
 x_\varepsilon^{(3)}:=x_\varepsilon^{(2)},\;C_\varepsilon^{(2)}:=\min(\frac{C_\varepsilon^{(1)}}{2}\varepsilon^
 {\rho_1-\rho_2},  \hat C_\varepsilon^{(2)}).
\end{equation}
Therefore, $(C_\varepsilon^{(2)})_\varepsilon$ still satisfies
condition (E), since it is still positive and monotonically
increasing with $\varepsilon\rightarrow 0$, furthermore we have only
modified for big parameter $\varepsilon$, the asymptotic growth with
$\varepsilon\rightarrow 0$ therefore remains unchanged (and so does
the sharp norm of $(C_\varepsilon^{(2)})_\varepsilon$, which it is
identically $1$). Next, it is evident that
\[
x_\varepsilon^{(3)}\in B_{\leq
\frac{C_\varepsilon^{(2)}\varepsilon^{\rho_2}}{2}}(x_\varepsilon^{(2)}).
\]
still holds for each $\varepsilon\in I$. Finally, by
(\ref{resetstep3}) it follows that the inclusion relation
(\ref{inclrel2balls}) holds now for each $\varepsilon\in I$. For the
inductive proof of the statement one formally proceeds as in Step 3.
Let $k>1$. Assume we have representatives
\[
(x_{\varepsilon}^{(1)})_{\varepsilon},\dots,(x_{\varepsilon}^{(k+1)})_{\varepsilon}
\]
and nets of positive numbers
\[
(C_{\varepsilon}^{(j)})_{\varepsilon}, (1\leq j\leq k),
\]
satisfying condition (E), such that for each $\varepsilon\in I$ we
have:
\[
B_{\leq
C_{\varepsilon}^{(1)}\varepsilon^{\rho_1}}(x_{\varepsilon}^{(1)})\supseteq
B_{\leq
C_{\varepsilon}^{(2)}\varepsilon^{\rho_2}}(x_{\varepsilon}^{(2)})\supseteq\dots\supseteq
B_{\leq
C_{\varepsilon}^{(k-1)}\varepsilon^{\rho_{k-1}}}(x_{\varepsilon}^{(k-1)}).
\]
and for some $\varepsilon_0^{(k-1)}$ we have for each
$\varepsilon<\varepsilon_0^{(k-1)}$
\[
B_{\leq
C_{\varepsilon}^{(k-1)}\varepsilon^{\rho_{k-1}}}(x_{\varepsilon}^{(k-1)})\supseteq
B_{\leq
C_{\varepsilon}^{(k)}\varepsilon^{\rho_{k}}}(x_{\varepsilon}^{(k)}).
\]
Furthermore we suppose the following additional property is
satisfied: For each $\varepsilon\in I$ we have:
\[
x_{\varepsilon}^{(k+1)}\in B_{\leq
\frac{C_{\varepsilon}^{(k)}}{2}\varepsilon^{\rho_k}}(x_{\varepsilon}^{(k)}),
 \]
 where $\rho_k:=-\log r_k$. In the very same manner as above, we may now find a representative $(x_{\varepsilon}^{(k+2)})_{\varepsilon}$ of $x_{k+2}$ and
a net of numbers $(C_{\varepsilon}^{(k+1)})_{\varepsilon}$
satisfying condition (E) such that the above sequential construction
can be enlarged by one ($k\rightarrow k+1$).
\end{proof}
The preceding proposition is a key ingredient in the proof of our
main statement Theorem \ref{theoremsph}:
\begin{proof}
Let $(\widetilde B_i)_{i=1}^{\infty},\; B_i:=\widetilde B_{\leq
r_i}(x_i)\;(i\geq 1)$ be the given nested sequence of dressed balls;
due to Proposition \ref{propseq}, there exists a proper sequence of
associated euclidean models
\[
(B^{(i)}_{\varepsilon})_{\varepsilon}
\]
such that for representatives
$(x^{(i)}_{\varepsilon})_{\varepsilon}$ of $x_i$ ($i\geq 1$) the
above nets are given by
\[
B^{(i)}_{\varepsilon}:=B_{\leq
C_{\varepsilon}^{(i)}\varepsilon^{\rho_i}}(x^{(i)}_{\varepsilon}),\quad\rho_i:=-\log
r_i,\quad C_{\varepsilon}^{(i)}\in\mathbb R_+
\]
for each $(\varepsilon,i)\in I\times\mathbb N$. Since $\mathbb K$ is
locally compact, for each $\varepsilon\in I$ we may choose some
$x_{\varepsilon}\in\mathbb R$ such that
\[
x_{\varepsilon}\in \bigcap_{i=1}^{\infty}B^{(i)}_{\varepsilon}
\]
since for each $\varepsilon\in I$ we have
$B_{\varepsilon}^{(1)}\supseteq
B_{\varepsilon}^{(2)}\supseteq\dots$. Since the sequence of
euclidean models of the $\widetilde B_i$'s is proper, for each
$\varepsilon\in I$ further holds:
\[
\vert x_{\varepsilon}-x_{\varepsilon}^{(i)}\vert \leq
C_{\varepsilon}^{(i)}\varepsilon^{\rho_i}.
\]
This shows that not only the net $(x_{\varepsilon})_{\varepsilon}$
is moderate (use the triangle inequality), but also gives rise to a
generalized number $x:=(x_{\varepsilon})_{\varepsilon}+\mathcal
N(\mathbb K)$ with the property
\[
\vert x-x_i\vert_e\leq r_i
\]
for each $i$. This shows that
\[
x\in\bigcap_{i=1}^{\infty}\widetilde B_i\neq \emptyset
\]
which yields the claim: $\widetilde{\mathbb K}$ is spherically
complete.
\end{proof}
\subsection{A Hahn-Banach Theorem} Let $L$ be a subfield of
$\widetilde{\mathbb K}$ such that $\nu_e$ restricted to $L$ is
additive. Let $E$ be an ultra pseudo-normed $L$-linear space. We
call $\varphi$ an $L$- linear functional on $E$, if $\varphi$ is an
$L$- linear mapping on $E$ with values in $\widetilde{\mathbb K}$.
$\varphi$ is continuous if
\[
\|\varphi\|:=\sup_{0\neq x\in E}\frac{\vert
\varphi(x)\vert}{\|x\|}<\infty
\]
and the space of all continuous $L$-linear functionals on $E$ we
denote by $E'_L$.
\begin{remark}
Note that nontrivial subfields $L$ of $\widetilde{\mathbb K}$ exist.
For instance, one may choose $\mathbb K(\alpha)$ with
$\alpha=[(\varepsilon)_{\varepsilon}]\in\widetilde{\mathbb K}$ or
its completion with respect to $\vert \;\;\vert_e$-the Laurent series
over $\widetilde{\mathbb K}$.
\end{remark}
Having introduced these notions we show that following version of
the Hahn-Banach Theorem holds:
\begin{theorem}
Let $V$ be an $L$-linear subspace of $E$ and $\varphi\in V'_L$. Then
$\varphi$ can be extended to some $\psi\in E'_L$ such that
$\|\psi\|=\|\varphi\|$.
\end{theorem}
\begin{proof}
We follow the lines of the proof of Ingleton's theorem (cf.\
\cite{Ingleton}) in the fashion of (\cite{zAR}, pp.\ 194--195). To
start with, let $V$ be a strict $L$-linear subspace of $E$ and let
$a\in E\setminus V$. We first show that $\varphi\in V'_L$ can be
extended to $\psi\in (V+La)'_L$ under conservation of its norm. To
do this it is sufficient to prove that such $\psi$ satisfies for
each $x\in V$:
\begin{eqnarray}\label{ineqnorm}
\|\psi(x-a)\|&\leq&\|\psi\|\cdot\|x-a\| \\\nonumber
\|\varphi(x)-\psi(a)\|&\leq&\|\varphi\|\cdot\|x-a\|=:r_x.
\end{eqnarray}
To this end define for each $x$ in $V$ the dressed ball
\[
B_x:=B_{\leq r_x}(\varphi(x)).
\]
Next we claim that the family $\{B_x\mid x\in V\}$ of dressed balls
is nested. To see this, let $x,y \in V$. By the linearity of
$\varphi$ and the ultrametric (strong) triangle inequality we have
\[
\vert \varphi(x)-\varphi(y)\vert\leq \|
\varphi\|\cdot\|x-y\|\leq\|\varphi\|\max(\|x-a\|,\|y-a\|)=\max(r_x,r_y).
\]
Therefore we have $B_x\subseteq B_y$ or $B_y\subseteq B_x$ or vice
versa. According to Theorem \ref{theoremsph}, $\widetilde{\mathbb
K}$ is spherically complete, therefore we may choose
\[
\alpha\in \bigcap_{x\in V}B_x
\]
and further define $\psi(a):=\alpha$. Due to (\ref{ineqnorm}) and
the homogeneity of the sharp norm with respect to the field $L$ we
therefore have for each $z\in V$ and for each $\lambda \in L$,
\[
\vert\psi(z-\lambda a)\vert=\vert\lambda\vert\cdot\vert
\psi(z/\lambda-a)\vert\leq \vert \lambda\vert r_{z/\lambda}=\vert
\lambda\vert \|\varphi\|\cdot\|z/\lambda-a\|=\|\varphi\|\cdot
\|z-\lambda a\|
\]
which shows that $\psi$ is an extension of $\varphi$ onto $V+La$ and
$\|\psi\|=\|\varphi\|$.

The rest of the proof is the standard one-an application of Zorn's
Lemma.
\end{proof}
Let $E$ be a ultra pseudo-normed $\widetilde{\mathbb K}$ module and denote by
$E'$ all continuous linear functionals on $E$. We end this section
by posing the following conjecture:
\begin{conjecture}
Let $V$ be a submodule of $E$ and let $\varphi\in V'$. Then
$\varphi$ can be extended to some element $\psi\in E'$ such that
$\|\psi\|=\|\varphi\|$.
\end{conjecture}
\subsection*{Appendix} Finally, it is worth
mentioning that apart from the standard Fixed Point Theorem due to
Banach, a non-archimedean version is available in spherically
complete ultrametric spaces ( therefore, also on $\widetilde{\mathbb
K}$, cf.\ \cite{priess1}, and for a recent generalization cf.\
\cite{priess2}):
\begin{theorem}
Let ($M,d$) be a spherically complete ultrametric space and $f:
M\rightarrow M$ be a mapping having the property
\[
\forall x,y\in M: d(f(x),f(y))<d(x,y).
\]
Then $f$ has a unique fixed point in $M$.
\end{theorem}
\section[Scaling invariance]{Scaling invariance in algebras of generalized functions}\label{chapterconstant}

Recent research in the field of generalized functions increasingly
focuses on intrinsic problems in algebras of generalized functions.
This is emphasized by a number of scientific papers on algebraic (cf.\ \cite{A1}) and topological topics (cf.\ \cite{DHPV, Scarpalezos1, Garetto2, Garetto1}).\\

In this chapter we investigate scaling invariance of generalized
functions. We prove that a generalized function on the real line
which is invariant under positive standard scaling has to be a
constant. Also, we add a couple of further new characterizations of
locally constant generalized functions to the well known ones. Our
proof is partially based on the solution of the so-called "Lobster
problem". It was at the {\it International Conference on Generalized
functions 2000 (April, 17--21)} that Professor Michael
Oberguggenberger offered a lobster for the answer to the question:
"Are generalized functions which are invariant under standard
translations, merely the constants?" A (positive) answer to the
latter was first given by S.\ Pilipovic, D.\ Scarpalezos and V.\
Valmorin in \cite{Lobster}
 and an independent proof has recently been established by H.\ Vernaeve \cite{Vernaeve}.

Note that there is also an evident link between the present work and that of S.\ Konjik and M. Kunzinger dealing with group invariants in algebras of generalized functions (\cite{KKo1,KKo2}) which are also partially based on the solution of the Lobster problem.
\subsection{Preliminaries}
The setting of this chapter is the {\it special algebra} $\mathcal G(\mathbb R^d)$ of generalized functions (cf.\ the introduction).

To start with we shortly review the specific concepts resp.\ methods we are going to employ in the sequel: association and integration of generalized functions, generalized points and sharp topology as well as continuity issues with respect to the latter. For the sake of simplicity we set $d=1$.
For the generalized point value concept in algebras of generalized functions introduced by M.\ Kunzinger and M.\ Oberguggenberger in \cite{MO1}, we refer to the introduction. Next, let us recall the so-called sharp topology on the ring of generalized numbers:
\subsubsection{The sharp topology on $\widetilde{\mathbb R}$}
The - maybe most natural - topology on the ring of generalized numbers is the one which respects
the asymptotic growth by means of which they are defined.
Define a (real valued) valuation function $\nu$ on $\mathcal E_M(\mathbb R)$ in the following way:
\[
\nu((u_{\varepsilon})_{\varepsilon}):=\sup\,\{b\in\mathbb R\mid \vert u_{\varepsilon}\vert=O(\varepsilon^b)\;\;(\varepsilon\rightarrow 0)\}.
\]
This valuation can be carried over to the ring of generalized numbers in a well defined way, since
for two representatives of a generalized number, their valuations coincide (cf.\ \cite{Garetto1}, chapter 1).
We then may endow $\widetilde{\mathbb R}$ with an ultra-pseudo-norm ('pseudo' refers to non-multiplicativity) $\vert \;\; \vert_e$ in the following way:
$\vert 0\vert_e:=0$, and whenever $x\neq 0$, $\vert x\vert_e:=e^{-\nu(x)}$. With the metric $d_e$ induced by the above norm, $\widetilde{\mathbb R}$
turns out to be a non-discrete ultrametric space, with the following topological properties:
\begin{enumerate}
\item $(\widetilde{\mathbb R},d_e)$ is topologically complete (cf. \cite{Garetto1}),
\item $(\widetilde{\mathbb R},d_e)$ is not separable, since the restriction of $d_e$ onto $\mathbb R$ is discrete.
\end{enumerate}
The latter property holds, since on metric spaces second countability and separability are equivalent
and the well known fact that the property of second countability is inherited by subspaces (whereas separability
is not in general).
\subsubsection{Continuity issues}\label{contissue}
In (\cite{A2}) Aragona et al.\ develop a new concept of
differentiability of generalized functions $f$ viewed as maps
$\widetilde f:\widetilde{\mathbb R}_c\rightarrow\widetilde{\mathbb
R}$, a concept which is compatible with partial differentiation in
$\mathcal G(\mathbb R^d)$ and evaluation of functions at generalized
points. We need not recall this in detail; we only mention one notable
consequence which we will make use of subsequently:
\begin{fact}
If $\widetilde f: \widetilde{\mathbb
R}_c\rightarrow\widetilde{\mathbb R}$ is induced by a generalized
function, then $\widetilde f$ is continuous with respect to the
sharp topology on $\widetilde{\mathbb R}_c$.
\end{fact}
\subsubsection{Integration of generalized functions}
Generalized functions may be integrated over relatively compact
Lebesgue measurable sets. We recall an elementary statement (this is
Proposition 1.2.56 in \cite{Bible}):
\begin{fact}
Let $M$ be a Lebesgue-measurable set such that $\bar M\subset\subset \mathbb R$ and take $u\in\mathcal G(\mathbb R)$. Let $(u_\varepsilon)_\varepsilon$ be a representative of $u$. Then
\[
\int_Mu(x)dx:=\left(\int_Mu_\varepsilon(x)\,dx\right)_\varepsilon+\mathcal N
\]
is a well-defined element of $\widetilde{\mathbb R}$ called the integral of $u$ over $M$.
\end{fact}
Also, we are going to need the 'antiderivative' $F$ of a generalized function.
Let $f\in\mathcal G(\mathbb R)$. This we introduce
by
\[
F(x):=\int_0^xf(s)ds:=\left(\int_0^x f_{\varepsilon}(s)ds\right)_{\varepsilon }+\mathcal N(\mathbb R)\in\mathcal G(\mathbb R)
\]
where $(f_{\varepsilon})_{\varepsilon}$ is an arbitrary representative of $f$. Note that $F$ is the primitive of $f$ with point value $F(0)=0$ in $\widetilde{\mathbb R}$ (cf.\ Proposition 1.2.58 in \cite{Bible}).
\subsubsection{The concept of association}\label{scalinginvariance}
Finally we recall the concept of association in $\widetilde{\mathbb
R}$ and in $\mathcal G(\mathbb R^d)$. First, let $\alpha\in
\widetilde{\mathbb R}$. We write $\alpha\approx 0$ and we say
"$\alpha$ is associated to zero", if for some (hence any)
representative $(\alpha_\varepsilon)_\varepsilon$ we have
\[
\alpha_\varepsilon\rightarrow 0 \qquad \mbox{whenever}\qquad \varepsilon\rightarrow 0.
\]
Similarly, we say $u\in\mathcal G(\mathbb R^n)$ is associated with zero, if for each test function $\phi$
we have
\[
\int u_\varepsilon(x)\phi(x)\, dx^n\rightarrow 0\qquad \mbox{whenever}\qquad \varepsilon\rightarrow 0.
\]
The relation $\approx$ is an equivalence relation on $\widetilde{\mathbb R}$ resp.\ $\mathcal G(\mathbb R^d)$.
By slightly abusing the above terminology we write  $u\approx w$, $w\in\mathcal D'(\mathbb R^d)$ and say "$u$ is associated with $w$" (or , "$w$ is the distributional shadow of $u$"), if we have
\[
\int u_\varepsilon(x)\phi(x)\, dx^n\rightarrow \langle w,\phi\rangle\qquad \mbox{whenever}\qquad \varepsilon\rightarrow 0.
\]
It is a well known fact that a generalized function $u$ has at most one distributional shadow (cf.\ \cite{Bible}, Proposition 1.2.67).
\subsection[Functions supported at the origin]{Generalized functions supported at the origin}
To start with we establish a basic lemma:
\begin{lemma}\label{supportimursprungundscalinginvariance}
Let $f\in\mathcal G(\mathbb R)$ be a non-negative function with $\supp(f)\subseteq \{0\}$. If for some $a>0$
we have
\[
I(f)=\int_{[-a,a]} f(x)dx=0,
\]
then $f=0$.
\end{lemma}
\begin{proof}
We present two variants of the proof:\\
{\it First Proof.}\\ It has been shown recently (cf.\
\cite{VariationMO}) that if for a generalized function $f$ we have
for all $\varphi\in\mathcal G_c(\mathbb R)$ (the space of compactly
supported generalized functions)
\[
\int f(x)\varphi(x)\, dx=0,
\]
then $f=0$ in $\mathcal G(\mathbb R)$. This is the so-called
fundamental lemma of the calculus of variations in the generalized
context. Now we have by the non-negativity of $f$,
\[
\left|\int f(x)\varphi(x)\, dx\right|\leq \|\varphi\|_{\infty}\int f(x) dx=0,
\]
therefore by the above we have $f=0$ in $\mathcal G(\mathbb R)$ and we are done.\\
{\it Alternative Proof.}\\
This proof employs continuity arguments of generalized functions with respect to the sharp topology. In view of the
first proof this may also yield a link between the fundamental lemma of variational calculus (in the generalized setting) and (sharp) topological issues. For our (indirect) proof we proceed in three steps.\\
{\it Step 1.}\\
Since $f$ is non-negative and $K:=[-a,a]\subset\subset \mathbb R$ is a compact set, we may choose a representative $(f_{\varepsilon})_{\varepsilon}$ of $f$ which is non-negative on $K$, that is, $(f_\varepsilon)_\varepsilon$ satisfies:
\[
\forall\;x\in K\;\forall\; \varepsilon>0: f_{\varepsilon}\geq 0.
\]
Assume $f\neq 0$ in $\mathcal G(\mathbb R)$. Due to (cf.\ subsection \ref{pointissue}), there exists
a compactly supported generalized point $x_c\in\widetilde{\mathbb R}$ such that
$f(x_c)=c\neq 0$. From our assumption on the support of $f$ ($\supp(f)\subseteq \{0\}$)
it  is further evident that $x_c\approx 0$; this information, however, is not crucial for what follows).\\
{\it Step 2.}\\
Let $(x_{\varepsilon})_{\varepsilon}$ be a representative of $x_c$. We shall prove the following:
\begin{equation}\label{subclaimintegral}
\exists \;\varepsilon_k\rightarrow 0\;\exists\; m_0\;\exists\;\rho_0\;\forall\; k\;\forall\; y_k\in [x_{\varepsilon_k}-\varepsilon_k^{\rho_0},x_{\varepsilon_k}+\varepsilon_k^{\rho_0}]: f_{\varepsilon_k}(y_k)\geq\varepsilon_k^{m_0}.
\end{equation}
To see this, we first observe by means of {\it Step 1} that there exists a zero sequence $\varepsilon_k$
and a real number $m_0$ such that for each $k\geq 0$ we have $f_{\varepsilon_k}(x_{\varepsilon_k})\geq 2 \varepsilon_k^{m_0}$ (we shall take this zero sequence as the one of our claim). Next, we employ a continuity argument to prove (\ref{subclaimintegral}). Recall that $f$ viewed as a map $\widetilde f:\widetilde{\mathbb R}_c\rightarrow\widetilde{\mathbb R}$ is continuous with respect to the sharp topology (cf.\ subsection \ref{contissue}). Assume that (\ref{subclaimintegral}) is not true. Then for each $m$ and for each $\rho$ there exists a sequence $(y_k)_k$ with $y_k\in [x_{\varepsilon_k}-\varepsilon_k^{\rho},x_{\varepsilon_k}+\varepsilon_k^{\rho}]$ for each $k$ such that
\begin{equation}
0 \leq f_{\varepsilon_k}(y_k)<\varepsilon_k^{m}
\end{equation}
 (the first inequality holds because we may
assume without loss of generality that everything takes place inside $[-a,a]$, where we have found
a non-negative representative of $f$). Define a (compactly supported) generalized number $y:=(y_\varepsilon)_\varepsilon+\mathcal N$ via
\[
y_\varepsilon:=\begin{cases} y_k,\quad \mbox{if}\;\varepsilon=\varepsilon_k\\x_\varepsilon,\quad\mbox{otherwise}  \end{cases}
\]
Then we have for sufficiently small $m$
\[
\vert f_{\varepsilon_k}(x_{\varepsilon_k})-f_{\varepsilon_k}(y_{\varepsilon_k})\vert>2\varepsilon_k^{m_0}-\varepsilon_k^m>\varepsilon_k^{m_0},
\]
whereas for $\varepsilon\neq\varepsilon_k$ we have by the above construction that $f_\varepsilon(x_\varepsilon)-f_\varepsilon(y_\varepsilon)=0$ . In terms of the sharp norm $\vert\;\;\vert_e$ we therefore have:
\[
\vert f(x_c)-f(y)\vert_e\geq e^{-m_0};
\]
by our assumption, however, it follows that
\[
\vert x_c-y\vert_e\leq e^{-\rho_0}.
\]
The choice of $\rho$ was arbitrary, and $\rho\rightarrow 0$
violates the continuity of $f$ at $x_c$. Therefore we have established (\ref{subclaimintegral}). This we
apply in the third and final step:\\
{\it Step 3.}\\
For sufficiently large $k$ we obtain
\begin{equation}\label{intcont}
\int_{[-a,a]}f_{\varepsilon_k}(y)dy>\varepsilon_k^{m_0}(2\varepsilon_k^{\rho})=2\varepsilon_k^{\rho+m_0}.\end{equation}
Since $\left(\int_{[-a,a]}f_{\varepsilon}(y)dy\right)_{\varepsilon}$ is a representative of $I(f)$, inequality (\ref{intcont})
contradicts our assumption $I(f)=0$ (the representative not being a negligible net) and we are done.
\end{proof}
A further ingredient in the subsequent proof of our main result is
the elementary observation that generalized scaling invariant
functions $f\in\mathcal G(\mathbb R)$ with support contained in the
origin have to be identically zero. To motivate our proof, we first
analyze the-maybe- simplest non-trivial example: a generalized function $\hat \rho$
associated with a distribution supported at the origin, say
$\delta$. In this situation invariance under standard scaling is
absurd: Assume we are given a standard mollifier $\rho\in
C_c^{\infty}(\mathbb R)$, that is $\int_{\mathbb R} \rho(x) dx=1$.
Then
$\rho_{\varepsilon}:(\frac{1}{\varepsilon}\rho(\frac{x}{\varepsilon}))_{\varepsilon}$
gives rise to a generalized function
$\hat{\rho}:=[(\rho_{\varepsilon})_{\varepsilon}]\in\mathcal
G(\mathbb R)$ and, as it is well known, we have:
\[
\hat{\rho}\approx  \delta,\quad\mbox{that is,}\quad \forall \varphi \in\mathcal C_c^{\infty}(\mathbb R):\; \lim_{\varepsilon\rightarrow 0}\langle \rho_{\varepsilon},\varphi\rangle\rightarrow \langle \delta,\varphi\rangle=\varphi(0).
\]
Consider now, $h\neq 0,1$ and assume the identity $\hat
\rho(hx)=\hat \rho (x)$ holds in $\mathcal G(\mathbb R)$. This in
particular means that
\[
\hat{\rho}=[(\rho_{\varepsilon}(hx))_{\varepsilon}]\qquad \mbox{in}\qquad \mathcal G(\mathbb R)
\]
holds as well. But for each $\varphi \in\mathcal C_c^{\infty}(\mathbb R),\;\varphi(0)\neq 0$ we have
\[
\lim_{\varepsilon\rightarrow 0}\langle \rho_{\varepsilon}(hx),\varphi(x)\rangle\rightarrow \frac{1}{h}\varphi(0)\neq\langle \delta,\varphi\rangle
\]
therefore $\hat{\rho}$ has more than one distributional shadow, namely $\delta, h\delta,\;$ for arbitrary  $h\neq 0$ which is impossible!
\footnote{
Of course, $\delta$ is scaling invariant, however, in the sense that
\[
\langle \delta (*h),\varphi\rangle:=\langle \delta,\varphi(*h)\rangle.
\]
In terms of the model delta net above this refers to the following 'scaling':
\[
\rho_{\varepsilon}(x)\mapsto h\rho_{\varepsilon}(hx),\;h\neq 0
\]
and for each $h\neq 0$ the 'scaled' object is associated to $\delta$ as well; furthermore even the identity
\[
\hat{\rho}(x)=h\hat{\rho}(hx)
\]
holds in $\mathcal G(\mathbb R)$ (cf.\ the proof of Proposition \ref{prop}).}
We may now present the statement in full generality:
\begin{proposition}\label{prop}
Assume $f\in\mathcal G(\mathbb R)$ has the following properties:
\begin{enumerate}
\item $f$ is invariant under positive standard scaling.
\item $\supp(f)\subseteq\{0\}$.
\end{enumerate}
Then $f=0$ in $\mathcal G(\mathbb R)$.
\end{proposition}
\begin{proof}
Assume $f\in\mathcal G(\mathbb R)$ satisfies the assumption of the proposition and without loss of generality we further assume $f\geq 0$ (otherwise, take $f^2$ instead of $f$). Let $(f_{\varepsilon})_{\varepsilon}$ be a representative of $f$ and $a>0$. Since $f$ is supported at the origin, the integral
\[
I(f):=\int_{[-a,a]}f(x)dx:=\left(\int_{[-a,a]} f_{\varepsilon}(x)dx\right)_{\varepsilon} +\mathcal N\in\widetilde{\mathbb R}
\]
is well defined, that is, the value $I(f)$ is independent of the choice of $a>0$ resp.\ of the representative of $f$. Further, for each
$h\neq 0,1$ and each $\varepsilon>0$ we have:
\[
\int_{ [-a,a]}f_{\varepsilon}(xh)dx=\frac{1}{h}\int_{ [-ah,ah]}f_{\varepsilon}(s)ds.
\]
The scaling invariance of $f$, therefore, which in terms of representatives reads
\[
(f_{\varepsilon}(x))_{\varepsilon}-(f_{\varepsilon}(hx))_{\varepsilon}=(n_{\varepsilon}(x))_{\varepsilon}\in\mathcal N(\mathbb R),
\]
combined with the fact that $f$ is supported in the origin, yields
\[
I(f)=\frac{1}{h}I(f) \qquad \mbox{in}\qquad \widetilde{\mathbb R}.
\]
Since $h\neq 0, 1$ this implies $I(f)=0$. Now we may apply Lemma \ref{supportimursprungundscalinginvariance} to the non-negative function $f$, and we obtain $f=0$.
\end{proof}
\subsection{The main theorem}
We are now ready to state the main theorem:
\begin{theorem}\label{th1}
Let $f\in\mathcal G(\mathbb R)$. The following are equivalent:
\begin{enumerate}
\item \label{const1} $f$ is constant, that is, there exists an $a\in\widetilde{\mathbb R}$ such that $f=a$ holds in $\mathcal G(\mathbb R)$.
\item \label{const2} $\widetilde f$ is constant.
\item \label{const3} $\widetilde f$ is locally constant.
\item \label{const4} $f$ is translation invariant, that is $\forall h\in\mathbb R: f(x+h)=f(x)$ holds in $\mathcal G(\mathbb R)$.
\item \label{const8} $f$ is invariant under positive standard scaling, that is,
\[
\forall h\in\mathbb R^+: f(hx)=f(x).
\]
\item \label{const5} $F$ is additive, that is, $\forall h\in\mathbb R: F(x+h)=F(x)+F(h)$ holds in $\mathcal G(\mathbb R)$.
\item \label{const6} $\widetilde F$ is additive, that is,
\[
\forall x_c, h_c\in\widetilde{\mathbb R}: \widetilde F(x_c+h_c)=\widetilde F(x_c)+\widetilde F(h_c)
\]
holds in $\widetilde{\mathbb R}$.
\item \label{const7} $F$ has the following property: There exists $\gamma\in (0,1)$ such that the identity:
\begin{equation}\label{cond6}
\forall h\in \mathbb R: F(\gamma x+(1-\gamma) h)=\gamma F(x)+(1-\gamma)F(h)
\end{equation}
holds in $\mathcal G(\mathbb R)$.
\end{enumerate}
\end{theorem}
\begin{proof}
We establish the implications (\ref{const3})$\Rightarrow$(\ref{const1})$\Rightarrow$(\ref{const2})$\Rightarrow$(\ref{const3})
as well as\\  (\ref{const7})$\Rightarrow$(\ref{const4})$\Rightarrow$(\ref{const1})$\Rightarrow$(\ref{const7}) and the equivalences (\ref{const1})$\Leftrightarrow$(\ref{const5}), (\ref{const1})$\Leftrightarrow$(\ref{const8}),
To begin with, assume (\ref{const3}), that is $f$ is locally constant. We show the implication by applying the generalized differential calculus
for Colombeau generalized functions evaluated on generalized points as has been developed by Aragona et al.\ in (\cite{A2}). Let $\kappa:\mathcal G(\mathbb R)\rightarrow \widetilde{\mathbb R}^{\widetilde{\mathbb R}_c}$ be the linear embedding of generalized functions into mappings on compactly supported points due to \cite{MO1}. Due to (\cite{A2}, Theorem 4.1)
differentiation in $\mathcal G(\mathbb R)$ resp.\ in $\widetilde{\mathbb R}^{\widetilde{\mathbb R}_c}$ commute with $\kappa$. Clearly
$\kappa(f)$ is differentiable with derivative $\kappa(f)'\equiv 0$, and as just mentioned, $\kappa(f')=\kappa(f)'=0$, therefore, due to the generalized point characterization in $\mathcal G(\mathbb R)$ (\cite{MO1}) we have $f'=0$ in $\mathcal G(\mathbb R)$ and
integrating yields (\ref{const1}) that is, $f$ is constant as a generalized function. The latter immediately implies (\ref{const2})
by evaluating $f$ on compactly supported generalized numbers and the implication (\ref{const2})$\Rightarrow$(\ref{const3}) is trivial.\\ Next, let $\gamma\in(0,1)$ and assume (\ref{cond6}) holds for $F$. Differentiating yields
\[
\forall h\in \mathbb R: f(\gamma x+(1-\gamma)h)=f(x) \quad \mbox{holds in}\quad \mathcal G(\mathbb R).
\]
This is equivalent to
\begin{equation}\label{cond7}
\forall h\in \mathbb R: f(x+h)=f(\gamma^{-1}x) \quad \mbox{holds in} \quad \mathcal G(\mathbb R).
\end{equation}
Setting $h=0$ shows that $f(x)=f(\gamma^{-1}x)$ in $\mathcal G(\mathbb R)$ which further implies
\begin{equation}\label{cond7}
\forall h\in \mathbb R: f(x+h)=f(x) \quad \mbox{holds in} \quad \mathcal G(\mathbb R),
\end{equation}
i.\ e., $f$ is translation invariant. This proves (\ref{const4}). The implication (\ref{const4})$\Rightarrow$(\ref{const1}) is proven in (\cite{Lobster}, Theorem 6); for an alternative proof
cf.\ the appendix to \cite{Vernaeve}. Since $f=a$ implies $F=ax$ in $\mathcal G(\mathbb R)$, the implication (\ref{const1})$\Rightarrow$(\ref{const7}) holds.\\ Further we establish the equivalence (\ref{const1})$\Leftrightarrow$(\ref{const5}). Again  (\ref{const1}) implies
that $F$ is of the form $F=ax$ with some generalized number $a$, therefore (\ref{const5}) holds. Conversely, assume that f satisfies
\[
F(x+h)=F(x)+F(h)\quad\mbox{holds in}\quad\mathcal G(\mathbb R)
\]
for each $h\in\mathbb R$. Differentiation yields
\[
\forall h\in\mathbb R: f(x+h)=f(x)\quad\mbox{holds in}\quad\mathcal G(\mathbb R)
\]
and by the above this implies (\ref{const1}). Finally we establish the equivalence (\ref{const1})$\Leftrightarrow$(\ref{const8}).
Since (\ref{const1})$\Rightarrow$(\ref{const8}) is trivial, we only need to show
(\ref{const1})$\Leftarrow$(\ref{const8}):\\
Note that without loss of generality we may assume that $f$ is symmetric (or equivalently, $f$ is invariant under any non-zero standard scaling). Indeed, if $f$ is not, we may introduce the two functions $g_{\pm}$ resp.\ $f_{\pm}$ given via
\[
g_+(x):=f_+^2(x):=(f(x)+f(-x))^2,\quad g_-(x):=f_-^2(x):=(f(x)-f(-x))^2.
\]
If for generalized constants $c_1,c_2$ we would have $g_+=c_1,g_-=c_2$, then for some generalized constants
$d_1,d_2$ we would have $f_+=d_1,f_-=d_2$, therefore
\[
f(x):=\frac{f_+(x)+f_-(x)}{2}=\frac{d_1+d_2}{2},
\]
that is, $f$ is a constant, and we would be done.\\
We may proceed now in two different ways: the first is a variant of H.\ Vernaeve's (\cite{Vernaeve})
proof of the Lobster problem.\\
{\it First proof.}\\
We distinguish the two possible cases, '$f$ is constant in a neighborhood of $1$' or not.
\\{\it Case 1}\\ Assume first, there exist a neighborhood $\Omega:=(1-\delta,1+\delta)$ of $1$, $\delta>0$ and $c\in\widetilde{\mathbb R}$ such that $f=c$ on $\Omega$. Since $f$ is invariant under positive standard scaling and symmetric, it follows that
\begin{enumerate}
\item For each $h>0$, $f=c$ on $(h-h\delta,h+h\delta)$.
\item $f(x)=f(-x)$ in $\mathcal G(\mathbb R)$.
\end{enumerate}
Since $\mathcal G(\mathbb R)$ is a sheaf, $f=c$ on $\mathbb R\setminus \{0\}$ and we have obtained a scaling invariant generalized function $g:=f-c$ with $\supp (g)\subseteq\{0\}$. Applying Proposition \ref{prop} yields $g=0$, that is, $f$ is a constant and we are done with the first case.
\\{\it Case 2}\\ If $f|_{\Omega}\in\mathcal G(\Omega)$ is non-constant
on every standard neighborhood $\Omega=(1-\delta,1+\delta)$ ($\delta>0$) of $1$, then we have for any representative $(f_{\varepsilon})_{\varepsilon}$ of $f$:
\[
(f_{\varepsilon}|_{\Omega}-f_{\varepsilon}(1))\notin \mathcal N(\Omega).
\]
Thus there exists a representative $(f_{\varepsilon})_{\varepsilon}$ of $f$ along with a zero sequence $(\varepsilon_k)_k$, a sequence $(a_k)_k\in[\frac{1}{2},\frac{3}{2}]^{\mathbb N}$ and an $N$ such that for all sufficiently large $k$ we have
\begin{equation}\label{Eq:assumption on f}
\vert
f_{\varepsilon_k}(a_k)-f_{\varepsilon_k}(1)\vert>\varepsilon_k^N.
\end{equation}
We now follow the basic idea of H.\ Vernaeve in (theorem 7 in \cite{Vernaeve}). Let $g_k(x):=f_{\varepsilon_k}(x)-f_{\varepsilon_k}(1)$
for each $k\geq 1$. We define
\[
        A_k:=\{x\in\mathbb R:|g_k(x)|<\frac 1 {3}\varepsilon_k^N \},\quad B_k:=\bigcap_{m\geq k}A_m
\]
It is evident that for all $k\in\mathbb N$ $g_k(1)=0$, therefore $1\in B_1$. Furthermore for each $x\in\mathbb R^{\ast}$ there exists $(n_\varepsilon)_{\varepsilon}\in\mathcal N$ such that for each $\varepsilon\in I$ we have
\[
        f_\varepsilon(x)=f_\varepsilon(1) + n_\varepsilon.
\]
In particular
\[
        g_k(x)=f_{\varepsilon_k}(x)-f_{\varepsilon_k}(1)=n_{\varepsilon_k}.
\]
As a consequence $\forall x\in\mathbb R^{\ast}\;\exists\; k_0\forall\; k\geq k_0: x\in A_k$. This clearly implies that
for each $x\in\mathbb R^{\ast}$ there exists a $k\geq 1$ such that $x\in B_k$, therefore we obtain
\begin{equation}\label{unionB}
\mathbb R^{\ast}\subseteq(\bigcup_{k=1}^{\infty} B_k)\subseteq\mathbb R.
\end{equation}
In a similar way
as $A_k,\; B_k$ we introduce the sets:
\[
        C_k:=\{x\in\mathbb R:|g_k(xa_k)-g_k(a_k)|<\frac 1 {3}\varepsilon_k^N\},\quad D_k:=\bigcap_{m\geq k}C_m.
\]
Again for each $x\in\mathbb R^{\ast}$, $x\in D_k$ for some $k$, since by the assumption of scaling invariance there exists an $(n_\varepsilon(y))_\varepsilon\in \mathcal N(\mathbb R)$ such that
\begin{align*}
        g_k(xa_k)-g_k(a_k)= & f_{\varepsilon_k}(xa_k)-f_{\varepsilon_k}(1)-f_{\varepsilon_k}(a_k)+f_{\varepsilon_k}(1) \\
        = & f_{\varepsilon_k}(xa_k) - f_{\varepsilon_k}(a_k) \\
        = & n_{\varepsilon_k}(a_k).
\end{align*}
Therefore we have
\begin{equation}\label{unionD}
\mathbb R^{\ast}\subseteq(\bigcup_{k=1}^{\infty} D_k)\subseteq\mathbb R.
\end{equation}
$B_k$ and $D_k$ are increasing sequences of Lebesgue measurable subsets of $\mathbb R$. Let $\mu$ be the Lebesgue measure on $\mathbb R$ and let $B_r(x)$ denote the open ball with radius $r$ and center $x$. For each $\rho>0$ we have due to (\ref{unionB}) and (\ref{unionD})
\begin{equation}\label{BD}
        \mu(B_{\rho}(0)\backslash B_k)\to 0,\; \mu(B_{\rho}(0)\backslash D_k)\to 0,\;(k\to\infty).
\end{equation}
Moreover by construction $B_k\subseteq A_k$ and $D_k\subseteq C_k$, therefore for each $\rho>0$ we also have
\begin{equation}\label{AC}
        \mu(B_{\rho}(0)\backslash A_k)\to 0,\; \mu(B_{\rho}(0)\backslash C_k)\to 0,\quad (k\to\infty).
\end{equation}
Finally we define
\[
        E_k:=\{x\in\mathbb R:|g_k(x)-g_k(a_k)|<\frac 1 {3}\varepsilon_k^N\}=a_k C_k.
\]
By the above we obtain for each $\rho>0$
\begin{align}\label{E}
        \mu(B_{\rho}(0)\backslash E_k) = & \mu(B_{\rho}(0)\backslash a_k C_k) \nonumber\\
        = & \mu\left(a_k(B_{\frac {\rho}{\vert a_k\vert}}(0)\backslash C_k)\right) \nonumber\\
        = & |a_k|\mu(B_{\frac{\rho}{\vert a_k\vert}}(0)\backslash C_k) \nonumber\\
        \leq & \frac{3}{2} \mu(B_{2\rho}(0)\backslash C_k)\to 0,
\end{align}
whenever $k\to\infty$, since $\frac 1 2\leq |a_k|\leq \frac 3 2$ and due to (\ref{AC}). A consequence of (\ref{AC}) and (\ref{E})
is the following:
\[
        \mu(B_{\rho}(0)\backslash (A_k\cap E_k))\leq \mu(B_{\rho}(0)\backslash A_k)+\mu(B_{\rho}(0)\backslash E_k) \to 0,
\]
that is, for sufficiently large $k$ the intersection of $A_k$ and $E_k$ is not empty, i.\ e.,
\[
        \exists k_0:\forall k\geq k_0\,\exists y_k\in A_k\cap E_k.
\]
Hence $|g_k(y_k)| < \frac 1 {3k^N}$ and $|g_k(y_k)-g_k(a_k)|<\frac 1
{3}\varepsilon_k^N$ for all $k\geq k_0$. The triangle inequality
yields for each $k\geq k_0$
\[
        |g_k(a_k)|=|f_{\varepsilon_k}(a_k)-f_{\varepsilon_k}(1)| < \frac 2 {3}\varepsilon_k^N.
\]
This contradicts line \eqref{Eq:assumption on f} and we are done.\\
{\it Alternative proof.}\\ First we consider the problem for $f\in
\mathcal G(\mathbb R^+)$ i.\ e.,
\[
\forall \lambda>0: f(\lambda x)=f(x)\quad \mbox{in}\quad \mathcal G(\mathbb R^+).
\]
This is equivalent to the problem
\[
\forall h\in\mathbb R: g(x+h)=g(x)\quad \mbox{in}\quad \mathcal G(\mathbb R),
\]
where $g:=f\circ\exp$. Therefore, by (\cite{Lobster}, Theorem 6) it follows that $g=$const. We are going to show that $f$ is constant on $\mathbb R^+$ as well.
To this end, note that the logarithm on $\widetilde{\mathbb R}_c^+$ is a well defined mapping since it stems from evaluation of
$\log\in\mathcal G(\mathbb R^+)$. Assume that $f$ is non-constant on the positive real numbers, that is,
there exist $x_c^+,y_c^+\in\widetilde{\mathbb R}_c^+$ such that $\widetilde f(x_c^+)\neq\widetilde f(y_c^+)$. This is equivalent to the fact that $f\circ\exp(x_c)\neq f\circ\exp(y_c) $, where $x_c:=\log x_c^+,y_c:=\log y_c^+$, a contradiction. By the symmetry of $f$ we have $f=c=const$ on $\mathbb R\setminus \{0\}$. Now we proceed as in {\it Case 1} of the first variant of the proof and we are done.
\end{proof}
\subsection{Scaling invariance in space.}
In the preceding section we established that any generalized function on the real
 line, which is invariant under positive standard scaling, is a constant.
  An important information we used was that without loss of generality
   we may assume that $f$ is symmetric. This helped us to overcome the
   obstacle    that $\mathbb R\setminus\{0\}$ is not connected, and we were able to reduce
     the problem to scaling invariance of generalized functions supported at the origin.
     The analogous question in higher space dimensions may be reduced to the one dimensional case. In the following,
     $d$ is an arbitrary positive integer.
         
\begin{theorem}\label{scinvth}
Any generalized function $f$ in $\mathbb R^d$ which is invariant under standard scaling is constant.
\end{theorem}
\begin{proof}
Let $f\in\Gen(\R^d)$ be invariant under positive (standard) scaling, that is,
$\forall\lambda\in\R$, $\lambda >0$ we have:
\[
f(\lambda x)=f(x).
\]
Fix a net $(a_\eps)_\eps$  such that $a_\eps\in L\csub\R^d$ for all $\eps>0$.
Then the net $(g_\varepsilon)_\varepsilon:=(f_\eps(a_\eps t))_\eps$ defines a generalized function
$g:=[(g_\varepsilon)_\varepsilon]\in\Gen(\R)$.
%
Now the scaling invariance for a fixed $\lambda$
 \[
 \forall\, L\csub\R^d\, \forall\, b\in\R:\sup_{x\in L}\abs{f_\eps(\lambda x) -
f_\eps(x)}= \Ord(\eps^b), \text{ as }\eps\to 0
\]
implies the scaling invariance for the same $\lambda$ of $g$
\[
\forall K\csub\R\,\forall\, b\in\R:\sup_{t\in K}\abs{f_\eps(\lambda a_\eps t) -
f_\eps(a_\eps t)}= \Ord(\eps^b), \text{ as }\eps\to 0.
\]
So the one-dimensional statement (Theorem \ref{th1}) implies that $g$ is a generalized constant, that is,

\[
\forall K\csub\R\,\forall\, b\in\R:\sup_{t\in K}\abs{f_\eps(a_\eps t) -
f_\eps(0)}= \Ord(\eps^b), \text{ as }\eps\to 0.
\]

By setting $t=1$ and $a:=(a_\varepsilon)_\varepsilon+\mathcal N(\mathbb R^d)$ we therefore have $f(a)=f(0)$ in $\widetilde{\mathbb R}$. Since the net $(a_\varepsilon)_\varepsilon$
was arbitrary it follows from Theorem \ref{distingprop} that $f=f(0)$ in $\mathcal G(\mathbb R^d)$ and we are done.

\end{proof}

\backmatter

\end{document}